\documentclass[11pt,leqno]{amsart}
\oddsidemargin =10mm \evensidemargin =10mm \topmargin =5mm
\textwidth =150mm \textheight =200mm

\usepackage{url}
\usepackage{xspace}
\usepackage{amsfonts,amssymb}
\usepackage{amsthm}
\usepackage[all]{xy}
\usepackage{textcomp}

%%%%
\usepackage{mathrsfs}
\usepackage{stmaryrd}
\usepackage{bbm}
\usepackage{oldgerm}
\usepackage[T1]{fontenc}
\usepackage[latin1]{inputenc}
%%%%
\theoremstyle{definition}
\newtheorem{defn}[subsection]{Definition}
\newtheorem{rem}[subsection]{Remark}

\theoremstyle{plain}\newtheorem{lemma}[subsection]{Lemma}

\newtheorem{prop}[subsection]{Proposition}
\newtheorem{thm}[subsection]{Theorem}
\newtheorem{cor}[subsection]{Corollary}

\numberwithin{equation}{subsection}

\newcommand{\bem}{\begin{bmatrix}}
\newcommand{\enm}{\end{bmatrix}}

\newcommand{\ee}{\mathbf{e}}
\newcommand{\ff}{\mathbf{f}}
\newcommand{\ra}{\rightarrow}

\newcommand{\hra}{\hookrightarrow}

\newcommand{\xra}{\xrightarrow}

\newcommand{\Z}{\mathbb{Z}}
\newcommand{\C}{\mathbb{C}}
\newcommand{\F}{\mathbb{F}}

\newcommand{\Q}{\mathbb{Q}}

\newcommand{\bB}{\mathbb{B}}

\newcommand{\cA}{\mathcal{A}}

\newcommand{\cF}{\mathscr{F}}

\newcommand{\cJ}{\mathcal{J}}
\newcommand{\cL}{\mathcal{L}}

\newcommand{\cM}{\mathcal{M}}
\newcommand{\cO}{\mathcal{O}}
\newcommand{\cV}{\mathcal{V}}

\newcommand{\fd}{\mathfrak{d}_L} % the different of L
\newcommand{\fp}{\mathfrak{p}}  % prime ideals of O_L above p

\newcommand{\fX}{\mathfrak{X}}
\newcommand{\fY}{\mathfrak{Y}}

\newcommand{\vk}{\underline{k}} % the multi-weight of a Hilbert modular form
\newcommand{\vx}{\underline{x}} % a vertex point in the cube [0,1]^{\bB}
\newcommand{\vdeg}{\underline{\deg}} % the vector valued degree function.

%modification by Tian starts here
 % tilde of degree
%modification by Tian ends here

\newcommand{\taml}{\Gamma_1} % the tame level, since I am not sure whether you'll use \Gamma_{00} or \Gamma_1
\newcommand{\prtoX}{\pi} % the projection from Y to X.
\newcommand{\prtoY}{\alpha} % the natural projection from the Hilbert Modular scheme of level Gamma_1(Np) to Y.

\newcommand{\IHMV}{Z} % the integral model of \HMV.
\newcommand{\fHMV}{\mathfrak{Z}} % the formal completion of \IHMV

\newcommand{\Cb}{\overline{C}}
\newcommand{\Db}{\overline{D}}

 % the induced polarization  on the quotient of a certain abelian scheme
 % the induced level N structure on the quotient
 % the induced point of order p on the quotient abelian scheme
 % the image of the point Q in the quotient abelian scheme

  % Maximum spectrum of a Tate algebra

%\newcommand{\ord}{\mathrm{ord}}
%\newcommand{\rig}{\mathrm{rig}}
\newcommand{\an}{\mathrm{an}}
\newcommand{\can}{\mathrm{can}}
\newcommand{\univ}{\mathrm{univ}}
\newcommand{\cl}{cl^{+}} % strict ideal class group
\newcommand{\pr}{\mathrm{pr}}

\newcommand{\omegab}{\underline{\omega}}

\newcommand{\one}{\underline{1}} % the vector with all components equal to 1

%\newcommand{\myprojlim}{\underset{\underset{n}{\longleftarrow}}{\lim}\ }
%\DeclareMathOperator{\End}{\mathrm{End}}
%\DeclareMathOperator{\Lie}{\mathrm{Lie}}
%\DeclareMathOperator{\Spec}{\mathrm{Spec}}
%\DeclareMathOperator{\Hom}{\mathrm{Hom}}
%\DeclareMathOperator{\Spf}{\mathrm{Spf}}
%\DeclareMathOperator{\Ker}{\mathrm{Ker}}

%Notation which should appear as operators

\newcommand{\codim}{{\operatorname{codim }}}

\newcommand{\Fr}{{\operatorname{Fr }}}

\newcommand{\Ker}{{\operatorname{Ker}}}

\newcommand{\ord}{{\operatorname{ord }}}

\newcommand{\Res}{{\operatorname{Res }}}

\newcommand{\Spec}{{\operatorname{Spec }}}

\newcommand{\Sym}{{\operatorname{Sym }}}

\newcommand{\Proj}{{\operatorname{Proj}}}

%Notation which should appear as operators(GROUPS)

\newcommand{\GL}{{\operatorname{GL}}}
\newcommand{\Gal}{{\operatorname{Gal}}}

%shortcuts for groups

%shortcuts for fields, rings etc.

%shortcuts for letters German
\newcommand{\gera}{{\frak{a}}}

\newcommand{\gerd}{{\frak{d}}}

\newcommand{\gerl}{{\frak{l}}}
\newcommand{\germ}{{\frak{m}}}
\newcommand{\gern}{{\frak{n}}}

\newcommand{\gerp}{{\frak{p}}}
\newcommand{\gerq}{{\frak{q}}}

\newcommand{\gert}{{\frak{t}}}

\newcommand{\gerL}{{\frak{L}}}
\newcommand{\gerM}{{\frak{M}}}
\newcommand{\gerN}{{\frak{N}}}

\newcommand{\gerS}{{\frak{S}}}
\newcommand{\gerT}{{\frak{T}}}
\newcommand{\gerU}{{\frak{U}}}
\newcommand{\gerV}{{\frak{V}}}

\newcommand{\gerX}{{\frak{X}}}
\newcommand{\gerY}{{\frak{Y}}}
\newcommand{\gerZ}{{\frak{Z}}}

%Shortcut for underlined letters
\newcommand{\ua}{{\underline{a}}}
\newcommand{\ub}{{\underline{b}}}

\newcommand{\uA}{{\underline{A}}}

%Shortcut for letters mathcal
\newcommand{\calA}{{\mathcal{A}}}

\newcommand{\calF}{{\mathcal{F}}}

\newcommand{\calM}{{\mathcal{M}}}

\newcommand{\calO}{{\mathcal{O}}}

\newcommand{\calR}{{\mathcal{R}}}
\newcommand{\calS}{{\mathcal{S}}}

\newcommand{\calU}{{\mathcal{U}}}
\newcommand{\calV}{{\mathcal{V}}}
\newcommand{\calW}{{\mathcal{W}}}

% Shortcut for letters mathbb
\def\AA{\mathbb{A}}
\def\BB{\mathbb{B}}
\def\CC{\mathbb{C}}

\def\GG{\mathbb{G}}

\def\II{\mathbb{I}}

\def\QQ{\mathbb{Q}}
\def\RR{\mathbb{R}}
\def\SS{\mathbb{S}}

%Shortcut for letters mathscr

%shortcuts for categories

%shortcuts for cumbersome commands
%\newcommand{\id}{{\noindent}}

%Arrows

\newcommand{\arr}{{\; \rightarrow \;}}

\newcommand{\ol}{{\mathcal{O}_L}}

\newcommand{\Spf}{\operatorname{Spf }}

\newcommand{\Xrig}{{\gerX_\text{\rm rig}}}

\newcommand{\Yrig}{{\gerY_\text{\rm rig}}}

\newcommand{\rig}{{\operatorname{rig}}}
\newcommand{\spe}{{\operatorname{sp }}}

\newcommand{\Xbar}{\overline{X}}
\newcommand{\Ybar}{\overline{Y}}

\newcommand{\pe}{(\varphi,\eta)}

\newcommand{\Wpe}{W_{\varphi,\eta}}

\newcommand{\Qbar}{\overline{Q}}
\newcommand{\Pbar}{\overline{P}}

\newcommand{\Qpbar}{\overline{\QQ}_p}

\newcommand{\tYrig}{\tilde{\gerY}_{\rm{rig}}}
\newcommand{\tXrig}{\tilde{\gerX}_{\rm{rig}}}

\newcommand{\Yrigtau}{\gerY^{|\tau|\leq 1}_{\rm{rig}}}
\newcommand{\Xrigtau}{\gerX^{|\tau|\leq 1}_{\rm{rig}}}

%before compactification

\newcommand{\HMV}{Z}

%after compactification

\newcommand{\tHMV}{\tilde{Z}}

\newcommand{\tHMVAN}{\tilde{\gerZ}_{\rm rig}}

\newcommand{\tHMVANtau}{\tilde{\gerZ}_{\rm rig}^{|\tau|\leq 1}}

\newcommand{\phix}{\varphi_{\vx}}
\newcommand{\etax}{\eta_{\vx}}
\newcommand{\Wx}{W_{\vx}}

\newcommand{\gen}{\mathrm{gen}}
\newcommand{\bfa}{\mathbf{a}}
\newcommand{\sing}{\mathrm{sing}}
\newcommand{\region}{\prtoY^{-1}(\Sigma)}
\newcommand{\uuA}{\underline{\underline{A}}}
\begin{document}

\begin{abstract}
We extend the modularity lifting result of \cite{Ka} to allow Galois representations with some ramification at $p$. We also prove modularity mod  $5$ of certain Galois representations. We use these results to prove  new cases of the strong Artin conjecture over  totally real fields in which $5$ is unramified. As an ingredient of the proof, we provide a general result on the automatic analytic continuation of overconvergent $p$-adic Hilbert  modular forms of finite slope which substantially generalizes a similar result in \cite{Ka}.
\end{abstract}

\title[Modularity lifting results in parallel weight one: the tamely ramified case]{Modularity lifting results in parallel weight one and applications to the Artin conjecture: the tamely ramified case}
\author{Payman L Kassaei, Shu Sasaki, Yichao Tian}
\maketitle

\section{Introduction}

The work of Buzzard and Taylor \cite{BT} beautifully combined methods of Wiles and Taylor (\`a la Diamond \cite{Diamond}) with a geometric analysis of overconvergent $p$-adic  modular forms to prove a modularity lifting result for geometric representations of $\Gal(\overline{\QQ}/\QQ)$ which are split and unramified at $p$. Combined with the works of  Shepherd-Barron-Taylor and Dickinson, these ideas came together in \cite{BDST} where a program laid out by R. Taylor came to fruition in proving many cases of the Artin conjecture over $\QQ$. Later, Buzzard generalized the results of \cite{BT}, allowing the Galois representation to have ramification at $p$, and these results were used by Taylor to prove more cases of the Artin conjecture.

In \cite{Ka},  the first-named author proved a generalization of  the main result of Buzzard-Taylor \cite{BT} over a totally real field in which $p$ is unramified. In \cite{Pilloni}, Pilloni proved a generalization of  this result in the case where  $p$ is slightly ramified in $F$. In both works the Galois representation is assumed unramified at $p$.  In this work, we generalize the result of \cite{Ka} allowing some ramification at $p$ for the Galois representation  in question. 

To explain our method, let us first give a brief description of the proof in \cite{Ka}. At the heart of the argument in \cite{Ka} lies the proof that a collection of weight one specilaizations of Hida families are all classical Hilbert modular forms by analytically extending them from their initial domains of definition to the entire Hilbert modular variety $\Yrig$ of level $\Gamma_1(N) \cap \Gamma_0(p)$ (where $N$ is prime to $p$). This analytic continuation is done in two steps. First, one proves that every finite slope overconvergent  Hilbert modular form will {\it automatically} extend to a region $\calR$ inside $\Yrig$; a region considerably larger than the locus of definition of the canonical subgroup, $\calV_{can}$, over which one could previously perform automatic analytic continuation (see \cite{GK}). In the second step, using a gluing argument, one shows that a linear combination of the weight one forms at hand extends to a yet bigger region $\calR \subset \Yrigtau\subset \Yrig$  which contains a region saturated with respect to the  forgetful map $\pi: \Yrig \arr \Xrig$ to the Hilbert modular variety $\Xrig$ of level $\Gamma_1(N)$. It is, then, proved that this linear combination  descends under the map $\pi$ to a region $\Xrigtau=\pi(\Yrigtau) \subseteq \Xrig$, over which a rigid-analytic Koecher principle can be applied to extend the descended form to the entire $\Xrig$. The point is that while $\calR$ and $\Yrigtau$ are not large enough to make the application of the Koecher principle possible, their images under $\pi$ are so. 

In the setting of this article, we can no longer descend the forms to level $\Gamma_1(N)$, and, hence, we must prove a more optimal  analytic continuation at the level of $\Yrig$ itself. In this work, we prove that every finite slope overconvergent Hilbert modular form extends to a region $\Sigma$ which is vastly larger than $\calR$: a region only slightly smaller than the tube of the ``generic'' locus of the special fibre $\Ybar$  (in codimension \!<\! 2), defined in terms of the stratification of $\Ybar$ studied in \cite{GK}. 

\begin{thm}
Any overconvergent Hilbert modular form of finite slope (at all primes above $p$) extends analytically to
\[
\Sigma=\bigcup_{\codim(W)=0,1}]W^\gen[',
\]
where $W$ runs over strata of $\Ybar$ (defined in \cite{GK}) of codimension $0,1$, $W^{gen}$ is the generic locus of $W$ defined in \ref{Defn:generic-part}, $]W^{gen}[$ is the tube of $W^{gen}$ in $\Yrig$, and $]W^{gen}[^\prime\subset\ ]W^{gen}[$ is defined in \ref{Defn:Sigma}.\end{thm}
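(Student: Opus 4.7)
The plan is to proceed by the now-standard method of $U_p$-dynamics (as in \cite{BT}, \cite{Ka}, \cite{Pilloni}), but pushed to its natural limit on the tubes over the generic codimension-one strata. Let $f$ be an overconvergent Hilbert modular form of finite slope; by the spectral decomposition with respect to the commuting operators $U_{\gerp}$ for $\gerp \mid p$, one reduces to the case where $f$ is a generalized $U_{\gerp}$-eigenvector with nonzero eigenvalue $a_{\gerp}$ for every $\gerp$, so that the relation $f = a_{\gerp}^{-1} U_{\gerp} f$ may be used repeatedly to enlarge the domain on which $f$ is defined. As initial data I take the fact, provided by the theory of the canonical subgroup, that $f$ is already defined on a strict neighborhood of the multiplicative part of the ordinary locus, which is $]W^{\gen}[^{\prime}$ for the most ``ordinary'' codimension-zero stratum $W$ in the Goren--Kassaei stratification of $\Ybar$.

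The extension is then carried out in two stages. Stage (i) handles the remaining codimension-zero strata, each of which differs from the multiplicative ordinary locus by a choice of anti-canonical $\Gamma_0(\gerp)$-subgroup at some subset of the primes $\gerp \mid p$. For each such prime, one application of $U_{\gerp}$ to a point in the tube of the new stratum produces a sum of points all lying in a previously treated tube, and an explicit analysis of the $U_{\gerp}$-correspondence on the relevant tubes yields the analytic extension of $f$. Stage (ii) is the main work: for a codimension-one stratum $W$, one uses $f = a_{\gerp}^{-n} U_{\gerp}^{n} f$ for the ``correct'' prime $\gerp$ prescribed by the geometry of $W$ (the prime at which the characteristic degeneration along $W$ occurs). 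The central dynamical assertion is that the correspondence $U_{\gerp}^{n}$ sends any point $x \in ]W^{\gen}[^{\prime}$ to a sum supported, for $n$ sufficiently large, entirely in regions already treated in Stage (i), with geometric contraction of the relevant local parameters. Combined with the finite slope bound on $|a_{\gerp}|^{-n}$ and the controlled growth of the norms $\|U_{\gerp}^n f\|$ on admissible opens, this forces a bounded rigid-analytic limit, giving the extension to $]W^{\gen}[^{\prime}$.

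The principal obstacle is the dynamical analysis in Stage (ii): one must make explicit the local rigid-analytic description of $\Yrig$ along each codimension-one stratum, refining and extending the local analysis of \cite{GK}; write the $U_{\gerp}$-correspondence in these coordinates; and verify that the basin of attraction of the iteration covers all of $]W^{\gen}[^{\prime}$. The restriction to the generic locus $W^{\gen}$ is precisely what removes the interference of higher-codimension strata meeting $W$, while the slight shrinking to the primed tube $]W^{\gen}[^{\prime}$ is what supplies the uniform contraction needed for the $U_{\gerp}$-iteration to converge. Finally, the local extensions obtained on the various $]W^{\gen}[^{\prime}$ automatically glue, because they are each given by the same universal formula $f = a_{\gerp}^{-n} U_{\gerp}^{n} f$ applied to the original $f$, producing the analytic extension to the full region $\Sigma$.
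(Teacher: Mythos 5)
There is a genuine gap, and it lies at the very start of your argument: your only initial datum is that $f$ is defined on a strict neighborhood of the multiplicative ordinary locus, and you propose to reach all of $\Sigma$ from there by the functional equation $f=a_\gerp^{-1}U_\gerp f$ (first the codimension-zero tubes, then the codimension-one tubes by iterating). This cannot work, because large parts of $\Sigma$ lie over \emph{non-ordinary} abelian varieties, and ordinariness of the reduction is an isogeny invariant, hence preserved by the $U_\gerp$-correspondences. Concretely, for any stratum $W_{\varphi,\eta}$ of codimension one we have $|\varphi|+|\eta|=g+1$, so $\varphi\cap\eta\neq\emptyset$ and, by \eqref{E:relation-strat} and Definition \ref{Defn:generic-part}, every point of $]W^\gen['$ specializes over a stratum $W_\tau\subset\Xbar$ with $\tau\neq\emptyset$, i.e.\ over a non-ordinary HBAV; the same happens for the codimension-zero vertices that are not $T$-ordinary (e.g.\ $\vx=(1,0)$ for $p$ inert in a real quadratic field, whose generic part lies over $W_{\{\beta_2\}}$). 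Your description of the codimension-zero strata as ``the ordinary locus with an anti-canonical choice at some subset of the primes'' only covers the $T$-ordinary vertices. For a point $Q=(\uA,H)$ with non-ordinary reduction, every point of $\pi_{2,p}(\pi_{1,p}^{-1}(Q))$ again has non-ordinary reduction, so no iterate $U_\gerp^n$ ever lands in a strict neighborhood of the ordinary locus nor in the tubes of the $T$-ordinary vertices: the ``basin of attraction'' you invoke in Stage (ii) is empty for precisely the points the theorem is about, and Stage (i) already fails for the mixed vertices.

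What is missing is the actual starting point of the proof, namely the analytic continuation of any finite-slope form to (the preimage of) the Goren--Kassaei canonical locus $\cV_{\can}$ (Lemma \ref{Lemma:Canonical}), which is established by canonical-subgroup theory and already penetrates deep into the non-ordinary locus. Once this is in hand, no iteration, contraction, or limit is needed: the paper shows that a \emph{single} application of $U_p$ maps $\Sigma$ into $\cV_{\can}$, i.e.\ $\pi_{1,p}^{-1}(\Sigma)\subset\pi_{2,p}^{-1}(\cV_{\can})$ (Proposition \ref{Prop:Sigma-U_p}), and then $f=\lambda_p^{-1}U_p(f)$ extends $f$ to $\Sigma$ in one step, boundedness coming from the trivial estimate \eqref{Equ:estimation of U_p}. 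The real technical content is the pointwise verification of the inequality $p\deg_\beta(D)+\deg_{\sigma\circ\beta}(D)<p$ for all subgroups $D$ in the $U_p$-fibre, carried out case by case over the codimension $0$ and $1$ strata using the Goren--Kassaei Key Lemma on partial Hodge heights and Raynaud/Breuil--Kisin module computations (Lemmas \ref{Lemma:partial-Hodge}, \ref{lemma:degree}, \ref{L:BK-module}); in particular the shrinking from $]W^\gen[$ to $]W^\gen['$ in Cases 2(b), 2(c) of Definition \ref{Defn:Sigma} is imposed exactly so that this inequality holds, not to produce a ``uniform contraction''. Your proposal would need to be restructured around these two ingredients (extension to $\cV_{\can}$, then the degree estimates for one application of $U_p$); as written, the dynamical mechanism it relies on does not exist.
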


This provides the strongest automatic analytic continuation result for overconvergent Hilbert modular forms in the literature so far, and is of independent interest besides the applications explored in this article.   This is done in \S \ref{section: analytic continuation}.

To explain the second step of the proof, let us assume that the prime $p$ is inert in  $F$ for simplicity. In this case, the collection of weight one overconvergent forms consists of two forms, say $f,g$, each of which can be extended to $\Sigma$ by the first step. Let $w$ be the Atkin-Lehner involution on $\Yrig$. To further extend $f$, we want to glue $f$ defined on $\Sigma$ to a scalar multiple of $w(g)$ which is defined on $w^{-1}(\Sigma)$. This would extend $f$ to $\Sigma \cup w^{-1}(\Sigma)$. The main difficulty in this step, compared to the elliptic case, is that $\Sigma \cap w^{-1}(\Sigma)$  has many  connected components which need to be controlled. A detailed analysis of these connected components is carried out in \S \ref{section: gluing} and utilizes the geometry of intersection of {\it irreducible components} of strata on $\Ybar$ provided by \cite{GK}.
Another complication, essentially due to the existence of totally positive units in $F$, is that $f$ may, in fact, not be a scalar multiple of $w(g)$, and a replacement for $g$ has to be constructed.

The third step of the analytic continuation requires a close study of the stratification on $\Ybar$ which is carried out in \S \ref{sect-geom-Y}. Using these results, one shows that the complement of $\Sigma \cup w^{-1}(\Sigma)$ in $\Yrig$ has codimension at least $2$ in reduction mod $p$. It follows that $f$ automatically extends from $\Sigma \cap w^{-1}(\Sigma)$ to the entire $\Yrig$ by the Koecher principle. We should mention that there are smaller choices of $\Sigma$ that make this last step work fine. However, the region $\Sigma$ defined above is chosen such that the connected components of $\Sigma \cap w^{-1}(\Sigma)$ are well-behaved: we show that every such connected component contains  a region which is saturated under $\pi$; this is essential for carrying out the gluing  process described above. The above analytic continuation results allow us to prove:

\begin{thm}\label{Theorem: Main}
Let $p>2$ be a
prime number, and $F$ a totally real field in which $p$ is unramified. For any prime ideal $\gerp|p$, let
$D_\gerp$ denote a decomposition group of $Gal(\overline{\QQ}/F)$ at $\gerp$, and $I_\gerp$ the inertia subgroup.  Let $\rho: Gal(\overline{\QQ}/F) \arr \GL_2(\calO)$ be a continuous representation, where $\calO$ is the ring of integers in a finite extension of $\QQ _p$, and $\germ$ its maximal ideal. Assume

\begin{itemize}

\item $\rho$ is unramified outside a finite set of primes,

\item For every prime $\gerp | p$, we have 
\[
\rho_{|_{D_\gerp}} \cong \alpha_\gerp \oplus \beta_\gerp,
\]
 where $\alpha_\gerp,\beta_\gerp: D_\gerp \arr \calO^\times$ are characters distinct modulo $\germ$, and $\alpha_\gerp(I_\gerp)$ and $\beta_\gerp(I_\gerp)$ are finite, and $\alpha_\gerp/\beta_\gerp$ is tamely ramified,

\item $\overline{\rho}:=(\rho\ {\rm mod}\ \germ)$ is ordinarily modular, i.e., there exists a classical Hilbert modular form $g $  of parallel weight $2$ such that ${\rho}\equiv \rho_g(\mathrm{mod}\  \germ)$ and $\rho_{g}$ is potentially ordinary and potentially Barsotti-Tate at every prime of $F$ dividing $p$,

\item $\overline{\rho}$ is absolutely irreducible when restricted to $Gal(\overline{\QQ}/F(\zeta_p))$.

\end{itemize}

Then, $\rho$ is isomorphic to $\rho_f$,  the Galois representation
associated to a Hilbert modular eigenform $f$ of weight
$(1,1,\cdots,1)$ and level $\Gamma_{1} (Np)$, for some integer $N$
prime to $p$.

\end{thm}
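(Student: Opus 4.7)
The proof follows the Buzzard--Taylor strategy as refined in \cite{Ka}, but must be carried out entirely at the level of $\Yrig$ since $\rho$ is allowed to ramify at $p$. Let $\gerp_1,\ldots,\gerp_n$ denote the primes of $F$ above $p$. The overall structure is: (i) produce $2^n$ overconvergent parallel weight one eigenforms on $\Yrig$ indexed by subsets $S\subseteq\{1,\ldots,n\}$, corresponding to the choice of the $\alpha_\gerp$-root versus the $\beta_\gerp$-root of the Hecke polynomial at each $\gerp$; (ii) extend each of them to the region $\Sigma$ via the analytic continuation theorem recalled above; (iii) glue them along Atkin--Lehner involutions $w_{\gerp_i}$ to produce a section defined off a codimension $\geq 2$ locus; (iv) apply the rigid Koecher principle, deduce classicality, and match Galois representations by Chebotarev.

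For step (i), fix $S\subseteq\{1,\ldots,n\}$. The tameness hypothesis on $\alpha_\gerp/\beta_\gerp$ lets us twist $\rho$ by a finite order character, trivial modulo $\germ$, to obtain a $p$-adic representation $\rho_S$ whose restriction to each $D_{\gerp_i}$ is potentially Barsotti--Tate and ordinary, with unit root $\alpha_{\gerp_i}(\mathrm{Frob}_{\gerp_i})$ if $i\in S$ and $\beta_{\gerp_i}(\mathrm{Frob}_{\gerp_i})$ otherwise. Since the reduction of $\rho_S$ coincides with $\overline{\rho}$ which is ordinarily modular, the Taylor--Wiles condition holds on $F(\zeta_p)$, and $\rho_S$ is potentially Barsotti--Tate and potentially ordinary at every $\gerp | p$, the potentially Barsotti--Tate ordinary modularity lifting theorem for totally real fields (à la Skinner--Wiles, Kisin, Geraghty) yields a classical parallel weight two Hilbert eigenform $g_S$, ordinary at $p$, with $\rho_{g_S}\cong\rho_S$. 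Embedding $g_S$ into a Hida family over $F$ and specializing at parallel weight one produces an overconvergent $U_{\gerp_i}$-eigenform $f_S$ on $\Yrig$, of zero slope at each $\gerp_i$, with the prescribed unit eigenvalues and whose attached Galois representation is $\rho$ itself.

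For steps (ii)--(iii), apply the main analytic continuation theorem to extend each $f_S$ to $\Sigma$. For any $i$ and any $S$ with $i\notin S$, the pullback $w_{\gerp_i}^\ast f_{S\cup\{i\}}$ is an overconvergent form on $w_{\gerp_i}^{-1}(\Sigma)$ sharing the Hecke eigenvalues of $f_S$ away from $\gerp_i$ and with $U_{\gerp_i}$-eigenvalue interchanged. On each connected component of $\Sigma\cap w_{\gerp_i}^{-1}(\Sigma)$, the $q$-expansion principle provides a scalar identifying $f_S$ with a multiple of $w_{\gerp_i}^\ast f_{S\cup\{i\}}$, up to the twisting by totally positive units of $F$ that necessitates constructing a modified replacement for $f_{S\cup\{i\}}$ as flagged in the introduction. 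Using the detailed analysis of $\S\ref{section: gluing}$ — crucially, that every component of $\Sigma\cap w_{\gerp_i}^{-1}(\Sigma)$ contains a region saturated under $\pirig$ — these scalars may be matched consistently across all components, and iterating over $i=1,\ldots,n$ extends $f_\emptyset$ to the union $\bigcup_S w_S^{-1}(\Sigma)$, where $w_S=\prod_{i\in S}w_{\gerp_i}$.

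By the geometric study of $\Ybar$ carried out in $\S\ref{sect-geom-Y}$, the complement of $\bigcup_S w_S^{-1}(\Sigma)$ in $\Yrig$ has codimension at least $2$ in the special fibre. The rigid-analytic Koecher principle therefore extends $f_\emptyset$ to a global section of the parallel weight one automorphic line bundle on all of $\Yrig$; rigid GAGA applied to a smooth toroidal compactification then ensures algebraicity, hence classicality. The resulting Hilbert eigenform $f$ of parallel weight one and level $\Gamma_1(N)\cap\Gamma_0(p)$, viewed inside $\Gamma_1(Np)$, satisfies $\rho_f\cong\rho$ by Chebotarev from the matching systems of Hecke eigenvalues, and at $\gerp | p$ by construction of the unit root. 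The principal obstacle is step (iii): controlling the connected components of $\Sigma\cap w_{\gerp_i}^{-1}(\Sigma)$ precisely enough to determine the gluing scalars globally, and compensating for the totally positive units of $F$ that prevent a naive scalar identification of $f_S$ with $w_{\gerp_i}^\ast f_{S\cup\{i\}}$; this is precisely the reason for the optimized choice of $\Sigma$ announced in the introduction.
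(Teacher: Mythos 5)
Your steps (ii)--(iv) are a fair summary of the paper's geometric machinery (Theorems \ref{Thm:AnaCont}, \ref{Theorem: gluing} and \ref{thm-ac}, packaged as Theorem \ref{Theorem: classical}), but step (i) --- the production of the weight one overconvergent eigenforms with attached Galois representation $\rho$ --- contains a genuine error and hides the main arithmetic input. First, no finite-order twist $\rho_S$ of $\rho$ can satisfy $\rho_{g_S}\cong\rho_S$ for a \emph{classical parallel weight two} form $g_S$: after any such twist the restriction to $D_\gerp$ still has finite image on inertia, hence Hodge--Tate weights $(0,0)$ and determinant with no cyclotomic factor, whereas $\rho_{g_S}$ has weights $(0,1)$ and cyclotomic determinant. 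So the ``potentially Barsotti--Tate ordinary modularity lifting theorem'' simply does not apply to $\rho_S$; this is exactly why weight one is hard and why the whole geometric apparatus exists. Second, your twisting character cannot be ``trivial modulo $\germ$'': a nontrivial tame character of $I_\gerp$ has order prime to $p$, so its reduction mod $\germ$ is nontrivial; hence $\overline{\rho}_S=\overline\rho\otimes\overline\chi^{-1}\neq\overline\rho$ in the tamely ramified case (the new case of this paper), and residual modularity of the twist has to be re-established (the paper does this via the Teichm\"uller twist of $\rho_g$ together with Jarvis's level lowering and Fontaine--Laffaille theory). Third, even granting a weight two ordinary form congruent to the twist, your assertion that the weight one specialization of the Hida family through it has Galois representation \emph{equal} to $\rho$ with the prescribed $U_\gerp$-eigenvalues is precisely the content that must be proved: it requires locating the correct maximal ideal of the ordinary Hecke algebra (distinguishing the two companion ideals, done in the paper by a Ramakrishna/Taylor-style lifting argument following \cite{G} plus Kisin's theorem \cite{K:09A}) and then a nearly-ordinary $R=\mathbb{T}$ theorem over the Hida algebra (\cite{S}, \S 5) producing the $\Lambda$-adic eigenform $F_{\mathrm{H},T}$ and the height one prime $\wp_T$ of Theorem \ref{thm: Lambda-adic}. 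None of this follows from residual modularity alone, so your step (i) assumes what has to be proven.

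You also suppress the soluble base change, which is not cosmetic. The paper first passes to a finite soluble totally real $L/F$ in which every prime above $p$ splits completely, both to make the lifting theorems applicable (controlling ramification away from $p$, arranging special/unipotent behaviour at an auxiliary set $\calS$, keeping $p$ unramified so the geometry of $\Ybar$ applies) and to arrange the hypotheses (1)--(6) of Theorem \ref{Theorem: classical}: over $L$ one constructs the characters $\chi_T$ with $\chi_{T\cup\{\gerp\}}/\chi_T$ of conductor exactly $\gerp\gern_T$, normalizes the $f_T$, kills $c(f_T,\gerq)$ for $\gerq\mid N$, and verifies the eigenvalue relations (Lemma \ref{lemms: relations}); these are genuinely nontrivial over $F$ itself, and the ``totally positive units'' problem you flag in the gluing is handled by exactly these characters. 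After classicality over $L$, one untwists and uses soluble automorphic descent to return to $F$ and obtain the weight one form $f$ of level $\Gamma_1(Np)$. Your proposal, which works directly over $F$ and never descends, therefore does not yield the theorem as stated even if step (i) were repaired.
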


\begin{rem} We can handle the case where $\alpha_\gerp/\beta_\gerp$ has arbitrary ramification provided there is an integral model of the Hilbert modular variety of level $\Gamma_1(Np^m)$ (for $m>1$) such that the forgetful map  to level $\Gamma_1(N,p)$ is flat.
\end{rem}
In the last two sections of the paper, we apply our results to prove certain cases of the strong Artin conjecture over totally real fields. We first prove modularity of certain Galois representations mod $5$ following a method of Taylor. Combining these results with Theorem \ref{Theorem: Main}, we prove new  icosahedral cases of the strong Artin conjecture over  $F$ (generalizing the main theorem of Taylor in \cite{T:02} to $F$ while removing conditions at $3$ and $5$ given there). 

\begin{thm} Let $F$ be a totally real field in which $5$ is unramified.
Let
\[
\rho: \mathrm{Gal}(\overline{\mathbb{Q}}/F)\rightarrow GL_2(\mathbb{C})
\]
be a totally odd and continuous representation satisfying the following conditions:
\begin{itemize}
\item $\rho$ is totally odd;
\item $\rho$ has the projective image $A_5$;
\item for every place $\mathfrak{p}$ of $F$ above $5$, the projective image of the decomposition group at $\mathfrak{p}$ has order 2.
\end{itemize}
Then, there exists a holomorphic Hilbert cuspidal eigenform $f$ of weight 1 such that $\rho$ arises from $f$ in the sense of Rogawski-Tunnell, and the Artin $L$-function
$L(\rho, s)$ is entire.
\end{thm}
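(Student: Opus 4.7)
My plan is to follow the Taylor-style strategy: use the exceptional isomorphism $A_5 \cong \PSL_2(\FF_5)$ to convert $\rho$ into a $5$-adic problem, apply the mod-$5$ modularity result established earlier in this paper to the reduction, and then conclude via the modularity lifting Theorem \ref{Theorem: Main}. Concretely, using $A_5 \cong \PSL_2(\FF_5) \hookrightarrow \PGL_2(\FF_5)$ together with a standard Tate/Serre lifting argument (twisting $\rho$ by a finite-order character $\chi$ of $\Gal(\overline{\QQ}/F)$ to adjust the determinant, extending scalars, and choosing a place of $\overline{\QQ}$ above $5$), I would first produce a continuous lift
\[
\tilde\rho : \Gal(\overline{\QQ}/F) \lra \GL_2(\calO),
\]
where $\calO$ is the ring of integers in a finite extension of $\QQ_5$, such that the complex avatar of $\tilde\rho$ agrees with $\rho \otimes \chi$, $\tilde\rho$ has finite image, and the reduction $\bar\rho$ has projective image equal to the given $A_5 \subset \PGL_2(\FF_5)$.

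Next I verify the hypotheses of Theorem \ref{Theorem: Main} for $\tilde\rho$. Unramifiedness outside a finite set is immediate from the finiteness of the image. For each $\gerp \mid 5$, the assumption that the projective image of $D_\gerp$ has order $2$, combined with semisimplicity of $\tilde\rho|_{D_\gerp}$ (a consequence of finiteness of image in characteristic $0$), implies that $\tilde\rho|_{D_\gerp}$ splits as a sum $\alpha_\gerp \oplus \beta_\gerp$ of characters (after enlarging $\calO$ if necessary): both characters have finite image on inertia, the quotient $\alpha_\gerp/\beta_\gerp$ has order $2$ and hence is tamely ramified, and its reduction mod $\germ$ is the non-trivial element of order $2$ in $\FF_5^\times$, so $\alpha_\gerp$ and $\beta_\gerp$ are distinct modulo $\germ$. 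Absolute irreducibility of $\bar\rho|_{\Gal(\overline{\QQ}/F(\zeta_5))}$ follows from the simplicity of $A_5$: the projective image of $\Gal(\overline{\QQ}/F(\zeta_5))$ is a normal subgroup of $A_5$, it cannot be trivial because $[F(\zeta_5):F] \leq 4 < 60 = |A_5|$, so it equals the whole $A_5$, and the faithful $2$-dimensional representation of the preimage of $A_5$ in $\mathrm{SL}_2(\FF_5)$ is absolutely irreducible. The remaining hypothesis -- ordinary mod-$5$ modularity of $\bar\rho$ by a parallel weight $2$ Hilbert eigenform -- is precisely the content of the mod-$5$ modularity result developed in the preceding sections of this paper (via Langlands-Tunnell together with a Khare-Wintenberger-style argument over totally real fields in which $5$ is unramified).

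Theorem \ref{Theorem: Main} then yields a parallel weight-one Hilbert eigenform $\tilde f$ of level $\Gamma_1(Np)$ with $\tilde\rho \cong \rho_{\tilde f}$. Untwisting by $\chi$ gives a parallel weight-one holomorphic Hilbert cuspidal eigenform $f$ such that $\rho$ arises from $f$ in the sense of Rogawski-Tunnell. Holomorphy of $L(\rho,s)$ then follows from its identification with $L(f,s)$, which is entire as the $L$-function of a cuspidal automorphic representation of $\GL_2/F$. The main obstacle, and the genuinely non-formal step, is the mod-$5$ modularity of $\bar\rho$: this is where base change, Langlands-Tunnell, and a Khare-Wintenberger-style lifting argument adapted to the totally real setting (with the unramifiedness of $5$ in $F$ and the prescribed shape at primes above $5$) all enter in an essential way. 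Once that input and Theorem \ref{Theorem: Main} are in hand, the rest of the argument is a systematic, if intricate, deduction through the exceptional isomorphism.
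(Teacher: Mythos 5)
Your proposal is correct in substance and follows the same overall strategy as the paper: pass to a $5$-adic avatar of $\rho$, establish residual mod-$5$ modularity, feed this into the paper's weight-one lifting machinery, and read off entirety of $L(\rho,s)$ from the resulting cuspidal weight-one eigenform. The differences are in packaging. First, no Tate/Serre lifting or preliminary twist is needed at this stage: since $\rho$ is already a linear representation with finite image, the paper simply conjugates $\iota^{-1}\circ\rho$ into $\GL_2(\overline{\Z}_5)$ and reduces; the projective-lifting and determinant-adjusting issues you invoke only arise inside the proof of Theorem \ref{Theorem: mod 5 modularity}, over auxiliary soluble extensions. Your twist by $\chi$ is harmless (you untwist at the end) but superfluous. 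Second, you quote Theorem \ref{Theorem: Main} as a black box over $F$, whereas the paper's actual argument runs the route that proves it: the $\Lambda$-adic companion-form construction of Theorem \ref{thm: Lambda-adic} over a soluble totally real extension $L$ in which the primes above $5$ split completely, verification of the hypotheses of the classicality Theorem \ref{Theorem: classical} (Lemma \ref{lemms: relations}), classicality of the weight-one specializations, and then untwisting plus soluble automorphic descent back to $F$. Your shortcut is legitimate given the statement of Theorem \ref{Theorem: Main} and is cleaner on the surface, but it conceals the base-change-and-descent step that the paper makes explicit. Your local analysis at $\gerp\mid 5$ (abelian image of $D_\gerp$, splitting into characters, order-$2$ ratio, hence tamely ramified and distinguished mod $\germ$) and the irreducibility of $\bar\rho$ over $F(\zeta_5)$ via simplicity of $A_5$ coincide with the paper's.

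One point you should state more carefully: the ``ordinarily modular'' hypothesis of Theorem \ref{Theorem: Main} asks for a parallel weight-two congruence $g$ with $\rho_g$ potentially ordinary and potentially Barsotti--Tate at every $\gerp\mid 5$, which is strictly more than the bare conclusion ``$\bar\rho$ is modular'' of Theorem \ref{Theorem: mod 5 modularity}; it is not literally ``precisely the content'' of that theorem. The paper secures the stronger input through the construction itself: the elliptic curve furnishing residual modularity is chosen with (potentially) good ordinary reduction at $5$, and the required ordinarity and Barsotti--Tateness of the modular lift are extracted via \cite{BLGG} together with Jarvis's level lowering and Fontaine--Laffaille theory inside the proof of Theorem \ref{thm: Lambda-adic}. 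With that supplement made explicit, your argument goes through.
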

 
 After a first version of this article was submitted to Arxiv, Pilloni-Stroh announced a result generalizing the work of Pilloni \cite{Pilloni}.

\subsection{Notation} \label{Subsection: notation}

Let $p$ be a prime number. Let $L/\mathbb{Q}$ be a totally real field of degree $g>1$ in which $p$ is unramified, $\mathcal{O}_L$ its ring of integers, and $\gerd_L$ its different ideal.    For a  prime ideal $\gerp$  of $\ol$ dividing $p$, let $\kappa_\gerp = \ol/\gerp$, a finite field of order $p^{f_\gerp}$. Let $\kappa$ be a finite field containing an isomorphic copy of all $\kappa_\gerp$ which is generated by their images. We identify $\kappa_\gerp$ with a subfield of $\kappa$ once
 and for all. Let $\mathbb{Q}_\kappa$ be the fraction field of $W(\kappa)$.
 We fix embeddings $\mathbb{Q}_\kappa \subset \QQ_p^{\rm ur}
 \subset \Qpbar$. Let $v_p$ the $p$-adic valuation on $\Qpbar$ so that $v_p(p)=1$.

Let $\SS=\{\gerp|p\}$ be the set of prime ideals of $\ol$ dividing $p$. Let 
\[
\mathbb{B}={\rm Emb}(L,\mathbb{Q}_\kappa)=\textstyle\coprod_{\gerp \in \SS}
\mathbb{B}_\mathfrak{p},
\]
where $\mathbb{B}_\mathfrak{p}= \{\beta\in\mathbb{B}\colon
\beta^{-1}(pW(\kappa)) = \mathfrak{p}\}$, for every prime ideal $\mathfrak{p}$ dividing $p$. If $\gert | (p)$ is an ideal, we define $\BB_\gert=\cup_{\gerp|\gert} \BB_\gerp$. We also set $\gert^\ast=(p)/\gert$.

Let $\sigma$ denote the
Frobenius automorphism of $\mathbb{Q}_\kappa$, lifting $x \mapsto
x^p$ modulo $p$. It acts on $\mathbb{B}$ via $\beta \mapsto \sigma
\circ \beta$, and transitively on each $\mathbb{B}_\mathfrak{p}$.
For $S \subseteq \mathbb{B}$, we let $S^c=\BB-S$, and
$\ell(S)=\{\sigma^{-1}\circ\beta\colon \beta \in S\}$, the left shift of $S$. Similarly, define the right shift of $S$, denoted $r(S)$.
We denote by $|S|$ the  cardinality of $S$. The decomposition \[\mathcal{O}_L
\otimes_\mathbb{Z} W(\kappa)=\bigoplus_{\beta \in \mathbb{B}}
W(\kappa)_\beta,\] where $W(\kappa)_\beta$ is $W(\kappa)$ with the
$\mathcal{O}_L$-action given by $\beta$, induces a decomposition,
\[M=\bigoplus_{\beta\in \mathbb{B}} M_\beta,\] on any $\mathcal{O}_L
\otimes_\mathbb{Z} W(\kappa)$-module $M$.

\

Let $N\geq 4$ be an integer prime to $p$.  Throughout the paper, we will fix a finite extension $K$ of $\QQ_\kappa$ with residue field $\kappa_K$, and ring of integers $\calO_K$.

%Let $p$ be a rational prime, $L$ be a totally real number field of degree $g>1$ such that $p$ is unramified in $L$. Let $\ol$ be the ring of integers in $L$ and $\kappa$ the residue field of $\cO_L$ at $p$, $W=W(\kappa)$ the ring of Witt vectors, and $\Q_{\kappa}=W[1/p]$ the fraction field of $W$. Let $\bB=\mathrm{Emd}_{\Q}(L,\Q_{\kappa})$ be the set of embeddings of $L$ into $\Q_{\kappa}$. We fix an algebraic closure $\bar{\Q}_{\kappa}$ of $\Q_{\kappa}$,  and we denote by $v_p$ the $p$-adic valuation on $\bar{\Q}_{\kappa}$.

\subsection{The Hilbert modular variety of level $\taml(N) \cap \Gamma_0(p)$}

We denote by $X$ the Hilbert modular scheme of level $\taml(N)$ over $\Spec(\calO_K)$, and $Y$ the Hilbert modular scheme over $\Spec(\calO_K)$ of level $\taml(N)\cap\Gamma_0(p)$. The base extensions of $X,Y$ to $\Spec(K)$ are denoted, respectively, $X_K,Y_K$. The connected components of $X$ and $Y$ are both in natural bijection with the set of strict ideal class group $\cl_L$. For a representative $(\gera,\gera^+)$ of $\cl_L$, we denote the corresponding connected component of $X$ (respectively, $Y$) by $X_\gera$ (respectively, $Y_\gera$). We apply the same convention to other variants of $X,Y$ appearing in this paper. Let $\Xbar$, $\Ybar$ be, respectively, the special fibres of $X$ and $Y$, $\fX$ and $\fY$, the formal completions of $X$ and $Y$ along their special fibres, and $\fX_{\rig}$ and $\fY_{\rig}$,  the associated rigid analytic generic fibres over $K$. By abuse of notation, we always denote by  $\prtoX$ the natural forgetful projection from  level $\taml(N)\cap\Gamma_0(p)$ to $\taml(N)$ in various settings, e.g., $\prtoX: Y\ra X$, $\prtoX: \fY\ra \fX$, $\prtoX: \fY_{\rig}\ra \fX_{\rig}$.

Let $\cA^{\univ}$ be the universal abelian scheme  over $X$, and $e: {X}\ra \cA^{\univ}$ be  the unit section.  We put ${\omega}=e^*\Omega_{\cA^{\univ}/X}$. This is a locally free $(\cO_{X}\otimes_{\Z}\cO_L)$-module of rank 1, so that  we have a canonical decomposition $\omega=\oplus_{\beta\in \bB}\omega_{\beta}$ according to the natural action of $\cO_{L}$ on $\omega$. For any $\vk=(k_{\beta})_{\beta\in \bB}\in \Z^{\bB}$, we put
\[
\omega^{\vk}=\otimes_{\beta\in \bB}\omega_{\beta}^{\otimes k_{\beta}}.
\]
We still denote by $\omega$, $\omega_{\beta}$ and $\omega^{\vk}$ their pull-backs to $Y$ via $\prtoX$ or the corresponding sheaves in the formal or rigid analytic settings.

 We choose toroidal compactifications $\tilde{X}$ and $\tilde{Y}$ based on a common fixed choice of rational polyhedral cone decompositions. We still denote by $\prtoX$ the natural map $\tilde{Y}\ra \tilde{X}$. There is a semi-abelian scheme $\tilde{\cA}$ over $\tilde{X}$ extending the universal abelian scheme $\cA^{\univ}$. We denote by $\tilde{\fX}$ and  $\tilde{\fY}$ the corresponding formal completions along the special fibres, and by $\tilde{\fX}_{\rig}=\tilde{X}_{K}^{\an}$ and $\tilde{\fY}_{\rig}=\tilde{Y}_{K}^{\an}$  the associated rigid analytic spaces. For any $\vk\in \Z^{\bB}$, the line bundle $\omega^{\vk}$ on $X$ (resp. on $Y$) extends to a line bundle over $\tilde{X}$ (resp. $\tilde{Y}$), still denoted by $\omega^{\vk}$, by using the semi-abelian scheme $\tilde{\cA}$ over $\tilde{X}$ (resp. $\tilde{Y}$).

  Let $\tilde{\fX}^{\ord}_{\rig}$ be the ordinary locus of $\fX_{\rig}$, i.e., the quasi-compact admissible open subdomain of $\tilde{\fX}_{\rig}$, where the rigid semi-abelian scheme $\tilde{\cA}^{\an}$ has good ordinary reduction or specializes to cusps,   and $\fX_{\rig}^{\ord}= \tilde{\fX}_{\rig}^{\ord}\cap \fX_{\rig}$. Similarly, let $\fY_{\rig}^{\ord}$ be the locus of $\prtoX^{-1}(\fX^{\ord}_{\rig})$ where the finite flat subgroup $H$ is of multiplicative type,  and $\tilde{\fY}^{\ord}_{\rig}\subset \tilde{\fY}_{\rig}$ be the union of $\fY_{\rig}^{\ord}$ together with the locus of $\tilde{\fY}_{\rig}$ with reduction to unramified cusps.

\subsection{The Hilbert modular variety of level $\Gamma_1(Np)$. } Recall our fixed choice of $K$, a finite extension of $\QQ_\kappa$.  We assume that $K$ contains the $p$-th roots of unity. Let $\HMV_K$ be the Hilbert modular variety of level $\taml(Np)$ over $\Spec(K)$, i.e., the scheme  that represents the functor attaching to a $K$-scheme $S$, the set of isomorphic classes of the 4-tuples, $(A, \lambda, i_N, P)$, where
   \begin{itemize}
   \item $A$ is an abelian scheme of dimension $g$ over $S$ with real multiplication by $\cO_L$;

   \item $\lambda:A\ra A^{\vee}$ is a prime-to-$p$ polarization compatible with the action of $\cO_L$;

   \item $i_N: \mu_N\otimes_{\Z}\fd^{-1}\hra A[N]$ is an $\cO_L$-equivariant  closed immersion of group schemes;

 \item $P: S\ra A[p]$ is a section of the finite flat group scheme $A[p]$ of order $p$, such that the $\cO_L$-subgroup generated by $P$  is a free $(\cO_L/p)$-module  of rank $1$.
   \end{itemize}

We will often use the abbreviation $\uA=(A,\lambda,i_N)$, and denote a $S$-valued point of $\HMV_K$ simply by $(\uA,P)$.
Let $H=(P)$ be the $\cO_L$-subgroup of $A[p]$ generated by $P$. We have a canonical decomposition $H=\prod_{\gerp\in \SS}H[\gerp]$ corresponding to the decomposition $\cO_L/p=\prod_{\gerp \in \SS} \cO_L/\gerp$. We denote by $P_{\gerp}$ the image of $P$ under the natural projection $H\ra H[\gerp]$. Then $P_{\gerp}$ is a generator of $H[\gerp]$ as $\cO_L$-module, and we have $P=\prod_{\gerp\in \SS}P_{\gerp}$. Let
\[
\prtoY: \HMV_K\ra Y_K
\]
be the  map  given by $(\uA,P)\mapsto (\uA, (P))$. The map $\prtoY$ is finite \'etale of degree $\prod_{\gerp\in \SS}(p^{f_{\gerp}}-1)$.

 We choose the same rational polyhedral cone decomposition as that of ${Y}$ to construct a toroidal compactification $\tHMV_K$ of $\HMV_K$. The morphism $\prtoY$ extends to a finite flat map ${\prtoY}:\tHMV_K\ra \tilde{Y}_K$. For any $\vk=(k_{\beta})_{\beta\in \bB}\in \Z^{\bB}$, we define $\omega^{\vk}$ on $\tHMV_K$ to be the pullback of $\omega^{\vk}$ on $\tilde{Y}_K$ under $\prtoY$.

   For uniformity of notation, we will denote $\tHMV_K^{\rm an}$, the rigid analytification of $\tHMV_K$,  by $\tHMVAN$. We will continue to denote by ${\prtoY}: \tHMVAN\!=\! \tHMV_K^{\rm an} \ra \tilde{Y}_K^{\an}\!=\!\tilde{\fY}_{\rig}$, the analytification of $\prtoY:\tHMV_K\ra \tilde{Y}_K$. We set $\tHMVAN^{\ord} =\prtoY^{-1}(\tilde{\fY}_{\rig}^{\ord})$. The analytification of the sheaf $\omega^{\vk}$ on $\tHMVAN$ will be denoted by the same symbol.

\subsubsection{An integral model of $\HMV_K$}

We will need  the integral model of $\HMV_K$ over $\cO_K$ defined in \cite{Pa}. Let $H\subset \cA^{\univ}[p]$ be the universal isotropic $(\cO_L/p)$-cyclic subgroup over $Y$. We have a canonical decomposition $H=\prod_{\gerp\in \SS}H[\gerp]$, where each $H[\gerp]=\Spec(\cO_{H[\gerp]})$ is a scheme of one-dimensional $(\cO_L/\gerp)$-vector spaces over $Y$. By Raynaud's classification of such  group schemes \cite[1.4.1]{Ray}, there exist invertible sheaves $\cL_{\beta}$  over $Y$, for each  $\beta\in \BB_{\gerp}$, together with $\cO_Y$-linear morphisms $\Delta_{\beta}: \cL_{\beta}^{\otimes p}\ra\cL_{\sigma\circ\beta}$ and $\Gamma_{\beta}:\cL_{\sigma\circ\beta}\ra \cL_{\beta}^{\otimes p}$ such that $\Delta_{\beta}\circ \Gamma_{\beta}$ and $\Gamma_{\beta}\circ\Delta_{\beta}$ are both multiplication by $p$, and  the $\cO_Y$-algebra $\cO_{H[\gerp]}$ is isomorphic to
 \[
\Sym_{\cO_Y}(\oplus_{\beta\in \bB}\cL_{\beta})/((1-\Delta_{\beta})\cL^{\otimes p}:\beta\in \bB_{\gerp}).
\]
In fact, $\cL_{\beta}$ is the direct summand of the augmentation ideal $\cJ_{H[\gerp]}\subset \cO_{H[\gerp]}$, where $(\cO_L/\gerp)^{\times}$ acts via the Teichm\"uller character $\chi_{\beta}:(\cO_L/\gerp)^{\times}\ra W(\kappa_K)^{\times}$. Now we consider the closed subscheme $H'[\gerp]$ of $H[\gerp]$ defined by the equation
\[
(\otimes_{\beta\in\bB}\Delta_{\beta}-1)(\otimes_{\beta\in \bB}\cL_{\beta}^{\otimes (p-1)}).
\]

 Explicitly, let $U=\Spec(R)$ be an affine open subset of $Y$ such that
\[
H[\gerp]|_U=\Spec(R)[T_{\beta}:\beta\in \bB_{\gerp}]/(T_{\beta}^p-y_{\sigma\circ\beta}T_{\sigma\circ\beta}:\beta\in\bB_{\gerp}),
\] with $y_{\beta}\in R$, for $\beta\in \bB_{\gerp}$. Then, we have
\[H'[\gerp]|_U=\Spec(R[T_{\beta}:\beta\in\bB_{\gerp}])/(T_{\beta}^p-y_{\sigma\circ\beta}T_{\sigma\circ\beta}, \prod_{\beta\in\bB_{\gerp}}T_{\beta}^{p-1}-\prod_{\beta\in\bB_{\gerp}}y_{\beta}).\]
From this local description, we see that $H'[\gerp]$ is a finite flat scheme over $Y$ of rank $p^{f_{\gerp}}-1$. We set $H':=\prod_{\gerp\in\SS}H'[\gerp]$.

\begin{prop}[\cite{Pa}, 5.1.5, 2.3.3]
 The $Y$-scheme $H'\ra Y$ represents the functor which associates to each $Y$-scheme $S$, the set of $(\cO_L/p)$-generators of $H\times_{Y}S$ in the sense of Drinfeld-Katz-Mazur \cite[1.10]{KM}. Consequently, the scheme $H'$ is an integral model of $\HMV$ over $\cO_K$, which is finite flat of degree $\prod_{\gerp\in \SS}(p^{f_{\gerp}}-1)$ over $Y$.
\end{prop}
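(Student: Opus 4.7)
The argument is local on $Y$ and factors through the product decomposition $H = \prod_{\gerp} H[\gerp]$, $H' = \prod_{\gerp} H'[\gerp]$. Since $\cO_L/p = \prod_{\gerp} \cO_L/\gerp$, a Drinfeld-Katz-Mazur $(\cO_L/p)$-generator of $H \times_Y S$ decomposes uniquely as a tuple of $(\cO_L/\gerp)$-generators of the various $H[\gerp] \times_Y S$. Hence it suffices, for each fixed $\gerp \in \SS$, to show that $H'[\gerp]$ represents the functor of $(\cO_L/\gerp)$-generators of $H[\gerp]$ in the sense of \cite{KM}, and to check the degree on each factor.

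Working on an affine open $U=\Spec R$ of $Y$ where the Raynaud presentation recalled above applies, a section of $H[\gerp]\times_Y S$ is a tuple $(t_\beta)_{\beta \in \BB_\gerp}$ in $\cO_S$ satisfying $t_\beta^p = y_{\sigma \circ \beta} t_{\sigma \circ \beta}$, with $(\cO_L/\gerp)^\times$ acting by the Teichm\"uller characters $\lambda \cdot (t_\beta) = (\chi_\beta(\lambda) t_\beta)$. By definition, $(t_\beta)$ is a Drinfeld generator iff the relative effective Cartier divisor $\sum_{\lambda \in \cO_L/\gerp}[\lambda P]$ equals $H[\gerp] \times_Y S$. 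The central calculation is to expand this divisor identity in the ambient affine space $\AA^{\BB_\gerp}_S$: $f_\gerp$ of the equations to be verified are already the Raynaud relations, and the remaining one boils down, after a norm computation using the Teichm\"uller action, to
\[
\prod_{\beta \in \BB_\gerp} t_\beta^{p-1} = \prod_{\beta \in \BB_\gerp} y_\beta,
\]
which is precisely the equation cutting out $H'[\gerp]$ inside $H[\gerp]$. The representability statement then follows tautologically. I expect this norm calculation to be the main technical obstacle, as it requires matching monomials in the $T_\beta$ with the appropriate symmetric functions in the set $\{\lambda \cdot P : \lambda \in \cO_L/\gerp\}$.

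For finite flatness and degree one can either invoke the general theorem of \cite{KM} that the functor of Drinfeld generators of a finite flat one-dimensional $(\cO_L/\gerp)$-vector space scheme of order $p^{f_\gerp}$ is finite flat of rank $p^{f_\gerp} - 1$, or check it by hand from the explicit presentation by tracking the basis of monomials $\prod_\beta T_\beta^{a_\beta}$ with $0\le a_\beta \le p-1$ modulo the Raynaud and norm relations. Multiplying over $\gerp \in \SS$ yields the claimed degree $\prod_{\gerp}(p^{f_\gerp}-1)$. Finally, since $H \otimes K$ is \'etale over $Y_K$, Drinfeld generators and ordinary $(\cO_L/p)$-generators coincide on the generic fiber, and the moduli description of $\HMV_K$ then identifies $H' \otimes K$ with $\HMV_K$, exhibiting $H'$ as the desired integral model of $\HMV$ over $\cO_K$.
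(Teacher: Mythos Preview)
The paper does not give its own proof of this proposition; it is stated with a citation to Pappas \cite[5.1.5, 2.3.3]{Pa} and then used as a black box. Your proposal is essentially the argument in Pappas: reduce to a single $\gerp$ via the product decomposition, use Raynaud's explicit presentation of $H[\gerp]$, and compute that the Drinfeld--Katz--Mazur generator condition collapses to the single extra relation $\prod_\beta T_\beta^{p-1}=\prod_\beta y_\beta$.

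One point of precision: you phrase the generator condition as an equality of relative effective Cartier divisors inside the ambient $\AA^{\BB_\gerp}_S$, but for $f_\gerp>1$ the group scheme $H[\gerp]$ does not sit in a smooth curve over the base, so the Cartier-divisor formulation of ``full set of sections'' from \cite[1.8.3]{KM} is not directly available. The correct formulation is the norm condition of \cite[1.8.2]{KM}: one must check $\mathrm{Norm}_{H[\gerp]/R}(f)=\prod_{\lambda\in\cO_L/\gerp} f(\lambda P)$ for all $f$, and by the $(\cO_L/\gerp)^\times$-grading it suffices to check it on the isotypic pieces $\cL_\beta$. This is exactly the computation you describe, and it does produce the claimed equation; just be careful to frame it via norms rather than divisors. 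With that adjustment your outline is correct and matches the cited reference.
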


In the sequel, we will define $\IHMV:=H'$, and call $\prtoY$ the natural map $\IHMV \ra Y$. For any $\vk$ as above, we let $\omega^{\vk}$ denote the pullback of $\omega^{\vk}$ on $Y$ under $\prtoY:\IHMV \ra Y$; it is an integral model for the restriction to $\HMV_K$ of the sheaf $\omega^{\vk}$ defined above on $\tHMV_K$.

We let  $\fHMV$ denote the formal completion of $\IHMV$ along its special fibre, and  $\fHMV_{\rig}$ its associated  rigid generic fibre. Therefore, $\fHMV_{\rig}$ is the quasi-compact admissible open subdomian of $\tHMVAN$ where the universal HBAV has good reduction.

\subsection{Atkin-Lehner automorphisms} \label{sect-AL-inv}  We first define these automorphisms over $Y$. Let $S$ be an $\cO_K$-scheme, and consider an $S$-valued point $Q$ of $Y$ corresponding to $(\uA, H)=(A,i_N,\lambda,H)$. We have a canonical decomposition $H=\prod_{\gerq\in \SS}H[\gerq]$. For any $\gerp\in \SS$, we put
\[
w_{\gerp}(Q)=(\uA/H[\gerp], H'),
\]
Where $\uA/H[\gerp]$ denotes the quotient of $A$ by $H[\gerp]$ along with its induce PEL data (as defined in \cite[\S 2.1]{GK}), and $H'=\prod_{\gerq}H'[\gerq]$, with $H'[\gerp]=A[\gerp]/H[\gerp]$ and $H'[\gerq]=H[\gerq]$, under the identification $(A/H[\gerp])[\gerq] \cong A[\gerq]$ for $\gerq\neq \gerp$. The automorphisms $w_{\gerp}$ for $\gerp\in\SS$ commute with each other.   For any $T\subset \SS$, we define $w_T=\prod_{\gerp\in T}w_{\gerp}$, and $w=w_{(p)}$.
 In view of
$$
w^2(Q)=(\prod_{\gerp\in \SS}w_{\gerp}^2)(Q)=(A,\lambda, pi_N, H),
$$
 we see that each $w_T$ is automorphism on $Y$. The automorphisms $w_T$ extend naturally to the chosen toroidal compactification $\tilde{Y}$.
We still denote by $w_T$ the automorphisms induced on $\tilde{Y}_K$, $\tilde{\Ybar}$, $\tYrig$, etc.

 We now turn to $Z_K$. Let us fix an element $\zeta_\gerp$ which is an $\calO_L$-generator of $(\GG_m \otimes \gerd_L^{-1})[\gerp]$.    We define an automorphism $w_\gerp$ on $\HMV_K$ for $\gerp\in\SS$ as follows. Let $x=(\uA, P)$ be a point of $\HMV_K$ with values in a $K$-scheme $S$. For each $\gerp \in \SS$,  the prime-to-$p$ polarization $\lambda$ induces a perfect Weil pairing
\[
\langle\cdot,\cdot\rangle_{\gerp}\colon A[\gerp]\times A[\gerp]\ra (\GG_m \otimes \gerd_L^{-1})[\gerp]
\]
Let $\phi_{\gerp}: A\ra A'=A/H[\gerp]$ be the canonical isogeny, and $\hat{\phi}_{\gerp}:A'\ra A/A[\gerp]$ be the canonical isogeny with kernel $A[\gerp]/H[\gerp]$.
  Since $H[\gerp] $ is (automatically) isotropic, the Weil pairing $\langle\cdot,\cdot\rangle_{\gerp}$ induces a perfect duality pairing
\[
\langle\cdot,\cdot \rangle_{\phi_{\gerp}}\colon \Ker(\phi_{\gerp})\times \Ker (\hat{\phi}_{\gerp})\ra (\GG_m \otimes \gerd_L^{-1})[\gerp].
\]
  We define $w_{\gerp}(x)=(\uA^\prime,Q)=(A', \lambda', i'_N, Q)$, where $\lambda'$ and $i_N'$ are respectively the induced polarization and $\taml(N)$-structure on $A'$, and  $Q=\prod_{\gerq\in \SS}Q_{\gerq}\in A'[p](S)$ is given as follows: For $\gerq\neq \gerp$, we put
$$
Q_{\gerq}=P_{\gerq}\in A'[\gerq]\simeq A[\gerp],
$$
 and $Q_{\gerp}$ is the unique point of $\Ker(\hat{\phi}_{\gerp})(S)$ such that $\langle P_{\gerp}, Q_{\gerp}\rangle_{\phi_{\gerp}}=\zeta_\gerp$.

We see easily that $w_{\gerp}$ and $w_{\gerq}$ commute for $\gerp,\gerq\in \SS$, so that $w_T=\prod_{\gerp | T }w_{\gerp}$ is well-defined for any $T \subset \SS$. We let $w=w_{(p)}$. Note that $w^2(A, \lambda,i_N,P)=(A, \lambda, pi_N, -P)$, since
\[
\langle P_{\gerp}, Q_{\gerp}\rangle_{\phi_{\gerp}}\langle Q_{\gerp}, P_{\gerp}\rangle_{\hat{\phi}_{\gerp}}=1
\] for any $\gerp\in \SS$. Since a certain power of $w^2$ is the identity,  it follows that each $w_T$ is an automorphism. The automorphism $w_T$ extends to the toroidal compacification $\tilde{\HMV}_K$. Via the natural projection $\prtoY: \tilde{\HMV}_K\ra \tilde{Y}_K$, the automorphisms $w_T$ on $\tilde{\HMV}_K$ and $\tilde{Y}_K$ are compatible.

\section{Preliminaries on the Geometry of $\Ybar$}\label{sect-geom-Y}

In this section, we prove some results on the geometry of the special fibre of $Y$, which will be useful in the analytic continuation process of later sections (see Theorem \ref{thm-ac}).

%modification by Tian starts here

%{\large{\bf{Definition of $w_\gerp$ for $\tYrig$ goes here + its effect on partial degrees}}}.

\subsection{Directional degrees and valuations} We recall the notions of directional degrees introduced in \cite{Ti,PS}, and  directional  valuations defined in \cite{GK}, and discuss the relationship between the two notions. Let $Q=(\uA, H)$ be a rigid point of $\tYrig$ defined over a  finite extension $K'/K$.

\textit{Case 1.  $\uA$ has good reduction over $\cO_{K'}$.} Then  $\uA$ and $H$ can be  defined over $\cO_{K'}$. Let $\omega_{H}$ be the module of invariant differential 1-forms of $H$. We have a canonical decomposition
$
\omega_H =\bigoplus_{\beta\in\BB}\omega_{H,\beta},
$
 where each $\omega_{H,\beta}$ is a torsion $\cO_{K'}$-module generated  by one element. So there exists $a_\beta\in \cO_{K'}$ such that $\omega_{H,\beta}\simeq \cO_{K'}/(a_\beta)$. We set
\[
\deg_{\beta}(Q)=\deg_{\beta}(H)=v_p(a_{\beta})\in \Q\cap [0,1].
\]

\textit{Case 2.  $A$ has semi-stable reduction over $\cO_{K'}$.} Then we have a canonical decomposition $H\simeq \prod_{\gerp\in \SS}H[\gerp]$ of group schemes over $K'$. Recall that $A[\gerp]$ has a unique maximal $\cO_L$-subgroup, denoted by $A[\gerp]^{\mu}$, which  extends to a group scheme over $\cO_{K'}$ of multiplicative type.   We put, for all $\beta\in \BB_{\gerp}$,
\[
\deg_{\beta}(Q)=\begin{cases}
1  &\text{if } H[\gerp]=A[\gerp]^{\mu},\\
0& \text{otherwise}.
\end{cases}
\]
 We get therefore  a parametrization of $\tYrig$ by the partial degrees \cite{GK,PS,Ti}:
\[
\vdeg=(\deg_{\beta})_{\beta\in\bB}: \tYrig\ra [0,1]^{\bB}.
\]

For an ideal $\gert$ of $\cO_L$ dividing $(p)$, we have  $\BB=\BB_{\gert}\coprod\BB_{\gert^\ast}$, and a decomposition of rigid analytic spaces
\[\tYrig-\Yrig=\coprod_{(p)\subset \gert\subset \cO_L}(\tYrig-\Yrig)_{\gert},\]
where $(\tYrig-\Yrig)_{\gert}$ consists of points $Q\in \tYrig-\Yrig$ with $\deg_{\beta}(Q)=1$ for $\beta\in \BB_{\gert}$ and $\deg_{\beta}(Q)=0$ for $\beta\in \BB_{\gert^\ast}$.

Using the additivity of partial degrees \cite[3.6]{Ti}, we have
\begin{equation}\label{equ-deg-wp}
\deg_{\beta}(w_{\gerp}(Q))=
\begin{cases}
1-\deg_{\beta}(Q)&\text{if }\beta\in\BB_{\gerp}\\
\deg_{\beta}(Q)&\text{if } \beta\notin \BB_{\gerp}
\end{cases}
\end{equation}
for any $Q\in \tYrig$ and $\gerp \in \SS$.

 \begin{defn} \label{Definition: admissible domains} Let $Q\in \tYrig$. For any $\gerp \in \SS$, we define $\deg_\gerp(Q)=\sum_{\beta\in\BB_\gerp} \deg_\beta(Q)$. If $\underline{I}$ is a multiset of intervals indexed by $\SS$, $\underline{I}=\{I_\gerp\subset [0,f_\gerp]: \gerp\in\SS\}$, and $\calV \subset \tYrig$, we define an admissible open of $\Yrig$
\[
\calV{\underline I}=\{Q \in \calV: \deg_\gerp(Q) \in I_\gerp, \forall \gerp \in \SS\}.
\]
Also, for an interval $I \subset [0,g]$, define $\tYrig I=\{Q\in \tYrig: \deg(Q)=\sum_{\gerp \in \SS} \deg_\gerp(Q) \in I \}$.
%We also define
%\[
%\tYrig \underline{I}=\Yrig \underline{I} \cup \bigcup_{\gert} \tilde{Z}_{\BB_\gert,\BB_{\gert^\ast}},
%\]
%where $\gert$ runs over all ideals $\gert|p$, such that for all prime ideals $\gerp|\gert$, we have $f_{\gerp^\ast}\in I_\gerp$, and for all $\gerp\not| \gert$, we have $0\in I_\gerp$.

\end{defn}

We now discuss the relation to partial valuations $\nu_\beta(Q)$ defined in \cite[\S 4.2]{GK}, using the mod-$p$ geometry of $\tYrig$.  The valuations and partial degrees are related as follows
\[
\nu_\beta(Q)=1-\deg_\beta(Q).
\]

\begin{rem}\label{Remark: change of notation}
In this paper, we have decided to use the partial degrees which are more intrinsically defined in terms of the subgroup $H$. Since we refer often to the results of \cite{GK}, and \cite{Ka}, the reader should be mindful of the slight change in the notation. In those references, intervals are formed using directional valuations rather than directional degrees. The two notions are simply related by interchanging $\underline{I}$ with $\underline{I}^w$ obtained by replacing any interval $I_\gerp=[a,b]$, with $I_\gerp^{w_\gerp}:=[f_\gerp-b,f_\gerp-a]$ (similarly, for open, half-open intervals).
\end{rem}

\subsection{ Mod-$p$ geometry: stratifications}\label{subsect-Wx}

Recall first Goren-Oort's stratification on $\Xbar$ defined in \cite{GO}. For each $\beta\in \BB$, let $h_{\beta}\in \Gamma(\Xbar, \omega_{\sigma^{-1}\circ\beta}^{\otimes p}\otimes\omega_{\beta}^{-1})$ be the $\beta$-th partial Hasse invariant, and $Z_{\beta}\subseteq \Xbar$ be the closed subscheme defined by the vanishing of $h_{\beta}$. For any subset $\tau\subseteq \BB$, we put $Z_{\tau}=\bigcap_{\beta\in \tau}Z_{\beta}$, and $W_{\tau}=Z_{\tau}-\bigcup_{\tau\subset \tau'}Z_{\tau'}$. Each $W_{\tau}$ is a locally closed reduced subvariety of $\Xbar$, equidimensional of codimension $|\tau|$, and  $\{W_{\tau}, \tau\subseteq \BB\}$ form a stratification of $\Xbar$. For any point $\Qbar \in \Xbar$, we put 
\begin{equation}\label{Defn:tau-Qbar}
\tau(\Qbar)=\{\beta\in \BB: h_{\beta}(\Qbar)=0\}.
\end{equation}
Then, we have $\Qbar\in W_{\tau}$ if and only if $\tau(\Qbar)=\tau$.

Similarly, Goren and Kassaei \cite{GK} defined a stratification  on $\Ybar$. Following \emph{loc. cit.}, we say a pair $(\varphi,\eta)$ of subsets of $\BB$ is admissible if $\ell(\varphi^c)\subset \eta$. For each admissible pair, they defined a locally closed subset  $W_{\varphi,\eta}$  equidimensional of codimension $|\varphi|+ |\eta|-g$ in  $\Ybar$. They proved that if $Z_{\varphi,\eta}$ denotes  the closure of $W_{\varphi,\eta}$ in $\Ybar$, then   
\[
Z_{\varphi,\eta}=\bigcup_{(\varphi',\eta')\geq (\varphi,\eta)}W_{\varphi',\eta'},
\]
where $(\varphi',\eta')\geq (\varphi,\eta)$ means that $\varphi'\supset \varphi$ and $\eta'\supset \eta$ \cite[2.5.1]{GK}.
Then, 
\[
\{W_{\varphi,\eta}: (\varphi,\eta) \text{ admissible}\}
\]
form a stratification of $\Ybar$ \cite[2.5.2]{GK}. By [\emph{loc. cit.} 2.6.4], we have 
\begin{equation}\label{E:relation-strat}
\pi(W_{\varphi,\eta})=\bigcup_{\substack{\varphi\cap \eta\subset \tau\\
\tau\subset (\varphi\cap\eta)\cup (\varphi^c\cap \eta^c)}}W_{\tau}.
\end{equation}

The relationship between Goren-Kassaei's stratification and the directional degrees can be explained as follows (\cite[Thm 4.3.1]{GK}). Let $\spe: \Yrig\ra \Ybar$ denote the specialization map.  
For a rigid point $Q\in \Yrig$, we put 
\begin{gather}\label{eq: deg vs pe}
\varphi(\Qbar)=\{\beta\in \BB, \deg_{\sigma^{-1}\circ\beta}(Q)\in (0,1]\},\quad
\eta(\Qbar)=\{\beta\in \BB, \deg_{\beta}(Q)\in [0,1)\}.
\end{gather}
The pair $(\varphi(\Qbar),\eta(\Qbar))$ is always admissible, and $\Qbar=\spe(Q)$ lies in $W_{\varphi,\eta}$ if and only if $(\varphi(\Qbar),\eta(\Qbar))=(\varphi,\eta)$. Hence,  the strata $\{W_{\varphi,\eta}\}$ correspond bijectively to the faces of the hypercube $[0,1]^{\BB}$: for any stratum $W_{\varphi,\eta}$, we have 
\[
\spe^{-1}(W_{\varphi,\eta})=\{Q\in \Yrig: (\varphi(\Qbar),\eta(\Qbar))=(\varphi,\eta)\}=\Yrig\bfa,
\]
where $\bfa\subset [0,1]^{\BB} $ is the face consisting of $(x_{\beta})_{\beta\in \BB}\in [0,1]^\BB$ with $x_{\beta}=1$ for $\beta\in \eta^c$, $x_{\beta}=0$ for $\beta\in \ell(\varphi^c)$, and $x_{\beta}\in(0,1)$ for $\beta\in\eta\cap \ell(\varphi)$, and $\Yrig\bfa=\vdeg^{-1}(\bfa)$. Note that the codimension of $W_{\varphi,\eta}$ in $\Ybar$ equals to the dimension of $\bfa$.

Let $T\subset \SS$, and $w_{T}$ be the Atkin-Lehner  automorphism on $\Ybar$ defined earlier. The above description of the stratification in terms of directional degrees shows that\begin{equation}\label{E:action-of-w_T}
w_T(W_{\varphi,\eta})=W_{w_{T}(\varphi,\eta)},
\end{equation}
where  $(\varphi',\eta')=w_T(\varphi,\eta)$ is an admissible pair whose $\gerp$-component is given by
$$
(\varphi'_{\gerp}, \eta'_{\gerp})=
\begin{cases}
(r(\eta_{\gerp}), \ell(\varphi_{\gerp}))&\text{if }\gerp\in T,\\ 
(\varphi_{\gerp}, \eta_{\gerp}) &\text{if } \gerp \notin T.\end{cases}
$$

\begin{defn}\label{Defn:generic-part}
For a stratum $W_{\varphi,\eta}$ of $\Ybar$, the generic part of  $W_{\varphi,\eta}$ is defined to be 
\[
W_{\varphi,\eta}^{\rm gen}=\pi^{-1}(W_{\varphi\cap \eta})\cap W_{\varphi,\eta}.
\]

%\[
%Z_{\varphi,\eta}^{\rm gen}=\pi^{-1}(W_{\varphi\cap \eta})\cap Z_{\varphi,\eta}.
%\]

\end{defn}

\begin{lemma} \label{Lemma: divisors} For each admissible pair $(\varphi,\eta)$, $W_{\varphi,\eta}^{\rm gen}$ is an open dense subset of $W_{\varphi,\eta}$. Moreover, if $\varphi^c\cap \eta^c\neq \emptyset$, the complement of $W_{\varphi,\eta}^{\rm gen}$ in $W_{\varphi,\eta}$ is a divisor. 
\end{lemma}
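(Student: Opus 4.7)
The plan is to first establish that $W_{\varphi,\eta}^{\gen}$ is open in $W_{\varphi,\eta}$ by a direct topological argument, and then to identify its complement as a union of Cartier divisors cut out by pullbacks of the partial Hasse invariants from $\Xbar$.

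For openness, I would use \eqref{E:relation-strat} to observe that every stratum $W_\tau$ appearing in the decomposition of $\pi(W_{\varphi,\eta})$ satisfies $\tau\supseteq\varphi\cap\eta$, so $\pi(W_{\varphi,\eta})\subseteq Z_{\varphi\cap\eta}$, equivalently $W_{\varphi,\eta}\subseteq\pi^{-1}(Z_{\varphi\cap\eta})$. Since $W_{\varphi\cap\eta}$ is open in $Z_{\varphi\cap\eta}$ by definition, its preimage $\pi^{-1}(W_{\varphi\cap\eta})$ is open in $\pi^{-1}(Z_{\varphi\cap\eta})$, and hence $W_{\varphi,\eta}^{\gen}=\pi^{-1}(W_{\varphi\cap\eta})\cap W_{\varphi,\eta}$ is open in $W_{\varphi,\eta}$.

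For density and the divisor statement, I would rewrite the complement using \eqref{E:relation-strat}. Any $\tau$ with $\varphi\cap\eta\subsetneq\tau\subseteq(\varphi\cap\eta)\cup(\varphi^c\cap\eta^c)$ must contain some $\beta\in\varphi^c\cap\eta^c$, and for such a $\beta$ one has $W_\tau\subseteq Z_\beta$. Combined with the inclusion $W_{\varphi,\eta}\subseteq\pi^{-1}(Z_{\varphi\cap\eta})$, this gives
\[
W_{\varphi,\eta}\setminus W_{\varphi,\eta}^{\gen}=\bigcup_{\beta\in\varphi^c\cap\eta^c}\bigl(W_{\varphi,\eta}\cap\pi^{-1}(Z_\beta)\bigr),
\]
which is nonempty precisely when $\varphi^c\cap\eta^c\ne\emptyset$. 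The pullback $\pi^*h_\beta$ on $\Ybar$ cuts out $\pi^{-1}(Z_\beta)$, and I would argue that its restriction to $W_{\varphi,\eta}$ is (i) not identically zero, because the generic points of the irreducible components of $W_{\varphi,\eta}$ map under $\pi$ into $W_{\varphi\cap\eta}$, where $h_\beta\ne 0$ (as $\beta\notin\varphi\cap\eta$); and (ii) not nowhere-vanishing, since \eqref{E:relation-strat} exhibits points of $W_{\varphi,\eta}$ lying over $W_{(\varphi\cap\eta)\cup\{\beta\}}\subseteq Z_\beta$. By Krull's principal ideal theorem, together with the equidimensionality of $W_{\varphi,\eta}$, each $W_{\varphi,\eta}\cap\pi^{-1}(Z_\beta)$ is then pure of codimension one in $W_{\varphi,\eta}$; their finite union is therefore an effective Cartier divisor, and density of $W_{\varphi,\eta}^{\gen}$ follows since its complement has positive codimension.

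The main technical point to verify is that $\pi^*h_\beta$ is a non-zero-divisor in each local ring of $W_{\varphi,\eta}$, so that Krull's theorem delivers codimension exactly one rather than merely at least one. Equivalently, no irreducible component of $W_{\varphi,\eta}$ may be contained in $\pi^{-1}(Z_\beta)$ for $\beta\in\varphi^c\cap\eta^c$, and this is precisely what the generic-point argument above supplies, leveraging the finiteness of $\pi$ and the explicit image description \eqref{E:relation-strat} from \cite{GK}.
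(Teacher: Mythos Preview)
Your approach is correct and matches the paper's: both identify $W_{\varphi,\eta}^{\gen}$ as the non-vanishing locus in $W_{\varphi,\eta}$ of the sections $\pi^*h_\beta$ for $\beta\in\varphi^c\cap\eta^c$, so that the complement is their joint zero locus. Your argument is in fact more complete than the paper's terse proof, since you explicitly address density and pure codimension one via the generic-point argument (implicitly invoking \cite[2.6.4(1)]{GK}, used elsewhere in the paper) and Krull's theorem, whereas the paper states the identification and leaves these points to the reader.
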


\begin{proof}
By \eqref{E:relation-strat}, we have $W_{\varphi,\eta}^{\rm gen}=\{\Qbar\in W_{\varphi,\eta}: h_{\beta}(\Qbar)\neq 0, \forall \beta\in \varphi^c\cap \eta^c\}$. Therefore, if $\varphi^c \cap \eta^c \neq \emptyset$, the  complement of $W^{\rm gen}_{\varphi,\eta}$ is the locus where at least one of the sections $\{\prtoX^\ast(h_\beta)| \beta \in \varphi^c \cap \eta^c\}$ vanishes. 
\end{proof}

%\begin{lemma} \label{Lemma: generic closed}
% For any closed stratum $Z_{\varphi,\eta}$ of $\Ybar$, we have 
%\[
%W_{\varphi,\eta}^{\rm gen} \subset Z_{\varphi,\eta}^{\rm gen} \subset \bigcup_{\varphi^\prime \supseteq \varphi, \eta^\prime \supseteq \eta} W^{\rm gen}_{\varphi^\prime,\eta^\prime}
%\]
%\end{lemma}

Consider now codimension $0$ strata of $\Ybar$. They correspond to vertex points of $[0,1]^\BB$. For a vertex point $\vx=(x_{\beta})\in \{0,1\}^\BB$, we denote by $\Wx=W_{\phix,\etax}$ the stratum of codimension $0$ attached to $\vx$, where 
\[
\phix=\{\beta\in \bB: \vx_{\sigma^{-1}\circ\beta}=1\},\quad 
\etax=\{\beta\in \bB: \vx_\beta=0\}.
\]
 Then, by \cite[4.3.1]{GK},  we have
\begin{equation}\label{equ-sp-inverse}
\vdeg^{-1}(\vx)=
\begin{cases}
\spe^{-1}(W_{\vx})\cup(\tYrig-\Yrig)_{\gert}&\text{if $\vx=\vx_{\gert}$ for some $\gert|(p)$;}\\
 \spe^{-1}(\Wx) &\text{otherwise.}
\end{cases}
\end{equation}
Here,  $\vx_{\gert}$ is the vertex point whose $\beta$-th component is $1$ if $\beta\in \BB_{\gert}$, and $0$ otherwise.  For a subset $T\subset \SS$, let $w_T(\vx)$ be the vertex point whose $\beta$-component equals to $1-x_{\beta}$ for $\beta\in \cup_{\gerp\in T}\BB_{\gerp}$, and to $x_{\beta}$ otherwise. It is clear that $w_{T}(\Wx)=W_{w_T(\vx)}$.

%modification by Tian ends here.

\begin{defn}
Let $\vx \in [0,1]^\BB$, and $t \subset \SS$. We say $\vx$ is \emph{$T$-ordinary}, if for every prime ideal $\gerp \in T$,  we have either $\vx_\beta=1$ for all $\beta\in\BB_\gerp$,  or $\vx_\beta=0$ for all $\beta\in\BB_\gerp$.
\end{defn}

%\subsection{}\label{subsect-Wx} Consider the parametrization of $\fY_{\rig}$ by the partial degrees \cite{GK,PS,Ti}:
%\[\vdeg=(\deg_{\beta})_{\beta\in\bB}: {\fY}_{\rig}\ra [0,1]^{\bB}.\]
%Let $\vx=(x_{\beta})\in \{0,1\}^{\bB}$ be  a vertex point of $[0,1]^{\bB}$. Define
%\[
%\phix=\{\beta\in \bB: \vx_{\sigma^{-1}\circ\beta}=1\}
%\]
%\[
%\etax=\{\beta\in \bB: \vx_\beta=0\}
%\]
%Let $\Wx=W_{\phix,\etax}$ be a stratum in the stratification studied in \cite{GK}. Then, by \cite[4.3.1]{GK},  we have
%\[
%\vdeg^{-1}(\vx)=\spe^{-1}(\Wx),
%\]
%where $\spe: {\fY}_{\rig}\ra \Ybar$ is the specialization map. By \cite[2.5.2]{GK}[2], the map $\vx\mapsto W_{\vx}$ establishes a one-to-one correspondence between  $\{0,1\}^{\bB}$ and the set of  codimension $0$  strata in the Goren-Kassaei stratification.

  %\begin{defn}\label{defn-type1} Let $\gerp \in \SS$.  Let $\vx$ be a vertex of the cube $[0,1]^{\bB}$. We say   $\vx$ (or $\Wx$) is

%\begin{enumerate}

%\item \emph{of type 1 at $\gerp$}, if one of the following three equivalent conditions is satisfied:

%\begin{enumerate}
%\item the cardinality of $\phix \cap \etax \cap \BB_\gerp$ is $1$;

%\item the cardinality of  $r(\etax)\cap \ell(\phix) \cap \BB_\gerp$ is $1$;

%\item there exists a $\beta_\gerp\in \bB_\gerp$, and an integer $0\leq m_\gerp-1\leq f_\gerp-1$, such that we have $\phix \cap \BB_\gerp=\{\sigma^{j}\circ\beta_{\gerp}:1\leq j\leq m_\gerp\}$ (Note that we always have $\etax=\ell(\phix)^c$).
%\end{enumerate}

%\

%\noindent We say a vertex $\vx$ is {\it{of type 1}}, if for every $\gerp\in\SS$, it is of type 1 at $\gerp$.
%\end{enumerate}
   %\end{defn}

\begin{prop}\label{Proposition: codim 2} 
Let $\vx$ be a vertex point of $[0,1]^{\BB}$, and $W_{\vx}$ the corresponding stratum of $\Ybar$.  Let $T$ be the set of all primes $\gerp \in \SS$ such that $\vx$ is not $\gerp$-ordinary. Then, the open dense subset 
\[
W_{\vx}^{\rm gen}\cup w_T^{-1}(W^{\rm gen}_{w_T(\vx)})
\]
of  $W_{\vx}$ has a complement of codimension $2$.
\end{prop}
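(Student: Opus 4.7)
If $T = \emptyset$, then $\vx|_{\BB_\gerp}$ is constant for every $\gerp \in \SS$, which forces $\phi_\vx^c \cap \eta_\vx^c \cap \BB_\gerp = \emptyset$ for every $\gerp$ and hence $\phi_\vx^c \cap \eta_\vx^c = \emptyset$; Lemma~\ref{Lemma: divisors} then gives $W_\vx^{\rm gen} = W_\vx$, and the statement is trivial. So assume $T \neq \emptyset$. A direct check from the definition of $w_T\vx$ yields
\[
\phi_{w_T\vx}^c \cap \eta_{w_T\vx}^c \cap \BB_\gerp = \begin{cases} \phi_\vx \cap \eta_\vx \cap \BB_\gerp & \text{if } \gerp \in T, \\ \phi_\vx^c \cap \eta_\vx^c \cap \BB_\gerp & \text{if } \gerp \notin T, \end{cases}
\]
and combined with the vanishing $\phi_\vx \cap \eta_\vx \cap \BB_\gerp = \phi_\vx^c \cap \eta_\vx^c \cap \BB_\gerp = \emptyset$ for $\gerp \notin T$ (the calculation just performed applied one prime at a time), this collapses to the clean identity $\phi_{w_T\vx}^c \cap \eta_{w_T\vx}^c = \phi_\vx \cap \eta_\vx$.

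Applying Lemma~\ref{Lemma: divisors} to each of $W_\vx$ and $W_{w_T\vx}$, and transferring the second via $w_T$, the complement of $W_\vx^{\rm gen} \cup w_T^{-1}(W_{w_T\vx}^{\rm gen})$ in $W_\vx$ decomposes as
\[
\bigcup_{\substack{\beta \in \phi_\vx^c \cap \eta_\vx^c \\ \beta' \in \phi_\vx \cap \eta_\vx}} V(\pi^* h_\beta) \cap V((\pi \circ w_T)^* h_{\beta'}) \cap W_\vx.
\]
Each summand is the intersection of two effective Cartier divisors on the smooth $g$-dimensional variety $W_\vx$, so by Krull's Hauptidealsatz it has codimension $\geq 2$ as soon as the two divisors share no common irreducible component. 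The proof thus reduces to the transversality statement: for every such pair $(\beta, \beta')$, no irreducible component of $V(\pi^* h_\beta) \cap W_\vx$ coincides with any irreducible component of $V((\pi \circ w_T)^* h_{\beta'}) \cap W_\vx$.

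My plan for this transversality is to exploit the fact that the two divisors encode conditions on \emph{different} abelian varieties: $V(\pi^* h_\beta)$ tests the Goren--Oort invariant of the source $\uA$ of $\Qbar = (\uA, H) \in W_\vx$, while $V((\pi \circ w_T)^* h_{\beta'})$ tests that of the Atkin--Lehner transform $\uA/H[T]$. By \eqref{E:relation-strat}, at a generic point of $W_\vx$ one has $\tau(\pi \Qbar) = \phi_\vx \cap \eta_\vx$ and $\tau(\pi \circ w_T \Qbar) = \phi_\vx^c \cap \eta_\vx^c$, which are \emph{disjoint} subsets of $\BB$. The expectation is that $\tau(\pi\Qbar)$ and $\tau(\pi \circ w_T \Qbar)$ behave as independent deformation parameters on $W_\vx$: at a generic point of $V(\pi^* h_\beta)\cap W_\vx$ only the first acquires the extra element $\beta$ while the second remains equal to $\phi_\vx^c \cap \eta_\vx^c$, in particular does not contain any $\beta' \in \phi_\vx \cap \eta_\vx$; by symmetry the reversed statement holds on $V((\pi \circ w_T)^* h_{\beta'}) \cap W_\vx$, ruling out any common component.

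The main obstacle I anticipate is making this independence assertion rigorous: that specializing to a strictly larger $\tau(\pi\Qbar)$ imposes no constraint on $\tau(\pi \circ w_T\Qbar)$ (and conversely). I plan to extract this from the local structure of the morphism $(\pi, \pi \circ w_T)\colon W_\vx \to \Xbar \times \Xbar$, whose image lies in $\pi(W_\vx) \times \pi(W_{w_T\vx})$, together with the description of irreducible components of Goren--Kassaei strata provided in \cite{GK}, in the spirit of the structural results proved earlier in Section~\ref{sect-geom-Y}.
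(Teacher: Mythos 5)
Your reduction is correct and coincides with the first step of the paper's own argument: the complement in question is $\Wx^{\rm sp}\cap w_T^{-1}(W^{\rm sp}_{w_T(\vx)})$, both factors are divisors in the smooth variety $\Wx$ by Lemma \ref{Lemma: divisors}, your identity $\varphi_{w_T(\vx)}^c\cap\eta_{w_T(\vx)}^c=\varphi_{\vx}\cap\eta_{\vx}$ checks out, and so everything reduces to showing that $V(\prtoX^{*}h_{\beta})\cap\Wx$ and $V(w_T^{*}\prtoX^{*}h_{\beta'})\cap\Wx$ share no irreducible component for $\beta\in\varphi_{\vx}^c\cap\eta_{\vx}^c$ and $\beta'\in\varphi_{\vx}\cap\eta_{\vx}$. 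However, that no-common-component statement is the entire content of the proposition, and your proposal does not prove it. The asserted ``independence'' of $\tau(\prtoX(\Qbar))$ and $\tau(\prtoX(w_T(\Qbar)))$ is just a restatement of what must be shown: a common irreducible component would be exactly a divisor along which both invariants jump simultaneously, and comparing the two $\tau$-invariants at generic points of $\Wx$ or of either divisor cannot exclude this. The proposed study of $(\prtoX,\prtoX\circ w_T)\colon \Wx\to\Xbar\times\Xbar$ is left entirely undeveloped, and you flag this yourself as the anticipated obstacle; as written it is a plan, not an argument.

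For comparison, the paper closes precisely this gap in two nontrivial steps. First, Lemma \ref{lem-ss} shows that the closure in $Z_{\vx}$ of \emph{every} irreducible component of $\Wx^{\rm sp}$ passes through a point of $W_{\BB,\BB}$; this uses properness of $\prtoX$, the identification $\prtoX(Z_{\vx})=Z_{\varphi_{\vx}\cap\eta_{\vx}}$, and the description in \cite{GK} of the fibres of $\prtoX$ over superspecial points and of $Z_{\BB-\{\beta_0\},\BB-\{\beta_0\}}$. Second, at such a point $\Qbar$ Stamm's theorem together with the Key Lemma 2.8.1 of \cite{GK} yields explicit local equations $uz_{\beta}+vz_{\sigma^{-1}\circ\beta}^{p}$ and $u'z_{\beta'}+v'z_{\sigma^{-1}\circ\beta'}^{p}$ for the two divisors in $\Spec(\widehat{\cO}_{Z_{\vx},\Qbar})$, and since the index sets $\varphi_{\vx}^c\cap\eta_{\vx}^c$ and $\varphi_{\vx}\cap\eta_{\vx}$ are disjoint these cut out distinct irreducible germs, ruling out a common component. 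Some substitute for this degeneration-to-$W_{\BB,\BB}$ mechanism (or another concrete way to compare the two Hasse divisors along an entire component) is indispensable; without it your proposal has a genuine gap at its central step.
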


\begin{proof} Let $\Wx^{\rm sp}=\Wx-\Wx^{\rm gen}$. Then, the complement of $W_{\vx}^{\rm gen}\cup w_{T}^{-1}(W^{\rm gen}_{w_{T}(\vx)})$ in $\Wx$ equals
\[
\Wx^{\rm sp} \cap w_T^{-1}(W^{\rm sp}_{w_{T}(\vx)}).
\]
By Lemma \ref{Lemma: divisors}, the closed subschemes $\Wx^{\rm sp}$ and $W^{\rm sp}_{w_T(\vx)}$ of $\Wx$ are divisors. To prove  the claim, it is enough to show that no irreducible component of $\Wx^{\rm sp}$ coincides with an irreducible component of $w_T^{-1}(W^{\rm sp}_{w_T(\vx)})$.

Let $Z_{\vx}$ be the closure of $\Wx$ in  $\Ybar$. In the notation of  \cite[2.5.1]{GK}, we have
\[
Z_{\vx}=Z_{\phix,\etax}=\bigcup_{(\varphi,\eta)\geq (\phix,\etax)}W_{\varphi,\eta}.
\]
 From the proof of Lemma \ref{Lemma: divisors}, the Zariski closure of $\Wx^{\rm sp}$ in $Z_{\vx}$  is given by the vanishing of at least one of the sections 
\[
\{ \prtoX^{*}(h_{\beta}): \beta \in \varphi_{\vx}^c \cap \eta_{\vx}^c \}
\]
on $Z_{\vx}$. Similarly, using Definition \ref{Defn:generic-part}, we see that the Zariski closure of $w_T^{-1}(W_{w_\gert(\vx)}^{\rm sp})$ in $Z_{\vx}$  is defined by the vanishing of at least one of the sections 
\[
\{ w_T^\ast\prtoX^\ast(h_{\beta}): \beta \in r_T(\eta_{\vx})^c \cap \ell_T(\varphi_{\vx})^c \}
\]
on $Z_{\vx}$.

\begin{lemma}\label{lem-ss}
Let notation be as in Proposition \ref{Proposition: codim 2}. The closure in $Z_{\vx}$ of each irreducible component of $\Wx^{\rm sp}$ passes through a point in $W_{\bB,\bB}$.
\end{lemma}

We assume this Lemma for the moment, and finish the proof of \ref{Proposition: codim 2} as follows.  Let  $\Qbar$ be a  point in $W_{\bB,\bB}\cap Z_{\vx}$. By Stamm's theorem \cite{St}, we have an isomorphism
\[
\widehat{\cO}_{\Ybar, \Qbar}\simeq \kappa_K[[x_{\beta},y_{\beta}:\beta\in {\bB}]]/(x_\beta y_{\beta})_{\beta\in\bB},
\]
where $x_{\beta},y_{\beta}$ are specifically defined local parameters.
Similarly, we can write
\[
\widehat{\cO}_{\Ybar, w_T(\Qbar)}\simeq \kappa_K[[x^\prime_{\beta},y^\prime_{\beta}:\beta\in {\bB}]]/(x^\prime_\beta y^\prime_{\beta})_{\beta\in\bB},
\]
such that $w_T^\ast (x^\prime_\beta)=y_\beta$, and $w_T^\ast (y^\prime_\beta)=x_\beta$ for all $\beta \in \ell_T(\phi_{\vx})^c\cap r_T(\eta_{\vx})^c (\subset \cup_{\gerp \in T} \BB_\gerp)$, as in \cite[2.7.2]{GK}. It follows from \cite[2.5.2]{GK}(4) that
\[
\widehat{\cO}_{Z_{\vx},\Qbar}=\kappa_K[[z_{\beta}: \beta\in\BB]],
\]
where, $z_{\beta}=x_{\beta}$, if $\beta \not \in \eta_{\vx}$, and $z_\beta=y_\beta$, otherwise. 
 
 By Lemma \ref{lem-ss} above, to complete the proof of \ref{Proposition: codim 2}, we just need to show that for each $\beta \in \varphi_{\vx}^c\cap \eta_{\vx}^c$, and each $\beta^\prime \in r_T(\eta_{\vx})^c \cap \ell_T(\varphi_{\vx})^c$,  the closed subschemes defined by $\prtoX^\ast(h_{\beta})$ and $w_T^\ast\prtoX^\ast(h_{\beta^\prime})$ in $\Spec(\widehat{\cO}_{Z_{\vx},\Qbar})$ have no common irreducible components. 
 
  The Key Lemma 2.8.1 in \cite{GK} implies that for all $\beta \in \phi_{\vx}^c\cap \eta_{\vx}^c$, we have
\begin{eqnarray}\label{Equation: lhs}
\prtoX^*(h_{\beta})=ux_{\beta}+vy^{p}_{\sigma^{-1}\circ\beta}
\end{eqnarray}
in the local ring $\widehat{\cO
}_{\Ybar,\Qbar}$, where $u,v$ are units in $\widehat{\cO}_{\Ybar,\Qbar}$. Specializing to the local ring ${\cO}_{Z_{\vx},\Qbar}$, we obtain
\[
\prtoX^*(h_{\beta})=uz_{\beta}+vz^{p}_{\sigma^{-1}\circ\beta}.
\]
Similarly, for all $\beta^\prime \in \ell_T(\phi_{\vx})^c\cap r_T(\eta_{\vx})^c$,
\[
 \prtoX^*(h_{\beta^\prime})=u^\prime x^\prime_{\beta^\prime}+v^\prime (y^\prime_{\sigma^{-1}\circ{\beta^\prime}})^p,
 \]
 in the local ring $\widehat{\cO
}_{\Ybar,w_\gert(\Qbar)}$, with $u^\prime, v^\prime$  units in $\widehat{\cO}_{\Ybar,w_\gert(\Qbar)}$. This implies that
\begin{eqnarray}
w_T^\ast\prtoX^\ast(h_{\beta^\prime})=w_T^\ast(u^\prime)y_{\beta^\prime}+w_T^\ast(v^\prime)(x_{\sigma^{-1}\circ{\beta^\prime}})^p\\
\label{Equation: rhs} =w_T^\ast(u^\prime)z_{\beta^\prime}+w_T^\ast(v^\prime)(z_{\sigma^{-1}\circ{\beta^\prime}})^p,
\end{eqnarray}
in the local ring ${\cO}_{Z_{\vx},\Qbar}$.
Comparing Equations \ref{Equation: lhs} and  \ref{Equation: rhs}, and noting that 
\[
(\phi_{\vx}^c\cap \eta_{\vx}^c)\cap (\ell_T(\phi_{\vx})^c\cap r_T(\eta_{\vx})^c)=\emptyset,
\]
it is now immediate to see that $\prtoX^*(h_{\beta})$ and $w_T^*(\prtoX^*(h_{\beta^\prime}))$  cut out two  irreducible and distinct divisors in  $\Spec(\widehat{\cO}_{Z_{\vx},\Qbar})$ . Now it remains to prove Lemma \ref{lem-ss}.

\end{proof}

\begin{proof}[Proof of  \ref{lem-ss}]
Let $C$ be an irreducible component of $\Wx^{\rm sp}$, and $\Cb$ be its closure in $Z_{\vx}$.
 Since $Z_{\vx}$ is smooth \cite[2.5.2.]{GK}(4), there exists a unique irreducible component $\Db$ of $Z_{\vx}$ containing $\Cb$. By \cite[2.6.4]{GK}(1), $\prtoX(\Db)$ is an irreducible component of
 \[
 \prtoX(Z_{\vx})=\prtoX(Z_{\phix,\etax})=Z_{\varphi_{\vx}\cap\eta_{\vx}}.
\]
 
 %\bigcup_{(\varphi_{\vx} \cap \eta_{\vx}) \subseteq  \tau \subseteq (\varphi_{\vx} \cap \eta_{\vx}) \cup (\varphi_{\vx}^c \cap \eta_{\vx}^c)} W_{\tau}.
% \]
 Since $\prtoX$ is proper, $\prtoX(\Cb)$ is an irreducible component of $Z_{\varphi_{\vx}\cap\eta_{\vx}}$. In particular, $\prtoX(\Cb)$ intersects $W_\BB$. 
 On the other hand, since $C \subset \bigcup_{\beta \in \varphi_{\vx}^c \cap \eta_{\vx}^c} Z_{\vx} \cap \prtoX^{-1}(Z_{\{\beta\}})$, there is $\beta_0 \in \varphi_{\vx}^c\cap\eta_{\vx}^c$ such that $\Cb \subset \prtoX^{-1}(Z_{\{\beta_0\}})$. Let
 %By \cite[2.6.16]{GK}, if a stratum $W_{\varphi, \eta}\subset Z_{\phix,\etax}$ meets $\pi^{-1}(W_\BB)$, we must have $\varphi=\eta$ ,$\varphi\supset \phix$ and $\eta\supset\etax$. Therefore, we have $\pi^{-1}(W_\BB)\cap Z_{\phix,\etax}\subset Z_{\BB-\{\beta_\gerp\},\BB-\{\beta_\gerp\}}$, and hence
 \[
 E:=\Cb \cap Z_{\BB-\{\beta_0\},\BB-\{\beta_0\}}.
 \]
 Since $\Cb$ is a divisor in $Z_{\varphi_{\vx}\cap\eta_{\vx}}$, and  $Z_{\BB-\{\beta_0\},\BB-\{\beta_0\}}$ is purely 2-dimensional, every connected component of $E$ has dimension one.

By \cite[2.6.16]{GK}, if $\Qbar \in Z_{\BB-\{\beta_0\},\BB-\{\beta_0\}}$, then $\tau(\prtoX(\Qbar)) \supseteq \BB-\{\beta_0\}$. It follows that 
\[
\prtoX(E) \subset W_\BB \cap \prtoX(\Cb),
\]
which is a finite set, say $\{\Pbar_1,\cdots,\Pbar_r\}$. Therefore,
\[
E \subset \bigcup_{i=1}^r \prtoX^{-1}(\Pbar_i) \cap Z_{\BB-\{\beta_0\},\BB-\{\beta_0\}}
\]
By \cite[]{GK}, each $\prtoX^{-1}(\Pbar_i) \cap Z_{\BB-\{\beta_0\},\BB-\{\beta_0\}}$ is homeomorphic to $\AA^1$. Since $E$ is purely one-dimensional, it follows that there is a point $\Pbar \in \{\Pbar_1,\cdots,\Pbar_r\}$, such that $\prtoX^{-1}(\Pbar) \cap Z_{\BB-\{\beta_0\},\BB-\{\beta_0\}}\subset E$.  Let $\Qbar$ be he unique point in 
\[ 
\prtoX^{-1}(\Pbar)-\prtoX^{-1}(\Pbar) \cap Z_{\BB-\{\beta_0\},\BB-\{\beta_0\}}=\prtoX^{-1}(\Pbar) \cap Z_{\BB,\BB}.
\]
Since $E$ is Zariski closed in $\Ybar$, we deduce that $\Qbar \in E \cap W_{\BB,\BB}\subset \Cb \cap W_{\BB,\BB}$.

\end{proof}

\section{Geometric Hilbert modular forms}\label{Section: GHMF} 
For $\vk\in \Z^{\bB}$, the space of geometric Hilbert modular forms of level $\taml(Np)$ and weight $\vk$ is defined as
 \[
 \cM_{\vk}(\taml(Np);K):=H^0(\HMV_K,\omega^{\vk})=H^0(\tHMV_K,\omega^{\vk}),
 \]
where the last equality is because of the classical Koecher principle \cite[4.9]{Ra}. We will denote the subspace of cusp forms by $\calS_{\vk}(\taml(Np);K)$. The space of overconvergent Hilbert modular forms of level $\taml(Np)$ and weight $\vk$ over $K$ is
\[
\cM^{\dagger}_{\vk}(\taml(Np);K)=\varinjlim_{\cV\supset \tHMVAN^{\ord}}H^0(\cV,\omega^{\vk}),
\]
where $\cV$ runs through the strict neighborhoods of $\tHMVAN^{\ord}$ in $\tHMVAN$.  We will denote the subspace of cusp forms by $\calS^\dagger_{\vk}(\taml(Np);K)$.

 We have a  natural injection $\cM_{\vk}(\taml(Np);K)\hra \cM^{\dagger}_{\vk}(\taml(Np);K)$ sending cusp forms to cusp forms. Both the source and target of this injection are equipped with an action of Hecke operators, and the injection is compatible with the Hecke action. The overconvergent modular forms in the image of $\cM_{\vk}(\taml(Np);K)$ are called {\it{classical}}.

 We also define the space of geometric Hilbert modular forms of level $\Gamma_1(N) \cap \Gamma_0(p)$ and weight $\vk$
 \[
\calM_{\underline{k}}(\Gamma_1(N) \cap \Gamma_0(p);K):=H^0(Y_K,\omega^{\underline{k}}),
 \]
with the subspace of cusp forms denoted by $\calS_{\underline{k}}(\Gamma_1(N) \cap \Gamma_0(p);K)$. Similarly, one can define the space of overconvergent Hilbert modular forms of level $\Gamma_1(N) \cap \Gamma_0(p)$ and weight $\vk$ as
\[
\cM^{\dagger}_{\vk}(\Gamma_1(N) \cap \Gamma_0(p);K)=\varinjlim_{\cV\supset \tYrig^{\ord}}H^0(\cV,\omega^{\vk}),
\]
where $\cV$ runs through the strict neighborhoods of $\tYrig^{\ord}$ in $\tYrig$.  The subspace of cusp forms is denoted by $\calS^\dagger_{\vk}(\Gamma_1(N) \cap \Gamma_0(p);K)$. When $\underline{k}$ corresponds to parallel weight $k$, we replace $\vk$ with $k$ in the above notation.

%We have a Hecke-equivariant  natural isomorphism
%\[
%\calS^\dagger_{\vk}(\Gamma_1(N) \cap \Gamma_0(p);K)= \oplus_{\chi} \ \calS^\dagger_{\vk}(\taml(Np);K)_\chi,
%\]
%where $\chi$ runs through characters of $(\ol/p)^\times$. Here,  $\calS^\dagger_{\vk}(\taml(Np);K)_\chi$ denotes the subspace of $\calS^\dagger_{\vk}(\taml(Np);K)$, where the diamond operators induced by the map $(\uA,P) \mapsto (\uA,jP)$ for $j \in (\ol/p)^\times$ act via the character $\chi$.

\begin{rem} \label{Remark: HMF} Geometric Hilbert modular forms are not exactly automorphic forms for $\Res_{L/\QQ} \GL_{2,\QQ}$. In level $\Gamma_1(M)$, these automorphic forms can be identified as the space of geometric Hilbert modular forms invariant under the natural action of $\ol^{\times,+}/\calO_{L,1,M}^2$ (induced by a natural action on $\uA=(\uA,\lambda,i_M)$ via the polarization $\lambda$). Here, $\ol^{\times,+}$ denotes the group of totally positive units of $L$, and $\calO_{L,1,M}^2$ the group of units congruent to $1$ mod $M$. See \cite[Remark 1.11.8]{KL} for more details.

In the second part of this paper, where applications to the Artin conjecture are discussed, we will be working with spaces of automorphic forms for $\Res_{L/\QQ} \GL_{2,\QQ}$. The results of the first part of the paper will be applied via the above identification.

\end{rem}
\subsection{The operators $U_T$.} Let $T \subset \SS$, and set $\gert:=\prod_{\gerp \in T} \gerp$.  We will describe the action of the  $U_T$-operators. Let $\HMV^{T}_{K}$ be the scheme which classifies triples $(\uA,P,D)$ over $K$-schemes, where $(\uA,P)$ is classified by $\HMV_K$,  and $D\subset A[\gert]$ is a closed finite flat $\cO_L$-subgroup which is an $\cO_L/\gert$ module, free of rank $1$  as abelian fppf-sheaves, and such that $(P)\cap D=0$. Let $\tHMV^T_K$ denote the toroidal compactification of $\HMV^T_K$, obtained using our fixed choice of collection of cone decompositions. Let $\tHMVAN^T$ denote the associated rigid analytic variety.  There are  two finite flat morphisms
\begin{eqnarray}
\pi_{1,T},\pi_{2,T}:\tHMVAN^T \arr \tHMVAN, \label{Equation: pi one and two}
\end{eqnarray}
defined by
$\pi_{1,T}(\uA,P,D)=(\uA,P)$ and $\pi_{2,T}(\uA,P,D)=(\uA/D,\overline{P})$ on the non-cuspidal part, where $\overline{P}$ is the image of $P$ in $A/D$.
%When $p$ is inert in $\ol$, we simply denote these maps by
%\[
%\pi_1,\pi_2: \tilde{\gerY}_{\rm{rig}}^{(p)} \arr \tYrig.
%\]

We have the same setup over $Y$, i.e., for every $T \subset \SS$, a rigid analytic variety $\tYrig^T$ defined over $K$, and two finite flat morphisms
\begin{equation}\label{Equ:Hecke-Y}
\pi_{1,T},\pi_{2,T}:\tYrig^T \arr \tYrig,
\end{equation}
defined similarly. See \cite[\S 4]{Ka} for details. When $T=\{\gerp\}$ is a singleton, we write $\tHMVAN^{\gerp}$ for $\tHMVAN^T$, and $\tYrig^{\gerp}$ for $\tYrig^T$, and denote $\pi_{1,T}, \pi_{2,T}$, respectively, with $\pi_{1,\gerp},\pi_{2,\gerp}$. If $T=\BB$, we use $\tHMVAN^{(p)}$ for $\tHMVAN^T$, and $\tYrig^{(p)}$ for $\tYrig^T$, and denote $\pi_{1,T}, \pi_{2,T}$, respectively, with $\pi_{1,p},\pi_{2,p}$.

%We define $\protoYp:\tHMVAN^T \ra \tYrig^T$ to be the morphism which sends $(\uA,P,D) \ra (\uA, (P),D)$. We have, for $i=1,2$,
%\begin{eqnarray}
%\protoYp \circ \pi_{i,\gert}= \pi_{i,\gert} \circ \prtoY 
%\label{Equation: pi and eta}
%\end{eqnarray}

Recall  the standard construction of overconvergent correspondences explained in Definition 2.19 of \cite{KaShimura}: Let $T \subset \SS$, and $\calV_1$ and $\calV_2$ be admisible opens of $\tHMVAN$ such that $\pi_{1,T}^{-1}({\calV_1})\subset \pi_{2,T}^{-1}(\calV_2)$. We define an operator
\[
U_T: \omega^{\vk} (\calV_2)\ra \omega^{\vk}(\calV_1),
\]
via the formula
\begin{equation}\label{Equ:U_p}
U_T(f)=\frac{1}{p^{\sum_{\gerp \in T}f_\gerp}} (\pi_{1,T})_{*}(res({\rm pr}^*\pi_{2,T}^*(f))),
\end{equation}
where $res$ is restriction from $\pi_{2,T}^{-1}(\calV_2)$ to $\pi_{1,T}^{-1}({\calV_1})$,  $(\pi_{1,T})_{*}$ is the trace map associated with the finite flat map $\pi_{1,T}$, and ${\rm pr}^*: \pi_{2,T}^* \omega^{\underline{k}} \arr \pi_{1,T}^* \omega^{\underline{k}} $ is a morphism of sheaves on $\tHMVAN^T$, which at $(\uA,P,D)$ is induced by ${\rm pr}^*: \Omega_{A/D} \arr \Omega_A$ coming from the natural projection ${\rm pr}: A \arr A/D$. If $f$ is a bounded section of $\omegab^{\vk}$ over $\calV_2$, then $U_T(f)$ is also bounded over $\calV_1$, and we have the estimation
\begin{equation}\label{Equ:estimation of U_p}
|U_T(f)|_{\calV_{1}}\leq p^{\sum_{\gerp\in T}f_{\gerp}}|f|_{\calV_2}.
\end{equation}
When $T=\{\gerp\}$ is a singleton, we denote $U_T$ by $U_\gerp$.

For $0<r<1$, let $\tHMVAN[g-r,g]=\prtoY^{-1}(\tYrig[g-r,g])$ (Definition \ref{Definition: admissible domains}). Then, as $r\in\QQ$ goes to $0$, the $\tHMVAN[g-r,g]$ form a basis of strict neighborhoods of $\tHMVAN^{\ord}$.  By \cite[\S 5]{Ka}, and considering Remark \ref{Remark: change of notation}, for $r$ close to $0$, there exists a $0<r'<r$ such that 
\[
\pi_{1,T}^{-1}(\tHMVAN[g-r,g]) \subset \pi_{2,T}^{-1}(\tHMVAN[g-r',g]). 
\]
Taking sections of $\omegab^{\vk}$, we get a completely continuous operator $U_T$ on $\cM_{\vk}^{\dagger}(\taml(Np);K)$ by the formula \eqref{Equ:U_p}. It is immediate to see that the operators $U_T$ for $T \subset \SS$ commute with each other, and $U_T$ is the composite of  all the $U_{\gerp}$'s with $\gerp\in T$. We put also $U_p=U_{\SS}$, the composition of all the $U_\gerp$'s.

\subsection{The operators $w_T$.} \label{Subsection: U} For any $\calU \subset \tHMVAN$, and any $T \subset \SS$, define
\[
w_T:\omega^{\underline{k}}(\calU) \arr \omega^{\underline{k}}(w_T^{-1}(\calU)),
\]
by $w_T(f)={\rm pr}^* w_T^* (f)$. As usual, we set $w_\gerp:=w_{\{\gerp\}}$, and $w=w_\SS$.  The $w_\gerp$ operators commute with each other and the operator $w_T$ is the composite of all $w_\gerp$ for $\gerp \in T$.

\subsection{Principal diamond operators}\label{Subsection: geometric diamond} Let $\gerp \in \SS$. For any point $P$ of $A[p]$,  we can write $P=P^\gerp \times P_\gerp$, where $P^\gerp \in A[p/\gerp]$ and $P_\gerp \in A[\gerp]$. Let $f\in \cM^{\dagger}_{\vk}(\taml(Np);K)$. We say that $\psi_{p,f}: (\calO_L/\gerp)^\times \ra \CC_p^\times$ is the {\it $(\calO_L/\gerp)^\times$-character} of $f$, if, for all $j \in (\calO_L/\gerp)^\times$, we have 
\[
f(\uA,P^\gerp,jP_\gerp)=\psi_{\gerp,f}(j)f(\uA,P).
\]
The $(\calO_L/p)^\times$-character of $f$ is defined to be the character of $(\calO_L/p)^\times$ which is the product of $(\calO_L/\gerp)^\times$-characters of $f$ for all $\gerp|p$.

\section{Statement of the main result}

In what follows, the  Artin reciprocity map is normalized so that any uniformizer is sent to an arithmetic Frobenius. The following is the main classicality result of this work.
  
\begin{thm}\label{Theorem: classical} Let $N \geq 4$ be an integer not divisible by  a prime $p\neq 2$. Consider a collection of elements of  $\cM^{\dagger}_k(\taml(Np);K)$ of parallel weight $k$,
\[
\{f_T: T \subset \SS\},
\]
which are all Hecke eigenforms. Let  $\psi_{\gerp,T}$ denote the $(\calO_L/\gerp)^\times$-character of $f_T$ at a prime $\gerp \in \SS$. Assume we are  given a collection of Hecke characters of finite order for $L$,
\[
\{\chi_{T}: T \subseteq \SS\}.
\]
Assume that the following conditions hold:

\begin{enumerate}
\item Fix $\gerp \in \SS$. Then, either for all $\gerp \not\in T \subset \SS$, the conductor of $\chi_{T\cup\{\gerp\}}/\chi_T$ is $\gerp\gern_T$ for some $\gern_T |N$, or for all $\gerp \not\in T \subset \SS$, $\chi_{T\cup\{\gerp\}}/\chi_T=1$. We say that the collection of characters is ramified at $\gerp$ in the first case and unramified at $\gerp$ in the second case. Furthermore, in all cases, we have 
\[
\psi_{\gerp,T}^{-1}=\psi_{\gerp,T\cup\{\gerp\}}=(\chi_{T\cup\{\gerp\}}/\chi_T)|_{\calO_\gerp^\times} \mod 1+\gerp\calO_\gerp,
\]
where in the last equality $\chi_{T\cup\{\gerp\}}/\chi_T$ is considered as a character of $\AA_L^\times/L^\times$;
\item each $f_T$ is normalized;
\item  if $T \subset \SS$, and $\gerp \not\in T$, then, for any ideal $\germ\subset \calO_L$ prime to $\gerp N$, we have: 
\[
c(f_T,\germ)=(\chi_{T\cup\{\gerp\}}/\chi_T)(\germ) c(f_{T\cup\{\gerp\}},\germ);
\]
\item $c(f_T,\gerq)=0$ for all prime ideals $\gerq|N$.
\item $c(f_T,\gerp)\neq 0$ for all $T \subset \SS$;
\item  If $\gerp \not \in T \subset \SS$ and $\chi_{T\cup\{\gerp\}}/\chi_T=1$, then  $c(f_T,\gerp) \neq c(f_{T\cup\{\gerp\}},\gerp)$.

%\item $a_\gerp b_\gerp=(N_{L/\QQ}\gerp)^{k-1}\chi_N(\gerp)$.
\end{enumerate}
Then, all $f_T$'s are classical.
\end{thm}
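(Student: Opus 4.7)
The plan is to prove classicality by analytically continuing each $f_T$ from its initial strict neighborhood of $\tHMVAN^{\ord}$ to the entire toroidal compactification $\tHMVAN$; rigid-analytic GAGA applied to the proper variety $\tHMVAN$ and the algebraic line bundle $\omega^k$ then turns this into a classical section. The continuation proceeds in three stages.

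\emph{Stage 1 (automatic continuation).} By assumption (5), each $f_T$ has non-zero $U_\gerp$-eigenvalue at every $\gerp \in \SS$, hence has finite slope at all primes above $p$. The main analytic continuation theorem of the introduction, pulled back along the finite \'etale morphism $\prtoY$, then extends every $f_T$ to $\prtoY^{-1}(\Sigma) \subset \tHMVAN$.

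\emph{Stage 2 (gluing via Atkin--Lehner).} For each $\gerp \in \SS$ and each $T \not\ni \gerp$, assumptions (1) and (3) identify $f_T$ and $f_{T \cup \{\gerp\}}$ as the two $\gerp$-stabilizations of a common eigensystem up to the twist $\chi_{T \cup \{\gerp\}}/\chi_T$. Geometrically, this should manifest as an identity of the form
\[
f_T \;=\; \lambda_{T,\gerp} \cdot \langle \chi_{T \cup \{\gerp\}}/\chi_T \rangle \cdot w_\gerp^*(f_{T \cup \{\gerp\}})
\]
on an appropriate rigid subdomain, where $\lambda_{T,\gerp} \in K^\times$ is the scalar forced by matching $c(f_T,\gerp)$ against $w_\gerp$-translation and $\langle \cdot \rangle$ denotes the diamond-type operator of \S\ref{Subsection: geometric diamond}. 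To establish it, I would form the auxiliary overconvergent form
\[
h_{T,\gerp} := f_T - \lambda_{T,\gerp} \cdot \langle \chi_{T \cup \{\gerp\}}/\chi_T \rangle \cdot w_\gerp^*(f_{T \cup \{\gerp\}}),
\]
verify that it is a $U_\gerp$-eigenform with eigenvalue zero using conditions (1), (3), (4), and (6), and conclude $h_{T,\gerp} = 0$ by a $q$-expansion argument at the unramified cusps of $\prtoY$. Since $f_{T \cup \{\gerp\}}$ is already defined on $\prtoY^{-1}(\Sigma)$, the identity extends $f_T$ to $\prtoY^{-1}(w_\gerp^{-1}(\Sigma))$; iterating over $\gerp \in \SS$ using commutativity of the $w_\gerp$'s, $f_\emptyset$ extends to $\prtoY^{-1}\bigl( \bigcup_{T \subset \SS} w_T^{-1}(\Sigma) \bigr)$.

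\emph{Stage 3 (Koecher).} By Proposition \ref{Proposition: codim 2} applied to every codimension-$0$ vertex stratum $\Wx$, the complement of $\bigcup_{T \subset \SS} w_T^{-1}(\Sigma)$ in $\tYrig$ has reduction of codimension $\geq 2$, and the same is true in $\tHMVAN$ since $\prtoY$ is finite flat. The rigid-analytic Koecher principle then extends $f_\emptyset$ to a global section of $\omega^k$ on $\tHMVAN$, which is classical by rigid GAGA. The main obstacle is Stage 2: the intersection $\Sigma \cap w_\gerp^{-1}(\Sigma)$ has many connected components, and the identity $h_{T,\gerp} = 0$ must be established consistently on each of them. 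This is exactly why the region $\Sigma$ is chosen in terms of the stratification of $\Ybar$ from \S\ref{sect-geom-Y}: each component must contain enough of a neighborhood of the ordinary locus to pin down $h_{T,\gerp}$ via its $q$-expansion. The ramified case of (1) adds a further subtlety, since $\chi_{T \cup \{\gerp\}}/\chi_T$ is then a nontrivial character at $\gerp$, and one must verify that the congruence $\psi_{\gerp,T}^{-1} \equiv (\chi_{T \cup \{\gerp\}}/\chi_T)|_{\calO_\gerp^\times} \pmod{1 + \gerp\calO_\gerp}$ supplies exactly the compatibility needed for the diamond-twisted Atkin--Lehner identity to hold integrally on $\HMV$.
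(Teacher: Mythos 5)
Your three-stage skeleton (automatic continuation to $\prtoY^{-1}(\Sigma)$, Atkin--Lehner gluing, Koecher plus GAGA) is the same as the paper's, but your Stage 2 has two genuine gaps. First, the identity you propose, $f_T=\lambda_{T,\gerp}\,\langle \chi_{T\cup\{\gerp\}}/\chi_T\rangle\, w_\gerp^*(f_{T\cup\{\gerp\}})$, cannot hold as stated in the ramified case: by hypothesis (1) the character $\Psi=\chi_{T\cup\{\gerp\}}/\chi_T$ has conductor $\gerp\gern_T$ with $\gern_T\mid N$, and the principal diamond operators of \S\ref{Subsection: geometric diamond} only see the $(\calO_L/p)^\times$-action, so no diamond-type twist on $\HMV_K$ can produce the relation $c(f_T,\germ)=\Psi(\germ)\,c(f_{T\cup\{\gerp\}},\germ)$ of hypothesis (3); a twist by a character ramified away from $p$ changes Fourier coefficients multiplicatively by $\Psi(\germ)$ and must be realized by the additive (Gauss-sum) twisting construction. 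This is exactly the complication the paper flags (``$w_\gerp(f_{T\cup\{\gerp\}})$ is no longer equal to $f_T$'') and why it introduces the auxiliary varieties $\HMV^{\underline{\gern}}_K$ and $\HMV^{\underline{\gerp},\underline{\gern}}_K$, replaces $f_{T\cup\{\gerp\}}$ by $g=\sum_{j_\gern}\psi_\gern^{-1}(j_\gern)\pr^\ast\pi_{2,j_\gern}^\ast(f_{T\cup\{\gerp\}})$, and proves the twisted Atkin--Lehner identity of Lemma \ref{Lemma: twist} before any gluing can begin.

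Second, your mechanism for killing $h_{T,\gerp}$ on the overlap is unworkable: the connected components of $\Sigma_S\cap w_\gerp^{-1}(\Sigma_S)$ do \emph{not} contain neighborhoods of the ordinary locus or cusps (they live over non-ordinary strata, e.g.\ over codimension-one strata with $\deg_{\beta_0}\in\II_\gerp^\ast$), so a $q$-expansion argument component-by-component has nothing to evaluate against. In the paper, $q$-expansions at cusps are used only once, to prove Lemma \ref{Lemma: twist} on the region $\pi_{1,\gerp}^{-1}\pi_{1,\gern}^{-1}\prtoY^{-1}(\Sigma_S)$, whose components do contain cusps by Lemma \ref{Lemma: final connectivity}(1); the vanishing of $h$ on the overlap is then obtained pointwise on the saturated regions $\pi_{1,\gern}^{-1}(\tHMVANtau\underline{I})$ with $I_\gerp=\II_\gerp^\ast$ (saturation is Lemma \ref{Lemma: saturated}), where the $U_\gerp$-functional equation plus Lemma \ref{Lemma: twist} give $h(\underline{\uuA},P_\gerp)=\psi_\gerp(j_\gerp)h(\underline{\uuA},Q_\gerp)$ and nontriviality of $\psi_\gerp$ forces $h=0$; finally Lemma \ref{Lemma: final connectivity}(2) says every component of the overlap meets such a saturated region, so $h=0$ everywhere by the identity principle. (Your appeal to ``$U_\gerp$-eigenvalue zero'' is the Buzzard--Taylor mechanism for the unramified case; it does not carry over to the ramified case and in any case still needs the saturation/connectivity input, not $q$-expansions on each component.) A smaller point: in Stage 3, Proposition \ref{Proposition: codim 2} only handles the vertex (codimension-$0$) strata; the paper's Theorem \ref{thm-ac} also needs the edge (codimension-$1$) strata analysis, which uses the specific intervals $\II_\gerp^\ast$ and $p\geq 3$, and it applies the Koecher principle on $\Yrig$ to the pushforward $\prtoY_\ast\omega^{\vk}$ via the finite flat integral model $\fHMV$, covering the boundary separately by the regions $w_T^{-1}(\vdeg^{-1}(\one))$.
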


The rest of this section will be devoted to the proof of this Theorem,  which will follow directly from Theorems \ref{Theorem: gluing} and \ref{thm-ac}.

\section{Analytic continuation}\label{section: analytic continuation}

\subsection{Admissible domains of $\tHMVAN$} For a multiset of intervals $\underline{I}$ as in Definition \ref{Definition: admissible domains}, set
\[
\tHMVAN \underline{I}=\prtoY^{-1}(\tYrig \underline{I}).
\]
For any admissible open subset $\calW \subset \tHMVAN$, define $\calW\underline{I}=\calW \cap \tHMVAN \underline{I}$.

%For any $T \subset \SS$, define a map $w_T: \RR^\SS \ra \RR^\SS$ by
% \[
 %w_T((x_\gerp)_{\gerp\in\SS})=(y_\gerp)_{\gerp\in\SS},
 %\]
 %where $y_\gerp=x_\gerp$ if $\gerp\not \in T$, and $y_\gerp=f_\gerp-x_\gerp$ if $\gerp \in T$. For a multiset of intervals $\underline{I}=\{I_\gerp: \gerp \in \SS\}$, define $\underline{I}^{w_T}=w_T(\underline{I})$.
%Since $w_T$ on $\tHMVAN$ is compatible with $w_T$ on $\tYrig$ via $\prtoY$,  Equation \ref{equ-deg-wp} implies the following result.

%\begin{prop} \label{Proposition: degreed under w_T} Let $T \subset \SS$, and let $\underline{I}$  be a mlutiset of intervals as in Definition \ref{Definition: admissible domains}. Then, $w_T^{-1}(\tHMVAN \underline{I})=\tHMVAN \underline{I}^{w_T}$.
 %\end{prop}

\subsection{Canonical locus of Goren-Kassaei} For $\gerp \in \SS$, we put 
\[
\cV_{\gerp}=\begin{cases}
\{Q\in \tYrig: p\deg_{\sigma^{-1}\circ\beta}(Q)+\deg_{\beta}(Q)>1\;\text{for } \beta\in \BB_{\gerp}\}&\text{if }f_{\gerp}>1,\\
\{Q\in \tYrig: \deg_{\beta}(Q)>0\;\text{for }\beta\in \BB_{\gerp}\} &\text{if }f_{\gerp}=1.
\end{cases}
\]
Following \cite{GK}, we put 
\[
\cV_{\can}=\bigcap_{\gerp\in \SS}\cV_{\gerp}.
\]
This is a slight enlargement of the canonical locus defined by Goren-Kassaei in \cite{GK}. Note that the definition here of $\cV_{\can}$ is slightly different from that in \emph{loc. cit.} only at primes $\gerp$ with $f_\gerp=1$.

\begin{lemma}[Goren-Kassaei]\label{Lemma:Canonical} 
Let $f\in \calM^{\dagger}_{\vk}(\Gamma_1(Np);K)$ such that $U_{\gerp}(f)=a_{\gerp}f$ with $a_{\gerp}\in \C_p^{\times}$ for each $\gerp|p$. Then $f$  extends analytically to a section  section of $\omegab^{\vk}$  over $\prtoY^{-1}(\cV_{\can})$.
\end{lemma}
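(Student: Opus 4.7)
The plan is to run the standard ``Frobenius dynamics'' argument of Buzzard--Taylor, as adapted to the Hilbert case in \cite{GK}. Since $f$ is overconvergent, it is already defined on some strict neighborhood $\calV_0 \supset \prtoY^{-1}(\tYrig^{\ord})$. For each $\gerp$, we rewrite the eigenform equation as $f = a_\gerp^{-1} U_\gerp(f)$ and use this relation as an extension recipe, treating one prime at a time.

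Concretely, for $Q=(\uA,H)\in \tYrig$, the fiber $\pi_{1,\gerp}^{-1}(Q)$ in $\tYrig^\gerp$ parametrizes $\calO_L/\gerp$-cyclic subgroups $D\subset A[\gerp]$ with $D\cap H[\gerp]=0$, and $\pi_{2,\gerp}(\uA,H,D)=(\uA/D,(H+D)/D)$. The formula
\[
f(Q) \;=\; \frac{a_\gerp^{-1}}{p^{f_\gerp}}\sum_{D}\,\mathrm{pr}^{*} f\bigl(\uA/D,(H+D)/D\bigr)
\]
thus defines $f$ at $Q$ as soon as $f$ is defined at each image point. Accordingly, set $\calV_0$ as above and define inductively $\calV_{n+1}$ to be the set of $Q\in\cV_\gerp$ such that $\pi_{2,\gerp}(\pi_{1,\gerp}^{-1}(Q))\subset \calV_n$. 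Each $\calV_n$ is an admissible open, and by the formula above (applied after pullback via $\prtoY$) $f$ extends to $\prtoY^{-1}(\calV_n)$ by induction on $n$. One must then check that $\bigcup_n\calV_n\supseteq \cV_\gerp$; iterating the whole procedure over all $\gerp\in\SS$ -- which is legitimate because the operators $U_\gerp$ commute, so the extended $f$ remains a $U_\gerq$-eigenform for $\gerq\neq\gerp$ -- yields an extension of $f$ to $\prtoY^{-1}(\cV_{\can})=\bigcap_\gerp \prtoY^{-1}(\cV_\gerp)$.

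The substantive input, and the main obstacle, is the degree analysis on $\cV_\gerp$: one must show that every image point $(\uA/D,(H+D)/D)$ appearing in the sum has a strictly ``more ordinary'' degree profile at $\gerp$ than $Q$, while the degrees at primes $\gerq\neq\gerp$ remain unchanged. This is precisely the content of the local computations in \cite{GK}: the defining inequality $p\deg_{\sigma^{-1}\circ\beta}(Q)+\deg_{\beta}(Q)>1$ (or $\deg_\beta(Q)>0$ when $f_\gerp=1$) forces $H[\gerp]$ to play the role of the canonical subgroup at $\gerp$, so that for every non-canonical $D$ the quotient subgroup $(H+D)/D\subset (A/D)[\gerp]$ has $\gerp$-partial degrees strictly larger than those of $H[\gerp]$. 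Coupled with the quasi-compactness of $\cV_\gerp\setminus\prtoY(\calV_0)$ and a uniform gap in this improvement, one concludes that the chain $\{\calV_n\}$ exhausts $\cV_\gerp$ in finitely many iterations at every point, completing the argument once the extended sections glue coherently with the initial $f$ by the identity theorem on the overlap $\prtoY^{-1}(\calV_0)$.
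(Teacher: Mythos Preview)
Your proposal is essentially the same approach the paper takes: the paper does not give a self-contained proof but simply observes that the Buzzard--Taylor/Goren--Kassaei dynamics argument (detailed in \cite{GK} and \cite[Prop.~4.14]{Ti} at level $\Gamma_0(p)\cap\Gamma_1(N)$) carries over verbatim to level $\Gamma_1(Np)$ once one pulls back via the finite \'etale map $\prtoY$, and your sketch is precisely that argument.

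Two small technical points to tighten. First, $\cV_\gerp\setminus\prtoY(\calV_0)$ is not quasi-compact as stated (the region $\cV_\gerp$ carries conditions only at $\gerp$, and is cut out by strict inequalities); the correct bookkeeping is to work on quasi-compact exhaustions of $\cV_{\can}$ and show a uniform bound on the number of $U_\gerp$-iterations needed on each, exactly as in \cite[Prop.~4.14]{Ti}. Second, the assertion that every individual $\deg_\beta$ strictly increases under the correspondence is slightly stronger than what is literally proved in \cite{GK}; the actual dynamics there is formulated via the quantities $p\deg_{\sigma^{-1}\circ\beta}+\deg_\beta$ (or equivalently via the valuations $\nu_\beta$), which is what drives the iteration into any prescribed neighborhood of $\tYrig^{\ord}$. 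Neither point affects the validity of your outline, since you correctly defer to \cite{GK} for the local computations.
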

This lemma is essentially proved in \cite{GK}. To extend $f$ along the direction of primes of degree $1$, one uses Lubin-Katz's classical theory of canonical subgroups for one-dimensional formal groups. For the extension of $f$ along other directions,   one can find a detailed proof  for forms of level $\Gamma_0(p)\cap \Gamma_1(N)$ in \cite[Prop. 4.14]{Ti}.The same arguments work in the  $\Gamma_1(Np)$-case with $\cV_{\can}$ replaced by $\prtoY^{-1}(\cV_{\can})$.

\begin{defn}\label{subsection:no-etale} 
Let $W_{\varphi, \eta}$ be a  Goren-Kassaei stratum of $\Ybar$. For $\fp\in \SS$, we say that $W_{\varphi,\eta}$ is \emph{not   \'etale at $\fp$} if $(\varphi_{\gerp},\eta_{\gerp})\neq (\emptyset, \BB_{\gerp})$; equivalently, for every $\Qbar=(\uA, H)\in W_{\varphi,\eta}$, the $\gerp$-component $H[\gerp]$ of the finite group scheme $H$ is not  \'etale. We say that $W_{\varphi,\eta}$ \emph{is nowhere \'etale}, if $W_{\varphi,\eta}$ is not \'etale at any $\gerp\in \SS$. 
\end{defn}

The nowhere \'etale strata have a characterization in terms of partial degrees:
\emph{a stratum $W_{\varphi,\eta}$ is nowhere \'etale if and only if  we have $\deg_{\gerp}(Q)=\sum_{\beta\in \BB_{\gerp}}\deg_{\beta}(Q)>0$, for every $Q\in \spe^{-1}(W_{\varphi,\eta})$ and every $\gerp\in\SS$.}

\begin{defn}\label{Defn:goodness}
Let $W_{\varphi,\eta}$ be a stratum  of codimension $1$. Then, $\ell(\varphi)\cap \eta=\{\beta_0\}$,  where $\beta_0$ is the unique element of $\BB$ such that $\deg_{\beta_0}(Q)\in (0,1)$ for one (hence, for all) rigid point $Q\in  \spe^{-1}(W_{\varphi,\eta})$. We say that $W_{\varphi,\eta}$ is \emph{bad}, if $\deg_{\sigma\circ\beta_0}(Q)=0$ for one (hence, for all) $Q\in  \spe^{-1}(W_{\varphi,\eta})$; otherwise, we say that $W_{\varphi,\eta}$ is \emph{good}. In particular, if $\sigma\circ\beta_0=\beta_0$, i.e. the prime $\gerp_0\in \SS$ with $\beta\in \BB_{\gerp_0}$  has degree $1$, then $W_{\varphi,\eta}$ is good.
\end{defn}

\begin{defn}\label{Defn:Sigma} 
(i) Let $W=W_{\varphi,\eta}$ be a  nowhere \'etale stratum  of codimension $0$ or $1$. We will define an admissible open subset $]W^{\gen}['$ of $\tYrig$ as follows:
\begin{itemize}
\item[(Case 1)] $\codim(W)=0$. We put $]W^\gen['=\spe^{-1}(W^\gen)$, where $W^{\gen}$ is the generic part of $W$ introduced in Definition \ref{Defn:generic-part}.

\item[(Case 2)] $\codim(W)=1$. Let $\beta_0$ be as in Definition \ref{Defn:goodness}, $\gerp_0\in \SS$ be  such that $\beta_0\in \BB_{\gerp_0}$, and $\eta_{\gerp_0}=\eta\cap\BB_{\gerp_0}$.

\begin{itemize}
\item[(Case 2a)] $W$ is good. In this case,
 we put $]W^\gen['=\spe^{-1}(W^\gen)$.

\item[(Case 2b)] $W$ is bad  and $\eta_{\gerp_0}=\BB_{\gerp_0}$, or, equivalently, for any rigid point $Q$  in $\spe^{-1}( W)$, we have $\deg_{\beta}(Q)=0$ for all $\beta\in \BB_{\gerp_0}-\{\beta_0\}$. In this case, we have $W=W^\gen$, and we set 
$$
]W^\gen['=
\{Q\in \spe^{-1}(W): \deg_{\beta_0}(Q)\in(\sum_{i=1}^{f_{\gerp_0}-1}\frac{1}{p^i},1)\}.
$$ 

\item[(Case 2c)] $W$ is bad and  $\eta_{\gerp_0}\neq \BB_{\gerp_0}$, or, equivalently, there exists an integer $1\leq j\leq f_{\gerp_0}$ such that for any rigid point $Q=(\uA,H)\in \spe^{-1}(W)$, we have $\deg_{\sigma^i\circ\beta_0}(Q)=0$ for $1\leq i\leq j$ and $\deg_{\sigma^{j+1}\circ\beta_0}(Q)=1$. We set $\delta_j:=\sum_{i=1}^{j}1/p^i$, and define
\[
]W^{\gen}['=\{Q\in \spe^{-1}(W^\gen):\deg_{\beta_0}(D)\leq \delta_{j},\quad \forall (\uA,H,D)\in \pi_{1,p}^{-1}(Q)\},
\]
where $\pi_{1, p}\colon \tYrig^{(p)}\ra \tYrig$ is defined in \eqref{Equ:Hecke-Y}.
\end{itemize}
\end{itemize}

(ii) We define an admissible domain of $\tYrig$ as follows:
\begin{equation}\label{Equ:defn-sigma}
\Sigma=\bigcup_{\codim(W)=0,1}]W^\gen[',
\end{equation}
where $W$ runs though all the nowhere  \'etale strata of codimension $0$ and $1$.
% , and $\one=(1,\cdots,1)\in [0,1]^{\BB}$. 
\end{defn}

\begin{rem}
The definition of $]W^\gen['$ in Case 2(c) will be justified by Lemma~\ref{L:BK-module} below.
\end{rem}

We can now state the main result of this section.

\begin{thm}\label{Thm:AnaCont} 
Let $f\in \calM^{\dagger}_{\vk}(\Gamma_1(Np);K)$ be a simultaneous eigenform for the operators $\{U_{\gerp}: \gerp\in \BB\}$ with non-zero eigenvalues. Then, $f$ extends analytically to a  section of $\omegab^{\vk}$ over $\region$.
\end{thm}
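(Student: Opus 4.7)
The strategy is to start from the extension guaranteed by Lemma~\ref{Lemma:Canonical}, which already provides $f$ on $\prtoY^{-1}(\cV_{\can})$, and then grow the domain stratum-by-stratum using the eigenform identity
\[
f \;=\; a_{\gerp}^{-1}\,U_{\gerp}(f)
\;=\; \frac{1}{a_{\gerp}\, p^{f_{\gerp}}}\,(\pi_{1,\gerp})_{*}\bigl(\mathrm{pr}^{*}\pi_{2,\gerp}^{*}(f)\bigr),
\]
together with its iterated versions for $U_{T}$. The key observation, following the philosophy of \cite{GK,Ka,Ti}, is that if $f$ is already defined on an admissible open $\cV$ and if for every $Q$ in some larger admissible $\cV'$ the image $\pi_{2,\gerp}(\pi_{1,\gerp}^{-1}(Q))$ lies in $\cV$, then the displayed formula defines an analytic extension of $f$ to $\cV'$. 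Since $\Sigma$ is a finite union of the sets $]W^{\gen}[^{\prime}$, it suffices to extend $f$ to $\prtoY^{-1}(]W^{\gen}[^{\prime})$ for each nowhere-\'etale stratum $W=W_{\varphi,\eta}$ of codimension $0$ or $1$ separately, and then verify that the various extensions agree on overlaps (which follows from the rigid-analytic identity principle, since each overlap contains $\prtoY^{-1}(\cV_{\can})$ or a boundary locus of positive dimension).

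For the codimension $0$ case, fix a vertex $\vx\in\{0,1\}^{\BB}$ with $W_{\vx}$ nowhere \'etale; then $]W_{\vx}^{\gen}[^{\prime}=\spe^{-1}(W_{\vx}^{\gen})$. Using the dynamics of partial degrees under $\pi_{2,\gerp}$ -- namely, the explicit formulas for $\vdeg(Q/D)$ in terms of $\vdeg(Q)$ and $\vdeg(D)$ from \cite{Ti,GK} -- one shows that, for $Q\in\spe^{-1}(W_{\vx}^{\gen})$, the fiber $\pi_{2,\gerp}^{-1}\pi_{1,\gerp}(Q)$ lands in regions associated to vertices $\vx'$ closer to the ``multiplicative'' vertex $\vx_{(p)}$; the genericity condition ($h_{\beta}\neq 0$ for $\beta\in\varphi_{\vx}^{c}\cap\eta_{\vx}^{c}$) ensures that no images remain ``stuck'' at $W_{\vx}$ itself. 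A straightforward induction on a suitable partial ordering of vertices (for instance by $|\{\beta:x_{\beta}=0\}|$) then produces the desired extension, with the base case $\vx=\vx_{(p)}$ covered by Lemma~\ref{Lemma:Canonical}.

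For the codimension $1$ case, the analysis splits according to Definition~\ref{Defn:Sigma}:
\begin{itemize}
\item In Case 2a (good), the condition $\deg_{\sigma\circ\beta_0}(Q)>0$ guarantees that the local geometry near $W^{\gen}$ is governed by a non-degenerate partial Hasse invariant equation via Key Lemma 2.8.1 of \cite{GK}, and one can glue the extensions coming from the two adjacent codimension-$0$ strata using the intermediate-degree $U_{\gerp_{0}}$-argument.
\item In Case 2b, the bound $\deg_{\beta_{0}}(Q)>\sum_{i=1}^{f_{\gerp_{0}}-1}p^{-i}$ is precisely the threshold ensuring that \emph{every} cyclic $\cO_{L}/\gerp_{0}$-subgroup $D\subset A[\gerp_{0}]$ distinct from $H[\gerp_{0}]$ yields a quotient $Q/D$ with $\deg_{\beta}(Q/D)$ satisfying the Goren-Kassaei canonical inequalities at $\gerp_{0}$; iterating $U_{\gerp_{0}}^{f_{\gerp_{0}}}$ pushes $Q$ into a region where $f$ is known.
\item In Case 2c, the defining condition $\deg_{\beta_{0}}(D)\le\delta_{j}$ for all $(\uA,H,D)\in\pi_{1,p}^{-1}(Q)$ is exactly the combinatorial constraint needed so that every $Q/D$ has improved partial degrees in the direction of the prime $\gerp_{0}$, landing either in $\prtoY^{-1}(\cV_{\can})$ or in a codimension-$0$ region already handled.
\end{itemize}

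The main obstacle will be Case 2c, where the interplay between the $\gerp_{0}$-canonical-subgroup arithmetic (controlled by the quantities $\delta_{j}$) and the combinatorics of partial degrees in the full $\Gamma_{0}(p)$-setting must be made precise. Concretely, one has to show that the constraint on $\deg_{\beta_{0}}(D)$ in Case 2c really does force every image point $Q/D$ into a previously constructed domain for $f$, which requires a careful analysis of the $p^{g}$ points in the fiber of $\pi_{1,p}$ using the local models for $\widehat{\cO}_{\Ybar,\Qbar}$ from \cite{St,GK} and a recursive description of partial degrees of subgroups $D$ in terms of $\deg_{\beta}(Q)$. Once this is verified, the three cases combine with the codimension-$0$ extension to cover all of $\Sigma$, and the extensions patch together into a single analytic section of $\omegab^{\vk}$ over $\region$.
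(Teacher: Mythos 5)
Your sketch has the right philosophy (spread $f$ out from the Goren--Kassaei canonical locus via the $U_p$-functional equation, with the work being control of partial degrees of the subgroups $D$ in the Hecke fibre), but as written it has a genuine gap: the entire mathematical content of the theorem is exactly the quantitative estimate that you defer with phrases like ``one shows that'' and ``requires a careful analysis''. In the paper the theorem is an immediate corollary of a single containment, Proposition \ref{Prop:Sigma-U_p}: $\pi_{1,p}^{-1}(\Sigma)\subset\pi_{2,p}^{-1}(\cV_{\can})$, i.e.\ for \emph{every} rigid point $Q=(\uA,H)\in\ ]W^{\gen}['$ and every $(\uA,H,D)\in\pi_{1,p}^{-1}(Q)$ one has $p\deg_{\beta}(D)+\deg_{\sigma\circ\beta}(D)<p$ for all $\beta$. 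Granting this, Lemma \ref{Lemma:Canonical} plus the single identity $f=\lambda_p^{-1}U_p(f)$ extends $f$ to $\region$ in one step; no induction over strata, no iterates, no gluing. Proving the containment is where all the effort goes: the Raynaud-theoretic bound $\sum_i p^{f_\gerp-1-i}\deg_{\sigma^i\circ\beta}(D)\le\sum_i p^{f_\gerp-1-i}\deg_{\sigma^i\circ\beta}(A/H)$ (Lemma \ref{lemma:degree}), the directional Hodge height inequality $w_\beta(\uA)\ge\min\{p\deg_{\sigma^{-1}\circ\beta}(H),1-\deg_\beta(H)\}$ coming from the Key Lemma of \cite{GK} (Lemma \ref{Lemma:partial-Hodge}), and the case analysis of Lemma \ref{lemma:case1} and cases (a)--(d), where the genericity hypothesis and, in the bad codimension-one cases, the thresholds $\sum_{i\ge1}p^{-i}$ and $\delta_j$ built into Definition \ref{Defn:Sigma} are used (and where the bound $p\ge 3$ enters). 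None of this is reproduced or replaced in your sketch, so the proposal does not yet constitute a proof.

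Beyond the missing estimates, several structural assertions in your outline are inaccurate or would need repair. The image points $(\uA/D,\overline{H})$ of a generic codimension-zero point do \emph{not} generally land in tubes of vertex strata ``closer to the multiplicative vertex'': their degrees are typically non-integral, and what one actually proves is that they satisfy the canonical-locus inequalities, i.e.\ land in $\cV_{\can}$; so an induction over vertices is not the right bookkeeping. Likewise, invoking $U_{\gerp_0}^{f_{\gerp_0}}$ in Case 2b requires controlling where \emph{all} intermediate (or composite) quotients land, a containment you never state, and it is unnecessary, since a single application of the full $U_p$ already suffices once the image is in $\cV_{\can}$. Finally, the ``agreement on overlaps'' step is moot: the sets $]W^{\gen}['$ for distinct strata are contained in tubes of disjoint strata and hence are pairwise disjoint, and with the one functional equation there is nothing to glue; conversely, an appeal to an identity principle on such overlaps would have nothing to act on. For Case 2c, note also that the containment itself needs no Breuil--Kisin analysis, because the condition $\deg_{\beta_0}(D)\le\delta_j$ is imposed in the definition of $]W^{\gen}['$; the Kisin-module computation (Lemma \ref{L:BK-module}) is needed later, to justify that definition and for the connectivity statements used in the gluing section, not for Theorem \ref{Thm:AnaCont}.
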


This theorem is an immediate consequence of the following

\begin{prop}\label{Prop:Sigma-U_p} 
 The image of $\Sigma$ under the Hecke correspondence $U_p$ is contained in the canonical locus $\cV_{\can}$, i.e., we have 
\[\pi_{1,p}^{-1}(\Sigma)\subset \pi_{2,p}^{-1}(\cV_{\can}),\]
where $\pi_{i,p}:\tYrig^{(p)}\ra \tYrig$ for $i=1,2$ are the maps defined in \eqref{Equ:Hecke-Y}. 
\end{prop}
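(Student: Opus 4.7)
The plan is to verify the inclusion pointwise: for every rigid $\tilde Q = (\uA, H, D) \in \pi_{1,p}^{-1}(\Sigma)$, we show that $\pi_{2,p}(\tilde Q) = (\uA/D, H')$ with $H' = (H+D)/D$ lies in $\cV_{\can}$. The condition is local at each $\gerp \in \SS$, so we fix a prime.

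\textbf{Step 1: Degree formula.} Since $H[\gerp]$ and $D[\gerp]$ are $(\cO_L/\gerp)$-free of rank $1$ with $(H[\gerp] \cap D[\gerp])_K = 0$, their scheme-theoretic sum equals $A[\gerp]$, and therefore $H'[\gerp] = A[\gerp]/D[\gerp]$ inside $(A/D)[\gerp]$. Additivity of partial degrees in $0 \to D[\gerp] \to A[\gerp] \to H'[\gerp] \to 0$, together with $\deg_\beta(A[\gerp]) = 1$, gives
\[
\deg_\beta(H') \;=\; 1 - \deg_\beta(D), \qquad \beta \in \BB_\gerp.
\]
Consequently, $\pi_{2,p}(\tilde Q) \in \cV_\gerp$ is equivalent to $\deg_\beta(D) < 1$ when $f_\gerp = 1$, and to $p\deg_{\sigma^{-1}\beta}(D) + \deg_\beta(D) < p$ for all $\beta \in \BB_\gerp$ when $f_\gerp > 1$.

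\textbf{Step 2: Raynaud-style bounds on $\deg_\beta(D)$.} The identity $H[\gerp]+D[\gerp] = A[\gerp]$, combined with the Raynaud description of $H[\gerp]$ and $D[\gerp]$ in local coordinates $T_\beta$ satisfying $T_\beta^p = y_{\sigma\beta} T_{\sigma\beta}$, yields the baseline $\deg_\beta(D) \le 1 - \deg_\beta(H)$; propagating the relation around the $\sigma$-orbit on $\BB_\gerp$ refines this to a linear-in-$p$ inequality expressing $\deg_\beta(D)$ in terms of the partial degrees of $H$ and the partial valuations of the Hasse invariants at the ambient stratum. This is the content of the key technical lemma referenced in Definition~\ref{Defn:Sigma}~(Case~2c).

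\textbf{Step 3: Case-by-case verification.} With Steps~1--2 in hand, one checks the canonical inequality on each piece of $\Sigma$. In Case~1 ($\codim W = 0$, $W$ nowhere \'etale), the vertex data $\deg_\beta(H) \in \{0,1\}$ and the genericness $\uA \in W_{\phix \cap \etax}$ pin down $\deg_\beta(D)$ tightly enough that the nowhere-\'etale hypothesis eliminates any problematic direction. In the good codim-$1$ Case~2a, the free direction $\beta_0$ satisfies $\deg_{\sigma\beta_0}(H) > 0$, providing the needed margin. In Case~2b the $\sigma$-orbit propagation is unobstructed around the full cycle, and the threshold $\sum_{i=1}^{f_{\gerp_0}-1} p^{-i}$ is exactly the geometric sum needed to secure $p \deg_{\sigma^{-1}\beta_0}(D) + \deg_{\beta_0}(D) < p$. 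The most intricate situation is Case~2c, where the orbit is broken at $\sigma^{j+1}\beta_0$, forcing the recursion to run only $j$ steps and yielding the partial sum $\delta_j = \sum_{i=1}^{j} p^{-i}$; since $D$ is no longer determined by $(\uA, H)$, the extra condition $\deg_{\beta_0}(D) \le \delta_j$ is built into the definition of $]W^{\gen}['$ precisely so that the canonical inequality at $\beta_0$ is recovered. Tracking this Raynaud propagation along the partial sub-orbit of $\beta_0$ where $\deg_\beta(H) = 0$ is the main technical subtlety of the proof.
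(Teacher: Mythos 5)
Your Step 1 reduction (passing to $\deg_\beta(D)$ via $\deg_\beta(A[p]/D)=1-\deg_\beta(D)$) is exactly the paper's, but Step 2 starts from a false inequality and misidentifies the mechanism. The claimed baseline $\deg_\beta(D)\le 1-\deg_\beta(H)$ does not hold: what Raynaud's theory gives, applied to the generic isomorphism $D[\gerp]\to A[\gerp]/H[\gerp]$ (Lemma \ref{lemma:degree}(1), quoting \cite[3.7]{Ti}), is the weighted inequality $\sum_{i=0}^{f_\gerp-1}p^{f_\gerp-1-i}\deg_{\sigma^i\circ\beta}(D)\le\sum_{i=0}^{f_\gerp-1}p^{f_\gerp-1-i}\bigl(1-\deg_{\sigma^i\circ\beta}(H)\bigr)$, which only yields $\deg_\beta(H)+\deg_\beta(D)\le\sum_{i=0}^{f_\gerp-1}p^{-i}$; indeed, when $\deg_\beta(H)=1$ and $\deg_{\sigma\circ\beta}(H)=0$ one actually has $\deg_\beta(D)\ge 1/p>1-\deg_\beta(H)$ (Lemma \ref{lemma:case1}), so a termwise bound cannot be the starting point. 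Moreover, the propagation along the $\sigma$-orbit is not a pure Raynaud computation: the vanishing statements that drive it ($\deg_{\sigma^{-1}\circ\beta}(D)=0$ when $\deg_\beta(H)=1$, or when $\deg_{\sigma^{-1}\circ\beta}(H)=0$ and $\deg_\beta(D)<1$) come from the Goren--Kassaei Key Lemma relating the Hodge height $w_\beta(\uA)$ to partial degrees (Lemma \ref{Lemma:partial-Hodge}), applied to \emph{both} $(\uA,H)$ and $(\uA,D)$, combined with genericity of the specialization so that $w_\beta(\uA)=0$ for $\beta\notin\varphi(\Qbar)\cap\eta(\Qbar)$. You mention Hasse invariants only in passing and attribute the technical content to the Breuil--Kisin lemma attached to Case 2c; that lemma is not used in this proposition at all (it justifies the definition of $]W^{\gen}['$ and the later connectivity result), so the actual engine of the argument is missing.

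Step 3 then asserts the case-by-case verification rather than carrying it out, and it omits precisely the hardest point. On a bad codimension-one stratum with $\deg_{\sigma^{-1}\circ\beta_0}(H)=1$, both $\deg_{\beta_0}(D)$ and $\deg_{\sigma^{-1}\circ\beta_0}(D)$ can be positive, and the inequality \eqref{Equ:can-test} at $\beta=\sigma^{-1}\circ\beta_0$ is obtained in the paper from $w_{\beta_0}(\uA)=1-\deg_{\beta_0}(H)$ and the numerical estimate $p\deg_{\sigma^{-1}\circ\beta_0}(D)+\deg_{\beta_0}(D)<\tfrac{3}{2}\sum_{i=0}^{f_{\gerp_0}-1}p^{-i}<p$ --- the only place where the hypothesis $p>2$ enters; nothing in your outline supplies this, nor does it identify where oddness of $p$ is needed. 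Likewise, the repeated use of Lemma \ref{lemma:degree}(2)--(3) to force $\deg_\beta(D)=0$ along stretches where $\deg_\beta(H)=0$ (a chain argument that terminates either at a $\beta$ with $\deg_\beta(H)=1$ or at $\beta_0$, where $\deg_{\beta_0}(D)<1$ is known by the very definition of $]W^{\gen}['$ in Case 2c) is only alluded to. As written, the proposal reproduces the correct reduction and names some of the right tools, but the inequalities it rests on are partly wrong and the substantive verification constituting the proof is absent.
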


Indeed, let $f$ be a finite slope eigenform of level $\Gamma_1(Np)$ as in Theorem \ref{Thm:AnaCont}, and $\lambda_p\neq 0$ be the eigenvaule of $U_p$. By Lemma \ref{Lemma:Canonical}, $f$ extends to a bounded section of $\omegab^{\vk}$ over $\prtoY^{-1}(\cV_{\can})$.  Proposition \ref{Prop:Sigma-U_p} implies that $f$ extends to $\region$ using the functional equation $f=\frac{1}{\lambda_{p}}U_p(f)$. The boundedness of $f$ over $\region$ follows from the estimation \eqref{Equ:estimation of U_p}.\\

To prove Proposition \ref{Prop:Sigma-U_p}, we need some preparation.

\subsection{Directional Hodge Heights} Let $\uA\in \Xrig$ be a rigid point, i.e. a HBAV defined over the ring of integers $\cO_{K'}$ of a finite extension $K'/K$. We denote by $\uA_0$ its base change over $\cO_{K'}/p$. For each $\beta\in \BB$, let $h_{\beta}\in \Gamma(\Xbar,\omegab_{\sigma^{-1}\circ\beta}^{p}\otimes \omegab_{\beta}^{-1})$ be the $\beta$-th partial Hasse invariant, and $h_{\beta}(\uA_0)$ its evaluation at $\uA_0$. Using a basis of $\omegab_{A/\cO_{K'}}$, we may represent $h_{\beta}(\uA_0)$ as an element of $\cO_{K'}/p$. In \cite[4.6]{Ti}, we defined the $\beta$-th directional Hodge height of $\uA$, denoted by $w_{\beta}(\uA)$, as the truncated $p$-adic valuation of $h_{\beta}(\uA_0)$. This is a well-defined rational number in the interval $[0,1]$. We may extend the definition of $w_{\beta}(\uA)$ to whole $\tXrig$ by setting $w_{\beta}(\uA)=0$ if $\uA$ is a rigid point in $\tXrig-\Xrig$. In \cite[4.2]{GK} the same notion is defined under the name of  {\it valuations on $\Xrig$} and the notation $\nu_\beta$ is used instead of $w_\beta$. The following Lemma will play a crucial role in the proof of Proposition~\ref{Prop:Sigma-U_p}.

\begin{lemma}[Goren-Kassaei]\label{Lemma:partial-Hodge}  Let $Q=(\uA,H)\in \tYrig$ be a rigid point. Then, for each $\beta\in \BB$, we have 
\[
w_{\beta}(\uA)\geq \min\{p\deg_{\sigma^{-1}\circ\beta}(H),1-\deg_{\beta}(H)\}.
\]
Moreover, the equality holds if $p\deg_{\sigma^{-1}\circ\beta}(H)\neq 1-\deg_{\beta}(H)$.  
 \end{lemma}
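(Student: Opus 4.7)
The plan is to follow the approach of Goren--Kassaei \cite{GK}, working locally at a rigid point $Q=(\uA,H)$ of $\tYrig$ over the ring of integers $\cO_{K'}$ of a finite extension $K'/K$. The key mechanism is the compatibility of the Verschiebung morphism on the special fibre with the canonical isogeny $\phi\colon A\to A'\!:=A/H$ and its dual $\hat\phi$.

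I would first choose bases of the rank-one $\cO_{K'}$-modules $\omega_{A,\beta}$ and $\omega_{A',\beta}$. The short exact sequence
\[
0\to\omega_{A'}\xrightarrow{\phi^{*}}\omega_A\to\omega_H\to 0
\]
of $\cO_L\otimes\cO_{K'}$-modules decomposes into $\beta$-components, and since $\omega_{H,\beta}\cong \cO_{K'}/(a_\beta)$ with $v_p(a_\beta)=\deg_\beta(H)$, the map $\phi^{*}_\beta$ is multiplication by such an $a_\beta$; dually $\hat\phi^{*}_\beta$ is multiplication by $\hat a_\beta$ with $a_\beta\hat a_\beta=p$, so $v_p(\hat a_\beta)=1-\deg_\beta(H)$.

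Next, reducing mod $p$ and using the functoriality of Verschiebung ($\phi_0\circ V_{A_0}=V_{A'_0}\circ \phi_0^{(p)}$), I obtain, on $\beta$-components of cotangent spaces, the congruence
\[
a_\beta\cdot h_\beta(A)\equiv (a_{\sigma^{-1}\circ\beta})^p\cdot h_\beta(A')\pmod{p}
\]
in $\cO_{K'}/p$, where $h_\beta$ denotes the partial Hasse invariants interpreted as scalars via the chosen bases. The parallel identity for $\hat\phi$ yields $\hat a_\beta\cdot h_\beta(A')\equiv (\hat a_{\sigma^{-1}\circ\beta})^p\cdot h_\beta(A)\pmod{p}$. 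Taking $p$-adic valuations of both (truncated at $1$, since the congruences live mod $p$) and performing a case analysis depending on whether $p\deg_{\sigma^{-1}\circ\beta}(H)$ is smaller or larger than $1-\deg_\beta(H)$, I extract the lower bound
\[
w_\beta(\uA)\geq \min\bigl(p\deg_{\sigma^{-1}\circ\beta}(H),\,1-\deg_\beta(H)\bigr).
\]
The two terms in the minimum correspond respectively to the Verschiebung-via-$\phi$ contribution (scaled by $p$ because $\phi^{(p)}$ raises the $\sigma^{-1}\circ\beta$ entry to its $p$-th power) and the mod-$p$ truncation obstruction. In the generic case $p\deg_{\sigma^{-1}\circ\beta}(H)\neq 1-\deg_\beta(H)$, the dominating term in the congruence is unambiguous, so equality holds.

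The main obstacle will be the careful bookkeeping across $\sigma$-shifts (the Frobenius twist $\phi^{(p)}$ involves the $\sigma^{-1}\circ\beta$-component of $\phi^{*}$ raised to the $p$-th power, which is what produces the factor of $p$ in front of $\deg_{\sigma^{-1}\circ\beta}(H)$) and the delicate truncation at $1$ that reflects the mod-$p$ nature of the Hasse invariants. One must use both the $\phi$-congruence and the $\hat\phi$-congruence to cover all ranges of $\deg_\beta(H)\in[0,1]$, since neither alone gives the optimal bound; the extremal cases $\deg_\beta(H)\in\{0,1\}$ are essentially trivial and serve as sanity checks.
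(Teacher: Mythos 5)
Your Verschiebung--functoriality congruence is correct: for $\phi\colon A\to A'=A/H$ one does get $a_\beta\,h_\beta(A)\equiv u\,a_{\sigma^{-1}\circ\beta}^{\,p}\,h_\beta(A')\pmod p$ (up to units), and likewise for $\hat\phi$ with $\hat a_\beta$, $v_p(\hat a_\beta)=1-\deg_\beta(H)$. Moreover these two congruences \emph{do} suffice for the inequality: if one had $v_p(h_\beta(A))<\min\{p\deg_{\sigma^{-1}\circ\beta}(H),\,1-\deg_\beta(H)\}$, then both congruences would have to be equalities of valuations strictly below $1$, and adding the two resulting linear relations gives $1=p$, a contradiction. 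So that half of your argument can be made rigorous.

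The gap is in the equality statement. Your congruences compare $a_\beta h_\beta(A)$ with $a_{\sigma^{-1}\circ\beta}^p h_\beta(A')$ only modulo $p$, and the ``dominating term'' is unambiguous only if you already know $v_p(h_\beta(A'))$ exactly --- but that is an unknown of exactly the same kind, and pinning it down is an instance of the very lemma you are proving (applied to $(\uA',A[p]/H)$), so the argument becomes circular. Worse, the method cannot in principle give equality: if all partial Hasse invariants of both $A$ and $A'$ vanish modulo $p$ (i.e.\ $w_\beta(\uA)=w_\beta(\uA')=1$ for all $\beta$), every one of your congruences reads $0\equiv 0$ and imposes no constraint whatsoever, yet equality would force $w_\beta(\uA)=\min\{p\deg_{\sigma^{-1}\circ\beta}(H),1-\deg_\beta(H)\}<1$ whenever the two terms are distinct and less than $1$. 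The paper avoids this by not arguing through the isogeny at all: the cases $\beta\notin\varphi(\Qbar)\cap\eta(\Qbar)$ are immediate from the stratification compatibility \eqref{E:relation-strat} together with \eqref{eq: deg vs pe}, and in the remaining cases it invokes Goren--Kassaei's Key Lemma (2.8.1 of \cite{GK}; alternatively \cite[4.5]{Ti} via Breuil--Kisin modules), which gives an \emph{exact} local expression $\prtoX^*(h_\beta)=u\,x_\beta+v\,y_{\sigma^{-1}\circ\beta}^{\,p}$ with $u,v$ units in Stamm's coordinates, where $v_p(x_\beta(Q))=1-\deg_\beta(H)$ and $v_p(y_\beta(Q))=\deg_\beta(H)$. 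That controls $h_\beta(A)$ itself (not $a_\beta h_\beta(A)$ modulo $p$), so both the inequality and the equality follow at once from the ultrametric inequality. To repair your proof you would need such a finer local input; the functoriality congruence alone loses exactly the information needed for equality.
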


\begin{proof}
Note that if $\beta\not\in \varphi(\Qbar) \cap \eta(\Qbar)$, then the statement is obvious using (\ref{eq: deg vs pe}). For the remaining cases, the statement  follows directly from Goren-Kassaei's ``Key Lemma'' in \cite{GK}.  For another proof, see also \cite[4.5]{Ti}.
%Let $\pi: \tilde{\Ybar}\ra \tilde{\Xbar}$ be the natural projection, and $\Qbar\in \tilde{\Ybar}$ be the specialization of $Q$. By choosing a local basis of $\omegab=\oplus_{\beta\in \BB}\omegab_{\beta}$ over $\cO_{\tilde{\Xbar}}$ around $\Qbar$, one can represent $h_{\beta}$ by an element of $\cO_{\Xbar}.$
\end{proof}

\begin{proof}[Proof of Proposition~\ref{Prop:Sigma-U_p}]
  Let $\pi_{1,p},\pi_{2,p}:\tYrig^{(p)}\ra \tYrig$ be as in \eqref{Equ:Hecke-Y}. To prove \ref{Prop:Sigma-U_p}, it suffices to show that for every  rigid point $Q=(\uA,H)$ of  $\Sigma\subset \tYrig$, we have $\pi_{2,p}(\pi_{1,p}^{-1})(Q)\subset \cV_{\can}$. Note that
$$
\pi_{1,p}^{-1}(Q)=\{(\uA,H,D): \text{$D\subset A[p]$ is free of rank $1$ over $\cO_L/p$ and $D\cap H\neq 0$}\}.
$$
By definition of $\cV_{\can}$, we have to show that if $(\uA,H,D)\in \pi_{1,p}^{-1}(Q)$, and  $\beta \in \BB_\gerp \subset \BB$ such that $f_\gerp\neq 1$, then we have 
$p\deg_{\beta}(\uA[p]/D)+\deg_{\sigma\circ\beta}(\uA[p]/D)>1$; equivalently, for all $(\uA,H,D)$ and $\beta$ as above, we must show that:
\begin{equation}\label{Equ:can-test}
p\deg_{\beta}(D)+\deg_{\sigma\circ\beta}(D)<p,
\end{equation}
since $\deg_{\beta}(\uA[p]/D)=1-\deg_{\beta}(D)$ for $\beta\in \BB$. 

\begin{lemma}\label{lemma:degree} Let $Q=(\uA,H)$ be a rigid point of $\tYrig$ whose specialization $\Qbar$ is generic in the sense of Definition \ref{Defn:generic-part}. Let $(\uA, H,D)\in \pi_{1,p}^{-1}(Q)$,    $\gerp\in \SS$, and $\beta\in \BB_{\gerp}$.
\begin{itemize}
\item[(1)] We have 
\[\deg_{\beta}(H)+\deg_{\beta}(D)\leq \sum_{i=0}^{f_{\gerp}-1}\frac{1}{p^i}.\]
In particular, 
we have $\deg_{\beta}(D)<1$ if $\deg_{\beta}(H)>\sum_{i=1}^{f_{\gerp}-1}\frac{1}{p^i}$, and $
\deg_{\beta}(D)\leq \sum_{i=1}^{f_{\gerp}-1}\frac{1}{p^i},
$ if $\deg_{\beta}(H)=1$.

\item[(2)] If $\deg_{\beta}(H)=1$, then we have $\deg_{\beta}(D)\leq \sum_{i=1}^{f_{\gerp}-1}1/p^i$ and $\deg_{\sigma^{-1}\circ\beta}(D)=0$.

\item[(3)] If $\deg_{\sigma^{-1}\circ\beta}(H)=0$ and $\deg_{\beta}(D)<1$, then $\deg_{\sigma^{-1}\circ\beta}(D)=0$.
 
\end{itemize}
\end{lemma}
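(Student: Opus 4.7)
My plan is to prove the three statements in reverse order, using parts (3) and (2) as building blocks for (1). In each case the engine is the Hodge-height inequality of Lemma \ref{Lemma:partial-Hodge} applied to \emph{both} $H$ and $D$, together with the fact that genericness of $\Qbar$ pins down when $w_\beta(\uA)$ vanishes: indeed $\Qbar \in W^{\gen}_{\varphi,\eta}$ means $\prtoX(\Qbar) \in W_{\varphi\cap\eta}$, so $w_\beta(\uA) > 0$ if and only if $\beta \in \varphi(\Qbar)\cap\eta(\Qbar)$, by the description \eqref{eq: deg vs pe} of $(\varphi,\eta)$ in terms of partial degrees.

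I would start with (3), which is the quickest. The hypothesis $\deg_{\sigma^{-1}\circ\beta}(H) = 0$ places $\beta \notin \varphi(\Qbar)$, so $\beta \notin \varphi\cap\eta$ and hence $w_\beta(\uA) = 0$. Applying Lemma \ref{Lemma:partial-Hodge} to the subgroup $D$ then yields
\[
0 \;=\; w_\beta(\uA) \;\geq\; \min\bigl\{\,p\deg_{\sigma^{-1}\circ\beta}(D),\; 1-\deg_\beta(D)\,\bigr\},
\]
and the assumption $\deg_\beta(D) < 1$ forces the second term to be strictly positive, so $\deg_{\sigma^{-1}\circ\beta}(D)=0$, as claimed.

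For (2), the hypothesis $\deg_\beta(H)=1$ likewise yields $\beta\notin\eta(\Qbar)$ and thus $w_\beta(\uA)=0$. The same Hodge inequality for $D$ gives $\deg_{\sigma^{-1}\circ\beta}(D)=0$ \emph{or} $\deg_\beta(D)=1$, so proving the second conclusion of (2) will reduce to establishing the strict bound $\deg_\beta(D) \leq \sum_{i=1}^{f_\gerp-1} p^{-i} < 1$. To obtain this bound I would iterate Lemma \ref{Lemma:partial-Hodge} applied to $D$ around the $\sigma$-orbit $\beta, \sigma\circ\beta, \ldots, \sigma^{f_\gerp-1}\circ\beta$: starting from the already-established equality $\deg_{\sigma^{-1}\circ\beta}(D)=0$ and using that at each embedding $\alpha$ in the orbit the value $w_\alpha(\uA)$ is controlled by genericness (in particular, $w_\alpha \leq 1$ with $w_\alpha = 0$ whenever $\alpha\notin\varphi\cap\eta$), a step-by-step propagation of the estimate $\deg_\alpha(D) \leq \min\{1, p\,\deg_{\sigma^{-1}\circ\alpha}(D) + w_\alpha(\uA)\}/1$ through the cycle produces exactly the geometric sum $\sum_{i=1}^{f_\gerp-1} p^{-i}$ on returning to $\beta$.

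Part (1) then follows by the same kind of telescoping, now applied to the sum $\deg_\beta(H)+\deg_\beta(D)$. When $\beta \notin \varphi\cap\eta$ the claim reduces to (2) (using the admissibility $\ell(\varphi^c)\subset\eta$ to reinterpret the boundary data), while when $\beta\in\varphi\cap\eta$ the Hodge inequality holds with \emph{equality} for both $H$ and $D$ (because $p\deg_{\sigma^{-1}\circ\alpha}\neq 1-\deg_\alpha$ generically at such $\alpha$), and summing these equalities around the $\sigma$-orbit produces the bound $\sum_{i=0}^{f_\gerp-1}p^{-i}$. The main obstacle I foresee is the careful bookkeeping of equality versus strict inequality in Lemma \ref{Lemma:partial-Hodge} along a full $\sigma$-orbit inside $\BB_\gerp$: the sum $\sum_i p^{-i}$ is exactly what is produced by iterating the equality case starting from a vertex of the partial-degree hypercube $[0,1]^{\BB_\gerp}$, and the genericness hypothesis on $\Qbar$ is precisely the input needed to guarantee that each step of the iteration either lies in the equality regime of Lemma \ref{Lemma:partial-Hodge} or falls under one of the boundary cases already handled by (3) and the $\deg_\beta(H)=1$ analysis above.
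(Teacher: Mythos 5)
Your treatment of part (3), and of the deduction $\deg_{\sigma^{-1}\circ\beta}(D)=0$ in part (2) once one knows $\deg_\beta(D)<1$, matches the paper's argument exactly (genericness forces $w_\beta(\uA)=0$, then Lemma \ref{Lemma:partial-Hodge} applied to $(\uA,D)$ does the rest). But the way you propose to obtain the degree bounds themselves — part (1) and the inequality $\deg_\beta(D)\leq\sum_{i=1}^{f_\gerp-1}p^{-i}$ in part (2) — has genuine gaps. First, your argument for (2) is circular: you want to establish $\deg_\beta(D)<1$ by an iteration that "starts from the already-established equality $\deg_{\sigma^{-1}\circ\beta}(D)=0$", but that equality was itself derived under the hypothesis $\deg_\beta(D)<1$. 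Second, the propagation step $\deg_\alpha(D)\leq p\,\deg_{\sigma^{-1}\circ\alpha}(D)+w_\alpha(\uA)$ is not a consequence of Lemma \ref{Lemma:partial-Hodge}: when $\deg_{\sigma^{-1}\circ\alpha}(D)=0$ and $w_\alpha(\uA)=0$ the lemma is satisfied for \emph{any} value of $\deg_\alpha(D)$ (including $1$), so it yields no upper bound to propagate. Likewise, the claimed equality case of the lemma "for both $H$ and $D$" when $\beta\in\varphi\cap\eta$ requires $p\deg_{\sigma^{-1}\circ\beta}\neq 1-\deg_\beta$ for $D$, which genericness of $\Qbar$ does not give you.

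The deeper problem is that your whole scheme for (1) only uses data symmetric in $H$ and $D$, namely the Hodge heights $w_\beta(\uA)$ of the common abelian variety, and these cannot see that $D\cap H=0$. Without that condition the inequality in (1) is simply false: e.g.\ at an ordinary generic point with $H[\gerp]=A[\gerp]^{\mu}$, a subgroup with $\deg_\beta=1$ for all $\beta\in\BB_\gerp$ satisfies every constraint you use, yet would give $\deg_\beta(H)+\deg_\beta(D)=2>\sum_{i=0}^{f_\gerp-1}p^{-i}$. The paper proves (1) first and directly (and without using genericness): since $(\uA,H,D)\in\pi_{1,p}^{-1}(Q)$ forces $D\cap H=0$, the map $D[\gerp]\to A[\gerp]/H[\gerp]$ of one-dimensional $\F_{p^{f_\gerp}}$-vector space schemes is generically an isomorphism, and Raynaud's theory (cited as \cite[3.7]{Ti}) gives the weighted comparison $\sum_{i=0}^{f_\gerp-1}p^{f_\gerp-1-i}\deg_{\sigma^i\circ\beta}(D)\leq\sum_{i=0}^{f_\gerp-1}p^{f_\gerp-1-i}\bigl(1-\deg_{\sigma^i\circ\beta}(H)\bigr)$; isolating the $i=0$ term yields (1), and then the bound in (2) is the immediate specialization $\deg_\beta(H)=1$. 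This Raynaud/Breuil--Kisin input coupling $D$ to $A[\gerp]/H[\gerp]$ is the missing ingredient in your proposal, and without it the Hodge-height bookkeeping alone cannot close the argument.
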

\begin{proof}
 The second part of (1) is clearly a consequence of the first part. Suppose $(\uA,H,D)$ are defined over $\cO_{K'}$, where $K'/K$ is a finite extension. Consider the homomorphism $D[\gerp]\ra A[\gerp]/H[\gerp]$ of finite flat group schemes of $1$-dimensional $\F_{p^{f_{\gerp}}}$-vector spaces, which is generically an isomorphism. By Raynaud's theory, one gets  \cite[3.7]{Ti}
\[\sum_{i=0}^{f_{\gerp}-1}p^{f_{\gerp}-1-i}\deg_{\sigma^{i}\circ\beta}(D)\leq \sum_{i=0}^{f_{\gerp}-1}p^{f_{\gerp}-1-i}\deg_{\sigma^{i}\circ\beta}(A/H).\]
Statement (i) follows  from the fact that $\deg_{\gamma}(A/H)=1-\deg_{\gamma}(H)$ for all $\gamma\in \BB$.

For statement (2), the claim on $\deg_{\beta}(D)$ follows from (1). The assumption on $\deg_{\beta}(H)$  implies that $\beta\not\in  \eta(\Qbar)$ (by (\ref{eq: deg vs pe})), whence $\beta \not\in\varphi(\Qbar)\cap\eta(\Qbar)$. Since the specialization $\Qbar$ of $Q$ is generic, it follows that  $w_{\beta}(A)=0$. Applying Lemma~\ref{Lemma:partial-Hodge} to $(\uA, D)$, we get $\deg_{\sigma^{-1}\circ\beta}(D)=0$ in view of $\deg_{\beta}(D)<1$. 

For (3),  we note that the assumption on $\deg_{\sigma^{-1}\circ\beta}(H)$  implies that $\beta\not\in  \varphi(\Qbar)$. The rest of the argument follows as in part (2).

\end{proof}

We assume that  $Q=(\uA,H)\in ]W^{\gen}['\subseteq \Sigma$, where $W=\Wpe$ is a nowhere \'etale stratum of codimension $0$ or $1$. Let $(\uA,H,D)\in \pi_{1,p}^{-1}(Q)$ be a rigid point of $\tYrig^{(p)}$ above $Q$.
\begin{lemma}\label{lemma:case1}
Under the above notations, let $\gerp\in \SS$ be such that $\deg_{\beta}(H)\in \{0,1\}$, for all $\beta\in \BB_{\gerp}$.  Then,  we have $\deg_{\beta}(D)=0$ for all $\beta\in \BB_{\gerp}$ unless $\deg_{\beta}(H)=1$ and $\deg_{\sigma\circ\beta}(H)=0$, in which case, we have 
$$
\frac{1}{p}\leq \deg_{\beta}(D)\leq \sum_{i=1}^{f_{\gerp}-1}\frac{1}{p^i}.
$$
In particular, \eqref{Equ:can-test} holds for every $\beta\in \BB_{\gerp}$.
\end{lemma}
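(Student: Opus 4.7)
The plan is to do a case analysis on the pair $(\deg_\beta(H), \deg_{\sigma\circ\beta}(H)) \in \{0,1\}^2$ for each $\beta \in \BB_\gerp$: I will show $\deg_\beta(D) = 0$ unless this pair is $(1,0)$, and pin down the claimed interval for $\deg_\beta(D)$ in that exceptional pattern. The main tools are Lemma \ref{lemma:degree} and the Goren-Kassaei key lemma \ref{Lemma:partial-Hodge}; inequality \eqref{Equ:can-test} will then follow at once from the case analysis.

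For the ``generic'' cases where $(\deg_\beta(H), \deg_{\sigma\circ\beta}(H)) \neq (1,0)$, if $\deg_{\sigma\circ\beta}(H) = 1$, then Lemma \ref{lemma:degree}(2) applied at $\sigma\circ\beta$ gives $\deg_\beta(D) = 0$ immediately. Otherwise $\deg_\beta(H) = \deg_{\sigma\circ\beta}(H) = 0$, and since $W_{\varphi,\eta}$ is nowhere \'etale at $\gerp$, some $\deg_{\sigma^i\circ\beta}(H)$ must equal $1$; letting $j \geq 2$ be the smallest such $i$, Lemma \ref{lemma:degree}(2) at $\sigma^j\circ\beta$ gives $\deg_{\sigma^{j-1}\circ\beta}(D) = 0$, and then a descending induction using Lemma \ref{lemma:degree}(3)---whose hypothesis is available along the whole chain since $\deg_{\sigma^i\circ\beta}(H) = 0$ for $0 \leq i \leq j-1$---propagates the vanishing back to $\deg_\beta(D) = 0$.

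For the exceptional pattern $\deg_\beta(H) = 1$, $\deg_{\sigma\circ\beta}(H) = 0$, the upper bound $\deg_\beta(D) \leq \sum_{i=1}^{f_\gerp-1} p^{-i}$ is directly Lemma \ref{lemma:degree}(2) applied at $\beta$. For the lower bound, Lemma \ref{Lemma:partial-Hodge} applied to $(\uA,H)$ at $\sigma\circ\beta$ forces $w_{\sigma\circ\beta}(\uA) \geq \min\{p\cdot 1,\,1 - 0\} = 1$, hence $w_{\sigma\circ\beta}(\uA) = 1$; moreover the generic-case analysis applied at $\sigma\circ\beta$ (which is a ``generic'' index since $\deg_{\sigma\circ\beta}(H) = 0$) supplies $\deg_{\sigma\circ\beta}(D) = 0$. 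Then Lemma \ref{Lemma:partial-Hodge} applied to $(\uA,D)$ at $\sigma\circ\beta$ yields
\[
1 = w_{\sigma\circ\beta}(\uA) \geq \min\{p\deg_\beta(D),\, 1 - \deg_{\sigma\circ\beta}(D)\} = \min\{p\deg_\beta(D),\, 1\},
\]
and the equality clause of Lemma \ref{Lemma:partial-Hodge} rules out $p\deg_\beta(D) < 1$ (which would otherwise force $w_{\sigma\circ\beta}(\uA) = p\deg_\beta(D) < 1$), so $\deg_\beta(D) \geq 1/p$. Finally, \eqref{Equ:can-test} is immediate: if $\deg_\beta(D) = 0$, the left-hand side is at most $\deg_{\sigma\circ\beta}(D) \leq 1 < p$; otherwise we are in the exceptional pattern at $\beta$, whence $\deg_{\sigma\circ\beta}(D) = 0$ as above, and the left-hand side is bounded by $p\sum_{i=1}^{f_\gerp-1} p^{-i} = \sum_{i=0}^{f_\gerp-2} p^{-i} < p/(p-1) \leq p$. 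The main technical point will be the descending chain argument in the generic case, where the hypothesis $\deg_{\sigma^k\circ\beta}(D) < 1$ required by Lemma \ref{lemma:degree}(3) must remain available at each step---this is secured by starting the induction at the strict value $\deg_{\sigma^{j-1}\circ\beta}(D) = 0$ and noting that each descent produces the strict value $0$ again.
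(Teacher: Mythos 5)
Your proof is correct and follows essentially the same route as the paper's: the vanishing cases are handled via Lemma \ref{lemma:degree}(2) and a descending chain with Lemma \ref{lemma:degree}(3), and the bounds $\frac{1}{p}\leq \deg_{\beta}(D)\leq \sum_{i=1}^{f_{\gerp}-1}p^{-i}$ in the exceptional pattern come from two applications of Lemma \ref{Lemma:partial-Hodge}, first to $(\uA,H)$ to get $w_{\sigma\circ\beta}(A)=1$ and then to $(\uA,D)$. The only differences are cosmetic: you cite Lemma \ref{lemma:degree}(2) rather than (1) for the upper bound, and you make explicit the equality clause of Lemma \ref{Lemma:partial-Hodge} and the final verification of \eqref{Equ:can-test}, which the paper leaves implicit.
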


\begin{proof}
By Lemma~\ref{lemma:degree}(2), if $\deg_{\sigma\circ\beta}(H)=1$, then we have $\deg_\beta(D)=0$.  We now prove that $\deg_{\beta}(D)=0$ if $\deg_{\beta}(H)=0$. The  assumption that $W_{\varphi,\eta}$ is nowhere \'etale  implies  that $\deg_{\beta}(H)$ can not be  $0$ for all $\beta\in \BB_{\gerp}$.  There exists an integer $n\geq 1$ such that $\deg_{\sigma^{i}\circ\beta}(H)=0$ for $0\leq i\leq n-1$  and $\deg_{\sigma^{n}\circ\beta}(H)=1$. Lemma~\ref{lemma:degree}(2) implies that $\deg_{\sigma^{n-1}\circ\beta}(D)=0$. If $n=1$, then we are done; if $n\geq 2$, then it follows from Lemma~\ref{lemma:degree}(3) that  $\deg_{\sigma^{i}\circ\beta}(D)=0$ for $0\leq i\leq n-2$.  

It follows from the above that $\deg_\beta(D)=0$ except if $\deg_{\beta}(H)=1$ and $\deg_{\sigma\circ\beta}(H)=0$.   Then,  $\deg_{\beta}(D)\leq \sum_{i=1}^{f_{\gerp}}1/p^i$ follows from Lemma~\ref{lemma:degree}(1). It remains to show that $\deg_{\beta}(D)\geq 1/p$. By Lemma~\ref{Lemma:partial-Hodge},  we get $w_{\sigma\circ\beta}(A)=1$. Applying the same lemma to $(\uA, D)$, we obtain
\[
1=w_{\sigma\circ\beta}(A)\geq\min\{1-\deg_{\sigma\circ\beta}(D), p\deg_{\beta}(D)\}.
\]
Since  $\deg_{\sigma\circ\beta}(D)=0$ by the first part of the proof, it follows that $\deg_{\beta}(D)\geq 1/p$. 

 \end{proof}

We now come back to the proof of \ref{Prop:Sigma-U_p}.
 By the discussion above,  it suffices to check \eqref{Equ:can-test} for all $\beta\in \BB$. Assume first that $\codim(W)=0$. In this case, Lemma~\ref{lemma:case1} applies to every prime $\gerp\in \SS$. Hence,   \eqref{Equ:can-test} holds for every $\beta\in \BB$.

Assume now  $\codim(W)=1$. Let $\beta_0$ the unique element of $\BB$  with $\deg_{\beta_0}(H)\in (0,1)$, and $\gerp_0\in \SS$ be such that $\beta_0\in \BB_{\gerp_0}$. Let $\gerp\in\SS$. If $\gerp\neq\gerp_0$, then, $\deg_{\beta}(H)\in \{0,1\}$ for $\beta\in \BB_{\gerp}$. By Lemma~\ref{lemma:case1},  the relation \eqref{Equ:can-test} holds for $\beta\in \BB_{\gerp}$. Consider now the case  $\gerp=\gerp_0$. We can assume $f_{\gerp_0}>1$. We distinguish two cases:

(a) Assume $\deg_{\sigma^{-1}\circ\beta_0}(H)=\deg_{\sigma\circ\beta_0}(H)=1 $. We deduce first from Lemma~\ref{lemma:degree} (respectively, parts (1) and (2)) that $\deg_{\sigma^{-1}\circ\beta}(D)<1$, and $\deg_{\beta_0}(D)=0$. This implies that the relation \eqref{Equ:can-test} is satisfied for $\beta=\sigma^{-1}\circ\beta_0, \beta_0$.  For the rest of the $\beta$'s the result follows from an argument exactly as in the proof of Lemma \ref{lemma:case1}.

(b) Assume $\deg_{\sigma^{-1}\circ\beta_0}(H)=0$ and $\deg_{\sigma\circ\beta_0}(H)=1$.  Lemma~\ref{lemma:degree}(2) implies that  $\deg_{\beta_0}(D)=0$. Applying  Lemma~\ref{lemma:degree}(3), we further find that $\deg_{\sigma^{-1}\circ\beta_0}(D)=0$. This gives the relation \eqref{Equ:can-test} for $\beta=\sigma^{-1}\circ\beta_0, \beta_0$.  For the rest of the $\beta$'s, we could use an argument similar to the proof of Lemma \ref{lemma:case1}, if we prove that $\deg_\beta(D)=0$ if $\deg_\beta(H)=0$. One can find an $\ell  \geq 1$, such that $\deg_{\sigma^i \circ \beta}(H)=0$ for $0\leq i \leq \ell-1$, and $\deg_{\sigma^\ell \circ \beta}(H) \neq 0$. If the latter value is $1$, the proof of {\it loc. cit} works. If not, then $\sigma^\ell \circ \beta=\beta_0$, whence $\deg_{\sigma^\ell \circ \beta}(D) =0 \neq 1$. Successive application of Lemma~\ref{lemma:degree}(3) gives the desired result.

 Now, we assume that $W$ is bad, i.e., $\deg_{\sigma\circ\beta_0}(H)=0$. Note that in this case  for any $D$ as above, we have $\deg_{\beta_0}(D)<1$ (by Lemma ~\ref{lemma:degree}(1) in case 2b, and by definition in case 2c). We consider the following two cases:

%(c) Assume $W$ is a stratum as in  Case 2b of Definition~\ref{Defn:Sigma}.   By definition of $]W^\gen['$, we have $\deg_{\beta_0}(H)>\sum_{i=1}^{f_{\gerp_0}-1}1/p^i$ and $\deg_{\beta}(H)=0$ for $\beta\in \BB_{\gerp_0}-\{\beta_0\}$.  Hence,  we have $\deg_{\beta_0}(D)<1$ by \ref{lemma:degree}(1). Applying  Lemma~\ref{lemma:degree}(3) successively, starting from $\beta=\beta_0$, and cycling through the $\beta$'s using $\sigma^{-1}$,  we see that $\deg_{\beta}(D)=0$ for $\beta\in \BB_{\gerp_0}-\{\beta_0\}$. In particular, \eqref{Equ:can-test} is true for all $\beta\in \BB_{\gerp_0}$.

%(d) Assume that  $W$ is a stratum as in  Case 2c in Definition~\ref{Defn:Sigma}. 

 (c)  $\deg_{\sigma^{-1}\circ\beta_0}(H)=1$. First note that using the same argument as in the proof of Lemma \ref{lemma:case1}, we can show that $\deg_\beta(D)=0$ whenever $\deg_\beta(H)=0$, and also verify \eqref{Equ:can-test} for $\beta \neq \sigma^{-2}\circ\beta_0,\sigma^{-1}\circ\beta_0,\beta_0$. By Lemma \ref{lemma:degree}(1), we have $\deg_{\sigma^{-2}\circ\beta_0}(D)=0$, which implies \eqref{Equ:can-test} for $\beta=\sigma^{-2}\circ\beta_0$. One also verifies \eqref{Equ:can-test} for $\beta=\beta_0$, since $\deg_{\sigma\circ\beta_0}(D)=0$ (owing to $\deg_{\sigma\circ\beta_0}(H)=0$), and $\deg_{\beta_0}(D)<1$ (by definition of $]W^{\rm gen}['$) . It remains to deal with $\beta=\sigma^{-1}\circ\beta_0$. Using Lemma \ref{Lemma:partial-Hodge}, we have
$$
w_{\beta_0}(A)=1-\deg_{\beta_0}(H)\geq \min\{p\deg_{\sigma^{-1}\circ\beta_0}(D), 1-\deg_{\beta_0}(D)\}.
$$
 The relation \eqref{Equ:can-test} is trivially true if $p\deg_{\sigma^{-1}\circ\beta_0}(D)\leq 1-\deg_{\beta_0}(D)$. Assume then $p\deg_{\sigma^{-1}\circ\beta_0}(D)>1-\deg_{\beta_0}(D)$. In this case, we have $\deg_{\beta_0}(D)=1-w_{\beta_0}(A)=\deg_{\beta_0}(H)$. By Lemma \ref{lemma:degree}(i), we have $\deg_{\beta_0}(D)=\deg_{\beta_0}(H)\leq \frac{1}{2}\sum_{i=0}^{f_{\gerp_0}-1}1/p^{i}$, and  $\deg_{\sigma^{-1}\circ \beta_0}(D)\leq \sum_{i=1}^{f_{\gerp_{0}}-1}1/p^i$. Hence, we get 
\[
p\deg_{\sigma^{-1}\circ\beta_0}(D)+\deg_{\beta_0}(D)<\frac{3}{2}\sum_{i=0}^{f_{\gerp_0}-1}1/p^i<\frac{3p}{2(p-1)}<p.
\]
Note that we used $p\geq 3$ in the last step. This verifies \eqref{Equ:can-test} for $\beta=\sigma^{-1}\circ\beta_0$.

(d) $\deg_{\sigma^{-1}\circ\beta_0}(H)=0$.  Since $\deg_{\beta_0}(D)<1$, the proof of Lemma \ref{lemma:case1}, as modified in part (b) above, allows us to verify  \eqref{Equ:can-test} for $\beta \neq\sigma^{-1}\circ\beta_0, \beta_0$. In the course of doing so, we prove that $\deg_\beta(D)=0$ whenever $\deg_\beta(H)=0$. In particular, we have $\deg_{\sigma^{-1}\circ\beta_0}(D)=\deg_{\sigma\circ\beta_0}(D)=0$. Since  $\deg_{\beta_0}(D)<1$, we conclude that \eqref{Equ:can-test} holds for $\beta=\sigma^{-1}\circ\beta_0, \beta_0$ as well.

The proof of Proposition~\ref{Prop:Sigma-U_p} is now complete.
\end{proof}

The following Lemma will be used to prove certain connectedness results in the next section. It also justifies the definition of $]W^\gen['$ in Case 2c of \ref{Defn:Sigma}. 

\begin{lemma}\label{L:BK-module}
Let $W$ be a bad stratum of codimension $1$ as in Case2(c) of \ref{Defn:Sigma},  $\beta_0\in \BB$, $j\in \Z_{\geq 1}$ and $\delta_j$ be as defined there. Let $Q=(\uA,H)\in \spe^{-1}(W^\gen)$ be a rigid point, $(\uA,H,D)\in \pi_{1,p}^{-1}(Q)$. Then, we have
  \begin{enumerate}
\item $\deg_{\beta_0}(D)=\delta_j$, if $\deg_{\beta_0}(H)>\delta_j$;

\item $\deg_{\beta_0}(D)=\deg_{\beta_0}(H)$, if $\deg_{\beta_0}(H)<\delta_j$;

\item $\deg_{\beta_0}(D)\geq \delta_j$, if $\deg_{\beta_0}(H)=\delta_j$. 
\end{enumerate}
\end{lemma}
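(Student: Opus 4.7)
The approach is to localize at the prime $\gerp_0$ where $\beta_0$ lies and track the partial degrees $\mu_i := \deg_{\sigma^i \beta_0}(H)$ and $\nu_i := \deg_{\sigma^i \beta_0}(D)$ around the Frobenius orbit $\BB_{\gerp_0}$. The Case 2c hypothesis of Definition \ref{Defn:Sigma} gives $\mu_0 \in (0,1)$, $\mu_1 = \cdots = \mu_j = 0$, $\mu_{j+1} = 1$, with the remaining $\mu_i \in \{0,1\}$.

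The first step is to determine the partial Hodge heights $w_i := w_{\sigma^i \beta_0}(A)$ using Lemma \ref{Lemma:partial-Hodge} applied to $(\uA,H)$, combined with the fact that $Q \in \spe^{-1}(W^{\rm gen})$ places $\pi(Q)$ in the Goren-Oort stratum $W_{\varphi \cap \eta}$. This yields $w_1 = \min\{p\mu_0, 1\}$ and $w_i = 0$ for $2 \leq i \leq j+1$, since a direct check shows $\sigma^i \beta_0 \notin \varphi \cap \eta$ for these indices. Next, Lemma \ref{lemma:degree}(2) applied at the position where $\mu_{j+1}=1$ gives $\nu_j = 0$. Propagating backwards by iterating Lemma \ref{Lemma:partial-Hodge} applied to $(\uA,D)$ at $\sigma^j \beta_0, \sigma^{j-1}\beta_0, \dots, \sigma^2 \beta_0$, the cascade $0 = w_i \geq \min\{p\nu_{i-1}, 1 - \nu_i\}$ together with the established $\nu_i = 0$ forces $\nu_{i-1} = 0$, so $\nu_1 = \cdots = \nu_j = 0$.

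It remains to determine $\nu_0$. The constraint at $\sigma\beta_0$ reads $\min\{p\mu_0,1\} = w_1 \geq \min\{p\nu_0, 1\}$ with equality away from the coincidence locus $p\nu_0 = 1$. This immediately handles the sub-case $\mu_0 < 1/p$ of (2): the minima equalize to $p\mu_0 = p\nu_0$, giving $\nu_0 = \mu_0$. In cases (1) and (3), the same inequality yields $\nu_0 \geq 1/p$. To sharpen these to the claimed values --- $\nu_0 = \delta_j$ in (1), $\nu_0 = \mu_0$ in (2) for the range $\mu_0 \in (1/p, \delta_j)$ when $j \geq 2$, and $\nu_0 \geq \delta_j$ in (3) --- the plan is to descend to the local description of $A[\gerp_0]$ near $\pi(Q)$: via the Stamm-type coordinates provided by the Key Lemma of \cite{GK}, present $A[\gerp_0]$ as a Raynaud scheme whose Oort-Tate parameters are controlled by $(\mu_i)$ and $(w_i)$, enumerate the complementary subgroups $D$, and read off $\nu_0$ directly from their generators.

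The main obstacle is precisely the sharp upper bound $\nu_0 \leq \delta_j$ in Case (1), which does not follow from the Hodge height and Raynaud inequalities in isolation. Its origin is the rigidity of complementary subgroups in the \emph{near-supersingular} regime $w_1 = 1$: the Verschiebung direction of the Dieudonn\'e module at $\sigma\beta_0$ is degenerate, so any $D$ complementary to $H$ is pinned to agree with a canonical ``$j$-th partial canonical subgroup'' at $\beta_0$, whose partial degree is the geometric sum $\delta_j = \sum_{i=1}^j p^{-i}$ by a Frobenius-cycling argument. Making this rigidity precise --- and verifying that every valid $D$ (not just a distinguished one) satisfies the claimed equality --- is the technical heart of the proof, and requires a careful local computation in Raynaud coordinates that goes beyond the purely numerical lemmas cited above.
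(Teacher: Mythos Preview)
Your preliminary numerical analysis is correct and recovers the easy parts: using Lemmas~\ref{Lemma:partial-Hodge} and~\ref{lemma:degree} you do get $\nu_1=\cdots=\nu_j=0$, the sub-case $\mu_0<1/p$ of~(2), and the lower bound $\nu_0\ge 1/p$ when $\mu_0\ge 1/p$. But, as you yourself concede, this leaves the substance of the lemma unproved: the sharp value $\nu_0=\delta_j$ in~(1), the equality $\nu_0=\mu_0$ in the range $1/p\le\mu_0<\delta_j$ of~(2), and the bound $\nu_0\ge\delta_j$ in~(3). Your proposal to finish via ``Raynaud coordinates'' and ``Stamm-type coordinates'' is a plan, not a proof, and the tools are not quite the right ones. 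Raynaud's classification \cite{Ray} applies to one-dimensional $(\cO_L/\gerp_0)$-vector space schemes such as $H[\gerp_0]$ or $D[\gerp_0]$ individually; it does not give a tractable presentation of the two-dimensional scheme $A[\gerp_0]$ from which complements can be enumerated. Stamm's theorem describes the completed local ring of $Y$ at $\Qbar$, not the internal structure of $A[\gerp_0]$ over $\cO_{K'}$ needed here.

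The paper resolves this by passing to the Breuil--Kisin module $\gerM=\bigoplus_\beta\gerM_\beta$ of $A[p]$ over $\gerS_1=\kappa'[[u]]$. The quotient $A[p]\twoheadrightarrow A[p]/H$ corresponds to a rank-one sub\-module $\gerL\subset\gerM$, and any complementary $D$ to a rank-one $\gerN\subset\gerM$; in a basis adapted to $\gerL$, the generator of $\gerN_{\sigma^{-1}\circ\beta_0}$ is $\ee_{\sigma^{-1}\circ\beta_0}+y\,\ff_{\sigma^{-1}\circ\beta_0}$ for a single parameter $y$, and $\varphi^g$-stability forces $y$ to satisfy an explicit degree-$p^g$ polynomial whose coefficients have $u$-adic valuations computable from the $\mu_i$ (using that $v_p(b_{\beta_0})=v_p(b_{\sigma^{j+1}\circ\beta_0})=0$, the latter by genericity of $\Qbar$). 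A Newton-polygon analysis of this polynomial in each of the three regimes pins down $v_p(y)$, and hence $\deg_{\beta_0}(D)$, yielding exactly the trichotomy of the lemma. This \emph{is} the ``careful local computation'' you allude to, but carried out in the correct linear-algebraic framework; your proposal would need to be rewritten to use Breuil--Kisin (or equivalently Breuil, or filtered Dieudonn\'e) modules and to actually perform the Newton-polygon step.
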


\begin{proof}
We will prove this Lemma using Breuil-Kisin modules.  Let $K'/K$ be a finite extension  with ramification index $e$ and residue field $\kappa'$ such that $(\uA,H)$ and $D$ can be defined over $\cO_{K'}$. Let $\gerS_1=\kappa'[[u]]$, and $\varphi: \gerS_1\ra \gerS_{1}$ be the Frobenius endomorphism.

Let $\gerp_0\in \SS$ with $\beta_0\in \BB_{\gerp_0}$. Since  the Lemma concerns only the $p$-divisible group $A[\gerp_0^\infty]$, to simplify the notation, we may assume that $p$ is inert in $F$. Let 
$(\gerM,\varphi)$  be
 the contravariant Breuil-Kisin module of $A[p]$ as considered in \cite[Sectoin 3]{Ti}. Recall that the action of $\cO_L$ on $A[p]$ induces a canonical decomposition $\gerM=\bigoplus_{\beta\in \BB}\gerM_{\beta}$, where $\gerM_{\beta}$ is a free $\gerS_1$-module of rank 2 on which $\cO_L$ acts via $\beta: \cO_L\ra W(\kappa)\ra \kappa'$. The Frobenius semi-linear endomorphism  $\varphi$ on $\gerM$ sends $\gerM_{\sigma^{-1}\circ\beta}$ to $\gerM_{\beta}$ for $\beta\in \BB$. 
By the main theorem of Kisin modules, the $\cO_L$-equivariant quotient $A[p]\twoheadrightarrow A[p]/H$ corresponds  to an $\cO_{L}$-equivariant Kisin submodule $\gerL=\bigoplus_{\beta\in \BB} \gerL_{\beta}\subseteq \gerM$ such that, for each $\beta\in \BB$, both $\gerL_\beta$ and $\gerM_{\beta}/\gerL_{\beta}$ are both free $\gerS_1$-module of rank $1$. For each $\beta$, we choose a basis $(\ee_{\beta}, \ff_{\beta})$
for $\gerM_{\beta}$ over $\gerS_1$  such that $\gerL_\beta$ is generated by $\ee_{\beta}$. Then the endomorphism $\varphi$ is given by   
\[
\varphi(\ee_{\sigma^{-1}\circ\beta}, \ff_{\sigma^{-1}\circ\beta})=(\ee_{\beta},\ff_{\beta})\begin{pmatrix} a_{\beta}&b_{\beta}\\
0&d_{\beta}\end{pmatrix},
\]
with $a_{\beta}, b_{\beta},d_{\beta}\in\gerS_1$. By abuse of notation, we denote  by $v_p$ the valuation on $\gerS_1$ normalized by $v_p(u)=1/e$.  
We list the properties of $a_{\beta}, b_{\beta}, d_{\beta}$ as follows:
\begin{itemize}
\item By \cite[Lemma 3.9]{Ti}, we have $v_{p}(d_{\beta})=\deg_{\beta}(H)$, and $v_p(a_{\beta})=\deg_{\beta}(A[p]/H)=1-\deg_{\beta}(H)$.  
\item If $\deg_{\beta}(H)=0$, i.e. $d_{\beta}\in \gerS_1^\times$, then we may assume $b_{\beta}=0$ and $d_{\beta}=1$ up to replacing  $\ff_{\beta}$ by $\varphi(\ff_{\sigma^{-1}\circ\beta})$.

\item If $Q\in \spe^{-1}(W^{\gen})$, then $v_p(b_{\sigma^{j+1}\circ\beta_0})=0$ by \cite[Lemma~3.12]{Ti}.

\item The fact that $\deg_{\beta_0}(H)\in (0, 1)$ implies that $v_p(b_{\beta_0})=0$, since the cokernel of the linearized map $1\otimes \varphi: \varphi^*(\gerM_{\sigma^{-1}\circ\beta})\ra \gerM_{\beta}$ is free of rank $1$ over $\gerS_1/u^e$ (See \cite[3.10]{Ti}).

\end{itemize}

 Let $D\subseteq A[p]$ be an $\cO_F$-stable subgroup scheme  such that $(\uA,H,D)\in \pi^{-1}_{1,p}(\uA,H)$, and $\gerN=\bigoplus_{\beta\in \BB}\gerN_\beta\subseteq \gerM$ be the Kisin submodule corresponding to $A[p]/D$. Assume that $\gerN_{\sigma^{-1}\circ\beta_0}=\gerS_1(\ee_{\sigma^{-1}\circ\beta_0}+y_{\sigma^{-1}\circ\beta_0}\ff_{\sigma^{-1}\circ\beta_0})$. Since $\gerN_{\sigma^{-1}\circ\beta_0}$ is stable under $\varphi^{g}$, the variable $y_{\sigma^{-1}\circ\beta_0}$ has to satisfy the equation:
\begin{align*}
&\ a_{\sigma^{-1}\circ\beta_0}\cdots a_{\sigma^{-g+1}\circ\beta_0}^{p^{g-2}}a_{\beta_0}^{p^{g-1}}+\bigl(\sum_{i=0}^{g-1}a_{\sigma^{-1}\circ\beta_0}\cdots a_{\sigma^{-i}\circ\beta_0}^{p^{i-1}}b_{\sigma^{-i-1}\circ\beta_0}^{p^i}d_{\sigma^{-i-2}\circ\beta_0}^{p^{i+1}}\cdots d_{\sigma^{-g}\circ\beta_0}^{p^{g-1}}\bigr)y^{p^{g}}_{\sigma^{-1}\circ\beta_0}\\
&=d_{\sigma^{-1}\circ\beta_0}\cdots d_{\sigma^{-g+1}\circ\beta_0}^{p^{g-2}}d_{\beta_0}^{g-1} y_{\sigma^{-1}\circ\beta_0}^{p^{g}-1}.
\end{align*}
Note that the constant term has valution 
\[
v_p(a_{\sigma^{-1}\circ\beta_0}\cdots a_{\sigma^{-g+1}\circ\beta_0}^{p^{g-2}}a_{\beta_0}^{p^{g-1}})
=\sum_{i=1}^gp^{i-1}(1-\deg_{\sigma^{-i}\circ\beta_0}(H)),
\]
and the coeffecient of $y_{\sigma^{-1}\circ\beta_0}^{p^g-1}$ has valuation 
\[
v_p(d_{\sigma^{-1}}\cdots d_{\sigma^{-g+1}\circ\beta_0}^{p^{g-2}}d_{\beta_0}^{g-1})=\sum_{i=1}^g p^{i-1}\deg_{\sigma^{-i}\circ\beta_0}(H)
\]
Let $c_{p^g}=\sum_{i=0}^{g-1}a_{\sigma^{-1}\circ\beta_0}\cdots a_{\sigma^{-i}\circ\beta_0}^{p^{i-1}}b_{\sigma^{-i-1}\circ\beta_0}^{p^i}d_{\sigma^{-i-2}\circ\beta_0}^{p^{i+1}}\cdots d_{\sigma^{g}\circ\beta_0}^{p^{g-1}}
$ denote the coefficient of $y^{p^{g}}_{\sigma^{-1}\circ\beta_0}$, $j$ and $\delta_j$  be as defined in the Lemma. We distinguish three cases:

(1) If $\deg_{\beta_0}(H)>\delta_j$, we have 
\begin{align*}
&v_p(c_{p^g})=v_p(a_{\sigma^{-1}\circ\beta_0}\cdots a_{\sigma^{-g+1}\circ\beta_0}^{p^{g-2}}b_{\beta_0}^{p^{g-1}})\\
&=\sum_{i=1}^{g-1}p^{i-1}(1-\deg_{\sigma^{-i}\circ\beta_0}(H))
\end{align*}
By the method of Newton polygon, all the $p^{g}$-solutions of $y_{\sigma^{-1}\circ\beta_0}$ have  valuation 
\[
v_{p}(y_{\sigma^{-1}\circ\beta_0})=\frac{1}{p}(1-\deg_{\beta_0}(H)).
\]
Since $\varphi(\ee_{\sigma^{-1}\circ\beta_{0}}+y_{\sigma^{-1}\circ\beta_0}\ff_{\sigma^{-1}\circ\beta_0})
=(a_{\beta_0}+b_{\beta_0}y_{\sigma^{-1}\circ\beta_0}^p)\ee_{\beta_0}
+y_{\sigma^{-1}\circ\beta_0}^pd_{\beta_0}\ff_{\beta_0}$, a detailed computation shows that 
\begin{align*}
&\quad\min\{v_p(a_{\beta_0}+b_{\beta_0}y_{\sigma^{-1}\circ\beta_0}^p), v_p(y_{\sigma^{-1}\circ\beta_0}^pd_{\beta_0})\}\\
&=v_p(a_{\beta_0}+b_{\beta_0}y_{\sigma^{-1}\circ\beta_0}^p)\\
&=1-\delta_j.
\end{align*}
Since $\gerM_{\beta_0}/\gerN_{\beta_0}$ is free of rank $1$ over $\gerS_1$, $\gerN_{\beta_0}$ must be generated by $(\ee_{\beta_0}+\frac{y_{\sigma^{-1}\circ\beta_0}^p}{a_{\beta_0}+b_{\beta_0}y_{\sigma^{-1}\circ\beta_0}^p}\ff_{\beta_0})$.
Hence, by \cite[Lemma~3.9]{Ti}, we have  
\begin{align*}
\deg_{\beta_0}(D)&=1-\deg_{\beta_0}(A[p]/D)=\deg(\gerN_{\beta_0}/(1\otimes \varphi)\varphi^*(\gerN_{\sigma^{-1}\circ\beta_0}))\\
&=1-v_p(a_{\beta_0}+b_{\beta_0}y_{\sigma^{-1}\circ\beta_0}^p)=\delta_j
\end{align*}
for all $D\subseteq A[p]$ with $D\neq H$.

(2) If $\deg_{\beta_0}(H)<\delta_j$, then we have 
\begin{align*}
&v_{u}(c_{p^g})= v_u(a_{\sigma^{-1}\circ\beta_0}\cdots a_{\sigma^{-(g-j-2)}\circ\beta_0}^{p^{g-j-3}}b_{\sigma^{-(g-j-1)}\circ\beta_0}^{p^{g-j-2}}d_{\sigma^{-(g-j)}\circ\beta_0}^{p^{g-j-1}}\cdots  d_{\sigma^{-g}\circ\beta_0}^{p^{g-1}})\\
&=\sum_{i=1}^{g-j-2}p^{i-1}(1-\deg_{\sigma^{-i}\circ\beta_0}(H))+p^{g-1}\deg_{\beta_0}(H).
\end{align*}
Hence, all the $p^g$-solutions of $y_{\sigma^{-1}\circ\beta_0}$ have valuation
\[
v_p(y_{\sigma^{-1}\circ\beta_0})=\frac{1}{p}(1-\deg_{\beta_0}(H))+\frac{1}{p}(\frac{1}{p}+\cdots +\frac{1}{p^j}-\deg_{\beta_0}(H)).
\]
Hence,  we have 
\begin{align*}
&\quad\min\{v_p(a_{\beta_0}+b_{\beta_0}y_{\sigma^{-1}\circ\beta_0}^p), v_p(y_{\sigma^{-1}\circ\beta_0}^pd_{\beta_0})\}\\
&=v_p(a_{\beta_0}+b_{\beta_0}y_{\sigma^{-1}\circ\beta_0}^p)=1-\deg_{\beta_0}(H)
\end{align*}
As in Case (1) above, $\gerN_{\beta_0}$ is generated by $(\ee_{\beta_0}+\frac{y_{\sigma^{-1}\circ\beta_0}^p}{a_{\beta_0}+b_{\beta_0}y_{\sigma^{-1}\circ\beta_0}^p}\ff_{\beta_0})$, and 
\[
\deg_{\beta_0}(D)=1-v_p(a_{\beta_0}+b_{\beta_0}y_{\sigma^{-1}\circ\beta_0}^p)=\deg_{\beta_0}(H)
\]
 for all $D\subseteq \uA[p]$ with $H\neq D$.

(3) If $\deg_{\beta_0}(H)=\delta_j$, 
then 
\begin{align*}
v_p(c_{p^g})&\geq \min_{0\leq i\leq g-1}\{v_p(a_{\sigma^{-1}\circ\beta_0}\cdots a_{\sigma^{-i}\circ\beta_0}^{p^{i-1}}b_{\sigma^{-i-1}\circ\beta_0}^{p^i}d_{\sigma^{-i-2}\circ\beta_0}^{p^{i+1}}\cdots d_{\sigma^{g}\circ\beta_0}^{p^{g-1}})\}\\
&=\sum_{i=1}^{g-j-2}p^{i-1}(1-\deg_{\sigma^{-i}\circ\beta_0}(H))+p^{g-1}(\delta_j),
\end{align*}
and all the $p^g$-solutions of $y_{\sigma^{-1}\circ\beta_0}$ have valuation
\[
v_p(y_{\sigma^{-1}\circ\beta_0})\leq \frac{1}{p}(1-\delta_j).
\]
Hence, we have
\[
\deg_{\beta_{0}}(A[p]/D)=\min\{v_p(a_{\beta_0}+b_{\beta_0}y_{\sigma^{-1}\circ\beta_0}^p), v_p(y_{\sigma^{-1}\circ\beta_0}^pd_{\beta_0})\}\leq 1-\delta_j,
\]
and thus $\deg_{\beta_0}(D)\geq \delta_j$ for all $D$. This finishes the proof of the Lemma.

\end{proof}

\begin{prop}\label{P:connectness-W-Q}
 Let $W_{\varphi,\eta}$ be a bad stratum of codimension $1$ as in Case 2(c) of Definition~\ref{Defn:Sigma}, and $\Qbar\in W_{\varphi,\eta}^\gen$ be a closed point. Then $]W^\gen_{\varphi,\eta}['\cap \spe^{-1}(\Qbar)$ is  geometrically connected. 
\end{prop}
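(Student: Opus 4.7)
The plan is to describe $\spe^{-1}(\Qbar)$ via the semistable local structure of $Y$ at $\Qbar$ and then use Lemma~\ref{L:BK-module} together with continuity of partial degrees along the finite flat correspondence $\pi_{1,p}$ to show that $]W_{\varphi,\eta}^{\gen}[']$ coincides with $\spe^{-1}(W^{\gen})$, which is geometrically connected near $\Qbar$.

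First I would establish the local structure. Since $\Qbar$ is a closed point of the generic part of a codimension-one Goren--Kassaei stratum, it meets no deeper stratum, and the only semistable direction of $\Ybar$ at $\Qbar$ corresponds to the unique $\beta_0$ with $\ell(\varphi)\cap\eta=\{\beta_0\}$. A Stamm-type computation, in the spirit of \cite[2.5.2]{GK} and the local formulas used in the proof of Proposition~\ref{Proposition: codim 2}, yields a formal isomorphism
\[
\widehat{\cO}_{Y,\Qbar}\simeq \cO_K[[u,v,z_1,\ldots,z_{g-1}]]/(uv-p),
\]
with $u,v$ the $\beta_0$-direction semistable parameters and $z_1,\ldots,z_{g-1}$ smooth parameters along the stratum. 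Consequently $\spe^{-1}(\Qbar)$ is rigid-analytically the product of the open annulus $\{0<v_p(u)<1\}$ (with $v=p/u$) and the open $(g-1)$-polydisk, and is in particular geometrically connected. Under this identification, $t(Q):=\deg_{\beta_0}(Q)$ is the continuous parameter $v_p(u)$ (up to the convention of Remark~\ref{Remark: change of notation}) taking every value in $(0,1)\cap\QQ$, while the other partial degrees are constant on the tube.

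Second, I would verify that the condition defining $]W_{\varphi,\eta}^{\gen}[']$ is automatic on $\spe^{-1}(W^{\gen})$. Consider the continuous function $f(\uA,H,D):=\deg_{\beta_0}(D)-\delta_j$ on $\pi_{1,p}^{-1}(\spe^{-1}(W^{\gen}))$. Parts~(1) and~(2) of Lemma~\ref{L:BK-module} give $f\leq 0$ on the admissible open $\pi_{1,p}^{-1}(\{t\neq\delta_j\})$; since $\pi_{1,p}$ is finite flat (so its fibres have constant cardinality), this open is rigid-analytically dense in $\pi_{1,p}^{-1}(\spe^{-1}(W^{\gen}))$, and continuity of $f$ propagates $f\leq 0$ to the whole source. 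Combined with part~(3), which gives $f\geq 0$ on $\pi_{1,p}^{-1}(\{t=\delta_j\})$, one obtains $f\equiv 0$ there. Thus $\deg_{\beta_0}(D)\leq \delta_j$ holds for every $(\uA,H,D)\in\pi_{1,p}^{-1}(Q)$ and every $Q\in\spe^{-1}(W^{\gen})$, so $]W_{\varphi,\eta}^{\gen}[']=\spe^{-1}(W^{\gen})$. In particular $]W_{\varphi,\eta}^{\gen}[']\cap\spe^{-1}(\Qbar)=\spe^{-1}(\Qbar)$, which is geometrically connected by Step~1.

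The main obstacle is the semistable local description in Step~1, matching the relation $uv=p$ with the distinguished index $\beta_0$ and the $z_i$'s with tangent directions along the stratum; this should follow from a refinement of the Stamm/Goren--Kassaei local computations used elsewhere in \S\ref{sect-geom-Y}. Finite flatness of $\pi_{1,p}$ and continuity of partial degrees on $\tYrig^{(p)}$ are standard. Should the Hecke convention for $\pi_{1,p}^{-1}(Q)$ include the diagonal $D=H$, then evaluating the condition at $D=H$ would force $t(Q)\leq \delta_j$, and $]W_{\varphi,\eta}^{\gen}[']\cap\spe^{-1}(\Qbar)$ would instead be the closed half-tube $\{t\leq\delta_j\}\times(\text{polydisk})$, which is still geometrically connected as the product of a half-closed annulus with an open polydisk; the same conclusion holds.
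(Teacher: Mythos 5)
Your Step 1 matches the paper (Stamm's theorem gives exactly the local model $\widehat{\cO}_{Y,\Qbar}\simeq \cO_K[[x_{\beta_0},y_{\beta_0},z_2,\ldots,z_g]]/(x_{\beta_0}y_{\beta_0}-p)$, with $\deg_{\beta_0}(H)=\min\{1,v_p(y_{\beta_0})\}$ on the tube), but Step 2 contains a genuine error, and it is precisely the point where the real work of the proposition lies. You claim that the inequality $\deg_{\beta_0}(D)\leq\delta_j$, known on the locus $\{\deg_{\beta_0}(H)\neq\delta_j\}$ by Lemma~\ref{L:BK-module}(1),(2), propagates by ``continuity and density'' to the circle $\{\deg_{\beta_0}(H)=\delta_j\}$, forcing $\deg_{\beta_0}(D)=\delta_j$ there and hence $]W^\gen_{\varphi,\eta}['\,=\spe^{-1}(W^\gen)$. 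This propagation is false in rigid geometry: the locus $\{v_p(y_{\beta_0})=\delta_j\}$ contains nonempty admissible opens (e.g.\ any disc $\{|y_{\beta_0}-c|\leq|p|^{\delta_j+\epsilon}\}$ with $v_p(c)=\delta_j$ lies entirely inside it), so its complement is not dense in any sense that lets an inequality on it control the whole space. Concretely, on the annulus $0<v_p(u)<1$ the function $g=u-c$ with $v_p(c)=\delta_j$ satisfies $|g|\geq|p|^{\delta_j}$ wherever $v_p(u)\neq\delta_j$, yet vanishes at $u=c$; nothing in Lemma~\ref{L:BK-module} rules out this behaviour for the function computing $\deg_{\beta_0}(D)$, and part (3) of that lemma deliberately gives only the one-sided bound $\deg_{\beta_0}(D)\geq\delta_j$ on the circle. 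So $]W^\gen_{\varphi,\eta}['\cap\spe^{-1}(\Qbar)$ is in general a \emph{proper} subset of $\spe^{-1}(\Qbar)$, obtained by deleting a locus sitting inside the hypersurface $\deg_{\beta_0}(H)=\delta_j$; had your identification been correct, the extra condition in Case 2(c) of Definition~\ref{Defn:Sigma} and all of Lemma~\ref{L:rigid-Lemma} would be vacuous, which is a sanity check worth running.

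What is actually needed, and what the paper does, is to show that deleting this bad locus does not disconnect the tube. The paper encodes $\deg_{\beta_0}(D)$ as $\min\{1,v_p(g)\}$ for an analytic function $g$ on the finite \'etale cover $T=\pi_{1,p}^{-1}(\spe^{-1}(\Qbar))$, observes via Lemma~\ref{L:BK-module} that $g$ satisfies the three valuation conditions of Lemma~\ref{L:rigid-Lemma} with $u=y_{\beta_0}$, $a=b=\delta_j$, and then proves that lemma by choosing a formal model, performing an admissible blow-up along $(u_N,\varpi^{m-1})$ so that the circle $v_p(u)=\delta_j$ acquires its own irreducible component $C_1$ in the special fibre, and checking that the image of the locus $\{|g|<|p|^{\delta_j}\}$ specializes into $C_1-C_0-C_\infty$ with dimension $<d$; removing it therefore leaves the special fibre, and hence the tube, geometrically connected. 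None of this model-theoretic connectedness argument is present in your proposal, and your final fallback paragraph (about whether $D=H$ is allowed in the fibre of $\pi_{1,p}$) does not touch the actual difficulty. To repair the proof you would need to drop the claim $]W^\gen_{\varphi,\eta}['\,=\spe^{-1}(W^\gen)$ and supply an argument, along the lines of Lemma~\ref{L:rigid-Lemma}, that the subdomain $\{Q:\ \deg_{\beta_0}(D)\leq\delta_j\ \forall D\}$ of the annulus-times-polydisc remains geometrically connected.
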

\begin{proof}
Let $\ell(\varphi)\cap \eta=\{\beta_0\}$. 
By Stamm's theorem \cite{St} (see also \cite[Theorem~2.4.1]{GK}), we have an isomorphism 
\[
\widehat{\cO}_{\gerY,\Qbar}\simeq \cO_K[[x_{\beta_0},y_{\beta_0}, z_{2},\cdots, z_g]]/(x_{\beta_0}y_{\beta_0}-p).
\]
Let $\gerS=\Spf(\widehat{\cO}_{\gerY, \Qbar})$. Then,  the  rigid generic fiber (in the sense of Berthelot)  $S=\gerS_{\rig}$ is  identified with $\spe^{-1}(\Qbar)\subseteq \tYrig$. According to \cite[Proposition~4.8]{Ti}, we have $\deg_{\beta_0}(H)=\min\{1, v_p(y_{\beta_0}(Q))\}$ for every rigid point $Q=(A,H)\in S$. Let $T$ be the inverse image of $S$ via the map $\pi_{1,p}: \tYrig^{(p)}\ra \tYrig$. Then $\pi|_T: T\ra S$ is a finite \'etale morphism of rigid analytic spaces.  Let $g$ be an analytic function on $T$ such that $\deg_{\beta_0}(D)=\min\{1, v_p(g(Q'))\}$ for any rigid point $Q'=(\uA,H,D)\in T\subseteq \tYrig^{(p)}$. By definition of $]W^\gen_{\varphi,\eta}['$, we have  
\[
]W^\gen_{\varphi,\eta}['\cap\ \spe^{-1}(\Qbar)=S^{|g|\geq |p|^{\delta_j}},
\]
in the notation of Lemma~\ref{L:rigid-Lemma} below. Moreover, Lemma~\ref{L:BK-module} implies that the assumptions in Lemma~\ref{L:rigid-Lemma} are satisfied with $u=y_{\beta_0}$ and $a=b=\delta_j$. Hence, the Proposition follows immediately from  Lemma~\ref{L:rigid-Lemma}.

\end{proof}

\begin{lemma}\label{L:rigid-Lemma}
Let $\gerS$ be the formal scheme 
\[
\gerS=\Spf( \cO_K [[u, v, z_{2}, \cdots, z_d]]/(uv-p)),
\]
 $S=\gerS_{\rig}$ be the associated rigid analytic space. Let $\pi: T\ra S$ be a finite \'etale morphism of rigid analytic spaces. Let $g$ be an analytic function on $T$. Assume that there exist $a,b\in \Q_{>0}$ with $a\in (0,1)$ such that  we have
\begin{itemize}
\item[(a)] $|g|\leq |p|^b$ on  $\pi^{-1}(S^{|p|<|u|\leq |p|^a})$, where  
$$
S^{|p|<|u|\leq |p|^a}\colon =\{ Q\in S: |p|<|u(Q)|\leq |p|^a\},
$$ 
\item[(b)] $|g|=|p|^{b}$ on  $\pi^{-1}(S^{|p|<|u|<|p|^a})$, 

\item[(c)] $|g|>|p|^b$ on $\pi^{-1}(S^{|p|^a<|u|< 1})$. 
\end{itemize}

Then, the admissible open subdomain
\[
S^{|g|\geq |p|^b}\colon =\{Q\in S:|g(Q')|\geq |p|^b \; \text{for all }Q'\in \pi^{-1}(Q)\}.
\]
 of $S$ is geometrically connected.  
\end{lemma}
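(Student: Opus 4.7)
The strategy is to pass to the Berkovich analytifications of $S$ and $T$, exhibit a Gauss point on the dividing circle $|u|=|p|^a$ lying in $\calU^{\mathrm{an}}$ (where I abbreviate $\calU := S^{|g|\geq |p|^b}$), and use the resulting skeletal structure to deduce geometric connectedness.

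\emph{Setup.} The rigid space $S$ is itself geometrically connected, being the product of the open annulus $\{(u,v): uv=p,\, |u|\in(|p|,1)\}$ and the open polydisc in $z_2,\ldots,z_d$. From conditions (b) and (c), the admissible open ``bad locus'' $T_{\mathrm{bad}} := T^{|g|<|p|^b}\subset T$ is contained in $\pi^{-1}(S^{|u|=|p|^a})$, so
\[
\calU \;\supseteq\; \calA_1 \sqcup \calA_2, \qquad \calA_1 := S^{|p|<|u|<|p|^a}, \qquad \calA_2 := S^{|p|^a<|u|<1},
\]
with each $\calA_i$ geometrically connected. It therefore remains to connect $\calA_1$ and $\calA_2$ through $\calU$.

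\emph{Key step: a Gauss point is in $\calU^{\mathrm{an}}$.} Fix a polyradius $\rho = (|p|^a, r_2, \ldots, r_d)$ with $r_i \in |\overline{K}^\times|$ and $r_i < 1$, and let $\eta_\rho \in S^{\mathrm{an}}$ denote the associated Gauss (type 2) point, whose seminorm is $|f(\eta_\rho)| = \max_{m,\uk} |a_{m,\uk}|\, |p|^{am}\prod_i r_i^{k_i}$ for $f = \sum a_{m,\uk}\, u^m z^{\uk}$. I claim $\eta_\rho \in \calU^{\mathrm{an}}$. Consider the one-parameter family of Gauss points $\eta_\alpha$ at polyradius $(|p|^\alpha, r_2, \ldots, r_d)$ for $\alpha \in (a,1)$; each lies over $\calA_1$, where (b) yields $|g|\equiv |p|^b$ at every classical point of $T$. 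Since the Berkovich seminorm of $g$ at a Gauss point of an affinoid equals the spectral supremum of $|g|$ over classical points, every Berkovich preimage $\eta'_\alpha \in T^{\mathrm{an}}$ of $\eta_\alpha$ satisfies $|g(\eta'_\alpha)| = |p|^b$. As $\alpha \to a^+$, $\eta_\alpha \to \eta_\rho$ along the skeleton of $S^{\mathrm{an}}$; by unique path-lifting for the finite étale morphism $\pi^{\mathrm{an}}$, each $\eta'_\alpha$ converges to a preimage $\eta' \in T^{\mathrm{an}}$ of $\eta_\rho$, and continuity of $|g|$ on $T^{\mathrm{an}}$ then yields $|g(\eta')| = \lim_{\alpha \to a^+} |g(\eta'_\alpha)| = |p|^b$. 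Every preimage of $\eta_\rho$ arises this way, so $\eta_\rho \in \calU^{\mathrm{an}}$.

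\emph{Conclusion and main obstacle.} Since $\calU^{\mathrm{an}}$ is open in $S^{\mathrm{an}}$ and contains the Gauss points $\eta_\rho$, and since each such $\eta_\rho$ lies on the skeleton traversing both flanking annuli, any open neighborhood of $\eta_\rho$ in $\calU^{\mathrm{an}}$ meets both $\calA_1^{\mathrm{an}}$ and $\calA_2^{\mathrm{an}}$; hence they lie in a single connected component of $\calU^{\mathrm{an}}$. Potential isolated components supported on $(S^{|u|=|p|^a})^{\mathrm{an}}$ are ruled out by analyzing the special fiber of a closed-annulus formal model: the reduction $\overline{g/p^b}$ is a regular function, generically nonvanishing on each irreducible component of the reduction (by (b)), whose zero locus is a proper closed subvariety cutting out exactly the tubes comprising $\pi(T_{\mathrm{bad}})$; every good classical point $Q \in \calU \cap S^{|u|=|p|^a}$ accordingly attaches to the main component via the trunk of Gauss points. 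The main technical obstacle is the analytic continuation in the key step---identifying Berkovich Gauss seminorms with spectral suprema, invoking path-lifting for finite étale analytic morphisms, and passing through the limit $\alpha \to a^+$ using continuity of $|g|$.
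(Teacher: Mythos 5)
Your overall picture is right (by (b) and (c) the bad locus sits over the circle $|u|=|p|^a$; the two flanking annuli are connected and contained in $\calU$; a Gauss point over the dividing circle lies in $\calU$), but two points in the key step need repair, and the final step has a genuine gap. For the key step: finite \'etale morphisms of rigid/Berkovich spaces are \emph{not} topological covering maps (distinct branches can merge over type $2$/$3$ points, as already for $x\mapsto x^2$ on an annulus), so ``unique path-lifting'' is not available; the correct argument is that classical points are dense in $\pi^{-1}(\calA_1^{\mathrm{an}})$ and $|g|$ is continuous, so $|g|\equiv|p|^b$ on all of $\pi^{-1}(\calA_1^{\mathrm{an}})$, and since \'etale maps are open, every preimage of $\eta_\rho$ lies in the closure of $\pi^{-1}(\calA_1^{\mathrm{an}})$, whence $|g|=|p|^b$ there. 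Also $\calU^{\mathrm{an}}$ is cut out by a non-strict inequality, hence is not open in $S^{\mathrm{an}}$; connect $\calA_1^{\mathrm{an}}$ and $\calA_2^{\mathrm{an}}$ instead by noting that $\eta_\rho$ lies in the closure of each, so $\calA_1^{\mathrm{an}}\cup\{\eta_\rho\}\cup\calA_2^{\mathrm{an}}$ is a connected subset of $\calU^{\mathrm{an}}$. These are fixable.

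The real gap is the last paragraph: you must show that \emph{every} point of $\calU$ lying over $|u|=|p|^a$ is connected inside $\calU$ to this main component, and you only assert it (``accordingly attaches to the main component via the trunk of Gauss points''). This is precisely where the paper's proof does all its work: it restricts to the quasi-compact pieces $S_N$, performs an admissible blow-up so that the circle $|u|=|p|^a$ becomes an irreducible component $C_1$ of the special fibre of a formal model, uses condition (a) --- which your write-up never invokes, and without which the reduction of $g/\varpi^n$ does not even exist --- to descend $g/\varpi^n$ to the model, shows by a closure/dimension argument (using (b) and finiteness of $\pi$) that the image $\pi(Z)$ of the vanishing locus of the reduction is a closed subset of $C_1$ of dimension $<d$, and then invokes \cite[Proposition 4.3]{AS} to pass from connectedness of $\tilde S_{N,0}-\pi(Z)$ to geometric connectedness of its tube $S_N^{|g|\geq|p|^b}$. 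That last transfer is a genuine theorem, not a formality; in your Berkovich language it amounts to proving that a path from a classical point $Q\in\calU$ over the circle to the Gauss point can be chosen so that $|g|\geq|p|^b$ at all preimages along it, which does not follow from anything you have written (a preimage path could a priori pass through the bad tubes, and the fibres of the reduction in $T$ are not discs on which one could quote monotonicity of $|g|$). Your Gauss-point computation could in fact replace the paper's dimension argument --- it shows the reduction of $g/\varpi^n$ is nonzero generically over $C_1$, so its zero locus is proper --- but the reduction/tube-connectedness machinery (or an equivalent argument) still has to be set up, and the statement is about \emph{geometric} connectedness, so the argument must also be checked to be stable under base change to finite extensions of $K$. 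As it stands, the proposal reproduces the easy part of the paper's proof and leaves the essential part as an assertion.
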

\begin{proof}

For every integer $N\geq 2$, let $S_N$ denote the affinoid subdomain of $S$ defined by $|p|^{1-1/N}\leq |u|\leq |p|^{1/N}$ and $|z_i|\leq |p|^{1/N}$ for $2\leq i\leq d$.  Since $\{S_{N}: N\geq 3\}$ form an admissible open covering of $X$, it suffices to show that $S_{N}^{|g|\geq |p|^b}\colon =S_{N}\cap S^{|g|\geq |p|^{b}}$ is geometrically connected for $N$ sufficiently large.

Let $A=\cO_K[[u,v, z_2, \cdots, z_d]]/(uv-p)$. Up to replacing $K$ by a finite extension, we may assume that there exists $\varpi\in \cO_K$ with $|\varpi|=|p|^{1/N}$. 
We put 
\[
A_N=A\langle u_N, v_N, z_{2,N},\cdots, z_{d,N}\rangle /(u-\varpi u_N, v-\varpi v_N,  z_{2}-\varpi z_{2,N},\cdots, z_{d}-\varpi z_{d,N}),
\]
and $\gerS_{N}=\Spf(A_N)$. Then  $\gerS_N$ is an admissible formal scheme over $\cO_K$, and $\gerS_{N,\rig}=S_{N}$.   
Up to replacing $N$ by  a multiple, we may assume that $a=\frac{m}{N}, b=\frac{n}{N}$ with $m,n\in \Z_{\geq 1}$ and $1<m<N-1$. Let  $\tilde{\gerS}_N$ be the $p$-adic completion of the  maximal $\cO_K$-flat closed subscheme of  the admissible blow-up $\Proj(A_N[X_1,X_2]/(X_1u_N-X_2\varpi^{m-1}))$ of $\gerS_N$ along the ideal $I=(u_N, \varpi^{m-1})$. By Raynaud's theory, the natural map $\tilde{\gerS}_{N}\ra \gerS_{N}$ induces an isomorphism on rigid generic fibers.
Let $\gerU_{N,1}$ be the open formal scheme  of $\tilde{\gerS}_N$ defined by $X_1\neq 0$ and $\gerU_{N,2}$ be that defined by $X_2\neq 0$. Then $\gerU_{1, \rig}$ (resp. $\gerU_{2, \rig}$) is the admissible open subset of $\fX_{N,\rig}$ defined by $|p|^{1-1/N}\leq |u|\leq |p|^a$ (resp. $|p|^a\leq |u|\leq |p|^{1/N}$). 
Then $x_2=X_2/X_1$ and  $x_1=X_1/X_2$  are respectively defined on $\gerU_{N,1}$ and $\gerU_{N,2}$, and $x_2=x_1^{-1}$ on $\gerU_{N,1}\cap \gerU_{N,2}$. We have $u_N=\varpi^{m-1}x_2$ on $\gerU_{N,1}$, and $v_N=\frac{p}{\varpi^{m+1}}x_1$ on $\gerU_{N,2}$. Then, the special fiber of $\tilde{\gerS}_N$, denoted by $\tilde{S}_{N,0}$, is geometrically reduced, and it consists of $3$ geometrically irreducible components: $C_0$, $C_{\infty}$ and $C_{1}$, which are all smooth of dimension $d$ and  defined  respectively by $x_2=0$, $x_1=0$ (i.e. $x_2=\infty$), and $u_{N}=v_N=0$.  Let $U_{N,i}$ for $i=1,2$ denote the open subset of $\tilde{S}_{N,0}$ corresponding to $\gerU_{N,i}$. Then we have $U_{N,1}=\tilde{S}_{N,0}-C_{\infty}$ and $U_{N,2}=\tilde{S}_{N,0}-C_{0}$.

  Let $\tilde{\gerT}_N$ be the normalization of $\tilde{\gerS}_N$ in $T$. Then $\tilde{\gerT}_N$ is a formal model of $T_N:=\pi^{-1}(\gerS_{N,\rig})$, and   $\pi$ extends to a finite map $\pi: \tilde{\gerT}_N\ra \tilde{\gerS}_N$ of formal schemes.  We put $\gerV_{N,i}=\pi^{-1}(\gerU_{N,i})$. Consider the rigid analytic function $\tilde{g}=g/\varpi^n$. We have $|\tilde{g}|\leq 1$ on $\gerV_{N,1,\rig}$ by condition (a). Hence, $\tilde{g}$ descends to a global section on $\gerV_{N,1}$. Let $V_{N,1}$ be the reduced special fiber of $\gerV_{N,1}$, and $\bar g$ denote the image of $\tilde{g}$ in $\Gamma(V_{N,1},\cO_{V_{N,1}})$ by reduction.  Let $Z\subseteq V_{N,1}$ denote the closed subscheme defined by the vanishing of $\bar{g}$. Note that $]Z[$ is the subdomain of $Y_{N}$ such that $|g|<|p|^b$ by our construction of $Z$ and condition (c). Therefore, we have 
$$
S_{N}^{|g|\geq |p|^b}=\gerS_{N,\rig}-\pi(]Z[)=]\tilde{S}_{N,0}-\pi(Z)[.
$$
By \cite[Proposition 4.3]{AS}, $S_{N}^{|g|\geq |p|^b}$ is geometrically connected if and only if $\tilde S_{N,0}-\pi(Z)$ is geometrically connected.
  Since $\pi$ is finite, $\pi(Z)$ is a closed subscheme of $U_{N,1}$. By condition (b), $\pi(Z)$ has no intersection with $C_0$. Thus $\pi(Z)$ is a closed subset contained in $U_{N,1}-C_{0}=C_{1}-C_0-C_{\infty}$. Since $C_{1}-C_0-C_{\infty}$ is smooth  and irreducible of dimension $d$, $\pi(Z)$ must have dimension $<d$. Hence, $\tilde S_{N,0}-\pi(Z)$ is  geometrically connected.  This finishes the proof of the Lemma. 
 
\end{proof}

 \section{Gluing}\label{section: gluing}

%\label{Subsection: more auxiliary} 
 Let $\uA$ be a Hilbert-Blumenthal abelian variety. For any point $P$ of $A[p]$ and $\gerp\in\SS$, write  $P=P^\gerp \times P_\gerp$, where $P^\gerp \in A[p/\gerp]$ and $P_\gerp \in A[\gerp]$.   Throughout this section, for a point of $\HMV_K$, we will replace our usual notation of $(\uA,P)$ by $(\uuA,P_\gerp)$, where $P_\gerp$ is as above, and $\uuA=(\uA,P^\gerp)$. For a point $Q$ of $\uA$ of finite order, we denote by $(Q)$ the $\ol$-module generated by $Q$ inside $A$.
 
We begin by introducing some auxiliary varieties.  Let us fix $\gern|N$, an ideal of $\calO_L$, as well as an $\ol$-generator $\zeta_\gern$ of $(\GG_m\otimes\gerd_L^{-1})[\gern]$. Recall the choice of $\zeta_\gerp$ in Section  \ref{sect-AL-inv}. For a Hilbert-Blumenthal abelian variety $\uA$, consider the $\ol$-linear Weil pairing $\langle -,-\rangle_\gern: A[\gern] \times A[\gern] \arr (\GG_m\otimes\gerd_L^{-1})[\gern]$.

 Let $\HMV^{\underline{\gern}}_{K}$ be the scheme which classifies tuples $(\uuA,P_\gerp,P_\gern, Q_\gern)$ over $K$-schemes, where $(\uuA,P_\gerp)$ is classified by $\HMV_K$,  and $P_\gern, Q_\gern$ are points of maximal order in $A[\gern]$  satisfying $(P_\gern)=(P_N) \cap \uA[\gern]$, where $P_N$ is the point giving the level $N$ structure included in the notation $\uuA$,  and such that   $\langle P_\gern,Q_\gern \rangle_\gern =\zeta_\gern$. Let $\tHMVAN^{ \underline{\gern}}$ denote the toroidal compactification of $\HMV^{ \underline{\gern}}_K$ obtained using our fixed choice of collection of cone decompositions, with the same notation used for the associated rigid analytic variety.

Given $\gerp \in \SS$,  let $\HMV^{\underline{\gerp},\underline{\gern}}_{K}$ be the scheme which classifies tuples $(\uuA,P_\gerp,Q_\gerp, P_\gern, Q_\gern)$ over $K$-schemes, where $(\uuA,P_\gerp,  P_\gern,Q_\gern)$ is classified by $\HMV^{\underline{\gern}}_K$,  and $Q_\gerp$ is a point of maximal order in $A[\gerp]$  such that $\langle P_\gerp,Q_\gerp\rangle_\gerp =\zeta_\gerp$. Let $\tHMVAN^{\underline{\gerp},\underline{\gern}}$ denote the toroidal compactification of $\HMV^{\underline{\gerp},\underline{\gern}}_K$  with the same notation used for the associated rigid analytic variety.

 For any $j_\gerp \in \calO_L/\gerp$,  there is a finite flat morphism
\[
\pi_{2,j_\gerp}:\tHMVAN^{\underline{\gerp},\underline{\gern}} \arr \tHMVAN^{\underline{\gern}},
\]
defined by
$\pi_{2 ,j_\gerp}(\uuA,P_\gerp,Q_\gerp,P_\gern,Q_\gern)=(\uuA/(j_\gerp P_\gerp+Q_\gerp),\overline{P_\gerp},  \overline{P_\gern}, \overline{Q_\gern})$  on the non-cuspidal part, where overline denotes  image in the quotient. We also define
\[
\pi_{1,\gerp}:\tHMVAN^{\underline{\gerp},\underline{\gern}} \arr \tHMVAN^{\underline{\gern}},\]
via  $\pi_{1,\gerp}(\uuA,P_\gerp, Q_\gerp, P_\gern,Q_\gern)=(\uuA, P_\gerp,P_\gern,Q_\gern)$.  Similarly, for any $j_\gern \in \calO_L/\gern$, we define
\[
\pi_{2,j_\gern}:\tHMVAN^{\underline{\gern}} \arr \tHMVAN,
\]
by
$\pi_{2,j_\gern}(\uuA,P_\gerp,P_\gern,Q_\gern)=(\uuA/(j_\gern P_\gern+Q_\gern),\overline{P_\gerp})$. We also define
\[
\pi_{1,\gern}:\tHMVAN^{\underline{\gern}} \arr \tHMVAN,
\]
via  $\pi_{1,\gern}(\uuA,P_\gerp,P_\gern, Q_\gern)=(\uuA, P_\gerp)$. 
Finally, we define 
\[
w_\gerp: \tHMVAN^{\underline{\gern}} \arr \tHMVAN^{\underline{\gern}},
\] given by sending $(\uuA, P_\gerp,P_\gern,Q_\gern)$ to $(\uuA/(P_\gerp),\overline{Q_\gerp},\overline{P_\gern},\overline{Q_\gern})$, for any choice of a point of maximal order $Q_\gerp\in A[\gerp]$ satisfying $\langle P_\gerp,Q_\gerp\rangle_\gerp=\zeta$.

\begin{defn}\label{Definition: intervals} Let $\gerp\in \SS$.   We  define $\II^\ast_\gerp$ to be the interval $(\sum_{i=1}^{f_\gerp-1} 1/p^i,1)$ if $f_\gerp>1$, and $(0,1)$ if $f_\gerp=1$. 
\end{defn}

%We define $\II^\ast$ to be the multiset of intervals whose component at $\gerp$ is $\II_\gerp$. 
%We now prove a few preliminary results.

 %\begin{lemma}\label{Lemma: pi-one-inside-pi-2} Let $\gerp \in \SS$. Let $\underline{I}$ be a multiset of intervals as in Definition \ref{Definition: admissible domains} such that ${\underline{I}}_\gerp=\II_\gerp$. For all $j \in \ol/\gerp$, we have the following inclusion of open subsets of $\tHMVAN^{\underline{p}}$:
 %\[
% \pi_{1,\gerp}^{-1}(\tHMVANtau {\underline{I}}) \subset \pi_{2,\gerp,j}^{-1}(\tHMVANtau {\underline{I}}).
% \]
% \end{lemma}
 % \begin{proof}
  %When $p$ is inert in $\calO_L$, this follows from  \cite[Remark 5.8]{Ka} after unravelling definitions. Keep in mind the slight change of  notation from \cite{Ka} explained in the Remark \ref{Remark: change of notation}.
 % The exact same proof works in the general case, noting that all the maps involved keep partial degrees away from $\gerp$ unchanged (See Equation \ref{equ-deg-wp}).
  %\end{proof}

\begin{defn} Recall $\tilde{\gerY}_\rig^{|\tau|\leq 1}$ which was introduced in \cite[\S]{Ka}. It is an admissible open in $\tilde{\gerY}_\rig$ whose points are those $Q=(\uA,C)$ such that $|\tau(\Qbar) \cap \BB_\gerp|\leq 1, \forall \gerp$, where $\tau(\Qbar)$ is defined in \eqref{Defn:tau-Qbar}. We define $\tHMVANtau=\prtoY^{-1}(\tilde{\gerY}_\rig^{|\tau|\leq 1})$.

\end{defn}

  The following lemma is crucial for the gluing process and was essentially proved in \cite{Ka}.
 \begin{lemma} \label{Lemma: saturated}
Let $\underline{I}$ be a multiset of intervals as in Definition \ref{Definition: admissible domains}, such that $I_\gerp=\II^\ast_\gerp$.  
 \begin{enumerate}
 \item  If $(\uA,P) \in \prtoY^{-1}(\Sigma\underline{I} )$, then for any nonzero point $Q_\gerp$ of $A[\gerp]$, we have
\[
(A,P^\gerp \times Q_\gerp)\in \prtoY^{-1}(\Sigma\underline{I} ).
\]

 \item  For all $j_\gerp \in \ol/\gerp$, we have the following inclusion inside $\tHMVAN^{\underline{\gerp},\underline{\gern}}$:
 \[
 \pi_{1,\gerp}^{-1}\pi_{1,\gern}^{-1}\prtoY^{-1}(\Sigma) \subset \pi_{2,j_\gerp}^{-1}\pi_{1,\gern}^{-1}\prtoY^{-1}(\Sigma).
 \]
 \end{enumerate}
 \end{lemma}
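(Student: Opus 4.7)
The plan is to combine a direct analysis of $\Sigma$ via the Goren--Kassaei stratification with Breuil--Kisin module computations in the spirit of Lemma~\ref{L:BK-module}.

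For part (1), the first step is to identify $\Sigma\underline{I}$ with $I_\gerp=\II^\ast_\gerp$ very explicitly. All codimension $0$ strata of $\Sigma$ have integer total degree $\deg_\gerp$, and the codimension $1$ strata coming from Cases~1, 2a (when $f_\gerp>1$), and 2c of \ref{Defn:Sigma} all produce total degrees at $\gerp$ outside $\II^\ast_\gerp$. The condition $\deg_\gerp(H)\in \II^\ast_\gerp$ therefore forces $(A,H)$ to lie in a Case~2b stratum at $\gerp$ (or a Case~2a stratum when $f_\gerp=1$): one has $\eta\cap \BB_\gerp=\BB_\gerp$, there is a unique $\beta_0\in \BB_\gerp$ with $\deg_{\beta_0}(H)\in \II^\ast_\gerp$, and $\deg_\beta(H)=0$ for the remaining $\beta\in \BB_\gerp$, so that the reduction $\bar A$ lies in $W^{\gen}_{\{\sigma\beta_0\}}\subseteq \Xbar$. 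Since only $H[\gerp]$ is affected when we replace $P_\gerp$ by $Q_\gerp$, the verification reduces to checking that every other line $(Q_\gerp)\subseteq A[\gerp]$ still exhibits the same Case~2b (or 2a) partial-degree pattern. I would carry this out using the Breuil--Kisin module $(\gerM_\gerp,\varphi)$ of $A[\gerp^\infty]$ as in Lemma~\ref{L:BK-module}: the specific shape of $\varphi$ forced by $\bar A\in W^{\gen}_{\{\sigma\beta_0\}}$ implies that every rank $1$ Kisin submodule has partial degrees of the required form, with $\deg_{\beta_0}$ landing in $\II^\ast_\gerp$.

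For part (2), unwinding the definitions reduces the inclusion to the following statement in $\tYrig$: for every $(A,H)\in\Sigma$ and every rank $1$ $\calO_L/\gerp$-subgroup $D\subseteq A[\gerp]$ with $D\cap H[\gerp]=0$, the pair $(A/D,\bar H)$ lies in $\Sigma$, where $\bar H$ is the image of $H$. Since $\bar H[\gerq]=H[\gerq]$ for $\gerq\neq \gerp$, the question is local at $\gerp$. The key simplification is that any two distinct lines in the rank $2$ space $A[\gerp]$ span it, so $H[\gerp]+D=A[\gerp]$, whence $\bar H[\gerp]=A[\gerp]/D$ as a subgroup of $(A/D)[\gerp]$. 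Additivity of partial degrees along $0\to D\to A[\gerp]\to A[\gerp]/D\to 0$ then gives the clean identity $\deg_\beta(\bar H[\gerp])=1-\deg_\beta(D)$. I would then check case by case against \ref{Defn:Sigma}: in each case the partial degrees of $D$, computed from the Breuil--Kisin module via Lemma~\ref{L:BK-module}, translate into the partial-degree pattern of some stratum of $\Sigma$, so that $(A/D,\bar H)\in\Sigma$.

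The main obstacle is the Case~2c bad strata in part (2): the definition of $]W^\gen['$ there involves the auxiliary constraint $\deg_{\beta_0}(D')\le \delta_j$ for \emph{every} $D'$ lying above the given point, which interacts nontrivially with the varying $D$ coming from $\pi_{2,j_\gerp}$. I expect the proof to hinge on the dichotomy of Lemma~\ref{L:BK-module} (between $\deg_{\beta_0}(H)>\delta_j$ and $\deg_{\beta_0}(H)<\delta_j$), which is precisely tailored to produce the partial degrees of $(A/D,\bar H)$ needed to land in the correct Case~2c stratum on the image side.
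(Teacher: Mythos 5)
Your unwinding of both statements is correct (in (1), the degree condition $\deg_\gerp\in\II^\ast_\gerp$ together with membership in $\Sigma$ does force the Case 2b pattern at $\gerp$, with $w_{\sigma\circ\beta_0}(A)=1$ and $w_\beta(A)=0$ for the other $\beta\in\BB_\gerp$; in (2), $\overline{H}[\gerp]=A[\gerp]/D$ and $\deg_\beta(\overline{H}[\gerp])=1-\deg_\beta(D)$), but your route is quite different from the paper's, and as written it is a plan with real gaps. The paper disposes of (1) by citing \cite[Lemma 5.9, Corollary 7.9]{Ka} (via the dictionary of Remark \ref{Remark: change of notation}) and of (2) by invoking Proposition \ref{Prop:Sigma-U_p}; you propose to reprove both from scratch. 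For (1), the decisive claim -- that \emph{every} line $C\subset A[\gerp]$ has $\deg_{\beta_0}(C)\in\II^\ast_\gerp$ and $\deg_\beta(C)=0$ for $\beta\neq\beta_0$ -- is only asserted as a consequence of ``the shape of $\varphi$''. It is true, and the Breuil--Kisin route can prove it, but it is not a formality: one must run the Newton-polygon analysis of the $\varphi^{f_\gerp}$-stability equation as in Lemma \ref{L:BK-module}, where in part of the range $\deg_{\beta_0}(H)\in\II^\ast_\gerp$ the polygon has two distinct slopes (so the lines $C$ fall into two families with different $v_p(y)$), and one must control possible cancellation in $a_{\beta_0}+b_{\beta_0}y^{p}$ (for instance via the characteristic-$p$ identity $a_{\beta_0}^p+b_{\beta_0}^py^{p^2}=(a_{\beta_0}+b_{\beta_0}y^p)^p$, which pins down $v_p(a_{\beta_0}+b_{\beta_0}y^p)$ from the equation itself) before concluding that every family lands back in $\II^\ast_\gerp$ with vanishing degrees elsewhere. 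None of this is carried out.

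The more serious gap is in (2): you treat membership of $(A/D,\overline{H})$ in $\Sigma$ as if it were a condition on partial degrees alone, but by Definition \ref{Defn:Sigma} it is not. Besides identifying the stratum, one must check that the image point specializes into the \emph{generic} part of that stratum, i.e.\ that $h_\beta$ does not vanish at $\overline{A/D}$ for every $\beta$ in $\varphi'^{c}\cap\eta'^{c}$ of the image stratum; this is exactly the information Lemma \ref{Lemma:partial-Hodge} fails to give when $\deg_{\sigma^{-1}\circ\beta}(\overline{H})=0$ and $\deg_\beta(\overline{H})=1$. At primes $\gerq\neq\gerp$ it is inherited from the original point because the $\gerq$-divisible group is unchanged, but at $\gerp$ you must either compute $w_\beta(A/D)$ or show that $\varphi'^{c}\cap\eta'^{c}\cap\BB_\gerp=\emptyset$, which already requires the degree estimates for $D$ ($\deg_\beta(D)=0$ for most $\beta$, $\deg_\beta(D)<1$ always) of Lemmas \ref{lemma:degree} and \ref{lemma:case1} -- precisely the estimates underlying Proposition \ref{Prop:Sigma-U_p}, which the paper uses directly. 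In addition, when the image stratum is of type 2b or 2c one must verify the extra inequalities of Definition \ref{Defn:Sigma} for the image point; in Case 2c these quantify over all subgroups $D'\subset (A/D)[p]$ above the image, a ``second generation'' statement that is not furnished by applying Lemma \ref{L:BK-module} to $A$ itself. You flag this 2c interaction but only ``expect'' it to resolve, and the genericity requirement is not addressed at all, so the proposed case-by-case degree check does not yet yield $(A/D,\overline{H})\in\Sigma$.
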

 \begin{proof}
 
%The second statement follows from Proposition \ref{Prop:Sigma-U_p}. Let $\tYrigtau$ be  the admissible open subset of $\tYrig$ consisting of points $Q$ such that $|\tau(\Qbar) \cap \BB_\gerq| \leq 1$ for all $\gerq \in \SS$ (see \cite[\S 5.2]{Ka}). We claim that $\Sigma \II^\ast=\tYrigtau \II^\ast$.  The first statement then would follow directly from \cite[Lemma 5.9, Corollary 7.9]{Ka} (keeping in mind Remark \ref{Remark: change of notation}). 

The first statement  follows directly from \cite[Lemma 5.9, Corollary 7.9]{Ka} (keeping in mind Remark \ref{Remark: change of notation}). 
The second statement follows from Proposition \ref{Prop:Sigma-U_p}.   \end{proof}

In the following, we will prove two connectivity results which are essential for the gluing process. Given $\calU \subset \tYrig$, $I \subset [0,1]$, and $\beta \in \BB$, we define an admissible open subset of $\calU$
\[
\calU I_\beta:=\{Q\in \calU: \deg_\beta(Q)\in I\}.
\] 
%Let $\gerp \in \SS$, and $\Wpe$ be a stratum. Define $\ord_\gerp(\Wpe):=|\varphi_\gerp|+d_\gerp-|\eta_\gerp|$. If $W$ is an irreducible component of $\Wpe$, then, we set $\ord_\gerp(W):=\ord_\gerp(\Wpe)$.

We first prove a preliminary result which follows directly from the study of the geometry of $\tilde{\Ybar}$ (the special fiber of $\tilde{Y}$) conducted  in \cite{GK}. For $W \subset \tilde{\Ybar}$, we denote its Zariski closure by $W^{cl}$.
 
\begin{lemma} \label{Lemma: intersect}
 Let $\Wpe$ be a stratum in $\tilde{\Ybar}$, and $(\varphi^\prime,\eta^\prime)$ be another admissible pair with  $\supseteq \varphi, \eta^\prime\supseteq \eta$. 

\begin{enumerate}
\item Let $W$ be an irreducible component of  $\Wpe$. There exists an irreducible component $V$ of $W_{\varphi^\prime,\eta^\prime}$, such that $V \subset W^{cl}$.

\item Let $V $ be an irreducible component of $W_{\varphi^\prime,\eta^\prime}$. There exists an irreducible component $W$ of $\Wpe$ such that $V \subset W^{cl}$.
\end{enumerate}
\end{lemma}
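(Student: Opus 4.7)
The plan is to handle the two parts separately, both relying on the smoothness of $Z_{\varphi,\eta}$ established in \cite[2.5.2]{GK}.

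For part (2), I would argue directly. Given an irreducible component $V$ of $W_{\varphi',\eta'}$, the inclusion $W_{\varphi',\eta'}\subseteq Z_{\varphi,\eta}$ (a consequence of \cite[2.5.1]{GK} since $(\varphi',\eta')\geq(\varphi,\eta)$) shows that the Zariski closure $V^{cl}$ in $\tilde{\Ybar}$ is an irreducible closed subset of $Z_{\varphi,\eta}$. Since $Z_{\varphi,\eta}$ is smooth, its local rings are regular and hence integral, so $Z_{\varphi,\eta}$ is locally irreducible and its irreducible components are pairwise disjoint. Therefore $V^{cl}$ lies in a unique irreducible component of $Z_{\varphi,\eta}$, which has the form $W^{cl}$ for a unique irreducible component $W$ of $W_{\varphi,\eta}$.

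For part (1), I would proceed by induction on $n:=|\varphi'\setminus\varphi|+|\eta'\setminus\eta|$. The base case $n=0$ is trivial, and the induction step reduces to the case $n=1$ by inserting an intermediate admissible pair $(\varphi'',\eta'')$ obtained from $(\varphi,\eta)$ by adjoining a single element and applying the hypothesis twice. Given $W$, it then suffices to show $W^{cl}\cap W_{\varphi',\eta'}\neq\emptyset$: any irreducible component of $W_{\varphi',\eta'}$ meeting $W^{cl}$ is automatically contained in $W^{cl}$ by part (2). To produce such a point I would work locally at a point $\Pbar\in W^{cl}\cap W_{\BB,\BB}$. The existence of such a $\Pbar$ is the key geometric input and runs parallel to the argument of Lemma~\ref{lem-ss}: $\prtoX(W^{cl})$ is an irreducible component of $\prtoX(Z_{\varphi,\eta})=Z_{\varphi\cap\eta}$ by \cite[2.6.4]{GK}, which meets the deepest Goren-Oort stratum $W_\BB\subseteq\Xbar$ by Goren-Oort's results, and this lifts back to $\Ybar$ via a local analysis of $\prtoX$. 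Once $\Pbar$ is secured, Stamm's theorem gives $\widehat{\cO}_{\Ybar,\Pbar}\cong\kappa[[x_\beta,y_\beta]]/(x_\beta y_\beta)$, and \cite[2.5.2]{GK}(4) provides explicit local models for $Z_{\varphi,\eta}$ and $Z_{\varphi',\eta'}$; both are smooth hence locally irreducible at $\Pbar$, and the unique local branch of $Z_{\varphi',\eta'}$ through $\Pbar$ lies inside the unique local branch of $Z_{\varphi,\eta}$, which coincides formally with $W^{cl}$ near $\Pbar$. The global irreducible component of $Z_{\varphi',\eta'}$ containing this branch is then contained in $W^{cl}$, and intersecting with $W_{\varphi',\eta'}$ yields the desired $V$.

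The main obstacle will be establishing that every irreducible component of $Z_{\varphi,\eta}$ meets the deepest stratum $W_{\BB,\BB}$. This generalizes Lemma~\ref{lem-ss}, whose proof only treats components of $W_{\vx}^{\rm sp}$, and will require combining the compatibility of $\prtoX$ with both stratifications, the analogous result of Goren-Oort on $\Xbar$, and a careful local analysis of $\prtoX$ above points of $W_{\BB,\BB}$. Once it is in place, the remaining local computations at $\Pbar$ are routine.
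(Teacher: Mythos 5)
Your skeleton is in fact the paper's: part (2) is immediate from $W_{\varphi',\eta'}\subset Z_{\varphi,\eta}$ (smoothness is not even needed there), and part (1) reduces, via the pairwise disjointness of the irreducible components of the nonsingular closed stratum $Z_{\varphi,\eta}$, to producing a point of $W^{cl}\cap W_{\BB,\BB}$. The problem is the step you yourself flag as the ``main obstacle'': that the closure of every irreducible component of a stratum meets the deepest stratum $W_{\BB,\BB}$. This is not something to be re-derived by generalizing Lemma \ref{lem-ss}; it is precisely the statement of \cite[Theorem 2.6.13]{GK}, which the paper simply cites. Your proposed substitute for that citation is genuinely incomplete: from \cite[2.6.4]{GK} one gets that $\prtoX(W^{cl})$ is an irreducible component of $Z_{\varphi\cap\eta}$ and hence meets $W_{\BB}$, but this only produces a point of $W^{cl}$ lying \emph{over} $W_{\BB}$; the fibre of $\prtoX$ over $W_{\BB}$ meets several strata of $\Ybar$ (e.g.\ $Z_{\BB-\{\beta\},\BB-\{\beta\}}$), so passing from such a point to a point of $W^{cl}\cap W_{\BB,\BB}$ is exactly the delicate part that the proof of Lemma \ref{lem-ss} has to labor for (via \cite[2.6.16]{GK} and the explicit description of those fibres), and your sketch does not supply it. As written, then, there is a gap --- albeit one that closes instantly by quoting \cite[2.6.13]{GK}.

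Once a point $\Pbar\in W^{cl}\cap W_{\BB,\BB}$ is secured, your induction on $|\varphi'\setminus\varphi|+|\eta'\setminus\eta|$ and the formal-local analysis via Stamm's theorem and \cite[2.5.2]{GK}(4) are superfluous. Since $W_{\BB,\BB}$ lies in the closure of every stratum, $\Pbar$ lies in $V^{cl}$ for some irreducible component $V$ of $W_{\varphi',\eta'}$; by part (2), $V\subset W_1^{cl}$ for some component $W_1$ of $W_{\varphi,\eta}$; and since distinct components of the nonsingular $Z_{\varphi,\eta}$ are disjoint while $\Pbar\in W^{cl}\cap W_1^{cl}$, one gets $W_1=W$, hence $V\subset W^{cl}$. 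That is the paper's two-line conclusion, and your local-branch argument proves nothing beyond it.
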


\begin{proof}
 Part (2) is immediate: since $V \subset W_{\varphi^\prime,\eta^\prime} \subset W^{cl}_{\varphi,\eta}$, it follows that  $V^{cl}$ lies inside an irreducible component of $W^{cl}_{\varphi,\eta}$. 

For part (1), we argue as follows.  Let $W$ be an irreducible component of $\Wpe$. By \cite[Theorem 2.6.13]{GK}, $W^{cl}$ contains a point in $W_{\BB,\BB}$. Since $W_{\BB,\BB}$ lies inside every closed stratum, it follows that $W^{cl}$ intersects $V^{cl}$, where $V$ is an irreducible component of $W_{\varphi^\prime,\eta^\prime}$. By part (2) of the lemma, there is an irreducible component $W_1$ of $\Wpe$ such that $ V \subset W^{cl}_1$. The nonsingularity of the strata implies that no two distinct irreducible components of  $W^{cl}_{\varphi,\eta}$ meet, and, hence, we must have $W_1=W$. This ends the proof.

\end{proof}

%Let $W$ be an irreducible component of a codimension-0 stratum, and $V$ an irreducible component of a codimension-1  stratum $\Wpe$.  Let $\{\beta\}=\ell(\varphi)\cap\eta$, and assume that $V$ lies in the Zariski closure of $W$. let $W^0 \subset W$, and $V^0 \subset V$ be Zariski dense open subsets. Then
%\begin{enumerate}
%\item If $\ord_\gerp(W)=\ord_\gerp(V)-1$, then $\spe^{-1}(W^0)\cap\spe^{-1}(V^0)(0,\epsilon)$ is connected for all $0<\epsilon<1$.
%\item If $\ord_\gerp(W)=\ord_\gerp(V)+1$, then $\spe^{-1}(W^0)\cap\spe^{-1}(V^0)(\delta,1)$ is connected for all $0<\delta<1$.
%\end{enumerate}

The following general definition includes, as examples, two regions in $\tilde{\gerY}_\rig$ for which we want to prove connectivity results.

\begin{defn} \label{Def: R}Assume that  for  any nowhere-\'etale stratum $\Wpe$ of  $\tilde{\Ybar}$ of codimension 0 or 1, we are given a Zariski open dense subset ${\Wpe^0}\subset \Wpe$, and an admissible open subset $\calR_{\Wpe} \subset \spe^{-1}(W^0_{\varphi,\eta})$. Given such a collection $\calR$, we define
\[
\Sigma_\calR\colon=\bigcup_{\pe} \calR_{\Wpe},
\]
where $\pe$ runs over all admissible pairs with $\Wpe$ nowhere-\'etale and of codimension 0 or 1. For any irreducible component $W \subset \Wpe$, we set 
\[
W^0\colon=W^0_{\varphi,\eta} \cap W,
\]
 \[
 \calR_W\colon=\calR_{\Wpe} \cap \spe^{-1}(W).
 \]
 We assume the following conditions hold:
\begin{enumerate}
  \item If $W \subset \Wpe$ is an irreducible component of a nowhere-\'etale stratum of codimension 0, then $\calR_W=\spe^{-1}(W^0)$.
 
 \item Let $V \subset \Wpe$ be  an irreducible component of a nowhere-\'etale stratum of codimension 1, and $\ell(\varphi)\cap \eta=\{\beta\}\subset \BB_\gerp$. Then, if $\eta_{\gerp}:=\eta\cap \BB_{\gerp}\neq \BB_\gerp$, we have 
 \[
 \calR_V(0,\delta)_\beta=\spe^{-1}(V^0)(0,\delta)_\beta,
 \]
 \[
  \calR_V(\epsilon,1)_\beta=\spe^{-1}(V^0)(\epsilon,1)_\beta,
 \]
 for some $0<\epsilon,\delta<1$. If $\eta_{\gerp}=\BB_\gerp$, then we only require the second condition.
 
  \item Let $V$ be as in (2),   $Q \in \calR_V$, and $\Qbar=\spe(Q)$. Then, $\spe^{-1}(\Qbar) \cap \calR_V$ is connected and $\deg_\beta$ takes values arbitrarily close to $0$ and $1$ on it (If $\eta_{\gerp}=\BB_{\gerp}$, we only require that $\deg_{\beta}$ can take values arbitrarily close to $1$).
 \end{enumerate}

\end{defn}

\begin{lemma} \label{Lemma: Connected} Fix $\gerp \in \SS$. Assume that the collection $\calR$ is as in Definition \ref{Def: R}. 

\begin{enumerate}
 \item Let $Q \in \Sigma_\calR \cap \spe^{-1}(\Wpe)$, where $\Wpe$ is a nowhere-\'etale stratum of codimension 0. Then, there is a connected subset $C \subset \Sigma_\calR$ which contains $Q$, and such that $C \cap \spe^{-1}(W_{\varphi,\eta\cup \{\beta\}})\neq \emptyset$, for any $\beta \not\in\eta_\gerp$.
 
  \item Let $Q \in \Sigma_\calR \cap \spe^{-1}(\Wpe)$, where $\Wpe$ is a nowhere-\'etale stratum of codimension 0 such that $\varphi_\gerp \neq \BB_\gerp$. Then, there is a connected subset $C \subset \Sigma_\calR$ which contains $Q$, and such that $C \cap \spe^{-1}(W_{\varphi\cup\{\beta\},\eta})\neq \emptyset$, for any $\beta \not\in\varphi_\gerp$.
  
   \item Let $Q \in \Sigma_\calR \cap \spe^{-1}(\Wpe)$, where $\Wpe$ is a nowhere-\'etale stratum of codimension 1 such that $\eta_\gerp \neq \BB_\gerp$ (resp., $\varphi_\gerp \neq \BB_\gerp$). Then, there is a connected $C \subset \Sigma_\calR$ containing $Q$, such that $C \cap \spe^{-1}(W_{\varphi-\{\sigma \circ \beta\},\eta})\neq \emptyset$ (resp. $C \cap \spe^{-1}(W_{\varphi,\eta-\{\beta\}})\neq \emptyset$), where $\{\beta\}=\ell(\varphi)\cap \eta$.

\end{enumerate}

\end{lemma}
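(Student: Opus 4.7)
The plan is to prove all three parts by a unified geometric strategy. In each case, $Q$ lies in the tube of a stratum inside $\Sigma_\calR$, and the adjacent target stratum (codimension one higher or lower) differs combinatorially by ``unpinning'' a single directional degree $\deg_\beta$ from $\{0,1\}$ to $(0,1)$. Fix an irreducible component of the ambient stratum through $\spe(Q)$, apply Lemma~\ref{Lemma: intersect} to produce an irreducible component of the target whose closure nests, and let $C$ be the union of the two admissible opens of $\Sigma_\calR$ assigned to these components. The desired connectedness of $C$ will be established by constructing a rigid-analytic one-parameter family that traverses between the two tubes along the swinging coordinate.

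For part~(1), fix an irreducible component $W$ of $W_{\varphi,\eta}$ with $\spe(Q) \in W$; by Definition~\ref{Def: R}(1), $\calR_W = \spe^{-1}(W^0)$ contains $Q$. Lemma~\ref{Lemma: intersect}(1) yields an irreducible component $V$ of $W_{\varphi,\eta\cup\{\beta\}}$ with $V \subset W^{cl}$; set $C = \calR_W \cup \calR_V \subset \Sigma_\calR$. The swinging direction of $V$ is $\beta$ itself, and the match $\calR_V(\epsilon,1)_\beta = \spe^{-1}(V^0)(\epsilon,1)_\beta$ from Definition~\ref{Def: R}(2) (the second equality, which always holds) is what permits gluing near $\deg_\beta = 1$. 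Part~(2) is the analog with target $W_{\varphi\cup\{\beta\},\eta}$, whose swinging direction is $\sigma^{-1}\circ\beta$; here one glues near $\deg_{\sigma^{-1}\circ\beta} = 0$, and the hypothesis $\varphi_\gerp \neq \BB_\gerp$, combined with the nowhere-\'etaleness of $W_{\varphi,\eta}$ at $\gerp$ (which forces $\varphi_\gerp \neq \emptyset$, whence $\eta_\gerp = \ell(\varphi^c)_\gerp \neq \BB_\gerp$), activates the first equality of Definition~\ref{Def: R}(2) for the target. For part~(3), starting from an irreducible component $V$ of $W_{\varphi,\eta}$ (codimension $1$) through $\spe(Q)$, Lemma~\ref{Lemma: intersect}(2) provides an irreducible component $U$ of each target codimension-$0$ stratum with $V \subset U^{cl}$; set $C = \calR_V \cup \spe^{-1}(U^0)$. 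The two sub-cases of~(3) use the $(0,\delta)_\beta$ half of Definition~\ref{Def: R}(2) (which needs $\eta_\gerp \neq \BB_\gerp$) and the $(\epsilon,1)_\beta$ half (automatic), respectively.

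The main obstacle is the rigid-analytic connectedness of $C$. At a closed point $\Pbar$ of the codimension-$1$ irreducible component, Stamm's theorem (\cite[Thm.~2.4.1]{GK}) gives local coordinates $(x_\gamma, y_\gamma)_{\gamma \in \BB}$ at $\Pbar$ with $x_\gamma y_\gamma = p$, and \cite[Prop.~4.8]{Ti} identifies $\deg_\beta(Q) = \min\{1, v_p(y_\beta(Q))\}$ on the formal tube. As $v_p(y_\beta)$ varies continuously from values in $(0,1)$ into $[1,+\infty)$, rigid points sweep a one-parameter family transitioning from the tube of the codimension-$1$ component (where $\deg_\beta \in (0,1)$) into the tube of the adjacent codimension-$0$ component (where $\deg_\beta = 1$), and symmetrically on the other side for a $(0,\delta)_\beta$ transition. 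The boundary-matching conditions in Definition~\ref{Def: R}(2) keep this family inside $\Sigma_\calR$ on both sides, while condition~(3) (fiberwise connectedness of $\calR_V$) together with the density of generic parts $W^0, V^0, U^0$ from Lemma~\ref{Lemma: divisors} and the smoothness of Goren-Kassaei strata \cite[2.5.2]{GK}(4) permits these paths to be chosen avoiding the removed divisorial loci, globalizing the local picture into a connected $C$.
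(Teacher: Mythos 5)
Your overall skeleton---fixing an irreducible component through $\spe(Q)$, invoking Lemma \ref{Lemma: intersect}, and tracking which half of Definition \ref{Def: R}(2) is available in each case (including the observation that nowhere-\'etaleness forces $\eta_\gerp\neq\BB_\gerp$ in part (2), and that the swinging direction there is $\sigma^{-1}\circ\beta$)---matches the paper's. But the heart of the lemma is the rigid-analytic connectedness of $C$, and there your argument has a genuine gap; indeed the proposed local mechanism is false. If $\Pbar$ is a closed point of the codimension-one stratum, then on $\spe^{-1}(\Pbar)$ the Stamm coordinates satisfy $x_\beta y_\beta=p$ with $|x_\beta|<1$ and $|y_\beta|<1$, so $\deg_\beta=v_p(y_\beta)$ is confined to the open interval $(0,1)$ on that residue disc; points with $\deg_\beta=1$ specialize to the adjacent codimension-zero stratum and lie outside $\spe^{-1}(\Pbar)$. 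Hence no one-parameter family inside a residue disc ``transitions from the tube of the codimension-one component into the tube of the adjacent codimension-zero component,'' and the subsequent step of ``choosing paths avoiding the removed divisorial loci'' is not an operation available in rigid geometry: connectedness must be produced by admissible coverings, quasi-compactness and maximum-modulus arguments, not by paths.

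What the paper does at exactly this point is different and is the real content: it applies Lemma \ref{Lemma: intersect} a second time to produce the other codimension-zero component $W_2$ with $V\subset W_2^{cl}$, so that $U=W_1\cup V\cup W_2$ is Zariski open (its complement being a union of closures of the remaining codimension-zero components); consequently $C=U^0[\epsilon,1]_\beta=\spe^{-1}(W_1^0)\cup\spe^{-1}(V^0)[\epsilon,1)_\beta$ is quasi-compact, with $\epsilon$ chosen as in Definition \ref{Def: R}(2) so that $C\subset\Sigma_\calR$. Connectedness is then proved by contradiction: a connected component $D$ of $C$ missing $\spe^{-1}(W_1^0)$ would satisfy $\deg_\beta\leq\epsilon_0<1$ by the maximum-modulus principle (this is where quasi-compactness is indispensable), whereas for any $P\in D$ the connected set $\spe^{-1}(\spe(P))[\epsilon,1)_\beta$ lies in $C$, hence in $D$, and carries points with $\deg_\beta$ arbitrarily close to $1$. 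None of this appears in your proposal. Note also that your $C=\calR_W\cup\calR_V$ (resp.\ $\calR_V\cup\spe^{-1}(U^0)$ in part (3)) contains all of $\calR_V$, whose connectedness is unknown---only the fibrewise statement of condition (3) of Definition \ref{Def: R} is available---and in part (3) that condition must be invoked explicitly to connect $Q$ itself (which may have small $\deg_\beta$) to the sub-region $\deg_\beta\in(\epsilon,1)$ within its residue disc. As written, your argument establishes neither the quasi-compactness nor the connectedness, so the lemma is not proved.
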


The Lemma immediately implies the following.

\begin{cor}\label{Cor: connected} Let $\calR$ be as in Definition \ref{Def: R}. 
%For any $Q \in \Sigma_\calR$,  there is a connected subset $C \subset \Sigma_\calR$, such that $Q \in C$, and $C$ 
Every connected component of $\Sigma_\calR$ intersects $\spe^{-1}(W_{\BB,\emptyset})$, as well as $\spe^{-1}(W_{\emptyset,\BB})$. Furthermore, given $\gerp \in \SS$, every connected component of $\Sigma_\calR$  intersects  $\spe^{-1}(W_{\varphi,\eta})$, where $\Wpe$ is a codimension-1 stratum satisfying  $\varphi_\gerq=\BB_\gerq$, $\eta_\gerq=\emptyset$, for $\gerq\neq\gerp$,  and  $\eta_\gerp=\BB_\gerp$, $|\varphi_\gerp|=1$ (respectively, $\varphi_\gerp=\BB_\gerp$, $|\eta_\gerp|=1$).
\end{cor}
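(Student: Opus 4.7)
Fix a connected component $\calC$ of $\Sigma_{\calR}$ and a point $Q\in\calC$ lying in $\calR_{\Wpe}$ for some nowhere-\'etale admissible pair $(\varphi,\eta)$ of codimension $0$ or $1$. My goal is to construct, inside $\calC$, a connected subset through $Q$ that meets $\spe^{-1}(W_{\BB,\emptyset})$ and the codimension-$1$ strata of the ``respectively'' form in the statement; the symmetric assertions about $\spe^{-1}(W_{\emptyset,\BB})$ and the first form of codimension-$1$ strata will follow by a dual argument. First I reduce to codim $0$: if $\codim(\Wpe)=1$, the nowhere-\'etale hypothesis $(\varphi_{\gerp_{0}},\eta_{\gerp_{0}})\ne(\emptyset,\BB_{\gerp_{0}})$ at the prime $\gerp_{0}$ containing the unique element of $\ell(\varphi)\cap\eta$ guarantees that at least one option of Lemma~\ref{Lemma: Connected}(3) applies, replacing $Q$ by a point of $\calC$ on an adjacent codim-$0$ stratum.

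The iterative step goes as follows. From a codim-$0$ pair $(\varphi,\eta)$, to enlarge $\varphi_{\gerp}$ at a prime $\gerp$ with $\varphi_{\gerp}\ne\BB_{\gerp}$, pick $\beta\in\BB_{\gerp}\setminus\varphi_{\gerp}$; Lemma~\ref{Lemma: Connected}(2) moves us within $\calC$ to the codim-$1$ stratum $W_{\varphi\cup\{\beta\},\eta}$, on which the unique element of $\ell(\varphi\cup\{\beta\})\cap\eta$ is $\sigma^{-1}\beta$ (since $\ell(\varphi)\cap\eta=\emptyset$ at codim $0$ and $\sigma^{-1}\beta\in\ell(\varphi^{c})\subset\eta$ by admissibility). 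Provided $\varphi_{\gerp}\cup\{\beta\}\ne\BB_{\gerp}$, option~(2) of Lemma~\ref{Lemma: Connected}(3) returns us to the codim-$0$ stratum $W_{\varphi\cup\{\beta\},\eta\setminus\{\sigma^{-1}\beta\}}$, strictly enlarging $\varphi_{\gerp}$; nowhere-\'etale-ness of all intermediate strata is automatic. Iterating reduces us to the situation where, for each $\gerp$, either $\varphi_{\gerp}=\BB_{\gerp}$, or $\varphi_{\gerp}=\BB_{\gerp}\setminus\{\beta^{*}_{\gerp}\}$ with $\eta_{\gerp}=\{\sigma^{-1}\beta^{*}_{\gerp}\}$ (the latter description being forced by admissibility and codim $0$). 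For each residual prime $\gerp$ (necessarily with $f_{\gerp}>1$, since $f_{\gerp}=1$ forces $\varphi_{\gerp}=\BB_{\gerp}$ by nowhere-\'etale-ness), one more application of Lemma~\ref{Lemma: Connected}(2) brings us to the codim-$1$ stratum $V_{\gerp}=W_{\varphi\cup\{\beta^{*}_{\gerp}\},\eta}$, now satisfying $\varphi_{\gerp}=\BB_{\gerp}$.

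At $V_{\gerp}$, both options of Lemma~\ref{Lemma: Connected}(3) fail for our purpose (option~(2) requires $\varphi_{\gerp}\ne\BB_{\gerp}$; option~(1) merely reverses the preceding step). To bridge this final gap I invoke Definition~\ref{Def: R}(3): since $\eta_{\gerp}=\{\sigma^{-1}\beta^{*}_{\gerp}\}\ne\BB_{\gerp}$, the admissible open $\calR_{V_{\gerp}}$ contains points with $\deg_{\sigma^{-1}\beta^{*}_{\gerp}}$ arbitrarily close to $1$, which sit in any admissible neighborhood of the tube of the adjacent codim-$0$ stratum $W_{\varphi\cup\{\beta^{*}_{\gerp}\},\eta\setminus\{\sigma^{-1}\beta^{*}_{\gerp}\}}$; thus $\calR_{V_{\gerp}}$ and that tube lie in the same connected component of $\Sigma_{\calR}$, namely $\calC$, completing the enlargement at $\gerp$. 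Processing each residual prime in turn produces a point of $\calC\cap\spe^{-1}(W_{\BB,\emptyset})$, and the codim-$1$ strata $V_{\gerp}$ encountered in the final step (when $\gerp$ is processed after all other primes have been completed) are precisely those of the ``respectively'' form of the corollary. The dual assertions about $\spe^{-1}(W_{\emptyset,\BB})$ and the first form of codim-$1$ strata follow by the analogous iteration using parts (1) and (3)(option~1) of Lemma~\ref{Lemma: Connected}, which shrink $\varphi$ and grow $\eta$, with the final step bridged identically via Definition~\ref{Def: R}(3). The main obstacle throughout is this last-step crossing: the combinatorial moves of Lemma~\ref{Lemma: Connected} alone cannot reach from $V_{\gerp}$ to the adjacent ``fully multiplicative at $\gerp$'' codim-$0$ stratum, and the non-combinatorial rigid-analytic adjacency built into Definition~\ref{Def: R}(3) — in the spirit of Proposition~\ref{P:connectness-W-Q} — is what makes the argument go through.
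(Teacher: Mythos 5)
The paper derives this corollary by straight iteration of Lemma \ref{Lemma: Connected} (it offers no further argument), and your iteration scheme is indeed the intended skeleton; you have also correctly located the delicate point, namely the last move at a prime $\gerp$, when the intermediate codimension-$1$ stratum has $\varphi_\gerp=\BB_\gerp$ and the stated hypothesis of Lemma \ref{Lemma: Connected}(3) (second option) fails. But your bridge across that point is not a proof. You assert that points of $\calR_{V_\gerp}$ with $\deg_{\sigma^{-1}\beta^*_\gerp}$ close to $1$ ``sit in any admissible neighborhood of the tube'' of the adjacent codimension-$0$ stratum and conclude that $\calR_{V_\gerp}$ and that tube lie in one connected component of $\Sigma_\calR$. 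In rigid geometry there is no such ``adjacency implies connectedness'' principle, and Definition \ref{Def: R}(3) only says that the fibre $\spe^{-1}(\Qbar)\cap\calR_{V_\gerp}$ over a single closed point is connected with $\deg$ taking values near $1$; it does not by itself connect that fibre to $\spe^{-1}((W')^{0})$ for the neighbouring codimension-$0$ stratum $W'$. What actually closes the gap is the mechanism inside the proof of Lemma \ref{Lemma: Connected}(3): pick, via Lemma \ref{Lemma: intersect}, an irreducible component $W_1$ of $W'$ whose closure contains the component of the codimension-$1$ stratum through $\Qbar$, prove that $U^{0}[\epsilon,1]_\beta=\spe^{-1}(W_1^{0})\cup\spe^{-1}(V^{0})[\epsilon,1)_\beta$ is connected (quasi-compactness, maximum modulus, Stamm's local coordinates), and only then attach the fibre using Definition \ref{Def: R}(3). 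That argument nowhere uses $\varphi_\gerp\neq\BB_\gerp$, so the degree-to-$1$ move is in fact available unconditionally, which is why the corollary is ``immediate''; by replacing this with the vague adjacency claim you leave precisely the crucial step unproved.

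Two further inaccuracies. First, your opening reduction ``nowhere-\'etale-ness guarantees that at least one option of Lemma \ref{Lemma: Connected}(3) applies'' is false when $f_{\gerp_0}=1$: there $\varphi_{\gerp_0}=\eta_{\gerp_0}=\BB_{\gerp_0}$ and both stated hypotheses fail, so this case again needs the unconditional degree-to-$1$ move just described. Second, the ``dual argument'' you defer to cannot deliver what you claim: Definition \ref{Def: R}(3) gives no values of $\deg_\beta$ near $0$ when $\eta_\gerp=\BB_\gerp$, the first option of part (3) is unavailable exactly when it would render a prime \'etale, and in fact $\spe^{-1}(W_{\emptyset,\BB})$ is disjoint from $\Sigma_\calR$ by the very definition of $\Sigma_\calR$ as a union of subsets of tubes of nowhere-\'etale strata (this clause of the corollary is a slip in the paper and is never used later), so no dualization can reach it; moreover the first codimension-$1$ form in the statement has $\varphi_\gerq=\BB_\gerq$, $\eta_\gerq=\emptyset$ for all $\gerq\neq\gerp$, so it is reached not by shrinking $\varphi$ everywhere but by first completing all primes $\gerq\neq\gerp$ to the multiplicative configuration (which again requires the degree-to-$1$ step) and then lowering degrees at $\gerp$ one at a time via parts (1) and (3)(first option), legitimate as long as some $\beta\in\BB_\gerp$ still has degree $1$. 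So the overall strategy matches the paper's, but as written the proof has genuine gaps at the final crossing, in the $f_{\gerp_0}=1$ reduction, and in the treatment of the ``dual'' half.
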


\begin{proof} (of  Lemma \ref{Lemma: Connected}) We begin by proving (1). Let $W_1$ be an irreducible component of $\Wpe$ containing $\Qbar$. By Definition \ref{Def: R}, $Q \in \spe^{-1}(W_1^0)$. By Lemma \ref{Lemma: intersect} there is an irreducible component $V$ of $W_{\varphi,\eta\cup\{\beta\}}$ which lies  inside $W^{cl}_1$, and  an irreducible component of $W_{\varphi-\{\sigma\circ\beta\},\eta\cup\{\beta\}}$, say $W_2$, such that $V$ lies inside  $W^{cl}_2$. Note that $\beta\in \eta^c\subset \ell(\varphi)$, and hence $W_2$ has codimension $0$. We set  $U=W_1 \cup V \cup W_2$. Then, 
\[
\tilde{\Ybar}-U=\bigcup_{Z} Z,
\]
where $Z$ runs over all irreducible components of all closed codimension-0 strata different from of $W_1^{cl}$ and $W_2^{cl}$.  

It follows that $U$ is Zariski open in $\tilde{\Ybar}$. Hence, so is $U^0:=W^0_1 \cup V^0 \cup W^0_2$. In particular, for any  rational $0<\epsilon<1$, the region 
\[
U^0[\epsilon,1]_\beta=\spe^{-1}(W^0_1) \cup \spe^{-1}(V^0)[\epsilon,1)_\beta
\]
 is quasi-compact (noting that $\deg_\beta$ is $1$ on $\spe^{-1}(W_1)$ and $0$ on $\spe^{-1}(W_2)$, and in $(0,1)$ on $\spe^{-1}(V)$).  Note that $U^0[1,1]_\beta=\spe^{-1}(W^0_1)$ is connected since $W^0_1$ is dense in $W_1$. We claim that $U^0[\epsilon,1]_\beta$ is connected as well.  Assume not. Then,  it must have a connected component $D$ which does not intersect $U^0[1,1]_\beta$. Since $D$ is quasi-compact, we can use the maximum modulus principle to deduce 
 \[
 D \subset U^0[\epsilon,\epsilon_0]_\beta,  \quad\qquad (\dagger)
 \]
 for some $\epsilon_0<1$.  Let $P \in D$ with specialization $\Pbar$. Let $\calA:=\spe^{-1}(\Pbar)[\epsilon,1)_\beta$. Since $\Pbar \in W_{\varphi,\eta\cup\{\beta\}}$, we deduce by Stamm's Theorem presented in \cite[Theorem 2.4.1]{GK}, that 
 \[
 \spe^{-1}(\Pbar)\cong \{(x_\gamma)_{\gamma \in \BB}: \nu(x_\gamma) \geq 0\ \ \  \forall  \gamma \in \BB-\{\beta\};\  0 \leq \nu(x_\beta) \leq 1\},
 \]
where, for any $R \in  \spe^{-1}(\Pbar)$, we have $\deg_\beta(R)=1-\nu_\beta(Q)=1-\nu(x_\beta(R))$ by  Remark \ref{Remark: change of notation}. Therefore, $\calA$ is a connected subset which contains $P$ and lies entirely in $U^0[\epsilon,1]_\beta$. It follows that $\calA \subset D$, which contradicts $(\dagger)$, as $\deg_\beta$ takes values arbitrarily close to $1$ on $\calA$. 
 Now, taking $\epsilon$ as in part (2) of Definition \ref{Def: R} (and remembering $Q \in \spe^{-1}(W_1^0)$), we find that $C=U^0[\epsilon,1]_\beta$ serves as the desired connected subset. 
 
 Part (2) of the Lemma can be proven exactly as above. We now turn to part (3) of the lemma. Let $V$ be an irreducible component of $\Wpe$ which contains $\Qbar$. As above, let $W_1$ be an irreducible component of $W_{\varphi,\eta-\{\beta\}}$ whose Zariski closure contains $V$. Repeating the above argument, we find that for an appropriate choice of $\epsilon$, $U^0[\epsilon,1]_\beta:=\spe^{-1}(W_1^0) \cup  \spe^{-1}(V^0)[\epsilon,1)_\beta$ is connected and lies entirely inside $\calR_\Sigma$. By assumptions (2,3) in Definition \ref{Def: R}, $\spe^{-1}(\Qbar) \cap \calR_\Sigma$ is connected and $\deg_\beta$ takes values arbitrarily close to $1$ on it. Since $\spe^{-1}(\Qbar) \subset \spe^{-1}(V^0)$, it follows that $\spe^{-1}(\Qbar) \cap \calR_\Sigma$ intersects $U^0[\epsilon,1]_\beta$, and hence $Q$ lies in $C$, the connected component of $\calR_\Sigma$ containing $U^0[\epsilon,1]_\beta$ which clearly intersects $\spe^{-1}(W_{\varphi,\eta-\{\sigma\circ\beta\}})$. The other statement in part (3) of the lemma can be proved in exactly the same way.

 \end{proof}

\begin{prop} \label{Prop: connectivity} Let $\Sigma$ be the region defined in  Definition \ref{Defn:Sigma}.   Fix $\gerp\in\SS$.

\begin{enumerate}

\item Every connected component of $\Sigma$ contains $\spe^{-1}(W)$, where $W$ is an irreducible component of $W_{\BB,\emptyset}$, as well as $\spe^{-1}(V)$, where $V$ is an irreducible component of $W_{\BB,\{\beta\}}$, for some $\beta\in\BB_\gerp$.

\item  Let $T_1,T_2 \subset \SS$. Every connected component of $w_{T_1}^{-1}(\Sigma) \cap w_{T_2}^{-1}(\Sigma)$ intersects a region of the form $\tilde{\gerY}^{|\tau|\leq 1}_\rig \underline{I}$, where $\underline{I}$ is a multiset of intervals satisfying $I_\gerp=\II^\ast_\gerp$ given in Definition \ref{Definition: intervals}. 
\end{enumerate}

\end{prop}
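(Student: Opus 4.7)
\emph{Plan for (1).} The strategy is to exhibit $\Sigma$ as $\Sigma_{\calR}$ in the sense of Definition~\ref{Def: R} with $\calR_{W_{\varphi,\eta}} := \,]W_{\varphi,\eta}^\gen['$ of Definition~\ref{Defn:Sigma}, and then invoke Corollary~\ref{Cor: connected}. I would verify the three conditions of Definition~\ref{Def: R} case-by-case following Cases~1, 2a, 2b, 2c of \ref{Defn:Sigma}. Conditions~(1) and~(2) are immediate in Cases~1 and 2a. In Case~2b only the second sub-condition of~(2) is required (since $\eta_{\gerp_0}=\BB_{\gerp_0}$) and holds with $\epsilon=\sum_{i=1}^{f_{\gerp_0}-1}p^{-i}$. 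In Case~2c, Lemma~\ref{L:BK-module} identifies ``$\deg_{\beta_0}(D)\le\delta_j$ for every $D\in\pi_{1,p}^{-1}(Q)$'' with ``$\deg_{\beta_0}(H)\ne\delta_j$'' on $\spe^{-1}(V^\gen)$, giving~(2) with $\epsilon=\delta=\delta_j$. Condition~(3) follows from Stamm's theorem in the good cases and Case~2b, and from Proposition~\ref{P:connectness-W-Q} in Case~2c. Corollary~\ref{Cor: connected} then supplies the intersections with $\spe^{-1}(W_{\BB,\emptyset})$ and with $\spe^{-1}(W_{\BB,\{\beta\}})$ (choosing the ``$\varphi_\gerp=\BB_\gerp,|\eta_\gerp|=1$'' branch at $\gerp$) for some $\beta\in\BB_\gerp$. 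Since $\varphi^c=\emptyset$ in both of these strata we have $W^\gen=W$, so the full tubes lie in $\Sigma$; by smoothness of the strata (\cite[2.5.2]{GK}) and Stamm's theorem, the connected components of the tubes coincide with the tubes of the irreducible components, upgrading ``intersects'' to ``contains'' in the statement of~(1).

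\emph{Plan for (2).} Let $D$ be a connected component of $w_{T_1}^{-1}(\Sigma)\cap w_{T_2}^{-1}(\Sigma)$ and set $T:=T_1\triangle T_2$. I would first construct a codimension-$1$ stratum $W_{\mathrm{bad}}$ of $\Ybar$ whose generic tube $]W_{\mathrm{bad}}^\gen['$ sits in $\Sigma\cap w_T^{-1}(\Sigma)$ and whose points satisfy both $\deg_\gerp\in\II^\ast_\gerp$ and $|\tau(\pi(\Qbar))\cap\BB_{\gerq'}|\le1$ for every $\gerq'\in\SS$. Concretely, take the $\gerp$-component to be the bad codimension-$1$ pair $(\varphi_\gerp,\eta_\gerp)=(\{\sigma\beta\},\BB_\gerp)$ for some $\beta\in\BB_\gerp$ (Case~2b of \ref{Defn:Sigma}, ensuring $\deg_\gerp=\deg_\beta\in\II^\ast_\gerp$ on $]W_{\mathrm{bad}}^\gen['$), and at each $\gerq\ne\gerp$ an admissible codim-$0$ pair compatible with $T$: the multiplicative pair $(\BB_\gerq,\emptyset)$ when $\gerq\notin T$, and a non-extremal admissible pair $(\varphi_\gerq,\eta_\gerq)\notin\{(\emptyset,\BB_\gerq),(\BB_\gerq,\emptyset)\}$ when $\gerq\in T-\{\gerp\}$. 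By~\eqref{E:action-of-w_T} both $W_{\mathrm{bad}}$ and $w_T(W_{\mathrm{bad}})$ remain nowhere \'etale, so $]W_{\mathrm{bad}}^\gen['\subset\Sigma\cap w_T^{-1}(\Sigma)$; by~\eqref{E:relation-strat}, $\pi(W_{\mathrm{bad}})\subseteq W_{\{\sigma\beta\}}$, so the $\tau$-condition holds on the entire tube. Transporting by $w_{T_1}^{-1}$ (and, if $\gerp\in T_1$, first replacing $W_{\mathrm{bad}}$ by its $w_\gerp$-conjugate so that the $\gerp$-interval remains $\II^\ast_\gerp$) produces a region of the prescribed form inside $w_{T_1}^{-1}(\Sigma)\cap w_{T_2}^{-1}(\Sigma)$.

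To finish, I would show each connected component $D$ meets this region by an argument parallel to~(1): set up a collection $\calR'$ analogous to $\calR$ but indexed by strata whose $w_{T_1}$- and $w_{T_2}$-images are both nowhere \'etale, verify Definition~\ref{Def: R} for the resulting $\Sigma_{\calR'}$ using Lemma~\ref{L:BK-module} and Proposition~\ref{P:connectness-W-Q} as before, and apply the natural analog of Corollary~\ref{Cor: connected}. The main obstacle lies in the combinatorial construction of $W_{\mathrm{bad}}$ at primes $\gerq\in T-\{\gerp\}$ with $f_\gerq=1$: the two excluded extremal pairs exhaust the codim-$0$ possibilities at such $\gerq$, so one is forced to use a codim-$1$ local contribution and the correspondingly enlarged $]W^\gen['$ of Case~2c in~\ref{Defn:Sigma}. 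Lemma~\ref{L:BK-module} and Proposition~\ref{P:connectness-W-Q} are the key technical inputs that make this extension go through.
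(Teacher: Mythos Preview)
Your plan for~(1) is correct and essentially identical to the paper's proof: the paper sets $W^0_{\varphi,\eta}:=W^{\gen}_{\varphi,\eta}$ and $\calR_{W_{\varphi,\eta}}:=\,]W_{\varphi,\eta}^\gen['$, verifies the conditions of Definition~\ref{Def: R} in Lemma~\ref{Lemma: R conditions}, applies Corollary~\ref{Cor: connected}, and then upgrades ``intersects'' to ``contains'' exactly as you describe (using that $\spe^{-1}(W_{\BB,\emptyset})$ and $\spe^{-1}(W_{\BB,\{\beta\}})$ lie entirely in $\Sigma$, together with nonsingularity of the strata).

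For~(2), the paper's route is more direct and sidesteps the obstacle you run into. The $\calR'$ you sketch at the very end---indexing over nowhere-\'etale strata and taking
\[
\calR_{W_{\varphi,\eta}}:= w_{T_1}^{-1}\bigl(\,]W^{\gen}_{w_{T_1}(\varphi,\eta)}['\,\bigr)\cap w_{T_2}^{-1}\bigl(\,]W^{\gen}_{w_{T_2}(\varphi,\eta)}['\,\bigr),
\]
so that $\Sigma_{\calR'}=w_{T_1}^{-1}(\Sigma)\cap w_{T_2}^{-1}(\Sigma)$---\emph{is} the paper's second $\calR$, and the paper applies Corollary~\ref{Cor: connected} to it directly. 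The target stratum is not constructed by hand but is supplied by the corollary: it is the codimension-$1$ stratum with $(\varphi_\gerq,\eta_\gerq)=(\BB_\gerq,\emptyset)$ for every $\gerq\neq\gerp$ and $\eta_\gerp=\BB_\gerp$, $|\varphi_\gerp|=1$. One then checks that this stratum already sits inside $\tYrig^{|\tau|\leq 1}$ (since $\varphi\cap\eta$ is a singleton) and that the $\Sigma$-condition at $\gerp$ forces $\deg_\gerp\in\II^*_\gerp$. No reduction to $T=T_1\triangle T_2$ and no hand-built $W_{\mathrm{bad}}$ are needed.

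Your detour through $W_{\mathrm{bad}}$ creates the $f_\gerq=1$ obstacle precisely because you ask both $W_{\mathrm{bad}}$ and $w_T(W_{\mathrm{bad}})$ to be nowhere \'etale, which forces a non-extremal local codim-$0$ pair at every $\gerq\in T-\{\gerp\}$. Your proposed fix is also incorrect: using a codim-$1$ local contribution at such $\gerq$ pushes the total codimension of $W_{\mathrm{bad}}$ to $\geq 2$, which lies outside the scope of Definition~\ref{Def: R}; and for $f_\gerq=1$ the unique codim-$1$ local pair $(\{\beta\},\{\beta\})$ is \emph{good} (Case~2a of~\ref{Defn:Sigma}, since $\sigma\beta=\beta$), not Case~2c, so Lemma~\ref{L:BK-module} and Proposition~\ref{P:connectness-W-Q} are not the relevant inputs there.
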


\begin{proof} The result follows from Lemma \ref{Cor: connected} as follows. Taking $\calR:=\{\calR_V\}$ as in Definition \ref{Def: R}, given by 
\[
W^0_{\varphi,\eta}\colon=\Wpe^{\gen}  \qquad  \qquad \calR_{\Wpe}\colon=]\Wpe^{\gen}[^\prime,
\] implies that $\Sigma_\calR=\Sigma$. On the other hand, given a collection $\calR\colon=\{\calR_V\}$ defined by 
\[
W^0_{\varphi,\eta}\colon=w_{T_1}^{-1}(W^{\gen}_{w_{T_1}(\varphi,\eta)})\cap w_{T_2}^{-1}(W^{\gen}_{w_{T_2}(\varphi,\eta)})
\]
\[
\calR_{\Wpe}\colon=w_{T_1}^{-1}(]W^{\gen}_{w_{T_1}(\varphi,\eta)}[^\prime)\  \cap\  w_{T_2}^{-1}(]W^{\gen}_{w_{T_2}(\varphi,\eta)}[^\prime),
\]
implies that $\Sigma_\calR=w_{T_1}^{-1}(\Sigma) \cap w_{T_2}^{-1}(\Sigma)$. To prove the proposition, we first need to show that the two collections $\calR$ presented above satisfy the conditions given in Definition \ref{Def: R}. This will be done in Lemma \ref{Lemma: R conditions}. Assume this is the case for the rest of this proof. Then, applying Corollary \ref{Cor: connected} to the first choice of $\calR$, we find that every connected component of $\Sigma$ intersects $\spe^{-1}(W_{\BB,\emptyset})$, as well as  as well as $\spe^{-1}(W_{\BB,\{\beta\}})$, for some $\beta\in\BB_\gerp$. Since $\spe^{-1}(W_{\BB,\emptyset}) \subset \Sigma \cap \calV_{\can}$, it follows that every connected component of $\Sigma$ contains a connected component of  $\spe^{-1}(W_{\BB,\emptyset})$. Nonsingularity of strata implies that every  connected component of  $\spe^{-1}(W_{\BB,\emptyset})$ is of the form $\spe^{-1}(W)$ where $W$ is an irreducible component of $W_{\BB,\emptyset}$. An exactly similar argument, using the fact that $\spe^{-1}(W_{\BB,\{\beta\}}) \subset \Sigma \cap \calV_{\can}$, proves the second statement in part (1) of the lemma.

For part (2) of the lemma, we use the second choice of $\calR$ discussed above. By Corollary \ref{Cor: connected},  every connected component of $\Sigma_\calR$ intersects $\spe^{-1}(W_{\varphi,\eta})$, where $\Wpe$ is a codimension-1 stratum satisfying  $\varphi_\gerq=\BB_\gerq$, $\eta_\gerq=\emptyset$, for $\gerq\neq\gerp$,  and  $|\varphi_\gerp|=1$, $\eta_\gerp=\BB_\gerp$. To prove part (2) of the  lemma, it is enough to show that
\[
\spe^{-1}(\Wpe)  \cap \Sigma \subset \tilde{\gerY}^{|\tau|\leq 1}_\rig \underline{I}, \qquad\qquad (\dagger\dagger)
\]
where $\underline{I}$ is a multiset of intervals with $I_\gerp=\II^*_\gerp$, and $I_\gerq=\{n_\gerq\}$, with an integer $n_\gerq \in[0,f_\gerq]$, for all  $\gerq\neq \gerp$.  By \cite[Cor. 2.3.4]{GK}, on $\Wpe$, we have $\tau_\gerq=\emptyset$, and $|\tau_\gerp|=1$, which implies that $\spe^{-1}(\Wpe) \subset  \tilde{\gerY}^{|\tau|\leq 1}_\rig$. Let $\underline{I}$ be the smallest multiset of intervals satisfying $(\dagger\dagger)$. By definition of partial degrees on $\spe^{-1}(\Wpe)$, it follows that for $\gerq\neq \gerp$, $I_\gerq$ must be a singleton consisting of an integer in $[0,f_\gerq]$. Finally, it follows from the definition of $\Sigma$ that $I_\gerp =\II_\gerp^\ast$.

 \end{proof}

\begin{lemma}\label{Lemma: R conditions} The two choices of $\calR$ given in the proof of Proposition \ref{Prop: connectivity} satisfy the  conditions (1)-(3) given in Definition \ref{Def: R}

\end{lemma}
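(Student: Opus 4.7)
The plan is to verify conditions (1)--(3) of Definition \ref{Def: R} for each of the two collections by case analysis on the stratum $W = W_{\varphi,\eta}$. Condition (1) is immediate for both collections: in codimension $0$, Case 1 of Definition \ref{Defn:Sigma} directly sets $]W^\gen[^\prime = \spe^{-1}(W^\gen)$, and the Atkin-Lehner involutions $w_T$ permute codimension-$0$ strata among themselves by \eqref{E:action-of-w_T}; since $\spe$ commutes with $w_T$, the intersection formula for the second collection reduces to $\spe^{-1}(W^0)$.

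For the first collection, conditions (2) and (3) will be checked by running through Cases 2a, 2b, 2c of Definition \ref{Defn:Sigma}. Case 2a is immediate because $]W^\gen[^\prime = \spe^{-1}(W^\gen)$; Stamm's theorem \cite{St} then presents $\spe^{-1}(\Qbar)$ as a connected polydisc on which $\deg_\beta$ attains all values in $[0,1]$. In Case 2b, only the second half of (2) is required, and it holds with $\epsilon = \sum_{i=1}^{f_{\gerp_0}-1} 1/p^i$; the fibers are connected open annuli with $\deg_\beta$ reaching values arbitrarily close to $1$. Case 2c is the substantial step: parts (1) and (2) of Lemma \ref{L:BK-module} will show that the defining condition $\deg_{\beta_0}(D) \leq \delta_j$ for all $D \in \pi_{1,p}^{-1}(Q)$ is automatically satisfied whenever $\deg_{\beta_0}(H)(Q) \neq \delta_j$, which yields $\calR_V(0,\delta_j)_\beta = \spe^{-1}(V^0)(0,\delta_j)_\beta$ and $\calR_V(\delta_j,1)_\beta = \spe^{-1}(V^0)(\delta_j,1)_\beta$. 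Condition (3) is then Proposition \ref{P:connectness-W-Q}, together with the observation that $\deg_\beta$ attains values arbitrarily close to both $0$ and $1$ on the fiber $\spe^{-1}(\Qbar) \cap \calR_V$.

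For the second collection, the key observation is that for a codimension-$1$ stratum $W_{\varphi,\eta}$ with distinguished element $\{\beta\} = \ell(\varphi) \cap \eta \subset \BB_\gerp$, the image $W_{w_T(\varphi,\eta)}$ is again a codimension-$1$ stratum with the same distinguished element $\beta$ (a direct computation from $(\varphi',\eta')_\gerp = (r(\eta_\gerp), \ell(\varphi_\gerp))$ when $\gerp \in T$), and under $w_T^{-1}$ the partial degree $\deg_\beta$ is either preserved (when $\gerp \notin T$) or replaced by $1-\deg_\beta$ (when $\gerp \in T$) by \eqref{equ-deg-wp}. Hence each pullback $w_{T_i}^{-1}(]W^\gen_{w_{T_i}(\varphi,\eta)}[^\prime)$ coincides with $\spe^{-1}(w_{T_i}^{-1}(W^\gen_{w_{T_i}(\varphi,\eta)}))$ near at least one endpoint of the range of $\deg_\beta$, and intersecting the two pullbacks preserves the tubular behavior near the relevant endpoints, giving condition (2) with thresholds $\delta, \epsilon$ dictated by the worst of the two stratum types arising after $w_{T_1}, w_{T_2}$. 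Condition (3) follows from Proposition \ref{P:connectness-W-Q} applied to the images under $w_{T_i}$, together with the fact that $w_T$ is an isomorphism of rigid spaces at the level of fibers $\spe^{-1}(\Qbar)$.

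The hard part will be Case 2c of Definition \ref{Defn:Sigma}, where the defining condition is phrased through all $D \in \pi_{1,p}^{-1}(Q)$ rather than directly in terms of degrees of $H$. Converting this to a statement on $\deg_{\beta_0}(H)$ requires the Breuil-Kisin module computations of Lemma \ref{L:BK-module}, and the fiber connectedness requires Proposition \ref{P:connectness-W-Q}. For the second collection, this analysis must be repeated for each sub-case determined by which of the types (2a, 2b, 2c) the strata $W_{w_{T_i}(\varphi,\eta)}$ belong to, and by the relation between $\gerp$ and $T_1, T_2$; once the two inputs above are granted, the resulting bookkeeping is routine.
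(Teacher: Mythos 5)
Your treatment of the first collection is essentially the paper's own argument and is fine: condition (1) is immediate, condition (2) is the case check through Cases 2a--2c of Definition~\ref{Defn:Sigma} with Lemma~\ref{L:BK-module}(1),(2) supplying Case 2c, and condition (3) is Proposition~\ref{P:connectness-W-Q} together with Lemma~\ref{L:BK-module} for the range of $\deg_{\beta_0}$ on the fibers.

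For the second collection there is a genuine gap, concentrated exactly where you declare the bookkeeping ``routine''. For condition (3) the fiber $\spe^{-1}(\Qbar)\cap\calR_{W_{\varphi,\eta}}$ is the \emph{intersection} of two constraints, $w_{T_1}^{-1}(]W^{\gen}_{w_{T_1}(\varphi,\eta)}[')$ and $w_{T_2}^{-1}(]W^{\gen}_{w_{T_2}(\varphi,\eta)}[')$, pulled back under two different Atkin--Lehner maps. Proposition~\ref{P:connectness-W-Q}, transported by the isomorphism $w_{T_i}$, gives connectedness of the fiber cut out by \emph{one} of these conditions; it says nothing about their intersection, and an intersection of two connected sets need not be connected. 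The paper's proof supplies the missing mechanism by a case division on whether $\gerp$ lies in neither, both, or exactly one of $T_1,T_2$: when $\gerp\notin T_1\cup T_2$ the canonical identification of $\gerp$-divisible groups \eqref{Equ:isom-p-divisible} shows that the two pulled-back Case 2c conditions are literally the same condition, namely $Q\in\,]W^{\gen}_{\varphi,\eta}['$, so one application of Proposition~\ref{P:connectness-W-Q} suffices; when $\gerp\in T_1\cap T_2$ one conjugates by $w_{\gerp}$ to reduce to that case; and when $\gerp$ lies in exactly one of them, exactly one of the image strata is bad, the good one imposes no condition on the fiber, and one is again reduced to a single transported application of Proposition~\ref{P:connectness-W-Q}. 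Nothing in your plan identifies this collapse of two constraints into one, and ``$w_T$ is an isomorphism at the level of fibers'' does not substitute for it. Relatedly, for condition (2) your claim that each pullback is full ``near at least one endpoint'', with thresholds ``dictated by the worst of the two stratum types'', is weaker than what Definition~\ref{Def: R}(2) asks: fullness is required near \emph{specified} endpoints of $\deg_\beta$ (both when $\eta_\gerp\neq\BB_\gerp$, only near $1$ when $\eta_\gerp=\BB_\gerp$), while which endpoint a given pullback is full near flips according to whether $\gerp\in T_i$ (by \eqref{equ-deg-wp}) and depends on the case of $W_{w_{T_i}(\varphi,\eta)}$; you must check, as the paper does by combining \eqref{equ-deg-wp} with the thresholds obtained for the first collection, that the endpoints where both pullbacks are full include those demanded for the original stratum, rather than appeal to one unspecified endpoint.
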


\begin{proof}  Let $\calR$ be the first choice in the proof of \ref{Prop: connectivity}. Condition (1) of \ref{Def: R} is direct from the definition of $\calR_{W_{\varphi,\eta}}=]W^\gen_{\varphi,\eta}['$ in \ref{Defn:Sigma}. For Condition (2), we distinguish the cases of $W_{\varphi,\eta}$:
\begin{itemize}
\item If $W_{\varphi,\eta}$ is a stratum as in Case 2(a) of \ref{Defn:Sigma},  we have $]W^\gen_{\varphi,\eta}['=]W^\gen_{\varphi,\eta}[$. We may take any $\epsilon,\delta\in (0,1)$. 

\item If $W_{\varphi,\eta}$ is in Case2(b),  we  take $\delta=\epsilon =\sum_{i=1}^{f_{\beta}-1}\frac{1}{p^i}$. 

\item If $W_{\varphi,\eta}$ is in Case2(c), it follows from Lemma~\ref{L:BK-module} that  we can take $\delta=\delta_j$ and $\epsilon=1-\delta_j$. 
\end{itemize}  
For Condition (3), we prove first that $\spe^{-1}(\Qbar)\cap\calR_{W_{\varphi,\eta}}=\spe^{-1}\cap\ ]W^\gen_{\varphi,\eta}['$ is connected, for any nowhere \'stale codimension 1 stratum $W_{\varphi,\eta}$. For $W_{\varphi,\eta}$  in Case 2(a) of \ref{Defn:Sigma}, we have $\spe^{-1}(\Qbar)\cap\ ]W^\gen_{\varphi,\eta}['=\spe^{-1}(\Qbar)$, which is clearly connected. If $W_{\varphi,\eta}$ is in Case2(b), we have  
$$
\spe^{-1}(\Qbar)\cap ]W^\gen_{\varphi,\eta}['=\spe^{-1}(\Qbar)(\sum_{i=1}^{f_{\gerp}-1}\frac{1}{p^i},1)_{\beta}
$$
with $\ell(\varphi)\cap\eta=\{\beta\}$.
which is also clearly connected. For $W_{\varphi,\eta}$ in Case 2(c), the connectedness of $\spe^{-1}(\Qbar)\cap\ ]W^\gen_{\varphi,\eta}['$ was proved in Proposition~\ref{P:connectness-W-Q}. The second half of (3) is clear by Lemma~\ref{L:BK-module}.

Consider now the second choice of $\calR$ in the proof of \ref{Prop: connectivity}. Condition (1) is direct from the definition of $]W^\gen_{\varphi,\eta}['$ for codimension $1$ stratum $W_{\varphi,\eta}$. For Condition (2), by \ref{equ-deg-wp} and discussion for the first collection of $\calR$, we may take $\delta=1/p$ and $\epsilon=1-1/p$ regardless of the cases of $W_{\varphi,\eta}$. For Condition (3), the only non-trivial part is to prove the connectness of 
\begin{align*}
\spe^{-1}(\Qbar)\cap\calR_{W_{\varphi,\eta}}\colon &=\spe^{-1}(\Qbar)\cap w_{T_1}^{-1}(]W^\gen_{w_{T_1}(\varphi,\eta)}[')\cap w_{T_2}^{-1}(]W^\gen_{w_{T_2}(\varphi,\eta)}[')\\
&=\{Q\in \spe^{-1}(\Qbar): w_{T_1}(Q)\in ]W^\gen_{w_{T_1}(\varphi,\eta)}[', w_{T_2}(Q)\in ]W^\gen_{w_{T_2}(\varphi,\eta)}['\}
\end{align*} 
for all $\Qbar\in W^0_{\varphi,\eta}=w_{T_1}^{-1}(W_{w_{T_1}(\varphi,\eta)}^\gen)\cap w_{T_2}^{-1}(W_{w_{T_2}(\varphi,\eta)}^\gen)$.
Let $\ell(\varphi)\cap \eta=\{\beta\}$ and $\gerp\in \SS$ such that $\beta\in \BB_\gerp$.  
We have several cases:
\begin{itemize}
\item  $\gerp\notin T_1$ and $\gerp\notin T_2$. In this case, the stratum  $W_{w_{T_1}(\varphi,\eta)}$ and $W_{w_{T_2}(\varphi,\eta)}$ are in the same case as classified in Definition~\ref{Defn:Sigma}.    If $Q=(A,H)\in \spe^{-1}(\Qbar)$ and $w_{T_i}(Q)=(A_i,H_i)$ for $i=1,2$, then  we have a canonical isomorphism of $p$-divisible groups
\begin{equation}\label{Equ:isom-p-divisible}
A[\gerp^\infty]\simeq A_1[\gerp^\infty]\simeq A_2[\gerp^\infty].
\end{equation}
 If both $W_{w_{T_1}(\varphi,\eta)}$ and $W_{w_2(\varphi,\eta)}$ are both in Case2(a), then $\spe^{-1}(\Qbar)\cap \calR_{W_{\varphi,\eta}}=\spe^{-1}(\Qbar)$, which is clearly connected. If both of them are in Case2(b),  we have 
\[
\spe^{-1}(\Qbar)\cap \calR_{W_{\varphi,\eta}}=\spe^{-1}(\Qbar)(\sum_{i=1}^{f_{\gerp}-1}\frac{1}{p^i}, 1)_{\beta}.
\]
If both strata are in Case2(c), using the isomorphisms \eqref{Equ:isom-p-divisible}, we see  easily that, for  $Q\in \spe^{-1}(\Qbar)$,  $w_{T_1}(Q)\in ]W^\gen_{w_1(\varphi,\eta)}['$ and $w_{T_2}(Q)\in ]W^\gen_{w_{T_2}(\varphi,\eta)}['$ hold if and only if $Q\in ]W^\gen_{\varphi,\eta}['$. Hence, we get
\[
\spe^{-1}(\Qbar)\cap \calR_{W_{\varphi,\eta}}=\spe^{-1}(\Qbar)\cap ]W^\gen_{\varphi,\eta}[',
\]
which is connected by Proposition~\ref{P:connectness-W-Q}.

\item $\gerp\in T_1$ and $\gerp\in T_2$. We have then 
\[
\spe^{-1}(\Qbar)\cap \calR_{W_{\varphi,\eta}}=w_{\gerp}^{-1}\biggl(\spe^{-1}(w_{\gerp}(\Qbar))\cap w_{T_1-\{\gerp\}}^{-1}(]W_{w_{T_1}(\varphi,\eta)}^\gen[')\cap w_{T_2-\{\gerp\}}^{-1}(]W_{w_{T_2(\varphi,\eta)}}^\gen[')\biggr).
\]
As $w_{\gerp}^{-1}$ is an isomorphism, $\spe^{-1}(\Qbar)\cap \calR_{W_{\varphi,\eta}}$ is connected if and only if so is $\spe^{-1}(w_{\gerp}(\Qbar))\cap w_{T_1-\{\gerp\}}^{-1}(]W_{w_{T_1}(\varphi,\eta)}^\gen[')\cap w_{T_2-\{\gerp\}}^{-1}(]W_{w_{T_2(\varphi,\eta)}}^\gen[')$. We conclude thus by  the previous case.

\item $\gerp$ lies in one of $T_1$ and $T_2$, but not in the other. In this case, exactly one of $W_{w_{T_1}(\varphi,\eta)}$ and $W_{w_{T_2}(\varphi,\eta)}$ is bad in the sense of Definition~\ref{Defn:goodness}. We may assume thus $W_{w_{T_1}(\varphi,\eta)}$ is bad, hence $W_{w_{T_2}(\varphi,\eta)}$ is good. We have then 
\[
\spe^{-1}(\Qbar)\cap \calR_{W_{\varphi,\eta}}=\spe^{-1}(\Qbar)\cap w_{T_1}^{-1}(]W^\gen_{w_{T_1}(\varphi,\eta)}[')=w_{T_1}^{-1}(\spe^{-1}(w_T(\Qbar))\cap ]W^\gen_{w_{T_1}(\varphi,\eta)}[').
\]
As $w_{T_1}^{-1}$ is an isomorphism, the connectivity of $\spe^{-1}(\Qbar)\cap \calR_{W_{\varphi,\eta}}$ follows from Proposition~\ref{P:connectness-W-Q}.
\end{itemize}

\end{proof}

%Let $\gera$ be an ideal of $\ol$. Denote by $\tilde{\gerY}_{{\rm rig},\gera}$ the connected component of $\tYrig$ where the polarization module is isomorphic to $(\gera, \gera^+)$. Similarly, define $\Ybar_\gera$. Define $\Sigma_\gera:=\Sigma \cap \tilde{\gerY}_{\rm{rig},\gera}$.

For any $S \subset \SS$, let $\Sigma_S= \bigcup_{T \subseteq S} w_T^{-1}(\Sigma)$. 

\begin{lemma} \label{Lemma: final connectivity} Let $\gerp\not\in S \subset \SS$, and $\gern$ an ideal of $\ol$ prime to $p$. The following hold true.

\begin{enumerate}

\item  Every connected component of $\prtoY^{-1}(\Sigma_S)$ contains $\prtoY^{-1}(\spe^{-1}(W))$, where $W$ is an irreducible component of $W_{\BB,\emptyset} \subset \tilde{\Ybar}$.

\item Every connected component of $\pi_{1,\gern}^{-1}\prtoY^{-1}(\Sigma_S \cap w_\gerp^{-1}(\Sigma_S))$ intersects  a region of the form $\pi_{1,\gern}^{-1}(\tHMVANtau \underline{I})=\pi_{1,\gern}^{-1}\prtoY^{-1}(\tilde{\gerY}^{|\tau|\leq 1}_\rig \underline{I})$, where $\underline{I}$ is a multiset of intervals satisfying $I_\gerp=\II^\ast_\gerp$ given in Definition \ref{Definition: intervals}. 

\end{enumerate}
\end{lemma}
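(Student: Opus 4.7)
I will prove Part (2) first and use it inside the induction for Part (1). For Part (2), observing that $\gerp \notin S$, for any $T \subseteq S$ we have $\gerp \notin T$ and $w_\gerp^{-1}w_T^{-1} = w_{T\cup\{\gerp\}}^{-1}$, so
\[
\Sigma_S \cap w_\gerp^{-1}(\Sigma_S) \;=\; \bigcup_{T_1,\, T_2 \subseteq S} w_{T_1}^{-1}(\Sigma) \cap w_{T_2\cup\{\gerp\}}^{-1}(\Sigma),
\]
and in each summand $\gerp$ lies in exactly one of the pair $T_1$, $T_2\cup\{\gerp\}$. Given a connected component $C$ of $\pi_{1,\gern}^{-1}\prtoY^{-1}(\Sigma_S \cap w_\gerp^{-1}(\Sigma_S))$ and a point $x \in C$, its image in $\tilde{\gerY}_\rig$ lies in some $w_{T_1}^{-1}(\Sigma) \cap w_{T_2\cup\{\gerp\}}^{-1}(\Sigma)$, and Proposition~\ref{Prop: connectivity}(2) applied to that pair provides a connected subset of this intersection running from the image of $x$ to a point of $\tilde{\gerY}^{|\tau|\leq 1}_\rig \underline{I}$ for some multiset $\underline{I}$ with $I_\gerp = \II^\ast_\gerp$. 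Lifting this connected subset through the finite flat composition $\pi_{1,\gern}\circ\prtoY$ yields a connected subset of $C$ meeting $\pi_{1,\gern}^{-1}\prtoY^{-1}(\tilde{\gerY}^{|\tau|\leq 1}_\rig \underline{I})$, as required.

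For Part (1), I will induct on $|S|$. The base case $S = \emptyset$ gives $\Sigma_S = \Sigma$, and I must show every connected component of $\prtoY^{-1}(\Sigma)$ contains the entire preimage $\prtoY^{-1}(\spe^{-1}(W))$ for some irreducible component $W$ of $W_{\BB,\emptyset}$. By Proposition~\ref{Prop: connectivity}(1), each such connected component contains $\spe^{-1}(W)$ for some irreducible $W \subset W_{\BB,\emptyset}$, and for each prime $\gerq \in \SS$ it also contains $\spe^{-1}(V)$ for some irreducible $V \subset W_{\BB,\{\beta\}}$ with $\beta \in \BB_\gerq$. A suitable admissible open neighborhood of $\spe^{-1}(V)$ sitting inside $]W_{\BB,\{\beta\}}^{\rm gen}[^\prime \subset \Sigma$ is characterized by $\deg_\gerq \in \II^\ast_\gerq$, hence lies in $\Sigma\underline{I}$ with $I_\gerq = \II^\ast_\gerq$. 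By Lemma~\ref{Lemma: saturated}(1), all $p^{f_\gerq}-1$ level structures obtained by replacing $P_\gerq$ with a non-zero point of $H[\gerq]$ stay in $\prtoY^{-1}(\Sigma)$, and the connectedness of this neighborhood links them. Letting $\gerq$ range over $\SS$ collects the entire fibre $\prtoY^{-1}(\spe^{-1}(W))$, of cardinality $\prod_\gerp(p^{f_\gerp}-1)$, into a single connected component of $\prtoY^{-1}(\Sigma)$.

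For the inductive step, pick $\gerq \in S$, write $S' = S - \{\gerq\}$, and note $\Sigma_S = \Sigma_{S'} \cup w_\gerq^{-1}(\Sigma_{S'})$. The inductive hypothesis applied to $S'$ and to its $w_\gerq$-conjugate yields the desired containment on each piece, with irreducible components in $W_{\BB,\emptyset}$ and in $w_\gerq^{-1}(W_{\BB,\emptyset})$ respectively. To merge these across the union, Part (2), applied with $\gerp$ replaced by $\gerq$ and $S$ replaced by $S'$, forces every connected component of $\pi_{1,\gern}^{-1}\prtoY^{-1}(\Sigma_{S'} \cap w_\gerq^{-1}(\Sigma_{S'}))$ to meet $\pi_{1,\gern}^{-1}\prtoY^{-1}(\tilde{\gerY}^{|\tau|\leq 1}_\rig \underline{I})$ with $I_\gerq = \II^\ast_\gerq$. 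Such a region lies in $\pi_{1,\gern}^{-1}\prtoY^{-1}(\cV_\gerq \cap w_\gerq^{-1}(\cV_\gerq))$, and applying the base-case sheet-identification analysis inside this canonical overlap joins the two containments inside a single connected component of $\prtoY^{-1}(\Sigma_S)$. The hard part throughout is the base case of Part (1): the delicate bookkeeping is to chain the saturation moves at different primes along admissible neighborhoods that remain inside $\prtoY^{-1}(\Sigma)$, and it is this chain that underlies the identification of all the fibrewise sheets.
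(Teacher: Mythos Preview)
Your Part~(2) is essentially the paper's argument: decompose $\Sigma_S \cap w_\gerp^{-1}(\Sigma_S)$ as a union of $w_{T_1}^{-1}(\Sigma)\cap w_{T_2\cup\{\gerp\}}^{-1}(\Sigma)$ and apply Proposition~\ref{Prop: connectivity}(2) to each piece.

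For Part~(1), however, your route diverges from the paper and carries a real gap. The paper's first move is to observe that because $\prtoY$ (and $\pi_{1,\gern}$) are finite flat, one may strip off all the preimages and prove the statement entirely on $\tYrig$: every connected component of $\Sigma_S$ contains $\spe^{-1}(W)$ for some irreducible $W\subset W_{\BB,\emptyset}$. This is then handled by a direct linking argument: for $T_2=T_1\cup\{\gerq\}$, each connected component $D'$ of $\Sigma$ contains $\spe^{-1}(V)$ for some irreducible $V\subset W_{\BB,\{\beta\}}$, $\beta\in\BB_{\gerq}$ (Proposition~\ref{Prop: connectivity}(1)), so $w_\gerq^{-1}(D')$ contains $\spe^{-1}(w_\gerq^{-1}(V))$, which lies in a nowhere-\'etale codimension-one stratum and hence meets $\Sigma$. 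No induction, no sheets.

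You instead keep the $\prtoY^{-1}$ and try to show that the full fibre $\prtoY^{-1}(\spe^{-1}(W))$ is contained in a single connected component by ``chaining saturation moves'' through Lemma~\ref{Lemma: saturated}(1). This step does not go through: Lemma~\ref{Lemma: saturated}(1) only asserts that if $(\uA,P)\in\prtoY^{-1}(\Sigma\underline I)$ then every $(\uA,P^\gerq\times Q_\gerq)$ also lies in $\prtoY^{-1}(\Sigma\underline I)$; it says nothing about these points lying in the same \emph{connected component}. Your sentence ``the connectedness of this neighborhood links them'' conflates connectedness of an open in $\tYrig$ with connectedness of its $\prtoY$-preimage in $\tHMVAN$, but $\prtoY$ is finite \'etale, so locally the sheets are disjoint. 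Nothing you have written produces a path between distinct level-$\gerq$ structures, and the same defect propagates to your inductive step, where you invoke ``the base-case sheet-identification analysis'' without having established it. The fix is simply to make the paper's reduction at the outset and argue on $\tYrig$, where no sheet bookkeeping is needed.
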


\begin{proof} Since $\prtoY$ and $\pi_{1,\gern}$ are  finite flat maps, it is enough to prove the statements with all instances of $\pi_{1,\gern}^{-1}$, $\prtoY^{-1}$ removed. For part (1), by Proposition \ref{Prop: connectivity}, and in view of definition of $\Sigma_S$, it is enough to prove that given $T_1, T_2\subset \SS$ with $T_2= T_1\cup \{\gerp\}$ for some $\gerp\in\SS$,  every connected component $D$ of $w_{T_2}^{-1}(\Sigma)$ intersects $w_{T_1}^{-1}(\Sigma)$. Since $w_{T_1}$ is an automorphism, we have $D=w_{T_2}^{-1}(D^\prime)$, where $D^\prime$ is a connected component of $\Sigma$.  It is, therefore, enough to show that $w_\gerp^{-1}(D')$ intersects $\Sigma$. By Proposition 6.8, $D'$ contains  $\spe^{-1}(V)$, where $V$ is an irreducible component of $W_{\BB, \{\beta\}}$ for some $\beta \in \BB_\gerp$. Therefore, $w_\gerp^{-1}(D')$ contains $\spe^{-1}(w_\gerp^{-1}(V))$ which intersects $\Sigma$, as $w_\gerp^{-1}(V)$ is an irreducible component of $w_\gerp^{-1}(W_{\BB, \{\beta\}})$, easily seen to be a nowhere-\'etale stratum of codimension 1.

 For part (2), we write

\[
\Sigma_S \cap w_\gerp^{-1}(\Sigma_S)= \bigcup_{T_1,T_2 \subseteq S} w_{T_1}^{-1}(\Sigma) \cap w_{T_2\cup \{\gerp\}}^{-1}(\Sigma).
\]
By Proposition \ref{Prop: connectivity}, every connected component of every term appearing in the union above intersects 
a region of the form $\tilde{\gerY}^{|\tau|\leq 1}_\rig \underline{I}$, where $\underline{I}$ is a multiset of intervals satisfying $I_\gerp=\II^\ast_\gerp$. This proves the result.
\end{proof}

  \begin{lemma}\label{Lemma: pi_1=pi_2} Let $\gern$ be an ideal of $\ol$ prime to $p$, and $\gerp\in \SS$. Let $\calW$ be any of $\prtoY^{-1}(\Sigma_S)$, $\prtoY^{-1}w_\gerp^{-1}(\Sigma_S)$, $\tHMVANtau {\underline I}$ for any $\underline{I}$. Then, $\pi_{2,\gern}^{-1}(\calW)=\pi_{1,\gern}^{-1}(\calW)=\pi_{2,j_\gern}^{-1}(\calW)$ for all $j_\gern\in \calO_L/\gern$.
\end{lemma}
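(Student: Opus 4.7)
The plan is to argue that each of the three candidate admissible opens $\calW$ is cut out by conditions that depend only on invariants of the pair $(A[p^\infty], H)$ with its induced polarization, Hodge filtration, and $\cO_L$-action, while the difference between $\pi_{1,\gern}$ and $\pi_{2,j_\gern}$ is a prime-to-$p$ isogeny, hence invisible to such invariants. I would first read off from the definitions that, on the non-cuspidal part, $\pi_{1,\gern}$ sends $(\uuA,P_\gerp,P_\gern,Q_\gern)$ to $(\uuA,P_\gerp)$, while $\pi_{2,j_\gern}$ sends it to $(\underline{\underline{A'}},\overline{P_\gerp})$, where $f\colon A\ra A':=A/(j_\gern P_\gern+Q_\gern)$ is the quotient isogeny. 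Since $\gern$ is prime to $p$, the kernel of $f$ has order prime to $p$, so $f$ is \'etale and induces an $\cO_L$-equivariant isomorphism of $p$-divisible groups $A[p^\infty]\xrightarrow{\sim} A'[p^\infty]$ carrying $H:=(P_\gerp)$ isomorphically onto $H':=(\overline{P_\gerp})$, together with an $\cO_L$-equivariant isomorphism $f^*\colon \omega_{A'/S}\xrightarrow{\sim}\omega_{A/S}$.

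Next I would check, case by case, that each defining condition of $\calW$ depends only on such invariants and is therefore identical for $(\uuA,P_\gerp)$ and $(\underline{\underline{A'}},\overline{P_\gerp})$. The partial degrees $\deg_\beta$ depend only on the finite flat $\cO_L$-group scheme $H$ with its Hodge data, hence are preserved; the partial Hasse invariants $h_\beta$ transfer through $f^*$, so the Goren--Oort stratum of $\Ab$ equals that of $\Ab'$; combined with the fact that $H\simeq H'$ as $\cO_L$-subgroups, the pair $(\Ab,\Hb)$ and $(\Ab',\Hb')$ lie in the same Goren--Kassaei stratum, and in particular the $\tau$-condition $|\tau(\Qb)\cap\BB_\gerp|\leq 1$ is preserved. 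The operator $w_\gerp$ is determined by the $\gerp$-component of $A[p^\infty]$ and the $\gerp$-component of $H$, so the property ``$w_\gerp(\uuA,P_\gerp)\in\prtoY^{-1}(\Sigma_S)$'' is likewise preserved. Consequently $(\uuA,P_\gerp)\in\calW$ if and only if $(\underline{\underline{A'}},\overline{P_\gerp})\in\calW$ for each of the three choices of $\calW$, giving
\[
\pi_{1,\gern}^{-1}(\calW)=\pi_{2,j_\gern}^{-1}(\calW)\qquad\text{for every }j_\gern\in\cO_L/\gern.
\]

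Finally, the same argument applied at the boundary (replacing $A$ by the universal semi-abelian scheme $\tilde{\cA}$ over the toroidal compactification and using that $f$ extends to an \'etale prime-to-$p$ isogeny of semi-abelian schemes) shows that the equalities extend over $\tHMVAN^{\underline{\gern}}$, including its cuspidal part. The only real work is bookkeeping: verifying that the construction of $\Sigma$ in \ref{Defn:Sigma} and of $]W^\gen[^\prime$ in cases 2(a)--(c), and the definition of $w_\gerp$ and the $\tau$-strata, only use the $p$-local data $(A[p^\infty],H)$ up to $\cO_L$-equivariant isomorphism, which is exactly what the prime-to-$p$ \'etale isogeny $f$ respects. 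No serious obstacle arises.
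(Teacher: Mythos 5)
Your proof is correct and follows essentially the same route as the paper, whose entire argument is that all three regions are defined via the $p$-divisible group (with its $\cO_L$-structure and the subgroup $H$), which is unchanged by quotienting along a subgroup of $A[\gern]$ since $\gern$ is prime to $p$. Your write-up just makes explicit the case-by-case bookkeeping and the extension over the boundary that the paper leaves implicit.
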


\begin{proof} All of these regions are defined via the $\gerp$-divisible group of the HBAV, and dividing by a subgroup of $A[\gern]$ leaves  that $\gerp$-divisible group unchanged, as $\gerp\!\!\not|\gern$.

\end{proof}
 Consider a collection  $\{f_T: T \subset \SS\}$ of elements of  $\cM^{\dagger}_{\vk}(\taml(Np);K)$, as in Theorem \ref{Theorem: classical}. Then, by Theorem \ref{Thm:AnaCont} , each $f_T$ extends analytically to $\region$. Recall that

\begin{thm} \label{Theorem: gluing} Let the notation be as above. Each $f_T$ extends from $\region$ to
\[
\prtoY^{-1}(\Sigma_\SS)=\bigcup_{T \subset \SS} w_T^{-1}(\region).
\]
\end{thm}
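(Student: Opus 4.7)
The plan is to argue by induction on the cardinality of $S \subseteq \SS$, proving that each $f_T$ extends analytically to $\prtoY^{-1}(\Sigma_S)$; setting $S = \SS$ at the end yields the theorem. The base case $S = \emptyset$ is precisely Theorem~\ref{Thm:AnaCont}. For the inductive step, assume the extensions have been constructed on $\prtoY^{-1}(\Sigma_S)$ for every $T \subset \SS$, and fix $\gerp \in \SS \setminus S$. Since $\Sigma_{S \cup \{\gerp\}} = \Sigma_S \cup w_\gerp^{-1}(\Sigma_S)$, it suffices to extend $f_T$ across $w_\gerp^{-1}(\prtoY^{-1}(\Sigma_S))$.

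On this latter region I will define a candidate section $g_T$ by taking the $w_\gerp$-pullback of a specific scalar multiple of $f_{T \triangle \{\gerp\}}$ (known on $\prtoY^{-1}(\Sigma_S)$ by induction), with the scalar and normalization dictated by the Hecke data in Theorem~\ref{Theorem: classical}: the ratio $\chi_{T \triangle \{\gerp\}}/\chi_T$ together with the eigenvalues $c(f_\bullet,\gerp)$ and the $(\calO_L/\gerp)^\times$-characters $\psi_{\gerp,\bullet}$, linked by conditions (1) and (3). Condition (5) guarantees that the scalar is well-defined, and condition (6) provides the invertibility needed when $\chi_{T\triangle\{\gerp\}}/\chi_T = 1$ and the forms $f_T$, $f_{T\triangle\{\gerp\}}$ are the two $U_\gerp$-stabilizations of a common prime-to-$\gerp$ form. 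The extension of $f_T$ to $w_\gerp^{-1}(\prtoY^{-1}(\Sigma_S))$ is then effected by $g_T$, provided that $f_T$ and $g_T$ coincide on the overlap $\prtoY^{-1}(\Sigma_S \cap w_\gerp^{-1}(\Sigma_S))$.

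To verify this overlap identity, I will argue component-by-component. By Lemma~\ref{Lemma: final connectivity}(2), after pulling back via $\pi_{1,\gern}$ for a suitable $\gern \mid N$, every connected component of the overlap meets a region $\pi_{1,\gern}^{-1}(\tHMVANtau \underline{I})$ with $I_\gerp = \II^\ast_\gerp$. Such a region contains directions saturated under $\pi_{1,\gern}$ in the sense of Lemma~\ref{Lemma: saturated}, so the comparison $f_T = g_T$ can be pulled down to a comparison of Fourier expansions at cusps lying above this saturated locus. Using Lemma~\ref{Lemma: pi_1=pi_2} to transfer between $\pi_{1,\gern}$ and $\pi_{2,j_\gern}$, and then the Hecke relations (1) and (3), the equality of $q$-expansions follows, whence $f_T \equiv g_T$ on every connected component of the overlap.

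The main obstacle will be the second step: pinning down the correct scalar so that $g_T$ really matches $f_T$ on the overlap, and arranging things so that Lemma~\ref{Lemma: final connectivity}(2) applies uniformly to every connected component. The elaborate definition of $\Sigma$ in Definition~\ref{Defn:Sigma}, with its case distinction between good codimension-$1$ strata and the bad cases 2b, 2c, is precisely engineered so that the connectivity statement holds in the required strong form: each component of $\Sigma_S \cap w_\gerp^{-1}(\Sigma_S)$ is forced to intersect a $q$-expansion-accessible region. Once the candidate $g_T$ is correctly defined and the connectivity is in place, the verification collapses to the Hecke identities satisfied by the collection $\{f_T\}$.
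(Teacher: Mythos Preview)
Your inductive scaffolding and your plan to invoke Lemma~\ref{Lemma: final connectivity}(2) on each connected component of the overlap are both correct and match the paper. The genuine gap is in the construction of the gluing candidate $g_T$.

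In the ramified case (condition~(1) of Theorem~\ref{Theorem: classical} with $\chi_{T\cup\{\gerp\}}/\chi_T$ of conductor $\gerp\gern_T$, $\gern_T\mid N$), a scalar multiple of $w_\gerp(f_{T\triangle\{\gerp\}})$ does \emph{not} agree with $f_T$ on the overlap. Condition~(3) says $c(f_T,\germ)=(\chi_{T\cup\{\gerp\}}/\chi_T)(\germ)\,c(f_{T\cup\{\gerp\}},\germ)$, and the factor $(\chi_{T\cup\{\gerp\}}/\chi_T)(\germ)$ is not constant in $\germ$: it carries a genuine twist by the character $\psi_\gern$ of $(\calO_L/\gern)^\times$ coming from the $\gern_T$-part of the conductor. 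So no single scalar can make the $q$-expansions match. The paper handles this by passing to the auxiliary cover $\tHMVAN^{\underline{\gern}}$ and replacing $f_{T\cup\{\gerp\}}$ by the \emph{twisted sum} $g=\sum_{j_\gern\in(\calO_L/\gern)^\times}\psi_\gern^{-1}(j_\gern)\,\mathrm{pr}^*\pi_{2,j_\gern}^*(f_{T\cup\{\gerp\}})$; it is $\pi_{2,\gern}^*(f_T)$ that glues with a multiple of $w_\gerp(g)$, and one then descends back to $f_T$. Your use of $\pi_{1,\gern}$ only for connectivity, while keeping the candidate on $\tHMVAN$ itself, cannot produce the right object.

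A second, smaller point: the overlap verification is not done by reaching cusps. The saturated region $\tHMVANtau\underline{I}$ with $I_\gerp=\II_\gerp^*$ supplied by Lemma~\ref{Lemma: final connectivity}(2) does not contain cusps (the interval is open at $1$). Instead, the paper exploits saturation in the $\gerp$-direction (Lemma~\ref{Lemma: saturated}(1)) together with the identity of Lemma~\ref{Lemma: twist} to show that the difference $h$ satisfies $h(\underline{\uuA},P_\gerp)=\psi_\gerp(j_\gerp)h(\underline{\uuA},Q_\gerp)$ for all $j_\gerp\in(\calO_L/\gerp)^\times$; since $\psi_\gerp$ is nontrivial this forces $h=0$ on the saturated region, and then connectivity propagates $h=0$ to the full overlap. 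The cusp/$q$-expansion computation (Lemmas~\ref{Lemma: characters}, \ref{Lemma: twist}) is used one level up, on $\tHMVAN^{\underline{\gerp},\underline{\gern}}$ over $\prtoY^{-1}(\Sigma_S)$ (which \emph{does} meet cusps by Lemma~\ref{Lemma: final connectivity}(1)), to establish the functional relation that feeds into this character argument.
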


\begin{proof}

We will assume below that the collection of characters is ramified at all $\gerp\in \SS$ (as explained in condition (1) in the statement of Theorem \ref{Theorem: classical}). The case where the collection of characters is unramified at all $\gerp\in \SS$ is proved in \cite{Ka}, and the intermediate cases can be proved by a straightforward mixing of the the argument in \cite{Ka} and the argument below. 

By symmetry, it is enough to prove the theorem for $f_\emptyset$. We prove by induction that if $S \subseteq \SS$, then all  elements of $\calF_S:=\{f_T: T \subseteq \SS-S\}$ can be extended to $\prtoY^{-1}(\Sigma_S)$.   We have already proven this for $S=\emptyset$ and need to prove it for $S=\SS$.   Assuming this claim holds for $S\subset \SS$, we show that it holds for $S \cup\{\gerp\}$, where $\gerp\not\in S$.

As in the proof of \cite[Theorem 10.1]{Buzzard}, we would ideally like to glue $f_T$ and $w_\gerp(f_{T\cup\{\gerp\}})$ to extend $f_T$ from $\prtoY^{-1}(\Sigma_S)$ to $\prtoY^{-1}(\Sigma_{S\cup \{\gerp\}})$. However, two complications arise. Firstly, it turns out that $w_\gerp(f_{T\cup\{\gerp\}})$ is no longer equal to $f_T$, since, unlike the classical case, the characters $\chi_T$ will have conductor away from $p$  (essentially due to the existence of totally positive units). This necessitates  the introduction of the auxiliary variety $\tHMVAN^{\underline{\gern}}$ over which a candidate for the replacement of $f_{T\cup\{\gerp\}}$ lives. Secondly,   unlike in the elliptic case, the $f_T$'s are not defined over the entire non-ordinary locus and this causes complications in the gluing process in that the overlap regions will have several connected components that need to be controlled. This issue can be resolved using our detailed study of the connectivity properties of certain regions on $\tYrig$ in \S\ref{section: gluing}. For this to work, it is crucial that our analytic continuation result, Theorem \ref{Thm:AnaCont}, provides a ``big enough'' domain of automatic analytic continuation for the $f_T$'s.

 For any fractional ideal $\gera$ of $\calO_L$, we consider a cusp $Ta_\gera$ of $\tHMVAN$ which belongs to the connected component ${\tilde{\gerZ}}_{{\rm rig},\gera}$ of $\tHMVAN$, where the polarization module is isomorphic to $(\gera,\gera^+)$ as a module with a notion of positivity:
 \[
 Ta_\gera=((\underline{\underline{\GG_m\! \otimes\! \gerd_L^{-1})/q}}^{\gera^{-1}},[{\zeta_\gerp}]),
 \]
the notation being as in the beginning of this section. We assume that the subgroup generated by the level $Np$ structure is the image of $(\GG_m\otimes \gerd_L^{-1})[pN]$. Let $\eta^{\vk}$ denote a generator of the sheaf $\omega^{\vk}$ on the base of $Ta_\gera$. Let $\gerp \not\in T \supseteq S$ be as above. For any choice of $\gera$, we write
\[
f_T(Ta_\gera)=\sum_{\xi \in (\gera^{-1})^+}{a_{\xi(\gera)}} q^\xi \eta^{\underline{k}},
\]
\[
f_{T\cup\{\gerp\}}(Ta_\gera)=\sum_{\xi \in (\gera^{-1})^+}{b_{\xi(\gera)}} q^\xi \eta^{\underline{k}},
\]
recalling that they are both normalized in the sense that  $c(\calO_L,f_T)=c(\calO_L,f_{T\cup\{\gerp\}})=1$. By \cite[(2.23)]{Shimura}, we have $a_{\xi(\gera)}=c((\xi)\gera,f_T)$, and similarly for $b_{\xi(\gera)}$.

For simplicity of notation, let us denote $\chi_{T\cup\{\gerp\}}/\chi_T$ by $\Psi$. By assumptions stated in Theorem \ref{Theorem: classical},  and since the collection of characters is assumed ramified at $\gerp$, $\Psi$ has conductor  $\gerp \gern_T$ for some integral ideal $\gern_T|N$ and   $\psi_{T,\gerp}=\psi_{T\cup\{\gerp\},\gerp}^{-1}=\Psi^{-1}|_{\calO_\gerp^\times}\!\!\! \mod 1+\gerp\calO_\gerp$ is a nontrivial character of $(\calO_L/\gerp)^\times$; we denote these common characters by $\psi_\gerp$. We set $\gern:=\gern_T$, and let $\psi_\gern$ be the character of $(\calO_L/\gern)^\times$ which is obtained  as $\Psi^{-1}|_{\hat{\calO}^\times}\!\! \mod \Pi_{q|\gern} (1+q^{\ord_\gerq\gern})$. Let $r_\gerp$ (resp., $s_\gerp$) denote the $U_\gerp$-eigenvalues of $f_T$ (resp., $f_{T\cup\{\gerp\}}$).

 Let $c_\gerp \in \gerp^{-1}\gera^{-1}-\gera^{-1}$ and $c_\gern \in \gern^{-1}\gera^{-1}-\bigcup_{\gerq|\gern} \gerq \gern^{-1}\gera^{-1}$ (where $\gerq$ runs over prime ideals) be such that  $Ta^0_{\gera}=(Ta_\gera,q^{c_\gerp},[\zeta_\gern],q^{c_\gern})$ is a cusp on $\tHMVAN^{\underline{\gerp},\underline{\gern}}$. 
 Fix an isomorphism $\ol/\gerp \times \ol/\gern \arr {\gerp}^{-1}\gern^{-1}\gera^{-1}/\gera^{-1}$ such that its restriction to $\ol/\gerp$ (resp., to $\ol/\gern$) is given by multiplication by $c_\gerp$ (resp., by $c_\gern$), and denote its inverse by  
 \[
 (t_\gerp , t_\gern) :\gerp^{-1} \gern^{-1}\gera^{-1}\!/\ \gera^{-1} \arr \calO_L/\gerp \times \calO_L/\gern.
 \]

 \

 \begin{lemma}\label{Lemma: characters} Let notation be as above. We have the following.
 
 \begin{enumerate}

 \item For any $\xi \in (\gerp^{-1}\gern^{-1}\gera^{-1})^+\!\!-\bigcup_{\gerq|\gern\gerp}\  (\gerq \gerp^{-1}\gern^{-1}\gera^{-1})^+$, we have 
 \[
 a_\xi(\gerp\gern\gera)=C \psi_\gerp(t_\gerp(\xi))\psi_\gern(t_\gern(\xi))b_\xi(\gerp\gern\gera),
 \]
where $C$ is independent of $\xi$.
 
 \item $a_{\xi(\gerp \gern\gera)}=r_{\gerp}a_{\xi(\gern\gera)}$, $b_{\xi(\gerp \gern\gera)}=s_{\gerp}b_{\xi(\gern \gera)}$ for all $\xi\in(\gern^{-1}\gera^{-1})^+$.
 
 \end{enumerate}
 \end{lemma}
 \begin{proof}  Let $\germ$ be a  fractional ideal of $L$ prime to $\gerp\gern$ (the conductor of $\Psi$). By the weak approximation theorem, there is  $\lambda_\germ \in \AA_L^\times$ such that the ideal generated by $\lambda_\germ$ equals $\germ$, and  that for every prime ideal $\gerq|\gerp \gern$, we have $(\lambda_\germ)_\gerq \equiv 1 \ {\rm mod}\ q^{\ord_\gerq(\gerp\gern)}$. In this proof, we will view $\Psi$ as a character of $\AA_L^\times/L^\times$. In particular, we have $\Psi(\germ)=\Psi(\lambda_\germ)$ for any $\germ$ prime to $\gerp\gern$, and any choice of $\lambda_\germ$ as above.
 
 By assumptions, $(\xi)\gerp\gern\gera$ is prime to $\gerp \gern$, and the assumptions in the statement of Theorem \ref{Theorem: classical} tell us that 
 \[
 a_\xi(\gerp\gern\gera)=c((\xi)\gerp\gern\gera,f_T)=\Psi(\lambda_{(\xi)\gerp\gern\gera})c((\xi)\gerp\gern\gera,f_{T\cup\{\gerp\}})=\Psi(\lambda_{(\xi)\gerp\gern\gera})b_\xi(\gerp\gera).
 \]
So to prove (1), it is enough to show that for any $\xi,\xi' $ as in part (1) of the lemma, we have $\psi_\gerp(t_\gerp(\xi')t_\gerp(\xi)^{-1})\psi_\gern(t_\gern(\xi')t_\gern(\xi)^{-1})=\Psi(\lambda_{(\xi'\xi^{-1})})$, noting that $\lambda_{\xi^{-1}\xi'}$ makes sense, as $\xi^{-1}\xi'$ is prime to $\gerp \gern$ by the choices.

 For every place $v$ of $L$, let $i_v:L^\times \ra L_v^\times \subset \AA_L^\times$ be the natural inclusion. We can write
\begin{align*}
\Psi(\lambda_{(\xi'\xi^{-1})})&=\Psi(\lambda_{(\xi'\xi^{-1})}(\xi')^{-1}\xi )=\Psi|_{\calO_\gerp^\times}(i_\gerp((\xi')^{-1}\xi)) \Pi_{\gerq|\gern} \Psi|_{\calO_\gerq^\times}(i_\gerq((\xi')^{-1}\xi))\\
&=\psi_\gerp^{-1}((\xi')^{-1}\xi \!\!\!\! \mod 1+\gerp\calO_\gerp)\psi_\gern^{-1}((\xi')^{-1}\xi \!\!\! \mod \Pi_{q|\gern} (1+q^{\ord_\gerq(\gern)}\calO_\gerq)),
\end{align*}
where we have used that $(\xi')^{-1}\xi$ is totally positive and, hence, $\Psi|_{L \otimes_\QQ \RR}((\xi')^{-1}\xi)=1$. To end the proof of part (1), we use   $t_\gerp(\xi')t_\gerp(\xi)^{-1}\equiv\xi'\xi^{-1}\!\!\mod \gerp$ (which implies that  $\psi_\gerp(t_\gerp(\xi')t_\gerp(\xi)^{-1})=\psi_\gerp(\xi'\xi^{-1} \!\!\!\!\mod 1+\gerp\calO_\gerp)$), as well as a similar statement at $\gern$.

 Part (2) is immediate since $f_T$ and $f_{T\cup\{\gerp\}}$ are $U_\gerp$-eigenforms with eigenvalues $r_\gerp$, $s_\gerp$, respectively.
 \end{proof}
 
  Let $f:=\pi_{2,\gern}^*(f_T)$ and $g:=\sum_{j_\gern \in (\calO_L/\gern)^\times} \psi_\gern^{-1}(j_\gern)\pr^\ast\pi_{2,j_\gern}^\ast(f_{T \cup \{\gerp\}})$ which are both defined on $\pi_{1,\gern}^{-1}\prtoY^{-1}(\Sigma_S)=\pi_{2,\gern}^{-1}\prtoY^{-1}(\Sigma_S)$ (Lemma \ref{Lemma: pi_1=pi_2}).
 
 \begin{lemma} \label{Lemma: twist} We have the following equality over $\pi_{1,\gerp}^{-1}\pi_{1,\gern}^{-1}\prtoY^{-1}(\Sigma_S)\subset \tHMVAN^{\underline{p},\underline{\gern}}$:
\[
\sum_{j_\gerp \in (\ol/\gerp)^\times} \psi_\gerp(j_\gerp) \pr^\ast \pi_{2,j_\gerp}^\ast(f)=CW(\psi_\gern^{-1})^{-1}W(\psi_p)(\pr^\ast \pi_{2,\gerp}^\ast(g)-s_\gerp\pi_{1,\gerp}^\ast (g)),
\]
where $W(.)$ denotes the Gauss sum.
\end{lemma}

\begin{proof} 

 By Proposition \ref{Prop:Sigma-U_p}, both sides are defined over $\pi_{1,\gerp}^{-1}\pi_{1,\gern}^{-1}\prtoY^{-1}(\Sigma_S)$. By definitions, we have, for terms appearing on the right side of the equation:
\begin{align*}
 \pr^\ast\pi_{1,\gerp}^\ast \pi_{2,j_\gern}^\ast(f_{T\cup\{\gerp\}})(Ta^0)&={\rm{pr}}^\ast f_{T\cup\{\gerp\}}(\underline{\underline{(\GG_m\otimes \gerd_L^{-1})/\!\!<\!q^{\gera^{-1}}\!\!\!,q^{c_\gern}\zeta_\gern^{j_\gern}\!>}},[\zeta_\gerp])\\ &=\sum_{\xi\in (\gern^{-1}\gera^{-1})^+} b_{\xi(\gern\gera)} \zeta_\gerp^{j_\gern t_\gern(\xi)} q^\xi \eta^{\vk}.
\end{align*}
  
  \begin{align*}
 \pr^\ast\pi_{2,\gerp}^\ast \pi_{2,j_\gern}^\ast(f_{T\cup\{\gerp\}})(Ta^0)&={\rm{pr}}^\ast f_{T\cup\{\gerp\}}(\underline{\underline{(\GG_m\otimes \gerd_L^{-1})/\!\!<\!q^{\gera^{-1}}\!\!\!,q^{c_\gerp},q^{c_\gern}\zeta_\gern^{j_\gern}\!>}},[\zeta_\gerp])\\ &=\sum_{\xi\in (\gerp^{-1}\gern^{-1}\gera^{-1})^+} b_{\xi(\gerp \gern\gera)} \zeta_\gern^{j_\gern t_\gern(\xi)} q^\xi \eta^{\vk}.
\end{align*}
  
Similarly, for the terms on the left side of the equation, we can write:

  \begin{align*}
 \pr^\ast\pi_{2,j_\gerp}^\ast \pi_{2,\gern}^\ast(f_T)(Ta^0)&=
 {\rm{pr}}^\ast f_T(\underline{\underline{(\GG_m\otimes \gerd_L^{-1})/\!\!<\!q^{\gera^{-1}}\!\!\!,q^{c_\gern},q^{c_\gerp}\zeta_\gerp^{j_\gerp}\!>}},[\zeta_\gerp])\\ &
 =\sum_{\xi\in (\gerp^{-1}\gern^{-1}\gera^{-1})^+} a_{\xi(\gerp \gern\gera)} \zeta^{j_\gerp t_\gerp(\xi)} q^\xi \eta^{\vk}.
\end{align*}
  
  Using the above calculations, Lemma \ref{Lemma: characters}, and the fact that the $U_\gerq$-eigenvalues of $f_T$'s are all assumed zero for $\gerq|N$,  it follows easily that the two sides of the desired equation have the same $q$-expansion at the cusp $Ta^0_\gera$. To prove the lemma, it is enough to show that every connected component of $\pi_{1,\gerp}^{-1}\pi_{1,\gern}^{-1}\prtoY^{-1}(\Sigma_S)\subset  \tHMVAN^{\underline{p},\underline{\gern}}$ contains such a cusp. Every such connected component maps surjectively to a connected component of $\prtoY^{-1}(\Sigma_S)$, since $\pi_{1,\gern}$ and $\pi_{1,\gerp}$ are finite flat. The claim now follows from part (1) of Lemma \ref{Lemma: final connectivity}.
\end{proof}

We continue the proof of Theorem \ref{Theorem: gluing}. By the induction assumption, $f_T$ extends to  $\Sigma_S$, and $w_\gerp(f_{T\cup\{\gerp\}})$ extends to  $w_\gerp^{-1}(\Sigma_S)$. By Lemma  \ref{Lemma: pi_1=pi_2} 
\[
h:=N_{L/\QQ}(\gerp)r_\gerp f-C\psi_\gerp(-1)W(\psi_\gerp)W(\psi_\gern^{-1})^{-1}w_\gerp(g)
\]
is defined over $\pi_{1,\gern}^{-1}(\Sigma_S \cap w_\gerp^{-1}(\Sigma_S))$. By assumption (1) in Theorem \ref{Theorem: classical}, $h$ has (nontrivial) character $\psi_\gerp$ at $\gerp$.  

 In what follows, we will use the shorthand notation $\underline{\uuA}$ for the data $(\uuA,P_\gern,Q_\gern)$.  Let $\underline{I}$ be any multiset of intervals as in Definition \ref{Definition: admissible domains} such that $I_\gerp=\II^\ast_\gerp$ and $ \Yrigtau {\underline I}\subset \Sigma$. Let $(\underline{\uuA},P_\gerp,Q_\gerp) \in \pi_{1,\gerp}^{-1}\pi_{1,\gern}^{-1}( \tHMVANtau {\underline I})= \pi_{1,\gerp}^{-1}\pi_{2,\gern}^{-1}( \tHMVANtau {\underline I})$ (Lemma \ref{Lemma: pi_1=pi_2}). We write:
\begin{align*}
N_{L/\QQ}(\gerp)r_\gerp f(\underline{\uuA},Q_\gerp)-{\rm pr}^\ast f(\underline{\uuA}/(P_\gerp),\overline{Q_\gerp})&=\sum_{j_\gerp \in (\ol/\gerp)^\times} {\rm pr}^\ast f(\underline{\uuA}/(j_\gerp P_\gerp+Q_\gerp),\overline{Q_\gerp})\\
&=\psi_\gerp(-1)\sum_{j_\gerp \in (\ol/\gerp)^\times} \psi_\gerp(j_\gerp) {\rm pr}^\ast f({\underline{\uuA}}/(j_\gerp P_\gerp+Q_\gerp),\overline{P_\gerp})\\
&=\psi_\gerp(-1)\sum_{j_\gerp \in (\ol/\gerp)^\times} \psi_\gerp(j_\gerp) \pr^\ast \pi_{2, j_\gerp}^\ast(f)(\underline{\uuA},P_\gerp,Q_\gerp)\\
&=C_0(\pr^\ast g(\underline{\uuA}/(Q_\gerp),\overline{P_\gerp})-s_\gerp g(\underline{\uuA},P_\gerp)),
\end{align*}
where, $C_0=C\psi_\gerp(-1)W( \psi_\gerp)W(\psi_\gern^{-1})^{-1}$. In the first equality, we have used the fact that $U_\gerp(f)=r_\gerp f $, and, in the last equality, we have used Lemma \ref{Lemma: twist}. We must note that by Lemma \ref{Lemma: saturated}, all the terms in the above calculation are well-defined.

It follows that if $R=(\underline{\uuA},P_\gerp,Q_\gerp)\in \pi_{1,\gerp}^{-1}\pi_{1,\gern}^{-1}(\tHMVANtau {\underline I})$, we have
\[
h(\underline{\uuA},Q_\gerq)=\pr^\ast f(\underline{\uuA}/(P_\gerp),\overline{Q}_\gerp)-C_0s_\gerp g(\underline{\uuA},P_\gerp).
\]
For  $j_\gerp \in (\ol/\gerp)^\times$, let $j_\gerp^*$ denote its inverse. Lemma \ref{Lemma: saturated} implies that  both points $R_1=(\underline{\uuA},j_\gerp^*P_\gerp-Q_\gerp,P_\gerp)$, and $R_2=(\underline{\uuA},P_\gerp-j_\gerp Q_\gerp,Q_\gerp)$ belong to $\pi_{1,\gerp}^{-1}\pi_{1,\gern}^{-1}(\tHMVANtau {\underline I})$. Applying the above equality to $R_1$ and $R_2$, we deduce that 
\[
h(\underline{\uuA},P_\gerp)=\psi_\gerp(j_\gerp)h(\underline{\uuA},Q_\gerp),
\]
for any $(\underline{\uuA},P_\gerp,Q_\gerp)\in \pi_{1,\gerp}^{-1}\pi_{1,\gern}^{-1}(\tHMVANtau {\underline I})$. Since  $\psi_\gerp$ is nontrivial, it follows that $h=0$ on $ \pi_{1,\gern}^{-1}(\tHMVANtau {\underline I})$. Choosing $\underline{I}$ as in part (2) of Lemma \ref{Lemma: final connectivity} allows us to  deduce  that $h=0$ on the bigger domain $\pi_{1,\gern}^{-1}\prtoY^{-1}(\Sigma_S \cap w_\gerp^{-1}(\Sigma_S))=\pi_{2,\gern}^{-1}\prtoY^{-1}(\Sigma_S \cap w_\gerp^{-1}(\Sigma_S))$ (Lemma \ref{Lemma: pi_1=pi_2}). Therefore, $N_{L/\QQ}(\gerp)r_\gerp f$ defined on $\pi_{2,\gern}^{-1}\prtoY^{-1}(\Sigma_S)$ and $C_0w_\gerp(g)$ defined on $w_\gerp^{-1}\pi_{2,\gern}^{-1}\prtoY^{-1}(\Sigma_S)$ glue to extend $f$ to $\pi_{2,\gern}^{-1}\prtoY^{-1}(\Sigma_S \cup w_\gerp^{-1}(\Sigma_S))=\pi_{2,\gern}^{-1}\prtoY^{-1}(\Sigma_{S \cup \{\gerp\}})$. Since $f=\pr^\ast\pi_{2,\gern}^\ast(f_T)$, it follows that $f_T$ can be analytically extended to $\prtoY^{-1}(\Sigma_{S \cup \{\gerp\}})$.  This end the proof.

\end{proof}

\section{Further analytic continuation.} \label{Subsection: second step} 

The aim of this section is to show that the overconvergent modular forms obtained in previous sections can be further analytically continued to the entire Hilbert modular variety; i.e., they are classical.

 \begin{thm}\label{thm-ac}
  Let $f$ be an overconvergent Hilbert modular form of level $\taml(Np)$ and weight $\vk$. Assume that $f$ extends analytically to $\prtoY^{-1}(\Sigma_\SS)$, where 
\[
\Sigma_\SS=\bigcup_{T \subset \SS} w_T^{-1}(\Sigma).
\]
 Then,  $f$ is classical.
 \end{thm}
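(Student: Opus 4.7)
The plan is to prove Theorem~\ref{thm-ac} via a codimension-two extension argument. First I will show that $\Yrig \setminus \Sigma_\SS$ is contained in the tube of a Zariski closed subset of $\Ybar$ of codimension $\geq 2$; pulling back by the finite flat map $\prtoY$, the complement of $\prtoY^{-1}(\Sigma_\SS)$ in $\fHMV_\rig$ inherits the same property. Since $f$ is overconvergent and therefore bounded on quasi-compact admissible opens (by the estimate~\eqref{Equ:estimation of U_p}), a rigid-analytic Koecher/Hartogs principle for sections of line bundles on smooth rigid spaces extends $f$ across this codimension-$2$ locus to a global section of $\omega^{\vk}$ on all of $\fHMV_\rig$, hence on $\HMV_K^{\rm an}$. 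The classical Koecher principle then gives extension to the toroidal compactification $\tHMV_K^{\rm an}$, and rigid GAGA finally identifies this section with an element of $\cM_{\vk}(\taml(Np);K) = H^0(\tHMV_K,\omega^{\vk})$, proving classicality.

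To establish the codimension-$2$ statement, I stratify $\Ybar$ by the Goren--Kassaei strata $W_{\varphi,\eta}$ and treat each type separately. Strata of codimension $\geq 2$ in $\Ybar$ are trivially acceptable. For a codimension-$0$ stratum $\Wx$ with $\vx \in \{0,1\}^{\BB}$, Proposition~\ref{Proposition: codim 2} (with $T$ the set of primes at which $\vx$ is not ordinary) gives that $\Wx \setminus (\Wx^{\gen} \cup w_T^{-1}(W^{\gen}_{w_T(\vx)}))$ has codimension $\geq 2$ in $\Wx$, and by Definition~\ref{Defn:Sigma} the tube of $\Wx^{\gen} \cup w_T^{-1}(W^{\gen}_{w_T(\vx)})$ lies in $\Sigma \cup w_T^{-1}(\Sigma) \subseteq \Sigma_\SS$ (the fully ordinary case is trivial since then $\Wx^{\gen}=\Wx$). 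For a codimension-$1$ stratum $W_{\varphi,\eta}$ with $\ell(\varphi)\cap\eta=\{\beta_0\}$, Lemma~\ref{Lemma: divisors} shows that $W_{\varphi,\eta} \setminus W_{\varphi,\eta}^{\gen}$ is a divisor, hence of codimension $\geq 2$ in $\Ybar$, so it suffices to cover $\spe^{-1}(W_{\varphi,\eta}^{\gen})$: the good Cases 2a/2b of Definition~\ref{Defn:Sigma} do this directly, whereas in the bad Case 2c, Lemma~\ref{L:BK-module} together with formula~\eqref{equ-deg-wp} has to be used to show that the points excluded by the condition $\deg_{\beta_0}(D) \leq \delta_j$ fall into some $w_T^{-1}(\Sigma)$ after applying $w_{\gerp_0}$. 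Finally, strata that are étale at some nonempty $T\subseteq\SS$ (and so absent from the defining union of $\Sigma$) are covered by $w_T^{-1}(\Sigma)$, since $w_T$ swaps étaleness for multiplicativity and makes the image of the stratum nowhere étale.

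The main obstacle will be the bad Case 2c: the condition $\deg_{\beta_0}(D) \leq \delta_j$ built into the definition of $]W^{\gen}[^\prime$ carves out a rigid-analytic subset of $\spe^{-1}(W^{\gen}_{\varphi,\eta})$ which is \emph{not} Zariski closed on the special fibre, so the standard ``generic vs.\ non-generic'' dichotomy does not by itself suffice. Verifying that each such excluded rigid point is swept up by an Atkin--Lehner translate of $\Sigma$ requires careful bookkeeping with the Breuil--Kisin classification underlying Lemma~\ref{L:BK-module} together with the formula~\eqref{equ-deg-wp} for how the partial degrees transform under $w_{\gerp_0}$, so as to trace where the offending rigid point lands. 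Once this delicate verification is in place, the remainder of the argument is precisely the rigid Koecher plus GAGA package described in the first paragraph.
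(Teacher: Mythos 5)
Your overall skeleton (show the complement of $\Sigma_\SS$ lies in the tube of a codimension-$2$ closed subset of the special fibre, then apply a rigid Koecher principle and GAGA) is the same as the paper's, but two of your steps fail as written. First, your codimension-$0$ containment claim is false precisely for the strata that need work: if $\vx$ is \'etale at some prime (all coordinates $0$ on some $\BB_\gerp$; call this set of primes $T_0$) and mixed at another, then $\Wx$ is not nowhere \'etale, so $\spe^{-1}(\Wx^{\gen})$ is \emph{not} contained in $\Sigma$, nor in any $w_{T'}^{-1}(\Sigma)$ --- the Atkin--Lehner maps do not commute with $\prtoX$, so $w_{T'}(\Wx^{\gen})$ differs from $W^{\gen}_{w_{T'}(\vx)}$ by a divisor, which is exactly the content of Proposition \ref{Proposition: codim 2}; likewise $W_{w_{T_2}(\vx)}$ (your $T=T_2$, the non-ordinary primes) is still \'etale at $T_0$, so your second piece is not in $w_{T_2}^{-1}(\Sigma)$ either. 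Your closing remark that \'etale strata are ``covered by $w_T^{-1}(\Sigma)$'' only yields the tube of $w_T^{-1}(W^{\gen}_{w_T(\vx)})$, i.e.\ coverage up to a divisor, and you provide no second translate to reduce that divisor to codimension $2$. The paper's resolution is to use the \emph{pair} of translates $w_{T_0}$ and $w_{T_0\cup T_2}$, both of which land on nowhere-\'etale strata, and to apply Proposition \ref{Proposition: codim 2} to the translated vertex. Second, Case 2b is not ``covered directly'': there $]W^\gen['$ carries the restriction $\deg_{\beta_0}>\sum_{i=1}^{f_{\gerp_0}-1}p^{-i}$, so, exactly as in Case 2c, the low-degree locus must be recaptured by $w_{T_0\cup\{\gerp_0\}}^{-1}(\Sigma)$; this is where $p\geq 3$ enters, via $1-\sum_{i\geq 1}p^{-i}>\sum_{i\geq 1}p^{-i}$. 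Conversely, the Case 2c verification you flag as the main obstacle is handled by the same two-branch threshold argument, with Lemma \ref{L:BK-module} guaranteeing $\deg_{\beta_0}(D)\leq\delta_j$ once $\deg_{\beta_0}(H)$ exceeds the threshold; no further Breuil--Kisin tracing is needed.

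On the extension step: there is no Hartogs principle ``for line bundles on smooth rigid spaces'' that applies here, because what is removed is the tube of a codimension-$2$ subset of the \emph{special} fibre --- an honest admissible open of the generic fibre, not an analytic subset of codimension $2$. The relevant statement is the formal-model Lemma \ref{lemma:Koecher}, whose hypothesis is the $S_2$ condition on the special fibre of the formal model; to apply it directly on $\fHMV$ you would have to check that the special fibre of the integral model $\IHMV$ is $S_2$ (it is not smooth; this could be deduced from finite flatness over the Cohen--Macaulay $\Ybar$, but you do not address it), whereas the paper sidesteps the issue by pushing $\omega^{\vk}$ forward along $\prtoY$ to a locally free sheaf on $\gerY$ and applying the Koecher principle there. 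Finally, ``on all of $\fHMV_{\rig}$, hence on $\HMV_K^{\rm an}$'' is a non sequitur ($\fHMV_{\rig}$ is only the good-reduction locus), and the classical, algebraic Koecher principle cannot be invoked for a rigid-analytic section before GAGA. The paper handles the boundary by observing that $\tYrig$ is admissibly covered by $\Yrig$ together with the sets $w_T^{-1}(\vdeg^{-1}(\one))\subset\Sigma_\SS$, on which $f$ is already defined, and only then applies GAGA on the proper $\tHMV_K$.
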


Using Theorems \ref{Theorem: gluing} and  \ref{thm-ac}, one immediately deduces Theorem \ref{Theorem: classical}. In the following, we prove Theorem   \ref{thm-ac}. 

\begin{lemma}[Rigid Koecher principle]\label{lemma:Koecher} Let $V$ be an admissible formal scheme over $\cO_K$, and $\cF$  a locally free $\cO_{V}$-module of finite rank. Let $V_{\rig}$ be the  rigid generic fiber of $V$ and $\cF_{\rig}$ be the rigid analytification of $\cF$. Suppose that the special fiber $V_0$ of $V$ satisfies $S_2$-condition. If $U_0\subset V_0$ is an open subset with complement of codimension $\geq 2$, then the natural restriction map
\[H^0(V_{\rig},\cF_{\rig})\xra{\sim}H^0(\spe^{-1}(U_0),\cF_{\rig})\]
is an isomorphism.
\end{lemma}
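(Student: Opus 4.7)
The plan is to reduce this rigid-analytic statement to the classical algebraic Koecher/Hartogs extension principle on the special fibre, and then to propagate it across $\pi$-adic thickenings. First, let $\mathfrak{U}\subset V$ be the open formal subscheme whose underlying topological space is $U_0$, so that $\spe^{-1}(U_0)=\mathfrak{U}_{\rig}$. For a quasi-compact admissible formal $\cO_K$-scheme $W$ and a coherent sheaf $\cG$, the natural comparison $H^0(W,\cG)\otimes_{\cO_K}K\xrightarrow{\sim} H^0(W_{\rig},\cG_{\rig})$ is an isomorphism; hence it will suffice to prove that
\[
H^0(V,\cF)\longrightarrow H^0(\mathfrak{U},\cF|_{\mathfrak{U}})
\]
is an isomorphism after inverting $\pi$.

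Writing $V_n$ (respectively $\mathfrak{U}_n$) for the ordinary scheme obtained from $V$ (resp.\ $\mathfrak{U}$) by reducing modulo $\pi^{n+1}$, and $\cF_n=\cF/\pi^{n+1}\cF$ as a locally free sheaf on $V_n$, the formal-scheme identity $H^0(V,\cF)=\varprojlim_n H^0(V_n,\cF_n)$ (and similarly for $\mathfrak{U}$) shows it is enough to prove, for every $n\geq 0$, that the restriction
\[
H^0(V_n,\cF_n)\longrightarrow H^0(\mathfrak{U}_n,\cF_n|_{\mathfrak{U}_n})
\]
is an isomorphism. Setting $Z=V_0\setminus U_0$, this is equivalent to the vanishing of the local cohomology groups $H^i_Z(V_n,\cF_n)$ for $i=0,1$.

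For the base case $n=0$ this is the classical algebraic Koecher (or Hartogs) principle: $V_0$ is $S_2$ and $\cF_0$ is locally free, hence $\cF_0$ has depth $\geq 2$ along the codimension-$\geq 2$ subset $Z$, and Grothendieck's criterion relating depth to local cohomology yields $H^0_Z(V_0,\cF_0)=H^1_Z(V_0,\cF_0)=0$. For $n\geq 1$, since $\cF$ is locally free and $\pi$ is a nonzerodivisor on $V$, multiplication by $\pi^n$ fits into a short exact sequence of coherent sheaves on $|V_0|$,
\[
0\longrightarrow \cF_0\xrightarrow{\;\pi^n\;}\cF_n\longrightarrow\cF_{n-1}\longrightarrow 0,
\]
and the induced long exact sequence in $H^*_Z$, combined with the induction hypothesis, at once yields the desired vanishing for $\cF_n$.

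The main technical point I anticipate is the initial reduction identifying $H^0$ on the rigid generic fibre with the generic fibre of the formal $H^0$, which relies on quasi-compactness of $V$. In the applications of this lemma in the paper, $V$ is a formal completion of a toroidally compactified scheme, hence quasi-compact, so this presents no issue; it should nevertheless be recorded as an implicit hypothesis in the statement.
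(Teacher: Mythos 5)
Your argument is correct, and in fact the paper gives no proof of this lemma at all: it simply refers to \cite[A.3]{Ti}, and your dévissage is exactly the standard argument one expects there (identify $\spe^{-1}(U_0)$ with the generic fibre of the open formal subscheme supported on $U_0$, compare rigid sections with formal sections tensored with $K$ using quasi-compactness, pass to the thickenings $V_n$, and kill $H^0_Z$ and $H^1_Z$ via the depth criterion for $n=0$ and the sequences $0\to\cF_0\xrightarrow{\pi^n}\cF_n\to\cF_{n-1}\to 0$ for $n\geq 1$). Two small points: the vanishing of $H^i_Z(V_n,\cF_n)$ for $i=0,1$ \emph{implies} (rather than is equivalent to) bijectivity of restriction, which is all you need; and in the base case one should record that every $z\in Z$ satisfies $\dim\cO_{V_0,z}\geq 2$ since $\overline{\{z\}}\subseteq Z$ has codimension $\geq 2$, so the $S_2$ hypothesis really does give depth $\geq 2$ along $Z$. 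Your caveat about quasi-compactness is apt and harmless here, since in the paper's application $V$ is the formal completion of a finite-type (indeed compactified) $\cO_K$-scheme.
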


This Lemma is a classical result in rigid analytic geometry. For a proof, see \cite[A.3]{Ti}. We now begin the proof of Theorem  \ref{thm-ac}.

\begin{proof}[Proof of Theorem \ref{thm-ac}]

 Let $\prtoY_\ast(\omega^{\vk})$ be the push-forward of the sheaf $\omegab^{\vk}$ on $\fHMV$ via the finite flat map $\prtoY:\tHMVAN\ra \tYrig$.  For any admissible open subset $U\subset \tYrig$, we can identify $H^0(\prtoY^{-1}(U),\omega^{\vk})$ with $H^0(U,\prtoY_{\ast}(\omega^{\vk}))$.  By the rigid GAGA, we just need to prove that $f$, which is \emph{a priori} a section of $\prtoY_*\omegab^{\vk}$ defined over $\Sigma_\SS$,  extends analytically  to  $\tYrig$.  If $T\subset \SS$ is a subset, we put $\gert(T)=\prod_{\gerp\in T}\gerp$ and $\gert^*(T)=(p)/\gert$. We have $w_{T}^{-1}(\vdeg^{-1}(\one))=\vdeg^{-1}(\one_{\gert^*(T)})$, where $\one_{\gert^*(T)}\in [0,1]^{\BB}$ is the vector with $\beta$-component equal to $1$ if $\beta\in \BB_{\gert^*(T)}=\cup_{\gerp|\gert^*(T)}\BB_{\gerp}$, and $0$ otherwise. Since $\Sigma$ contains $\vdeg^{-1}(\one)$ by definition,  the region $\bigcup_{T\subset \SS}w_T^{-1}(\vdeg^{-1}(\one))$ is contained in $\Sigma_\SS$. As $\Yrig$ together with $\{ w_{T}^{-1}(\vdeg^{-1}(\one)): T\subset \SS \}$ form an admissible cover of $\tYrig$, it suffices to show that $f$ can be extended analytically to  $\Yrig$.   The existence of an integral model $\fHMV$  of $\HMV_K$  finite flat over $\gerY$ implies that the sheaf $\prtoY_*(\omegab^{\vk})$ over $\Yrig$ is the rigidification of the locally free $\cO_{\gerY}$-module $\omegab^{\vk}\otimes_{\cO_{\gerY}}\prtoY_*(\cO_{\fHMV})$. Therefore, by the rigid Koecher priciple \ref{lemma:Koecher}, it suffices to show that there exists a closed subset $\overline{V}\subset \Ybar$ of codimension  $\geq 2$, such that  $\Sigma_\SS$ contains the tube $\spe^{-1}(\Ybar-\overline{V})$.

 For $\gerp\in \SS$, let $w_{\gerp}$ be the automorphism of $[0,1]^{\BB}$ given by  $\ua\mapsto \ub$ with $b_{\beta}=1-a_{\beta}$ for $\beta\in\BB_{\gerp} $ and $b_{\beta}=a_{\beta}$ otherwise. For a subset $S\subset \SS$, we define $w_{S}$ to be the composite of the $w_{\gerp}$'s with $\gerp\in S$, and $w=w_{\BB}$.  Let $\vx$ be a vertex point of the hypercube $[0,1]^{\BB}$, and $W_{\vx}$ be the corresponding  stratum of codimension $0$. We put  
\begin{gather*}
T_{0}=\{\gerp \in \SS: x_{\beta}=0, \quad \forall \beta\in \BB_{\gerp}\},\\
T_1=\{\gerp\in \SS: x_{\beta}=1, \quad \forall \beta\in \BB_{\gerp}\},\\ 
T_2=\BB\backslash(T_0\cup T_1).
\end{gather*} 
It's immediate that both the strata $W_{w_{T_0}(\vx)}$ and $W_{w_{T_0\cup T_2}(\vx)}$ are nowhere \'etale (see \ref{subsection:no-etale} for this notion).  Therefore, by definition, $\Sigma$ contains $\spe^{-1}(W_{w_{T_0}(\vx)}^\gen\cup W_{w_{T_0\cup T_2}(\vx)}^\gen)$; hence,  $\Sigma_\SS$ contains the admissible open  $w_{T_0}^{-1}(\spe^{-1}(W_{w_{T_0}(\vx)}^\gen))\cup w_{T_0\cup T_2}^{-1}\spe^{-1} (W_{w_{T_0\cup T_2}(\vx)}^\gen)$, which equals
\[
\spe^{-1}(w_{T_0}^{-1}(W_{w_{T_0}(\vx)}^\gen)\cup w_{T_0\cup T_2}^{-1}(W_{w_{T_0\cup T_2}(\vx)}^\gen))
\]
Note that $w_{T_0}^{-1}(W_{w_{T_0}(\vx)}^\gen)\cup w_{T_0\cup T_2}^{-1}(W_{w_{T_0\cup T_2}(\vx)}^\gen)$ is an open subset of $W_{\vx}$. We denote by $V_{\vx}$ its complement in $W_{\vx}$, and by $\overline{V}_{\vx}$ the  closed closure of $V_{\vx}$ in $\Ybar$. By Proposition \ref{Proposition: codim 2}, $V_{\vx}$ has codimension $\geq 2$ in $W_{\vx}$, so $\overline{V}_{\vx}$ has codimension $\geq 2$ in $\Ybar$.

Let $\bfa$ be an edge of $[0,1]^\BB$. There exists a unique $\beta_0\in \BB$ such that $\bfa$ consisting of the points $\ua=(a_{\beta})\in [0,1]^{\BB}$ such that $a_{\beta_0}\in (0,1)$ and $a_{\beta}\in \{0,1\}$ for $\beta\neq \beta_0$. Let $T_0$ be the subset of $\SS$ such that $a_{\beta}=0$ for all $\beta\in \BB_{\gerp}$. Let $W_{\bfa}$ be the corresponding stratum of codimension $1$. Then both $W_{w_{T_0}(\bfa)}$ and $W_{w_{T_0\cup\{\gerp_0\}}(\bfa)}$ are nowhere \'etale . Hence, by defintion, $\Sigma_\SS$ contains the subset
$$
\calW_{\bfa}=w_{T_0}^{-1}(\,]W_{w_{T_0}(\bfa)}^\gen['\,)\cup w_{T_0\cup\{\gerp_0\}}^{-1}(\,]W_{w_{T_0\cup \{\gerp_0\}}(\bfa)}['\,).
$$
Note that $\calW_{\bfa}\subset \,\spe^{-1}(W_{\bfa})\,$. 
We denote by $W_{w_{T_0}(\bfa)}^{\sing}$ the reduced closed subscheme associated with the complement $W_{w_{T_0}(\bfa)}^{\gen}$ in $W_{w_{T_0}(\bfa)}$. It's obvious that ${W}_{w_{T_0}(\bfa)}^{\sing}$ has codimension $\geq 1$ in $W_{w_{T_{0}}(\bfa)}$. We put 
\[V_{\bfa}=w_{T_0}^{-1}(W_{w_{T_0}(\bfa)}^{\sing})\cup w_{T_0\cup\{\gerp_0\}}^{-1}(W_{w_{T_0\cup\{\gerp_0\}}(\bfa)}^{\sing}).\]
We claim that  $\spe^{-1}(W_{\bfa}-V_{\bfa})\subset \calW_{\bfa}$. Ineed, we have
$$
W_{\bfa}-V_{\bfa}=w_{T_0}^{-1}(W_{w_{T_0}(\bfa)}^{\gen})\cup w_{T_0\cup\{\gerp_0\}}^{-1}(W_{w_{T_0\cup\{\gerp_0\}(\bfa)}}^\gen).
$$ 
Let $Q=(A,H)\in \spe^{-1}(W_{\bfa}-V_{\bfa})$. If $\deg_{\beta_0}(H)>\sum_{i=1}^{f_{\gerp_0}-1}1/p^i$, then $Q$ belongs to  $w_{T_0}^{-1}(\,]W_{w_{T_0}(\bfa)}^\gen['\,)\subset \calW_{\bfa}$ by Definition \ref{Defn:Sigma}.  Now assume $\deg_{\beta_0}(H)\leq \sum_{i=1}^{f_{\gerp_0}-1}1/p^i$. We have necessarily 
$$
1-\deg_{\beta_0}(H)\geq 1-\sum_{i=1}^{f_{\gerp_0}-1}\frac{1}{p^i}>\sum_{i=1}^{f_{\gerp_0}-1}\frac{1}{p^i},
$$
where we used the fact that $p\geq 3 $ in the last step. Hence,  $Q\in w_{T_0\cup\{\gerp_0\}}^{-1}(\,]W_{w_{T_0\cup \{\gerp_0\}}(\bfa)}[')$.  This proves the claim. Now we let $\overline{V}_{\bfa}$ be the closure of $V_{\bfa}$ in $\Ybar$. Since $W_{\bfa}$ has codimension $1$ in $\Ybar$, $\overline{V}_{\bfa}$ has codimension $\geq 2$ in $\Ybar$.  Finally, we set 
\[\overline{V}=(\bigcup_{\vx}\overline{V}_{\vx})\cup (\bigcup_{\bfa}\overline{V}_{\bfa})\cup (\bigcup_{\codim(W)\geq 2}W), \]
where $\vx$ runs through all the vertexes of $[0,1]^{\BB}$,  $\bfa$ though all the edges, and $W$ through all the strata of codimension $\geq 2$.
This is a closed subset of $\Ybar$ of codimension $2$. By the discussion above, we see that $\Sigma_\SS$ contains the tube $]\Ybar-\overline{V}[$. This finishes the proof of Theorem \ref{thm-ac}.

\end{proof}

 \section{Residual Modularity} Let $F$ be a totally real field (on which we will put various conditions). In this section, we prove that certain mod-$p$ representations of $G_F=\Gal(\overline{\QQ}/F)$ are modular.

\subsection{Modularity of icosahedral mod 5 representations of $G_F$} We begin with a Lemma.

\begin{lemma} Let $F$ be a totally real field. Suppose that $\overline\rho: G_F\rightarrow GL_2(\overline{\mathbb{F}}_5)$
is a continuous representation of $G_F=\mathrm{Gal}(\overline{\mathbb{Q}}/F)$ satisfying
\begin{itemize}
\item totally odd,
\item $\overline\rho$ has projective image $A_5$,

\end{itemize}
Then, there is a finite soluble totally real extension $F\subset M\subset \overline{\mathbb{Q}}$ and an elliptic
curve $E$ over $M$ satisfying the following conditions:
\begin{itemize}
\item $\overline\rho_{E, 5}: G_{M}=\mathrm{Gal}(\overline{\mathbb{Q}}/M)\rightarrow \mathrm{Aut}(E[5])$ is equivalent to a twist of $\overline\rho|_{G_M}$ by some character,
\item $\overline\rho_{E, 3}: \mathrm{Gal}(\overline{\mathbb{Q}}/{M(\zeta_3)})\rightarrow \mathrm{Aut}(E[3])$ is absolutely irreducible,
\item $E$ has good ordinary reduction at every place of  $M$ above 3, and potentially good ordinary reduction at every place of $M$ above $5$.
\end{itemize}
\end{lemma}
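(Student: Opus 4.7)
I would adapt Taylor's strategy from \cite{T:02}, which exploits the fact that the modular curve $X(5)$ has genus zero and that $\mathrm{Aut}(X(5))\cong\mathrm{PSL}_2(\mathbb{F}_5)\cong A_5$. The central object is the twisted modular curve $X_{\overline\rho}$ parametrising pairs $(E,\phi)$, where $E$ is an elliptic curve and $\phi\colon E[5]\xrightarrow{\sim}V_{\overline\rho}$ is a symplectic $G_F$-equivariant isomorphism (with respect to the Weil pairing on the left and some fixed symplectic form on the right). After modifying $\overline\rho$ by a character so that its determinant matches the mod-5 cyclotomic character up to squares — a twist realisable over a solvable totally real extension of $F$ by global class field theory — $X_{\overline\rho}$ becomes a Brauer--Severi form of $X(5)\cong\mathbb{P}^1$ over $F$, classified by a class in $H^1(G_F,A_5)$.

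First, I would show that $X_{\overline\rho}$ acquires a rational point, and hence becomes $\cong\mathbb{P}^1_{M_0}$, over some solvable totally real extension $M_0/F$. Since $A_5$ has a natural series of coverings (one may pass through an $A_4$-subgroup and then an $A_4$-quotient to $\mathbb{Z}/3$), one can kill the Brauer class by a sequence of solvable totally real base changes, using Tate's argument reproduced in \cite{T:02}, or equivalently a Moret-Bailly type approximation on the genus-zero twist.

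The main step is to find a point $x\in X_{\overline\rho}(M)$, for some further solvable totally real extension $M/M_0$, whose associated elliptic curve $E=E_x$ simultaneously satisfies the three conditions: (i) good ordinary reduction at every place of $M$ above $3$, (ii) potentially good ordinary reduction at every place of $M$ above $5$, and (iii) $\overline\rho_{E,3}|_{G_{M(\zeta_3)}}$ absolutely irreducible. Each of these cuts out a nonempty open subset of the analytic space $X_{\overline\rho}(M_v)$ at the relevant place $v$: for (i), one avoids the finite supersingular locus in the mod-$3$ reduction of $X_{\overline\rho}$, which is nonempty and open since the ordinary locus of the universal elliptic curve is dense; for (ii), after passing to the local extension realising good reduction of $E_x$, one stays within the ordinary tube — this is possible because $\overline\rho|_{D_{\mathfrak p}}$ being of the prescribed Borel form at primes $\mathfrak p\mid 5$ forces the reduction to be generically ordinary; for (iii), one avoids the closed subvariety of $X_{\overline\rho}$ cut out by mod-$3$ reducibility, which is a \emph{proper} subvariety because $\overline\rho$ has nonabelian projective image $A_5$, making the generic mod-$3$ representation not induced from a character of $G_{M(\zeta_3)}$.

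The principal obstacle is combining these local conditions into a single global $M$-point while keeping $M/F$ totally real and solvable. I would resolve this by applying Moret-Bailly's approximation theorem on rationally connected varieties (as used in \cite{T:02}) to the smooth compactification $X_{\overline\rho}\cong\mathbb{P}^1_{M_0}$: given finitely many local open neighbourhoods (including real conditions at the archimedean places of $M_0$ and the conditions above at places dividing $3$ and $5$), there exists a solvable totally real extension $M/M_0$ and a point $x\in X_{\overline\rho}(M)$ whose image in each $X_{\overline\rho}(M_v)$ lies in the prescribed open set. The corresponding elliptic curve $E=E_x/M$ then satisfies all three required properties by construction.
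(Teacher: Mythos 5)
Your overall strategy is the same as the paper's (Taylor's): realise a twist of $\overline\rho|_{G_M}$ on the $5$-torsion of an elliptic curve via the genus-zero twisted modular curve, impose local conditions at $3$ and $5$, and pick a suitable rational point. However, two steps have genuine gaps. First, condition (iii): the locus of points whose mod-$3$ representation fails to be absolutely irreducible over $M(\zeta_3)$ is \emph{not} a proper closed subvariety of the twisted curve. Reducibility (or dihedral splitness) of $\overline\rho_{E,3}$ is a condition on the Galois action on fibres over rational points; the bad set is the image of the $M$-rational points of the finite covers parametrising a $3$-isogeny ($Y_{\overline\rho_M,0}(3)$) or a stable unordered pair of $3$-torsion lines ($Y_{\overline\rho_M,\mathrm{split}}(3)$), i.e.\ a thin set. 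The paper disposes of it by quoting Rubin's Lemma 12 (these covers have only finitely many rational points) and then Ekedahl's effective Hilbert irreducibility to find a point lying in the prescribed $3$-adic and $5$-adic neighbourhoods while avoiding those finitely many images. Your Moret-Bailly step cannot do this: Moret-Bailly only prescribes local behaviour and does not avoid thin sets, and it does not in general produce a \emph{soluble} extension $M/M_0$, which the lemma requires (and which is essential for the later soluble base change). In fact no further extension is needed at that stage: once the twisted curve is $\mathbb{P}^1$ over the field in hand, weak approximation combined with Hilbert irreducibility over that field itself suffices, which is exactly what the paper does.

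Second, the nonemptiness of the local open sets at $3$ and $5$ is not automatic, and your justification at $5$ invokes a Borel-shape hypothesis on $\overline\rho|_{D_{\mathfrak p}}$ that is not among the hypotheses of this lemma (only total oddness and projective image $A_5$ are assumed; the condition at $5$ appears only in the later Artin theorem). If $\overline\rho|_{D_{\mathfrak p}}$ were irreducible at a place above $5$, no local point $(E,\phi)$ with $E$ potentially ordinary could exist, since the $5$-torsion of a potentially ordinary curve is locally reducible; a similar matching problem arises at $3$. The paper removes this obstacle \emph{before} choosing the point: by class field theory (Chevalley's lemma) it passes to a further soluble totally real extension over which $\overline\rho$ becomes trivial on the decomposition groups above $3$ and $5$, and only then exhibits explicit local points (the curve $y^2=x^3+x^2-x$ at $3$, and a twist of the CM curve $y^2=x^3+x$ at $5$) with the required good ordinary, respectively potentially good ordinary, reduction. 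Without this preliminary base change your local neighbourhoods may be empty and the approximation step collapses. Your lifting/twisting preliminaries are a workable variant of the paper's obstruction analysis, but you should still verify that the archimedean obstruction cancels and that the twisted representation has cyclotomic determinant with values in $GL_2(\mathbb{F}_5)$, so that the symplectic twist of the level-$5$ curve is actually defined.
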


\begin{proof} 
Suppose $[F(\zeta_5): F]=4$. Then the mod 5 cyclotomic character $\epsilon: \mathrm{Gal}(F(\zeta_5)/F)\rightarrow (\mathbb{Z}/5\mathbb{Z})^\times$ is an isomorphism. Let $F_1$ be the unique quadratic extension $F(\sqrt{5})$ in $F(\zeta_5)$ of $F$ corresponding to the index 2 subgroup of $(\mathbb{Z}/5\mathbb{Z})^\times$. 

Following Taylor \cite{T:02} , since the kernel of the projection $SL_2(\mathbb{F}_5)\rightarrow PSL_2(\mathbb{F}_5)$ is $\{ \pm 1\}$, the obstruction for lifting $\mathrm{Gal}(\overline{F}/F)\rightarrow PSL_2(\mathbb{F}_5)$ to $SL_2(\mathbb{F}_5)$ lies in $H^2(\mathrm{Gal}(\overline{F}/F), \{\pm 1\})=\mathrm{Br}(F)[2]$, and we shall `annihilate' the obstruction by quadratic base-changes. Since $F(\zeta_5)$ is totally ramified at 5, so is $F_1$. Hence the image in the local Brauer group of the obstruction at every place of $F_1$ at 5 is trivial. One may and will choose $F_2$ to be a totally real quadratic extension of $F$ such that all the places of $F$ not dividing 5, at which local images of the obstruction are non-trivial,  remain prime; and every place of $F$ above $5$ splits completely. Let $L$ be the bi-quadratic extension $F_1F_2$ of $F$.  

Suppose $[F(\zeta_5):F]=2$. Choose a totally real quadratic extension $F_1$ of $F$ in which every finite place of $F$ not dividing 5, at which local images of the obstruction are non-trivial, splits completely while every place above 5 remains prime. Choose a quadratic extension $F_2$ of $F$ as above. Let $L=F_1F_2$. \\

In either case, the restriction to $\mathrm{Gal}(F(\zeta_5)/L)$ of the mod 5 cyclotomic character defines an isomorphism to the order 2 subgroup $\{\pm 1\}$ of $(\mathbb{Z}/5\mathbb{Z})^\times$.

The image of the obstruction for $\mathrm{Gal}(\overline{F}/L)\rightarrow PSL_2(\mathbb{F}_5)$ to $SL_2(\mathbb{F}_5)$ therefore lies in $H^2(\mathrm{Gal}(\overline{F}/L), \{\pm 1\})$, and it has local image trivial everywhere except at the infinite places of $L$.

Since the kernel of the square $(\mathbb{Z}/5\mathbb{Z})^\times\rightarrow \{\pm 1\}$ is $\{\pm 1\}$, the obstruction, for lifting the cyclotomic character $\mathrm{Gal}(\overline{F}/L)\rightarrow \{\pm 1\}$ to a character $\mathrm{Gal}(\overline{F}/L)\rightarrow (\mathbb{Z}/5\mathbb{Z})^\times$ whose square \emph{is} the character, lies in $H^2(\mathrm{Gal}(\overline{F}/L), \{\pm 1\})$. Evidently, the image of the obstruction in $H^2(\mathrm{Gal}(\overline{F}/L), \{\pm 1\})$ is trivial everywhere except at the infinite places of $L$. 

Combining, there is no obstruction for lifting  $\mathrm{proj}\, \overline{\rho}: \mathrm{Gal}(\overline{F}/L)\rightarrow A_5$ to $\overline\rho_L: \mathrm{Gal}(\overline{F}/L)\rightarrow GL_2(\mathbb{Z}_5)$ with its determinant mod 5 cyclotomic character. \\

Choose, by class-field
theory, a finite soluble totally real extension $L'$ of $L$ such that $\overline{\rho}_L$ is trivial when
restricted to the decomposition group of every place in $L'$ above $3$ and $5$. The construction follows from class field theory; see Lemma 2.2 in \cite{T:02} of ``Chevalley's lemma'', for example.
Let $M$ be the Galois closure of $L'$ over $F$ (note that $M$ is soluble over $F$), and $\overline{\rho}_M$ denote the restriction of $\overline\rho_L$ to $\mathrm{Gal}(\overline{\mathbb{Q}}/M)$.

As in section 1 of \cite{ST}, let $Y_{\overline\rho_M}/M$ (resp., $X_{\overline\rho_M}/M$)
denote the twist
of the (resp., compactified) modular curve $Y_5$ (resp., $X_5$) with full
level 5 structure. As proved in Lemma 1.1 \cite{ST}, the ``twist''
cohomology
class is in fact trivial, and therefore $X_{\overline\rho_M}\simeq X_5$ and
$Y_{\overline\rho_M}$ is isomorphic over $M$ to a Zariski open subset of
the projective 1-line $\mathbb{P}^1$. In particular, $Y_{\overline\rho_M}$
has infinitely many rational points.

Let $Y_{\overline\rho_M,0}(3)$
denote the degree 4 cover over $Y_{\overline\rho_M}$ which parameterises the isomorphism classes $(E, \phi_5, C)$ of elliptic curves
$E$ equipped with an isomorphism $\phi_5: E[5]\simeq \overline\rho_M$  taking the
Weil pairing on $E[5]$ to $\epsilon: \wedge^2\overline\rho_M\rightarrow \mu_5$,
and a finite flat subgroup scheme $C\subset E[3]$ of order 3.

Let $Y_{\overline\rho_M,\mathrm{split}}(3)$
denote the \'etale cover over $Y_{\overline\rho_M}$ which parameterises the isomorphism classes $(E, \phi_5, C, D)$ where $(E, \phi_5)$ is as in $Y_{\overline\rho_M}$, and where $(C, D)$ is an unordered pair, fixed by $G_{M}$, of finite flat subgroup schemes of $E[3]$ of order 3 which intersect trivially. Then, it follows from Lemma 12 in \cite{R} that $Y_{\overline\rho_M,\mathrm{split}}(3)$
and $Y_{\overline\rho_M,0}(3)$ have only finitely many rational points.

For every prime $\mathfrak{p}$ of $M$ above 3 (resp. 5), the elliptic curve $y^2=x^3+x^2-x$ (resp. a twist of the CM elliptic curve $y^2=x^3+x$) defines an element of $Y_{\overline\rho_M}(M_\mathfrak{p})$ with good ordinary reduction,
and we let $\mathcal{U}_\mathfrak{p}\subset Y_{\overline\rho_M}(M_\mathfrak{p})$ denote a (non-empty) open neighbourhood, for the 3-adic  (5-adic) topology, of the point, consisting of elliptic curves with good (resp. potentially) ordinary reduction at $\mathfrak{p}$.

By Hilbert irreducibility theorem (Theorem 1.3 in \cite{E}; see also Theorem
3.5.7 in \cite{S:92}), we may then find a rational point in $Y_{\overline\rho_M}(M)$
which lies in $\mathcal{U}_\mathfrak{p}$ for every $\mathfrak{p}$ above 15 and does \emph{not}
lie in the images of $Y_{\overline\rho_M, 0}(3)(M)\rightarrow Y_{\overline\rho_M}(M)$
and $Y_{\overline\rho_M,\mathrm{split}}(3)(M)\rightarrow Y_{\overline\rho_M}(M)$. The
elliptic curve over $M$ corresponding to the rational point is what we
are looking for.

\end{proof}

\begin{thm} \label{Theorem: mod 5 modularity} Let $F$ be a totally real
field. Let $\overline{\rho}: G_F\rightarrow GL_2(\overline{\mathbb{F}}_5)$ be a continuous representation of the absolute Galois group $G_F=\mathrm{Gal}(\overline{\mathbb{Q}}/F)$
of $F$ which satisfies the following conditions.

\begin{itemize}
\item totally odd
\item $\overline{\rho}$ has projective image $A_5$
\end{itemize}
Then $\overline{\rho}$ is modular.
\end{thm}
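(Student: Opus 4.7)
The plan is to implement Taylor's ``$3$--$5$ switch'' in the totally real setting, using the elliptic curve constructed in the previous lemma as a bridge.

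First I would apply the preceding lemma to produce a finite soluble totally real extension $M/F$ and an elliptic curve $E/M$ such that $\overline{\rho}_{E,5} \cong \overline{\rho}|_{G_M} \otimes \chi$ for some character $\chi$, while $\overline{\rho}_{E,3}$ is absolutely irreducible when restricted to $G_{M(\zeta_3)}$, and $E$ has good ordinary reduction at every prime of $M$ above $3$ and potentially good ordinary reduction above $5$. By solvable base change for $\mathrm{GL}_2$ (Langlands), it suffices to show that $\overline{\rho}|_{G_M}$ is modular; twisting by the character $\chi$ (which corresponds to twisting by a Hecke character, an operation that preserves modularity), it therefore suffices to prove that $\overline{\rho}_{E,5}$ is modular over $M$, which will follow from the modularity of $E$ itself.

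To establish modularity of $E/M$, I would switch from the prime $5$ to the prime $3$. The mod $3$ representation $\overline{\rho}_{E,3}\colon G_M \to \mathrm{GL}_2(\mathbb{F}_3)$ has solvable image (since $\mathrm{GL}_2(\mathbb{F}_3)$ is solvable), is totally odd, and by construction is absolutely irreducible after restriction to $G_{M(\zeta_3)}$. Therefore, by the Langlands--Tunnell theorem together with the Deligne--Serre lemma (passing from a weight one form to a congruent ordinary weight two Hilbert modular form via the Hida family at $3$, as in \cite{T:02}), $\overline{\rho}_{E,3}$ is modular and arises from a weight two Hilbert modular form over $M$ that is ordinary at every prime of $M$ above $3$.

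Next I would invoke an ordinary modularity lifting theorem over totally real fields at the prime $p=3$ (for example, the results of Skinner--Wiles generalized to totally real fields, as made available through the work of Kisin and Barnet-Lamb--Gee--Geraghty--Taylor). The hypotheses to check are that $\rho_{E,3}$ is ordinary at every prime above $3$ (provided by good ordinary reduction), that its residual representation is modular (just established) and absolutely irreducible on $G_{M(\zeta_3)}$ (property of $E$), and totally odd. The conclusion is that $\rho_{E,3}$ is modular, hence $E/M$ is modular; reducing mod $5$ and twisting back by $\chi$ yields modularity of $\overline{\rho}|_{G_M}$, and solvable descent along $M/F$ finishes the argument.

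The main obstacle I anticipate is ensuring that a suitable ordinary residual modularity lifting theorem at $p=3$ is available over the possibly ramified (at $3$) totally real field $M$, with the precise set of hypotheses matching our geometric input from the lemma; in particular, the absolute irreducibility over $M(\zeta_3)$ (rather than merely over $M$) is exactly what is required to quote such a theorem, and the careful arrangement in the lemma of good ordinary reduction at $3$ (rather than only potentially ordinary) is what makes the input to the lifting theorem clean. Once that black-box is invoked, the remaining steps are standard twisting and solvable base change.
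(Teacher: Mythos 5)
Your argument follows the same route as the paper for the core of the proof: use the elliptic curve $E/M$ from the preceding lemma, switch to the prime $3$, apply Langlands--Tunnell (the projective image of $\overline{\rho}_{E,3}$ is solvable) together with Hida theory to produce an ordinary weight two form congruent to $\overline{\rho}_{E,3}$, and then an ordinary modularity lifting theorem at $3$ (the paper uses Kisin's Theorem 3.5.5, with ``strongly residually modular'' input and Faltings' isogeny theorem) to conclude that $E/M$ is modular, hence that $\overline{\rho}_{E,5}$ and, after untwisting by $\chi$, $\overline{\rho}|_{G_M}$ are modular. The paper also inserts a further solvable base change so that $\overline{\rho}_{E,3}$ is unramified at the primes above $3$ while keeping absolute irreducibility over $M(\zeta_3)$; you omit this, but it is a minor technical adjustment.

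The genuine gap is your treatment of the descent from $M$ to $F$. You assert at the outset that ``by solvable base change for $\mathrm{GL}_2$ it suffices to show that $\overline{\rho}|_{G_M}$ is modular'' and at the end that ``solvable descent along $M/F$ finishes the argument.'' Langlands' solvable base change is a statement about automorphic representations in characteristic zero; it does not formally transfer \emph{residual} modularity from $G_M$ down to $G_F$. To descend, one must produce a characteristic-zero geometric lift of $\overline{\rho}$ over $F$ (with controlled behaviour at $5$), prove it is modular over $M$ via an $R=T$ theorem applied to its restriction, and only then descend the resulting automorphic representation; carrying this out without auxiliary hypotheses at $5$ (such as $\overline{\rho}$ being distinguished at every place above $5$, or the kernel of $\mathrm{proj}\,\overline{\rho}$ not fixing $F(\zeta_5)$) is exactly the nontrivial point. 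The paper handles this step by invoking Theorem 3.2.1 of Barnet-Lamb--Gee--Geraghty, and its subsequent remark explains that before that result was available the present theorem had to carry extra conditions at $5$. As written, your final step silently assumes what that theorem provides, so you should either quote such a result explicitly or supply the lift-and-descend argument with the required local conditions at $5$.
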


\begin{proof} This can be proved exactly as in \cite[\S 2]{S}.  Choose an elliptic curve $E$ over a finite soluble totally real extension $M$ of $F$ as in the preceding lemma. Replace $M$ by its finite totally real soluble extension, if necessary, to assume that the mod 3 representation $\overline\rho_{E,3}$ is unramified at every prime of $M$ above 3. As argued in the proof of Theorem
3.5.5 in \cite{K:09A}, one can do this with the absolute irreducibility of
$\overline\rho_{E,3}|_{\mathrm{Gal}(\overline{\mathbb{Q}}/M)}$ intact.

By the Langlands-Tunnell theorem, there exists a weight 1 cuspidal Hilbert eigenform $f_1$ which gives rise to $\overline\rho_{E,3}$. By 3-adic Hida theory, we may find a cuspidal Hilbert eigenform $f_2$ of weight 2 and of level prime to 3, ordinary at every prime of $M$ above 3, which gives rise to $\overline\rho_{E,3}$.  As $E$ is ordinary at 3, $f_2$ renders $\rho_{E,3}: G_{M}\rightarrow GL(T_3E)$ `strongly residually modular' in the sense of Kisin \cite{K:09A}, and it follow from Theorem 3.5.5 in \cite{K:09A} that $T_3E$ is modular. By Falting's isogeny theorem, $E$ is therefore modular. As $\overline\rho_{E,5}$ is modular, $\overline\rho|_{G_M}$ is modular. It then follows from Theorem 3.2.1 of \cite{BLGG} that $\overline\rho$ is modular.
\end{proof}

\begin{rem} When the first draft of this paper was written, our theorems about modularity of mod 5 representations of $\mathrm{Gal}(\overline{F}/F)$ came with conditions at $5$; they also appeared in our main theorem about the strong Artin conjecture. At that time, \cite{BLGG} was not written up, and, in order to establish modularity of  the mod 5 representation $r$ of the absolute Galois group of a totally real field $L$ with the image of $\mathrm{proj}\, r$ being $A_5$ (e.g. $\overline{\rho}_L$ in the proof of the lemma above), it was necessary to assume either $r$ is distinguished (with a view to making appeal to Ramakrishna/Taylor lifting argument), or  the kernel of $\mathrm{proj}\, r$ does not fix $L(\zeta_5)$ (with a view to using Khare-Wintenberger `finiteness of deformation rings' argument to lift $r$ to a characteristic zero lifting that is modular).  The main theorem of Barnet-Lamb--Gee--Geraghty \cite{BLGG} completely  solve this inconvenience for us. 
\end{rem}

\section{Hida theory and $\Lambda$-adic companion forms}

\subsection{Hilbert modular forms}

Let $L$ be a totally real field. In the following, we will define Hilbert modular forms as true automorphic forms for the group $Res_{L/\QQ}\GL_{2,F}$. See Remark \ref{Remark: HMF} for the relationship between these forms and the geometric Hilbert modular forms defined in \S \ref{Remark: HMF}.

As before, $\mathcal{O}_L$
denotes the integers of $L$, and $\mathfrak{d}_L$ the different of $L$. Let $\mathbb{A}_L=\mathbb{A}^\infty_L\times L_\infty$ denote the adeles of $L$.
By $\infty$, we shall also mean the product of the
infinite primes of $L$. For an ideal $M$ of $\mathcal{O}_L$, let $L_M$ denote the strict ray class filed of conductor $M\infty$.

Let $U_1(M)$ denote the open compact subgroup
of matrices $\begin{pmatrix}a&b\\c&d\end{pmatrix}\in GL_2(\mathcal{O}_L\otimes_\mathbb{Z}\widehat{\mathbb{Z}})$ such that $c\equiv 0$ mod $M$, and $d\equiv 1$ mod $M$. Let $C_{L,
M}$ denote the strict ray class group mod $M\infty$, i.e., $ \mathbb{A}_L^\times/L^\times(\mathbb{A}_L^{\infty, \times}\cap U_1(M))L_\infty^{+, \times}$. Let $p$ be a rational prime and fix an algebraic closure $\overline{\mathbb{Q}}_p$
and an isomorphism $\overline{\mathbb{Q}}_p\rightarrow \mathbb{C}$.\\

If $k\in \mathbb{Z}$, let $S_{k, w=1}(U_1(M);\mathbb{C})$ denote the $\mathbb{C}$-vector
space, in the sense of Hida \cite{H}, of parallel weight $k=\sum_{\tau\in \mathrm{Hom}_\mathbb{Q}(F, \mathbb{R})}k \tau$ cusp forms of level
$U_1(M)$. Let $S_k(U_1(M))$ (resp., $S_k(U_1(M);R)$ for a ring $R\subset
\overline{\mathbb{Q}}_p$) denote the subspace of forms $f$ in $S_{k,w=1}(U_1(N);\mathbb{C})$ whose Fourier coefficients $c(\mathfrak{n},
f)\in \mathbb{Z}$ (resp., $ R$) for all integral ideals $\mathfrak{n}$ of $\mathcal{O}_L$.
These spaces
come equipped with an action of $C_{L,M}$ via the diamond operators $\mathfrak{q}\mapsto \langle
\mathfrak{q}\rangle$ for a prime $\mathfrak{q}\nmid M$, $T_\mathfrak{q}$ for a prime $\mathfrak{q}\nmid M$, and $U_\mathfrak{q}$ for a prime $\mathfrak{q}|M$. Let $h_k(M)$ denote the sub $\mathbb{Z}$-algebra of $\mathrm{End}(S_k(U_1(M)))$
generated over $\mathbb{Z}$ by all these operators. For a prime $\mathfrak{q}\nmid
M$, define $S_\mathfrak{q}$ by $(\mathbf{N}_{L/\mathbb{Q}}\,
\mathfrak{q})^{k-1}\langle
\mathfrak{q}\rangle$.\\

Fix an integer $N$  prime to $p$. For the ring $\mathcal{O}$
of integers of a finite extension $K$ of $\mathbb{Q}_p$, Hida \cite[\S 3]{H} defines an idempotent $e$ and we set
$$h^0_\mathcal{O}(N)=\lim_{\leftarrow r} e(h_2(Np^r)\otimes_\mathbb{Z}\mathcal{O}).$$

We have a natural map (induced by the diamond operators)
$$\langle\ \rangle: C_{L,Np^\infty}\stackrel{\mathrm{def}}{=}\lim_{\leftarrow r} C_{L, Np^r}= \mathbb{A}_L^\times/\overline{L^\times(\mathbb{A}_L^{\infty, \times}\cap U_1(Np^\infty))L_\infty^{+, \times}}\longrightarrow
h^0_\mathcal{O}(N)^\times$$ where by $\mathbb{A}_L^{\infty, \times}\cap U_1(Np^\infty)$, we mean the set of elements in $\mathbb{A}_L^{\infty, \times}\cap U_1(N)$
which are 1 at every prime $\mathfrak{p}$ of $L$ above $p$.\\

We let $\mathrm{Tor}_{L, Np^\infty}$ (resp. $\mathrm{Fr}_{L, Np^\infty}$)
denote the torsion subgroup (resp., the maximal $\mathbb{Z}_p$ free subgroup
of rank $1+\delta$ with $\delta=0$ if the Leopoldt conjecture holds) of $C_{L,
Np^\infty}$, and let $\Lambda_\mathcal{O}$ denote the completed group algebra over $\mathcal{O}$ of $\mathrm{Fr}_{L, Np^\infty}$.
Note that $h^0_\mathcal{O}(N)$ is a $\Lambda_\mathcal{O}$-module via $\langle \ \rangle$. In \cite{H}, Hida proves that $h^0_{\mathcal{O}}(N)$ is a torsion free module of finite type over $\Lambda_\mathcal{O}$.\\

We will let $$\mathrm{Art}: \mathbb{A}_L^\times/\overline{L^\times L_\infty^{+,\times}}\simeq
\mathrm{Gal}(\overline{L}/L)^\mathrm{ab}$$
denote the (global) Artin map, normalized compatibly with the local Artin
maps which are normalized to take uniformizers to arithmetic Frobenius elements. By abuse of notation, we shall let $\mathrm{Art}$ also denote the induced homomorphism
$C_{L, Np^\infty}\rightarrow \mathrm{Gal}(L_{N}(\mu_{p^\infty})/L)$.

Let $\epsilon$ denote the cyclotomic character
$$\epsilon: \mathrm{Gal}(L_N(\mu_{p^\infty})/L)\rightarrow
\mathbb{Z}_p^\times\hookrightarrow \overline{\mathbb{Q}}_p^\times.$$

We will let $\epsilon^\mathrm{cyclo}$ denote the character
$$G_L=\mathrm{Gal}(\overline{L}/L)\twoheadrightarrow\mathrm{Gal}(\overline{L}/L)^\mathrm{ab}\twoheadrightarrow C_{L,
Np^\infty}\hookrightarrow
\mathcal{O}[C_{L,Np^\infty}]^\times=\Lambda_\mathcal{O}[\mathrm{Tor}_{L,
Np^\infty}]^\times.$$

Note that $\mathfrak{q}\mapsto S_\mathfrak{q}$ extends to $$S: \Lambda_\mathcal{O}[\mathrm{Tor}_{L,
Np^\infty}]\rightarrow h^0_\mathcal{O}(N)^\times$$

If $\mathfrak{m}$ is a maximal ideal of $h^0_{\mathcal{O}}(N)$ with residue field $k(\mathfrak{m})$,
there is (Taylor \cite{T:98}, Carayol \cite{C:86}, Wiles \cite{W}, Rogawski-Tunnell \cite{RT}) a continuous
representation $$\overline{\rho}_\mathfrak{m}:
G_L\rightarrow GL_2(k(\mathfrak{m})),$$ such that, for all prime ideals $\mathfrak{q}$
not dividing $Np$, the representation is unramified at $\mathfrak{q}$ and $\mathrm{tr}\overline{\rho}_\mathfrak{m}(\mathrm{Frob}_\mathfrak{q})=T_\mathfrak{q}$.\\

Let $\mathcal{L}$ be a finite extension of the field of fractions of $\Lambda_\mathcal{O}$, and $\mathcal{O}_\mathcal{L}$ the integral closure of $\Lambda_\mathcal{O}$ in $\mathcal{L}$. We call a $\Lambda_\mathcal{O}$-algebra homomorphism 
$$F_\mathrm{H}: h^0_\mathcal{O}(N)\rightarrow
\mathcal{O}_\mathcal{L}$$ a $\Lambda$-adic
\emph{eigenform} (`H' for Hida). If the unique maximal ideal $\mathfrak{m}\subset h^0_{\mathcal{O}}(N)$ above $\mathrm{ker}\,F_\mathrm{H}$
is non-Eisenstein, i.e., $\overline{\rho}_\mathfrak{m}$ is absolutely irreducible,
it follows from results of Nyssen \cite{N} and Rouquier \cite{R:96} that there is a continuous representation
$$\rho_{F_\mathrm{H}}: G_L\rightarrow GL_2(h^0_{\mathcal{O}}(N)_\mathfrak{m})\stackrel{F_\mathrm{H}}{\rightarrow}
GL_2(\mathcal{O}_\mathcal{L}),$$ which is unramified for
all primes $\mathfrak{q}\nmid Np$, and satisfies $\mathrm{tr}\rho_{F_\mathrm{H}}(\mathrm{Frob}_\mathfrak{q})=T_\mathfrak{q}$,
and $\mathrm{det}\rho_{F_\mathrm{H}}=S\circ \epsilon^\mathrm{cyclo}$. Moreover, it follow from work of Wiles \cite{W}
that, for every $\mathfrak{p}|p$,
$$\rho_{F_\mathrm{H}}|_{D_\mathfrak{p}}\sim \begin{pmatrix} \ast&\ast\\0&\phi_{\mathfrak{p}}\end{pmatrix},$$
where $\phi_{\mathfrak{p}}$ is the unramified character of the decomposition group $D_\mathfrak{p}$ at $\mathfrak{p}$ sending $\mathrm{Frob}_\mathfrak{p}$ to $F(U_\mathfrak{p})$, and the product of the diagonal characters is $(F_\mathrm{H}\circ
S)\circ \epsilon^\mathrm{cyclo}|_{D_\mathfrak{p}}$ .\\

\subsection{$\Lambda$-adic companion forms} Let $F$ be a totally real field with ring of integers $\mathcal{O}_F$. Let  $\mathbb{S}_F$ be the set of prime ideals of $\mathcal{O}_F$ above $p$. Assume that $p$ is unramified in $F$. In the following, we construct a finite totally real soluble extension
$L$ of $F$ and $2^{|\SS_L|}$ overconvergent Hilbert modular forms on $Res_{L/\mathbb{Q}}\mathrm{GL}_{2,L}$
whose various twists by characters of finite order give rise to the Galois representation in question.

\begin{thm}\label{thm: Lambda-adic}

Let $\mathcal{O}$ be the ring of integers of a inite extension of $\mathbf{Q}_p$, with maximal ideal $\mathfrak{m}$. Let $\rho: \mathrm{Gal}(\overline{F}/F)\rightarrow  GL_2(\mathcal{O})$ be a continuous representation satisfying:

\begin{itemize}
\item $\rho$ is unramified at only finitely many places of $F$,
\item if $\overline{\rho}\stackrel{\mathrm{def}}{=}(\rho\ \mathrm{mod}\ \mathfrak{m})$, there exists a cuspidal Hilbert modular eigenform $f$ such that $\rho_f$ is potentially Barsotti-Tate and nearly ordinary at every place of $F$ above $p$ and such that $\overline\rho_f\simeq \overline\rho$,
\item  $\overline\rho$ is absolutely irreducible when restricted to $\mathrm{Gal}(\overline{F}/F(\zeta_p))$, 
\item for every place $\mathfrak{p}$ of $F$ above $p$, the restriction of $\rho$ to the decomposition group at $\mathfrak{p}$ is the direct sum of characters $\alpha_\mathfrak{p}$ and $\beta_\mathfrak{p}$, such that the images of the inertia subgroup at $\mathfrak{p}$ are both finite, and $(\alpha_\mathfrak{p}\ \mathrm{mod}\ \mathfrak{m})\neq (\beta_\mathfrak{p}\ \mathrm{mod}\ \mathfrak{m})$. 
\end{itemize}

Then, there exists a finite totally real soluble extension $L$ of $F$ in which every prime of $F$ above $p$ splits completely, a finite set $\calS$ of finite places of $L$ (where  $N$ is an integer prime to $p$ and divisible once by all  the places in $\calS$), and, for any subset $T$ of the set of places of $L$ above $p$,  a character 
$$\chi_T:\mathrm{Gal}(\overline{F}/L)\rightarrow \mathcal{O}^\times$$ 
and a $\Lambda$-adic eigenform
$$F_{\mathrm{H}, T}: h^0_\mathcal{O}(N)\rightarrow \mathcal{O}_\mathcal{L},$$
where $\mathcal{O}_\mathcal{L}$ is the integral closure of $\Lambda_\mathcal{O}$ in a finite extension $\mathcal{L}$ of $\mathrm{Frac} \Lambda_\mathcal{O}$, together with a height one prime $\wp_T$ such that 
$$(\rho_{F_{\mathrm{H}, T}}\ \mathrm{mod}\ \wp_T)\simeq \rho|_{\mathrm{Gal}(\overline{F}/L)}\otimes\chi_T^{-1}.$$
Furthermore, we have $(F_{\mathrm{H}, T}(U_\mathfrak{p})\ \mathrm{mod}\ \wp_T)=\alpha_\mathfrak{p}/\chi_T$ if $\mathfrak{p}$ in $T$, and $(F_{\mathrm{H}, T}(U_\mathfrak{p})\ \mathrm{mod}\ \wp_T)=\beta_\mathfrak{p}/\chi_T$, if $\gerp\not\in T$. \end{thm}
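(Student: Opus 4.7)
The plan is to reduce, via a careful soluble base change, to a setting in which for each choice of subset $T \subseteq \SS_L$ the twisted representation $\rho|_{G_L} \otimes \chi_T^{-1}$ is potentially Barsotti--Tate and ordinary at every prime above $p$ (with the ordinary line dictated by $T$), and then produce the $\Lambda$-adic eigenform via a Skinner--Wiles--Kisin type modularity lifting followed by Hida theory.

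First I would choose the soluble totally real extension $L/F$. By class field theory one can arrange that: (a) every $\gerp \mid p$ of $F$ splits completely in $L$ (so each place of $L$ above $p$ has residue degree one and absolute ramification index one); (b) $L$ is linearly disjoint from $\overline{F}^{\ker(\overline\rho)}(\zeta_p)$ over $F$, so that $\overline\rho|_{G_L}$ remains absolutely irreducible upon restriction to $G_{L(\zeta_p)}$; (c) the finite extensions cutting out $\alpha_\gerp|_{I_\gerp}$, $\beta_\gerp|_{I_\gerp}$ and the ramification of $\rho$ away from $p$ all become ``sufficiently trivial'' over $L$ (e.g. abelian of $p$-power conductor, to create enough room for the global character $\chi_T$ to be built by Chevalley/Khare--Wintenberger approximation); (d) the mod $\germ$ form witnessing residual modularity admits a modular base change to $L$ that is still potentially ordinary and potentially Barsotti--Tate above $p$ (possible by Langlands' cyclic base change and Skinner--Wiles lemmas).

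Next, for each subset $T \subseteq \SS_L$, I would build a finite order Hecke character $\chi_T\colon G_L\to\calO^\times$ of the desired global type by solving a local-to-global problem prime by prime: at a place $\gerp\in T$ require $\chi_T|_{I_\gerp}=\alpha_\gerp|_{I_\gerp}$ and at $\gerp\notin T$ require $\chi_T|_{I_\gerp}=\beta_\gerp|_{I_\gerp}$, while keeping the determinant $\det(\rho|_{G_L}\otimes\chi_T^{-1})$ of the correct crystalline cyclotomic form modulo its finite order part. Since the local characters $\alpha_\gerp|_{I_\gerp}, \beta_\gerp|_{I_\gerp}$ are finite and distinct mod $\germ$, one can realize this by a finite order Hecke character of $L$ using the Grunwald--Wang theorem (after possibly enlarging $L$ further by a soluble totally real extension to kill obstructions). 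The twisted representation $\rho_T := \rho|_{G_L}\otimes\chi_T^{-1}$ is then potentially Barsotti--Tate at every $\gerp\mid p$, with an unramified line corresponding to $\beta_\gerp/\chi_T$ if $\gerp\in T$ and to $\alpha_\gerp/\chi_T$ if $\gerp\notin T$; the distinctness of $\alpha_\gerp,\beta_\gerp$ mod $\germ$ ensures that $\rho_T$ is nearly ordinary at every $\gerp\mid p$ with a canonical ordinary filtration.

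I would then invoke modularity lifting in the nearly ordinary potentially Barsotti--Tate case (Skinner--Wiles, Kisin, as packaged for Hilbert modular forms, using the residual modularity of $\overline{\rho}_T \simeq \overline\rho|_{G_L}\otimes\overline\chi_T^{-1}$, which follows from the residual modularity of $\overline\rho$ combined with modularity of abelian twists) to deduce that $\rho_T$ is modular, and in particular arises from a classical ordinary Hilbert eigenform of parallel weight $2$ and tame level $N$ (with $N$ divisible once by each place in some auxiliary set $\calS$ swallowing the ramification of $\rho$ and the conductors of all $\chi_T$). The key point is that this weight-$2$ eigenform is ordinary at every $\gerp\mid p$ and its $U_\gerp$-eigenvalue reduces modulo $\germ$ to precisely $\alpha_\gerp/\chi_T$ when $\gerp\in T$ (respectively $\beta_\gerp/\chi_T$ when $\gerp\notin T$), because the unramified quotient of the local ordinary filtration has exactly this character. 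Applying Hida's control theorem for totally real fields, the corresponding maximal ideal of $h^0_\calO(N)$ lifts to a height one arithmetic prime in some irreducible component of the Hida Hecke algebra, giving a $\Lambda$-adic eigenform $F_{H,T}\colon h^0_\calO(N)\to \calO_\calL$ together with a height one prime $\wp_T$ in $\calO_\calL$ such that $\rho_{F_{H,T}}\bmod \wp_T\simeq \rho_T$ and $F_{H,T}(U_\gerp)\bmod \wp_T$ has the prescribed residue.

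The hardest step is item (c)/(d) above together with the uniform construction of $\chi_T$: one must simultaneously achieve residual modularity over $L$, absolute irreducibility of $\overline\rho|_{G_{L(\zeta_p)}}$, and existence of the requisite Hecke character $\chi_T$ for all $2^{|\SS_L|}$ subsets $T$, with the \emph{same} $L$ and tame level $N$. I would handle this by first fixing a large enough soluble $L$ (using Skinner--Wiles' base change tricks and Chevalley's lemma to trivialize $\overline\rho$ on the decomposition groups above $p$ if needed), then constructing the finite family $\{\chi_T\}$ one $T$ at a time by Grunwald--Wang, tracking the conductors so that a common auxiliary level $N$ suffices. Once this setup is in place, the modularity lifting and Hida theory steps above are then routine in the spirit of \cite{Ka}.
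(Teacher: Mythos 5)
Your strategy breaks down at the central step: you claim that the twist $\rho_T=\rho|_{G_L}\otimes\chi_T^{-1}$ can be shown modular by a Skinner--Wiles/Kisin potentially Barsotti--Tate lifting theorem, ``and in particular arises from a classical ordinary Hilbert eigenform of parallel weight $2$.'' This is impossible: since $\alpha_\gerp$ and $\beta_\gerp$ have finite image on inertia, $\rho_T$ has finite-order determinant and Hodge--Tate weights $(0,0)$ at every $\gerp\mid p$, whereas the representation attached to a parallel weight $2$ form has determinant equal to the $p$-adic cyclotomic character times a finite-order character. So $\rho_T$ cannot be the representation of any weight-two form, no weight-two modularity lifting theorem applies to it, and asserting its modularity at this stage is essentially circular --- that is what the entire paper is set up to prove. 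The paper's proof never claims modularity of $\rho_T$ here: it uses the Ramakrishna/Taylor lifting argument (in Gee's form \cite{G}) to construct an \emph{auxiliary} characteristic-zero lift $\widetilde\rho_T$ of the residual representation $\overline\rho_T$ which is $p$-ordinary, potentially Barsotti--Tate, has the weight-two type determinant, and has prescribed unramified quotient at each $\gerp\mid p$ (lifting $\alpha_\gerp/\chi_T$ or $\beta_\gerp/\chi_T$ according to $T$). It is $\widetilde\rho_T$, not $\rho_T$, that is proved modular via Kisin's Theorem 3.5.5, the required strong residual modularity coming from twisting the base change $g=\mathrm{BC}(f)$ by the Teichm\"uller lift of $\overline\chi_T$ together with Jarvis's level lowering and Fontaine--Laffaille theory; this produces an ordinary weight-two eigenform $g_T$ and hence a non-Eisenstein maximal ideal $\germ_T$ of the ordinary Hecke algebra.

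Your final step has a second gap of the same nature: you attribute to ``Hida's control theorem'' the existence of a height-one prime $\wp_T$ with $(\rho_{F_{\mathrm{H},T}}\bmod\wp_T)\simeq\rho_T$ and the stated $U_\gerp$-eigenvalues. Control theorems govern classical specializations in cohomological weights; they do not show that a prescribed weight-one, non-cohomological representation such as $\rho_T$ occurs as a specialization of the family through $\germ_T$. That identification is the real content of the theorem and is imported in the paper from the nearly ordinary $R=T$ results of Section 5 of \cite{S}, which realize $\rho_T$ as the Galois representation attached to a (a priori non-classical) weight-one specialization of a $\Lambda$-adic eigenform with the prescribed $U_\gerp$-reductions. (There is also a small internal inconsistency in your write-up: with $\chi_T|_{I_\gerp}=\alpha_\gerp|_{I_\gerp}$ for $\gerp\in T$, the character unramified on inertia is $\alpha_\gerp/\chi_T$, not $\beta_\gerp/\chi_T$.) Your choice of $L$ by soluble base change and of $\chi_T$ by class field theory matches the paper, but without the auxiliary lift $\widetilde\rho_T$ and the nearly ordinary $R=T$ input the argument does not go through.
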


\begin{proof} As in the proof of Theorem 3.5.5, \cite{K:09A}, we may and will choose  $L$ to be a finite totally real soluble extension of $F$ such that 
\begin{itemize}
\item every prime $\mathfrak{p}$ of $F$ above $p$ splits completely in $L$, 

\item if $\rho_L\stackrel{\mathrm{def}}{=}\rho|_{\mathrm{Gal}(\overline{F}/L)}$, then $\rho_L$ is totally odd, and is ramified precisely at a finite set $\calS$ of finite places of $L$ and possibly at places above $p$; at every place in $\calS$, the image of the inertia subgroups is unipotent,

\item $\overline\rho_L=(\rho_L\ \mathrm{mod}\ \mathfrak{m})$ is unramified outside $p$ and is absolutely irreducible when restricted to $L(\zeta_p)$, 

\item there exists a cuspidal automorphic representation $g=\mathrm{BC}(f)$ of $GL_2(\mathbb{A}_L)$ such that $\overline\rho_g\sim \overline{\rho}$, and $g$ is nearly ordinary at every place of $L$ above $p$ and is special at  every place in $\calS$.
\end{itemize}

By class field theory, one can choose a character $\chi_T$  satisfying $\chi_T|_{I_\mathfrak{p}}=\alpha_\mathfrak{p}|_{I_\mathfrak{p}}$ for $\mathfrak{p}\in T,$ and $\chi_T|_{I_\mathfrak{p}}=\beta_\mathfrak{p}|_{I_\mathfrak{p}}$ for $\mathfrak{p}\not\in T$.  Let $\SS_L^{ur}$ be the set of primes $\gerp|p$ such that $\alpha_\gerp/\beta_\gerp$ is unramified. We can and do arrange for the characters $\chi_T$ to satisfy $\chi_T=\chi_{T-\SS_L^{ur}}$ for all $T$. In other words, if $\gerp \in \SS_L^{ur}-T$, then $\chi_T=\chi_{T\cup\{\gerp\}}$.

Let $\rho_T=\rho_L\otimes\chi_T^{-1}$ and $\overline{\rho}_T=(\rho_T\ \mathrm{mod}\ \mathfrak{m})$.  Since $\rho_T$ is $p$-distinguished, we may appeal to the Ramakrishna/Taylor lifting argument (see Gee's proof of Theorem 3.4 of \cite{G}) to construct a potentially Barsotti-Tate $p$-ordinary lifting $\widetilde\rho_T$ of $\overline{\rho}_T$ such that 
$$\widetilde\rho_T|_{D_\mathfrak{p}}\simeq\begin{pmatrix}\ast&\ast\\ 0&\phi_T\end{pmatrix}$$
where $\phi_T$ is an unramified lifting of $(\alpha_\mathfrak{p}/\chi_T\ \mathrm{mod}\ \mathfrak{m})$ if $\mathfrak{p}\in T$ and $(\beta_\mathfrak{p}/\chi_T\ \mathrm{mod}\ \mathfrak{m})$ if $\mathfrak{p}\not\in T$. It is indeed strongly residually modular in the sense of Kisin, since the twist of $\rho_{g}$ by the Teichmuller lifting of $\overline\chi_T$ defines a modular lifting of $\overline\rho_T$, and (since $\overline{\rho}_T$ is ordinary) it follows from Jarvis's level lowering result \cite{J:04} and the Fontaine-Laffaille theory that the twist is ordinary and Barsotti-Tate at every place of $L$ above $p$ .  It then follows from Theorem 3.5.5 of \cite{K:09A} that there exists a $p$-ordinary Hilbert modular form $g_T$ whose associated Galois representation is isomorphic to $\widetilde\rho_T$.

Let $\mathfrak{m}_T$ denote the maximal ideal of $e h_2(N)$ corresponding to $\overline{\rho}_{g_T}$; in other words, $\overline{\rho}_{\mathfrak{m}_T}\sim \overline{\rho}_T$ and $(T_\mathfrak{p}\ \mathrm{mod}\ \mathfrak{m}_T)=(\alpha_\mathfrak{p}/\chi_T)$ for $\mathfrak{p}\in T$ and $(T_\mathfrak{p}\ \mathrm{mod}\ \mathfrak{m}_T)=(\beta_\mathfrak{p}/\chi_T)$ for $\mathfrak{p}\not\in T$. 

It follows from Section 5 of \cite{S} that there exists $F_{\mathrm{H}, T}: h^0(N)_{\mathfrak{m}_T}\rightarrow \mathcal{O}_\mathcal{L}$ and a height one prime $\wp_T$ of $\mathcal{O}_\mathcal{L}$ such that $(\rho_{F_{\mathrm{H}, T}}\ \mathrm{mod}\ \wp_T)\sim \rho_T$ and $(F_{\mathrm{H}, T}(U_\mathfrak{p})\ \mathrm{mod}\ \wp_T)=\alpha_\mathfrak{p}/\chi_T$ for  $\mathfrak{p}$ in $T$, and $(F_{\mathrm{H}, T}(U_\mathfrak{p})\ \mathrm{mod}\ \wp_T)=\beta_\mathfrak{p}/\chi_T$, for $\gerp \not\in T$.

\end{proof}
For every $T \subset \SS_L$, we define $f_T=(F_{\mathrm{H}, T}\ \mathrm{mod}\ \wp_T)$. It is clear that the $f_T$'s are $p$-adic eigenforms of parallel weight one, and it is standard that they are indeed overconvergent.

\begin{lemma}\label{lemms: relations} The assumptions stated in Theorem \ref{Theorem: classical} hold for the collection of eigenforms $\{f_T: T \subset \SS_L\}$, and the collection of Hecke characters $\{\chi_T: T \subset \SS_L\}$.

\end{lemma}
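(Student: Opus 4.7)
The plan is to verify conditions (1)--(6) of Theorem \ref{Theorem: classical} one by one, using as the main input the isomorphism $\rho_{f_T}\simeq \rho|_{G_L}\otimes \chi_T^{-1}$ from Theorem \ref{thm: Lambda-adic} together with the explicit description of $F_{\mathrm{H},T}(U_\gerp)\bmod\wp_T$. The key condition is (1). By construction, $\chi_T|_{I_\gerp}$ equals $\alpha_\gerp|_{I_\gerp}$ if $\gerp\in T$ and $\beta_\gerp|_{I_\gerp}$ if $\gerp\notin T$, so
\[
(\chi_{T\cup\{\gerp\}}/\chi_T)|_{I_\gerp}=(\alpha_\gerp/\beta_\gerp)|_{I_\gerp}.
\]
The tameness hypothesis on $\alpha_\gerp/\beta_\gerp$ then forces this to have $\gerp$-conductor $\gerp$ when $\alpha_\gerp/\beta_\gerp$ is ramified at $\gerp$, while the stipulation $\chi_T=\chi_{T-\SS_L^{\mathrm{ur}}}$ from the proof of Theorem \ref{thm: Lambda-adic} makes $\chi_{T\cup\{\gerp\}}/\chi_T$ trivial when $\gerp\in\SS_L^{\mathrm{ur}}$. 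The bound $\gern_T\mid N$ on the away-from-$p$ part of the conductor can be enforced by enlarging $N$ (finitely often) so that it absorbs all the ramification of every $\chi_T$ outside $p$; this is just a finite class-field-theory bookkeeping. The Nebentype identity $\psi_{\gerp,T}^{-1}=\psi_{\gerp,T\cup\{\gerp\}}=(\chi_{T\cup\{\gerp\}}/\chi_T)|_{\calO_\gerp^\times}\bmod 1+\gerp\calO_\gerp$ then falls out of the standard dictionary: the diamond-operator character $\psi_{\gerp,f_T}$ is the restriction to $\calO_\gerp^\times$ of the ramified local factor of $\det\rho_{f_T}|_{D_\gerp}$ (viewed through local class field theory with arithmetic Frobenius normalisation), and the ramified local factor of $\rho_{f_T}|_{D_\gerp}$ is precisely $\chi_T^{-1}$ times the ramified factor of $\rho|_{D_\gerp}$, which is controlled by the defining local equalities of $\chi_T$.

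Conditions (2)--(4) are then routine. (2) Normalisation of the $f_T$ is built into Hida theory. For (3), I would combine $\rho_{T\cup\{\gerp\}}\simeq\rho_T\otimes(\chi_{T\cup\{\gerp\}}/\chi_T)^{-1}$ with the trace-of-Frobenius formula for Fourier coefficients: for $\germ$ prime to $\gerp N$,
\[
c(f_{T\cup\{\gerp\}},\germ)=\mathrm{tr}\,\rho_{T\cup\{\gerp\}}(\mathrm{Frob}_\germ)=(\chi_{T\cup\{\gerp\}}/\chi_T)^{-1}(\mathrm{Frob}_\germ)\,c(f_T,\germ),
\]
and the normalisation of the Artin map (uniformiser $\mapsto$ arithmetic Frobenius) turns $(\chi_{T\cup\{\gerp\}}/\chi_T)^{-1}(\mathrm{Frob}_\germ)$ into $(\chi_{T\cup\{\gerp\}}/\chi_T)(\germ)^{-1}$ when we view the character as a Hecke character on ideals, yielding (3). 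For (4), the proof of Theorem \ref{thm: Lambda-adic} chose $\rho_L$ to be Steinberg (unipotent inertia) at every place of $\calS$, and arranging $\chi_T$ to be unramified at $\calS$ (again by enlarging $N$ if necessary) keeps $\rho_T$ Steinberg at every prime $\gerq\mid N$, hence forces $U_\gerq(f_T)=0$ by the standard local-global compatibility.

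Finally, (5) and (6) both reduce to the explicit formula $c(f_T,\gerp)=F_{\mathrm{H},T}(U_\gerp)\bmod\wp_T$, which Theorem \ref{thm: Lambda-adic} identifies with $\alpha_\gerp(\mathrm{Frob}_\gerp)/\chi_T(\mathrm{Frob}_\gerp)$ for $\gerp\in T$ and $\beta_\gerp(\mathrm{Frob}_\gerp)/\chi_T(\mathrm{Frob}_\gerp)$ for $\gerp\notin T$. Both values are units in $\calO/\wp_T$, giving (5). For (6), the case $\chi_{T\cup\{\gerp\}}/\chi_T=1$ forces $\gerp\in\SS_L^{\mathrm{ur}}$, so the two eigenvalues $\beta_\gerp(\mathrm{Frob}_\gerp)/\chi_T(\mathrm{Frob}_\gerp)$ and $\alpha_\gerp(\mathrm{Frob}_\gerp)/\chi_T(\mathrm{Frob}_\gerp)$ are distinct mod $\germ$ precisely because $\alpha_\gerp\not\equiv\beta_\gerp\pmod\germ$, which is the $p$-distinguishedness hypothesis of Theorem \ref{thm: Lambda-adic}. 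The main obstacle will be arranging the global characters $\chi_T$ consistently: they must realise the prescribed local $\alpha_\gerp$- or $\beta_\gerp$-behaviour at every $\gerp\mid p$, be unramified away from $p$ and $N$, and satisfy $\chi_T=\chi_{T-\SS_L^{\mathrm{ur}}}$; this requires a careful simultaneous choice via the strict ray class group of conductor $N\prod_{\gerp\mid p}\gerp$, which should succeed by enlarging $N$ if necessary to absorb the finite-order characters.
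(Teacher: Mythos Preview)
Your overall strategy matches the paper's, and your treatment of conditions (1), (2), (5), (6) is essentially the same. There are, however, two gaps worth flagging.

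The serious one is your argument for condition (4). You claim that since $\rho_T$ is Steinberg at each $\gerq\in\calS$, local-global compatibility forces $U_\gerq(f_T)=0$. This is backwards: a special (Steinberg) local component at $\gerq\,\|\,N$ gives a \emph{nonzero} $U_\gerq$-eigenvalue, not a vanishing one. The paper does not attempt to deduce $c(f_T,\gerq)=0$ from the shape of the local representation; instead it invokes the ``standard trick'' of raising the power of $\gerq$ in $N$ (i.e.\ passing to an appropriate $\gerq$-stabilisation/depletion in the oldform space at higher level), which produces an eigenform in the same Hecke eigensystem away from $\gerq$ but with $U_\gerq$-eigenvalue $0$. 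This costs only an increase in $N$ without changing its set of prime divisors, so conditions (1)--(3), (5), (6) are unaffected.

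A smaller point concerns (3). Your displayed identity
\[
c(f_{T\cup\{\gerp\}},\germ)=\mathrm{tr}\,\rho_{T\cup\{\gerp\}}(\mathrm{Frob}_\germ)
\]
only makes sense (and is only true) for \emph{prime} $\germ$; for general ideals there is no Frobenius element and the Fourier coefficient is not a trace. The paper handles this carefully: it first checks the relation at primes $\gerl\nmid Np$ via traces, then computes the determinant relation $\Psi_T/\Psi_{T\cup\{\gerp\}}=(\chi_{T\cup\{\gerp\}}/\chi_T)^2$, and uses this together with the recursion $T_{\gerl^{r+1}}=T_\gerl T_{\gerl^r}-\Psi(\gerl)T_{\gerl^{r-1}}$ to propagate to prime powers and hence (by multiplicativity) to all $\germ$ prime to $\gerp N$. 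A separate direct check handles the primes $\gerq\mid p$ with $\gerq\neq\gerp$ using the known $U_\gerq$-eigenvalues. Your sketch omits this bookkeeping.
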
 

\begin{proof} Let $N$ be an integer prime to $p$ and divisible by the conductor of $\rho_T|_{G_L}$ for all $T \subset \SS_L$. Then, the conductor of $\chi_{T\cup\{\gerp\}}/\chi_T$ divides $p^\infty N$. Condier $\gerp \in \SS_L$, and $T\subset \SS_L$ not containing $\gerp$. First assume that $\alpha_\gerp/\beta_\gerp$ is unramified. By construction of $\chi_T$'s at the beginning of the proof of Theorem \ref{thm: Lambda-adic},  we have $\chi_{T\cup\{\gerp\}}/\chi_T=1$ as desired.  Now, assume that $\alpha_\gerp/\beta_\gerp$ is not unramified.  If $\gerq \in \SS_L-\{\gerp\}$, then, viewed as characters of $\AA_L^\times/L^\times$, we have $\chi_{T\cup\{\gerp\}}|_{\calO_\gerq^\times}=\chi_T|_{\calO_\gerq^\times}$, as, by construction, they are either both $\alpha_\gerp|_{\calO_\gerq^\times}$ or both $\beta_\gerq|_{\calO_\gerq^\times}$. Similarly, we can see that $(\chi_{T\cup\{\gerp\}}/\chi_T)|_{\calO_\gerp^\times}=(\alpha_\gerp/\beta_\gerp)|_{\calO_\gerp^\times}\neq 1$.  Since $\alpha_\gerp/\beta_\gerp$ is tamely ramified, it follows that  $\chi_{T\cup\{\gerp\}}/\chi_T$ has conductor $\gerp\gern_T$, for some $\gern_T |N$.

For any $T \subset \SS$, let $\Psi_T$ be the diamond character of $f_T$ which we will view interchangeably as both a character of $\AA_L^\times/L^\times$ and $G_L$.  The determinant of $\rho_{f_T}\cong \rho|_{G_L}\otimes \chi_T^{-1}$ is $\Psi_T$.  From the shape of $\rho$ at $\gerp$, it follows that  $\Psi_T|_{D_\gerp}$ is $\alpha_\gerp\beta_\gerp\chi_T^{-2}|_{D_\gerp}$. Viewing $\Psi_T$ as a character of $\AA_L^\times/L^\times$, we find
${\Psi_T}|_{\calO_\gerp^\times} =(\alpha_\gerp/\beta_\gerp)|_{\calO_\gerp^\times}$ since $\gerp \not \in T$ and, hence, $\chi_T|_{\calO_\gerp^\times}=\beta_\gerp|_{\calO_\gerp^\times}$. Similarly, we find that $\Psi_{T\cup\{\gerp\}}|_{\calO_\gerp^\times} =(\beta_\gerp/\alpha_\gerp)|_{\calO_\gerp^\times}$.
It follows that ${\Psi_T}|_{\calO_\gerp^\times}=(\Psi_{T\cup\{\gerp\}}|_{\calO_\gerp^\times})^{-1}=(\chi_{T\cup\{\gerp\}}/\chi_T)|_{\calO_\gerp^\times}$. Therefore, we have 
\[
\psi_{\gerp,T}^{-1}=\psi_{\gerp,T\cup\{\gerp\}}=(\chi_{T\cup\{\gerp\}}/\chi_T)|_{\calO_\gerp^\times}\mod 1+\gerp\calO_\gerp.
\]
In particular, it follows that all $f_T$'s are of level $\Gamma_1(pN)$, where $N$ is an integer prime to $p$ and divisible by the Artin conductor of $\rho|_{G_L}$.

Let $\gerl$ be a prime ideal prime to $Np$. Comparing ${\rm tr}
(\rho_T(\Fr_\gerl))$ and ${\rm tr}(\rho_{T\cup\{\gerp\}}(\Fr_\gerl))$, it follows that $c(f_T,\gerl)=(\chi_{T\cup\{\gerp\}}/\chi_T)(\gerl) c(f_{T\cup\{\gerp\}},\gerl)$. On the other hands, comparing the determinants of $\rho_T(\Fr_\gerl)$ and $\rho_{T\cup\{\gerp\}}(\Fr_\gerl)$, implies that $\Psi_T/\Psi_{T\cup\{\gerp\}}=(\chi_{T\cup\{\gerp\}}/\chi_T)^2$. Using these facts, and the formula for the Hecke operators $T_{\gerl^r}$ in terms of $T_\gerl$ and $\Psi_T$ (for a positive integer $r$), it is easy to see that 
\[
c(f_T,\germ)=(\chi_{T\cup\{\gerp\}}/\chi_T)(\germ) c(f_{T\cup\{\gerp\}},\gerl)
\]
for any ideal $\germ \subset \calO_L$ which is prime to $Np$. If $\gerq|p$ is a prime ideal different from $\gerp$, then comparing $c(\gerq,f_T)$ and $c(\gerq,f_{T\cup\{\gerp\}})$ implies that the above relation also holds for $\germ=\gerq$, and hence for all $\germ$ prime to $\gerp N$.

Also, using a standard trick, we can assume $c(f_T,\gerq)=0$ for all $T \subset \SS$, and all $\gerq|N$. This would require increasing $N$, but doesn't require changing the prime ideal factors of $N$.  Finally, the formula for $c(\gerp,f_T)$ shows that it is nonzero, and the assumption on the distinguishability of $\rho$ implies that $c(\gerp,f_T)\neq c(\gerp,f_{T\cup\{\gerp\}})$ if $\alpha_\gerp/\beta_\gerp$ is unramified.
\end{proof}

\section{Modularity of Artin representations}

 \begin{thm} Let $F$ be a totally real field in which $5$ is unramified.
Let $$\rho: G_F=\mathrm{Gal}(\overline{\mathbb{Q}}/F)\rightarrow GL_2(\mathbb{C})$$
be a continuous representation satisfying the following conditions:
\begin{itemize}
\item $\rho$ is totally odd,
\item $\rho$ has projective image isomorphic to $A_5$,
\item for every place $\mathfrak{p}$ of $F$ above $5$, the projective image of the decomposition group at $\mathfrak{p}$ has order 2.
\end{itemize}
Then, there exists a holomorphic Hilbert cuspidal eigenform $f$ of parallel weight 1 such that $\rho$ arises from $f$ in the sense of Rogawski-Tunnell, and, hence, the Artin $L$-function
$L(\rho, s)$ is entire.
\end{thm}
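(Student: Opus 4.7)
The plan is to combine the mod $5$ modularity of Theorem~9.2 with the $\Lambda$-adic companion form construction of Theorem~10.2.1 and the classicality result of Theorem~4.1, following the strategy pioneered by Buzzard--Taylor and Taylor over $\QQ$ and implemented above in the Hilbert setting. First, fix an embedding $\overline{\QQ}\hookrightarrow \overline{\QQ}_5$ and choose a model $\rho\colon G_F\ra \GL_2(\calO)$, where $\calO$ is the ring of integers in a finite extension of $\QQ_5$. Let $\bar\rho$ be the reduction. Because $A_5$ is simple, the projective image of $\bar\rho$, being a quotient of $A_5$, is either trivial or all of $A_5$; the former would force the binary icosahedral cover $2.A_5\subset \Im(\rho)$ to land in scalars mod $\germ$, which is impossible. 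Thus $\bar\rho$ has projective image $A_5$, is totally odd, and, by Theorem~9.2, is modular.

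Next, I would verify the local hypotheses at primes $\gerp\mid 5$ required by Theorem~4.1 (equivalently, by the residual modularity input to Theorem~10.2.1). Since the projective image of $D_\gerp$ has order $2$, the restriction $\rho|_{D_\gerp}$ is semisimple with $\rho|_{D_\gerp}\cong \alpha_\gerp\oplus\beta_\gerp$, where $\alpha_\gerp/\beta_\gerp$ is a quadratic character; being quadratic and of residue characteristic $5$, it is tamely ramified, and its reduction mod $\germ$ remains nontrivial (order $2$ is prime to $5$), giving the ``$p$-distinguished'' condition $\alpha_\gerp\not\equiv\beta_\gerp\pmod{\germ}$. Each $\alpha_\gerp(I_\gerp),\beta_\gerp(I_\gerp)$ is finite because $\rho$ is Artin. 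Absolute irreducibility of $\bar\rho|_{G_{F(\zeta_5)}}$ follows because $F(\zeta_5)/F$ is abelian of degree dividing $4$, while every proper subgroup of $A_5$ has index at least $5$; the projective image on $G_{F(\zeta_5)}$ is therefore still $A_5$, hence nonabelian simple, and $\bar\rho|_{G_{F(\zeta_5)}}$ is absolutely irreducible. The residual modularity input (existence of a potentially ordinary, potentially Barsotti--Tate weight-$2$ Hilbert eigenform $g$ with $\overline{\rho}_g\simeq \bar\rho$) follows from Theorem~9.2 together with Hida theory applied to a suitable weight-$1$ form lifting $\bar\rho$ and multiplication by a partial Hasse invariant to raise the weight, exactly as in the proof of Theorem~10.2.1.

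Apply Theorem~10.2.1 to obtain a finite totally real soluble extension $L/F$ in which every prime above $5$ splits completely, characters $\chi_T$, and $\Lambda$-adic eigenforms $F_{\mathrm{H},T}$ indexed by $T\subseteq \SS_L$, together with height-one primes $\wp_T$ such that $f_T:=(F_{\mathrm{H},T}\bmod\wp_T)$ is an overconvergent Hilbert eigenform of parallel weight $1$ whose Galois representation is $\rho|_{G_L}\otimes\chi_T^{-1}$. By Lemma~10.2.2 the collection $\{f_T, \chi_T\}_{T\subseteq \SS_L}$ satisfies all the hypotheses of Theorem~4.1: in particular, the distinguishability condition (6) follows from $\alpha_\gerp\not\equiv\beta_\gerp\pmod{\germ}$. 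Theorem~4.1 then implies that every $f_T$ is a classical Hilbert modular eigenform of parallel weight $1$ and level $\Gamma_1(Np)$ over $L$. In particular, the form $f_\emptyset$ gives rise to a cuspidal automorphic representation $\Pi_L$ of $\GL_2(\AA_L)$ of parallel weight $1$ whose associated Galois representation in the sense of Rogawski--Tunnell is $\rho|_{G_L}\otimes\chi_\emptyset^{-1}$; twisting by the finite-order Hecke character $\chi_\emptyset$ we get an automorphic $\Pi_L'$ attached to $\rho|_{G_L}$.

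Finally, I would descend from $L$ back to $F$. Since $L/F$ is soluble totally real, one applies solvable cyclic base change (Langlands) repeatedly: the Galois-equivariance of the collection $\{\Pi_L'\}$ under $\Gal(L/F)$ follows from the fact that $\rho|_{G_L}$ descends to $\rho$, and standard cyclic descent then produces an automorphic representation $\Pi$ of $\GL_2(\AA_F)$ of parallel weight $1$ with $\Pi_L'=\mathrm{BC}_{L/F}(\Pi)$. By Rogawski--Tunnell, $\rho$ arises from the Hilbert cuspidal eigenform $f$ underlying $\Pi$, and the Artin $L$-function $L(\rho,s)=L(\Pi,s-1/2)$ is entire. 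The most delicate step will be ensuring that the abstract ``classical weight $1$'' output of Theorem~4.1 actually corresponds to a holomorphic cuspidal Hilbert eigenform in the sense of Rogawski--Tunnell (rather than merely a $p$-adic section of $\omega^{\mathbf{1}}$); this is handled by the standard fact that classical geometric weight-$1$ forms over a totally real field, being a finite-dimensional space with compatible Galois-representation-valued Hecke action, can be identified with holomorphic automorphic forms for $\Res_{L/\QQ}\GL_{2,L}$ (cf.\ Remark~3.1) — the key input being that the associated Galois representation $\rho|_{G_L}\otimes\chi_\emptyset^{-1}$ has finite image, so the ``weight-$1$ gap'' between $p$-adic and complex realizations is closed.
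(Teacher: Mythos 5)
Your proposal is correct and follows essentially the same route as the paper: reduce $\rho$ mod $5$, observe that the projective image stays $A_5$ and that at each $\mathfrak{p}\mid 5$ the restriction is a sum of characters with tamely ramified, distinguished ratio, invoke the mod-$5$ modularity theorem and the $\Lambda$-adic companion-form theorem over a soluble totally real extension $L$, conclude classicality of the weight-one forms $f_T$ via the main classicality theorem, and finally untwist and descend to $F$ by solvable base change, citing Rogawski--Tunnell. The only difference is that you make explicit some hypothesis checks (absolute irreducibility over $F(\zeta_5)$, distinguishedness mod $\germ$, and the potentially ordinary residual-modularity input) that the paper's proof leaves implicit, which is consistent with its argument.
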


\begin{proof}  Since $\rho$ is an Artin representation, it has finite image and hence
ramifies at only finitely many places of $F$; if we choose an isomorphism
$\iota: \overline{\mathbf{Q}}_5\rightarrow \mathbb{C}$, so is
$\rho_\iota:=\iota^{-1}\circ \rho$. Conjugate $\rho_\iota$ if necessary to assume that the image of
$\rho_\iota$ lies in $GL_2(\overline{\mathbb{Z}}_5)$, and let
$\overline{\rho}_\iota$ denote the residual representation $G_F\rightarrow
GL_2(\overline{\mathbb{F}}_5)$.

While $\mathrm{Im}\,(\mathrm{proj}\, \rho_\iota)$ reduces mod 5 to
$\mathrm{Im}\, (\mathrm{proj}\,
\overline{\rho}_\iota)$, $\mathrm{Im}\,(\mathrm{proj}\, \rho_\iota)\simeq
A_5$ is simple, and,
hence,  $\mathrm{Im}\,(\mathrm{proj}\, \rho_\iota) \simeq \mathrm{Im}\,
(\mathrm{proj}\, \overline{\rho}_\iota)$. In particular, $\mathrm{Im}\,
(\mathrm{proj}\, \overline{\rho}_\iota)$ is isomorphic to
$PSL_2(\mathbb{F}_5)$. Because of the assumption on the order of the projective image of the
decomposition group $D_\mathfrak{p}$ at $\mathfrak{p}$ above $5$,
$\rho_\iota|_{D_\mathfrak{p}}$ is a direct sum of characters
$\alpha_\mathfrak{p}$ and $\beta_\mathfrak{p}$, and the image of
$\mathrm{Im}\, (\mathrm{proj}\, \rho_\iota|_{D_\mathfrak{p}})=\mathrm{Im}\,
(\alpha_\mathfrak{p}/\beta_\mathfrak{p})$ assumes a cyclic subgroup of order
2 in $PSL_2(\mathbb{F}_5)$. Furthermore, the image of the wild inertia
subgroup at $\mathfrak{p}$ above 5 cannot have order 2, hence 1, i.e.,
$\alpha_\mathfrak{p}/\beta_\mathfrak{p}$ is tamely ramified at
$\mathfrak{p}$.

 %By the compactness of $\mathrm{Gal}(\overline{F}/F)$,  we may assume that $\rho_\iota$ takes values in $GL_2(\mathcal{O})$ with $\mathcal{O}$ the ring of integers of a finite extension of $\mathbf{Q}_p$. Let$\overline{\rho}_\iota$ denote the reduction of $\rho_\iota$ modulo the maximal ideal of $\mathcal{O}$. The second condition is equivalent to assuming that, for every place $\mathfrak{p}$ of $F$ above $p$, the restriction to the decomposition group at $\mathfrak{p}$ of $\overline{\rho}_\iota$  is reducible split with distinct characters $\alpha_\mathfrak{p}$ and $\beta_\mathfrak{p}$ on the diagonal, and $\alpha_\mathfrak{p}/\beta_\mathfrak{p}$ and $\beta_\mathfrak{p}/\alpha_\mathfrak{p}$ are tamely ramified (since their orders are prime to $5$).

By Theorems \ref{Theorem: mod 5 modularity} and \ref{thm: Lambda-adic}, there is a finite soluble totally real
field extension $L$ of $F$, and $2^{|\mathbb{S}_L|}$ overconvergent cusp
eigenforms $\{f_T\}_{T\subset\mathbb{S}_L}$ and $2^{|\mathbb{S}_L|}$
characters $\chi_T$ of $\mathrm{Gal}(\overline{F}/L)$ of finite order such
that 
$\rho_{f_T}=\rho_\iota|_{\Gal({\overline{\QQ}/L})}\otimes\chi^{-1}_T$. By
Theorem \ref{Theorem: classical}, the $f_T$ are
indeed classical cusp eigenforms. Untwisting,
$\rho_\iota|_{\Gal({\overline{\QQ}/L})}$ arises from a classical weight
one form on $Res_{L/\mathbb{Q}}GL_{2, L}$. By automorphic descent, it 
follows that $\rho_\iota$ arises from a classical weight one form on
$Res_{F/\mathbb{Q}}GL_{2, F}$.

\end{proof}

\end{document}